\newcommand{\eps}{\varepsilon}
\newcommand{\dd}{\,\mathrm{d}}
\renewcommand{\Re}{\mathop{\mathrm{Re}}}
\renewcommand{\Im}{\mathop{\mathrm{Im}}}
\newcommand{\calb}{{\mathcal B}}
\newcommand{\cald}{{\mathcal D}}
\newcommand{\calf}{{\mathcal F}}
\newcommand{\call}{{\mathcal L}}
\newcommand{\caln}{{\mathcal N}}
\newcommand{\calo}{{\mathcal O}}
\newcommand{\cals}{{\mathcal S}}
\newcommand{\R}{\mathbb{R}}
\newcommand{\Z}{\mathbb{Z}}
 \newcommand{\C}{\mathbb{C}}
\newcommand{\N}{\mathbb{N}}
\DeclareMathOperator{\id}{\mathsf{id}}
\DeclareMathOperator{\supp}{supp}
\DeclareMathOperator{\sgn}{sgn}
\DeclareMathOperator{\ord}{ord}
\DeclareMathOperator{\conv}{conv}
\DeclareMathOperator{\ad}{ad}
\newcommand{\opA}{\mathsf{A}}
\newcommand{\opB}{\mathsf{B}}
\newcommand{\opL}{\mathsf{L}}
\newcommand{\set}[1]{\ensuremath{\{#1\}}}
\newcommand{\seqkN}[1]{\ensuremath{(#1_k)_{k\in\N}}}
\newcommand{\op}{\mathsf{op}}
\newcommand{\Tr}{\mathsf{Tr}}
\newcommand{\ic}{\mathsf{i}}
\newcommand{\opnorm}[1]{{\lvert\kern-0.25ex\lvert\kern-0.25ex\lvert #1 \rvert\kern-0.25ex\rvert\kern-0.25ex\rvert}}
\newcommand{\CR}[1]{C^{#1}}
\newcommand{\CRi}{\CR \infty}
\newcommand{\CRci}{\CR \infty_c}
\newcommand{\newCCtr}[2][d]{
\newcounter{#2}\setcounter{#2}{0}
\expandafter\xdef\csname kyedtheconst#2\endcsname{#1}
}
\newcommand{\Cc}[2][nolabel]{
\stepcounter{#2}
\expandafter\ensuremath{\csname kyedtheconst#2\endcsname_{\arabic{#2}}}
\ifthenelse{\equal{#1}{nolabel}}
{}
{\expandafter\xdef\csname kyedconst#1\endcsname
{\expandafter\ensuremath{\csname kyedtheconst#2\endcsname_{\arabic{#2}}}}}
}
\newcommand{\Ccn}[2][nolabel]{
\expandafter\ensuremath{\csname kyedtheconst#2\endcsname}
\ifthenelse{\equal{#1}{nolabel}}
{}
{\expandafter\xdef\csname kyedconst#1\endcsname
{\expandafter\ensuremath{\csname kyedtheconst#2\endcsname}}}
}
\newcommand{\Cclast}[1]{
\expandafter\ensuremath{\csname kyedtheconst#1\endcsname_{\arabic{#1}}}
}
\newcommand{\Ccllast}[1]{
\addtocounter{#1}{-1}
\expandafter\ensuremath{\csname kyedtheconst#1\endcsname_{\arabic{#1}}}
\addtocounter{#1}{1}
}
\newcommand{\const}[1]{
\expandafter{\ifcsname kyedconst#1\endcsname
  \csname kyedconst#1\endcsname
\else
  \errmessage{Undefined Kyedconstant #1.}%
\fi}
}
\theoremstyle{plain}
\newtheorem{theorem}{Theorem}[section]
\newtheorem{proposition}[theorem]{Proposition}
\newtheorem{lemma}[theorem]{Lemma}
\newtheorem{corollary}[theorem]{Corollary}
\theoremstyle{definition}
\newtheorem{definition}[theorem]{Definition}
\newtheorem{example}[theorem]{Example}
\newtheorem{remark}[theorem]{Remark}
\title[Time-Periodic Cahn-Hilliard-Gurtin System on the Half Space]{The Time-Periodic Cahn-Hilliard-Gurtin System on the Half Space as a Mixed-Order System with General Boundary Conditions}
\author{Guillaume~Neuttiens}
\address{Guillaume~Neuttiens\newline Friedrich-Schiller-Universit\"at Jena \newline Inselplatz 5, 07743 Jena, Germany}
\email{guillaume.neuttiens@uni-jena.de}
\author{Jonas~Sauer}
\address{Jonas~Sauer\newline Friedrich-Schiller-Universit\"at Jena \newline Inselplatz 5, 07743 Jena, Germany}
\email{jonas.sauer@uni-jena.de}
\begin{document}

\subjclass[2020]{35B65, 35G45, 35M32, 82C26}
\keywords{Cahn-Hilliard-Gurtin system, mixed order systems, maximal regularity, general boundary conditions}

\begin{abstract}
A well-posedness and maximal regularity result for the time-periodic Cahn-Hilliard-Gurtin system in the half space is proved.
For this purpose, we introduce a novel class of complementing boundary conditions, extending the classical Lopatinski\u{\i}-Shapiro conditions from elliptic and parabolic theory to time-periodic mixed-order systems with general boundary conditions.
Moreover, we show that the classical Lopatinski\u{\i}-Shapiro conditions are in general insufficient for well-posedness of mixed-order systems.
\end{abstract}

\dedicatory{(On the occasion of the 60th birthday of Robert Denk.)}

\maketitle
\section{Introduction}
This paper develops a comprehensive $L^2$ theory for the time-periodic Cahn-Hilliard-Gurtin system on the half space.
We consider the system
    \begin{align}\label{eqn: CHG_system_1}
        \left\{\begin{array}{rcll}
        \partial_tu_1 -\Delta u_2 &=&f_1 & \text{in } \mathbb{T}\times \mathbb{R}^n_+,      \\
        \Delta u_1-\partial_tu_1+u_2 &=&f_2  & \text{in } \mathbb{T}\times \mathbb{R}^n_+,\\
        \partial_n u_1|_{\set{x_n=0}}&=&g_1  & \text{on } \mathbb{T}\times \mathbb{R}^{n-1},\\
        \partial_n u_2|_{\set{x_n=0}}&=&g_2 &\text{on } \mathbb{T}\times \mathbb{R}^{n-1},
    \end{array}\right.
    \end{align}
where all functions are periodic in time with period $T>0$, and accordingly the time variable lives in the torus $\mathbb{T}:=\R/T\Z$.
Such systems were introduced in \cite{Gur96} and arise naturally in the study of phase separation processes and interface dynamics when microscopic relaxation effects are taken into account.
The model augments the classical Cahn–Hilliard equation by coupling it to an additional evolution equation for the chemical potential, which leads to a symbol that is neither homogeneous nor quasi-homogeneous.
This prevents the use of classical elliptic or parabolic theory and makes the analysis considerably more delicate.
Wilke \cite{Wil12} has studied the Cahn-Hilliard-Gurtin system in the initial value setting, providing a maximal $L^p$ regularity result in the whole space, the half space and on sufficiently smooth bounded domains.

\medskip

The present work provides the first maximal regularity result for system \eqref{eqn: CHG_system_1} in the time periodic setting.
Our approach reveals that the Cahn-Hilliard-Gurtin system is a representative example of a broader class of mixed order systems with general boundary conditions.
These systems are characterised by the presence of several competing scaling behaviours in their symbols, a phenomenon that can be captured in a precise geometric way by means of Newton polygons.
Building on the general symbolic framework developed in \cite{denk2013general,denk1998newton,DeV02,gindikin2012method}, we introduce a refined theory of potential spaces that is adapted to boundary value problems in the time-periodic context and that reflects the mixed order structure of the system.
The first main result asserts that the operator associated with system \eqref{eqn: CHG_system_1} is an isomorphism between natural Newton polygon spaces for the solution and the data.
We refer to the next section for notational conventions and the definition of the involved function spaces, but already mention that $\overline{H}^{(s,r)}_\perp(\mathbb{T}\times \R^n_+)$ is the restriction to the half space of a purely oscillatory Sobolev space of mixed smoothness monitoring independently the $L^2$-norm of up to $s$ derivatives in time and $r$ derivatives in space, and $H^{(s,r)}_\perp(\mathbb{T}\times \R^{n-1})$ is its counterpart on the boundary $\mathbb{T}\times\R^{n-1}$.
\begin{theorem}\label{thm: main}
    Consider the solution space $\mathbb{E}:=\mathbb{E}_1\times \mathbb{E}_2$, the data space $\mathbb{F}:=\mathbb{F}_1\times\mathbb{F}_2$, and the trace space $\mathbb{G}:=\mathbb{G}_1\times\mathbb{G}_2$, given by
    \begin{align*}
    \begin{array}{rclrcl}
        \mathbb{E}_1&:=&\overline{H}^{(1,1)}_\perp(\mathbb{T}\times \mathbb{R}^n_+)\cap \overline{H}^{(0,3)}_\perp(\mathbb{T}\times \mathbb{R}^n_+),
        &\mathbb{E}_2&:=&\overline{H}^{(0,2)}_\perp(\mathbb{T}\times \mathbb{R}^n_+), \\
        \mathbb{F}_1&:=&L^2_\perp(\mathbb{T}\times \mathbb{R}^n_+),
        &\mathbb{F}_2&:=&\overline{H}^{(0,1)}_\perp(\mathbb{T}\times \mathbb{R}^n_+),\\
        \mathbb{G}_1&:=&H_\perp^{(\frac34,0)}(\mathbb{T}\times \mathbb{R}^{n-1})\cap H^{(0, \frac{3}{2})}_\perp(\mathbb{T}\times \mathbb{R}^{n-1}), &\mathbb{G}_2&:=&H^{(0, \frac{1}{2})}_\perp(\mathbb{T}\times \mathbb{R}^{n-1}).
    \end{array}
    \end{align*}
        For all $f=(f_1, f_2)\in \mathbb{F}$ and $g=(g_1,g_2)\in \mathbb{G}$, system \eqref{eqn: CHG_system_1} admits a unique solution $u=(u_1, u_2)\in\mathbb{E}$, and it holds 
\begin{equation*}
    \|u\|_{\mathbb{E}}\lesssim \|f\|_{\mathbb{F}}+\|g\|_{\mathbb{G}}, 
\end{equation*}
where the implicit constant depends only on the time period $T$ and the dimension $n$.
\end{theorem}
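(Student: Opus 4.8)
The plan is to pass to Fourier space in the time-periodic and tangential-spatial variables, reducing the system on $\mathbb{T}\times\mathbb{R}^n_+$ to a family of ordinary differential equations in the normal variable $x_n\in(0,\infty)$, parametrised by the dual variables $(k,\xi')\in\frac{2\pi}{T}\Z\times\mathbb{R}^{n-1}$. Writing $\hat u_j(k,\xi',x_n)$ for the coefficients, the two PDEs become a linear second-order (for $u_1$ this is effectively fourth-order, since $u_2$ is recovered algebraically) ODE system with characteristic roots depending on $(k,\xi')$; eliminating $u_2$ via the second equation of \eqref{eqn: CHG_system_1} yields a scalar equation for $\hat u_1$ whose symbol is the Newton-polygon symbol of the Cahn-Hilliard-Gurtin operator. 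One then solves this ODE on the half line with the decay condition at $x_n=+\infty$, picking out the stable subspace of solutions; the two remaining integration constants are fixed by the two Neumann conditions $\partial_n\hat u_1(0)=\hat g_1$, $\partial_n\hat u_2(0)=\hat g_2$. The first task is to show that the resulting $2\times2$ boundary matrix is invertible for every $(k,\xi')\neq(0,0)$ — this is precisely the complementing (Lopatinski\u{\i}-Shapiro-type) condition that, per the abstract, must be formulated with care for mixed-order systems, and it is presumably verified in an earlier section of the paper; I would invoke that verification here.

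Next I would derive the explicit solution formula $\hat u = \mathcal{S}(k,\xi')(\hat f,\hat g)$ and read off the symbol estimates. The key quantitative step is a symbol-by-symbol bound: one must show that each entry of the solution operator, acting on the data in the Fourier-transformed norms coming from $\mathbb{F}$ and $\mathbb{G}$, produces an output controlled in the Fourier-transformed norm of $\mathbb{E}$, uniformly in $(k,\xi')$. Concretely, for the contribution of $f$ one estimates convolution-type kernels in $x_n$ against the weights defining $\overline{H}^{(1,1)}_\perp\cap\overline{H}^{(0,3)}_\perp$ and $\overline{H}^{(0,2)}_\perp$, using the structure of the stable roots (their real parts are comparable to the relevant anisotropic weight $\langle k,\xi'\rangle$-quantities dictated by the Newton polygon); for the contribution of $g$ one uses that the trace spaces $\mathbb{G}_1,\mathbb{G}_2$ are exactly the half-order-loss trace spaces associated with $\mathbb{E}_1,\mathbb{E}_2$, so that the boundary terms are balanced. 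Summing/integrating the resulting pointwise bounds over $(k,\xi')$ via Plancherel gives $\|u\|_{\mathbb{E}}\lesssim\|f\|_{\mathbb{F}}+\|g\|_{\mathbb{G}}$ with constant depending only on $T$ and $n$. Uniqueness follows because the homogeneous problem forces every $\hat u_1(k,\xi',\cdot)$ to lie in the stable subspace with vanishing boundary data, hence to vanish, by the invertibility of the boundary matrix, including a separate check of the zero frequency $(k,\xi')=(0,0)$, which is excluded by the purely oscillatory ($\perp$) condition on the spaces.

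The main obstacle is the uniformity of the symbol estimates across the different scaling regimes: because the symbol is neither homogeneous nor quasi-homogeneous, the stable roots behave differently in the regimes where $|k|$ dominates, where $|\xi'|$ dominates, and in the transition zone, and one must check that the solution operator respects the mixed-order (Newton-polygon) weights in each regime and across the transitions. I would handle this by a case analysis on $(k,\xi')$ according to the faces of the Newton polygon, establishing in each region the asymptotics of the roots and of the boundary-matrix inverse, and then verifying that the estimates match up on the overlaps; the reference symbolic calculus of \cite{denk1998newton,denk2013general,DeV02,gindikin2012method} provides the bookkeeping framework, but the CHG-specific root structure has to be computed by hand. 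A secondary technical point is justifying the half-space reduction rigorously — i.e. that solving every Fourier mode and reassembling yields an element of the \emph{restriction} spaces $\overline{H}^{(s,r)}_\perp$ rather than merely a distribution — which is done by constructing a bounded extension operator and exhibiting the solution as a restriction, exactly as the definition of these spaces is set up to allow.
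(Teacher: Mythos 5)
Your proposal takes a genuinely different, more computational route than the paper. You propose the classical Fourier-mode approach: transform in $(t,x')$, solve a fourth-order ODE in $x_n$ for each mode by picking out the stable roots, fix two constants via the boundary conditions, and verify uniform symbol estimates by a case analysis across the Newton polygon regimes. The paper instead argues operator-theoretically: it factors the determinant $\op[D]=\op[D^+]\op[D^-]$ into support-preserving factors via Paley--Wiener (Propositions \ref{prop_support_preserving} and \ref{prop: DHG_admissible}), develops an abstract trace theory for Newton polygon spaces (Theorem \ref{thm: trace}), assembles well-posedness machinery for admissible scalar operators with complementing boundary conditions (Lemma \ref{lemma_added_boundary_conditions}, Theorem \ref{thm: half_spcae_general}, Corollary \ref{cor: corollary_trace_1_3}), and then, in Section \ref{sec:proof}, solves the auxiliary scalar problems $\op[D]_+v_i=f_i$ with $\Tr_1 v_i=\Tr_3 v_i=0$ and sets $u:=\op[\ad L]_+v$. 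Your route is valid in outline, but trades the paper's abstraction for explicit root and kernel computations that are error-prone for a non-quasihomogeneous symbol.

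There is a conceptual gap you should be alert to. You state that the task is to show the $2\times2$ boundary matrix is invertible for every $(k,\xi')\neq(0,0)$ and identify this with the complementing condition. That conflates the Lopatinski\u{\i}--Shapiro condition (pointwise invertibility of $\mathcal{B}$, which by Proposition \ref{prop: LScond} is equivalent to $\det\mathcal{B}\neq 0$) with the paper's strictly stronger complementing boundary condition (Definition \ref{def:compl_cond}: the extended boundary matrix must be a \emph{mixed-order system}, i.e., $\mathcal{N}$-elliptic with prescribed row and column order functions). The paper's Theorem \ref{thm: main_2} exists precisely to show that LS alone is \emph{not} sufficient: the Dirichlet problem for the very same operator $\op[D]$ satisfies LS and yet is not well-posed in the natural maximal regularity space. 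Your later plan of a face-by-face analysis and uniform bounds on the boundary-matrix inverse would, if carried out correctly, implicitly encode the stronger condition --- but since you do not separate the two conditions, you risk stopping at the qualitative invertibility check, which would not yield the uniform estimate.

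A second, more technical gap: eliminating $u_2=f_2-\Delta u_1+\partial_t u_1$ forces two derivatives onto $f_2\in\overline{H}^{(0,1)}_\perp$, and the boundary condition $\partial_n u_2|_{x_n=0}=g_2$ then nominally involves $\Tr_1 f_2=\partial_n f_2|_{x_n=0}$, which does not exist for $f_2$ at this regularity --- only the combination $\Tr_1(f_2-\Delta u_1+\partial_t u_1)$ has a trace, since $u_2\in\overline{H}^{(0,2)}_\perp$. The paper's adjugate construction with $v_1\in\overline{H}^{\mu_D}_\perp$, $v_2\in\overline{H}^{\mu_D+1}_\perp$ avoids this entirely, and the choice of the auxiliary traces $\Tr_1 v_i=\Tr_3 v_i=0$ is exactly what makes the trace cancellations work. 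Likewise, you flag the passage from mode-by-mode solutions to membership in the quotient spaces $\overline{H}^{(s,r)}_\perp(\mathbb{T}\times\mathbb{R}^n_+)$ as a secondary technical point; in fact the support-preservation and factorization structure is the paper's central device for handling precisely this issue, and it should not be deferred in your argument.
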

We emphasize that Theorem \ref{thm: main} indeed constitutes a maximal regularity result in the time-periodic framework: If we introduce $S:\mathbb{E}\to \mathbb{F}\times\mathbb{G}$ via
\begin{align*}
S(u_1,u_2)=(\partial_tu_1 -\Delta u_2, \Delta u_1-\partial_tu_1+u_2, \partial_n u_1|_{\set{x_n=0}}, \partial_n u_2|_{\set{x_n=0}}),
\end{align*}
then Theorem \ref{thm: main} shows that $S$ is an isomorphism of Banach spaces.

\medskip

A central idea of our approach is a factorization of the determinant into two operators that preserve the support in the lower and upper half space, respectively.
This separates the analysis into a factor that is inverted without prescribing traces, and a factor for which solvability requires boundary traces of the appropriate order. In this way, the factorization isolates the part of the symbol that is responsible for boundary conditions.
The idea of using the Paley-Wiener theorem for such a factorization was first introduced for elliptic scalar equations in \cite{Ark67} and can also be found in \cite[Section 5.3]{triebel1995interpolation}.
In \cite{KyS17, kyed2019time}, it was shown that this approach is also applicable for (time-periodic) parabolic scalar equations.
A core contribution of the present paper is to demonstrate that the factorization method can be adapted to treat general boundary value problems on the half space which are both of mixed-order and vector-valued.
In addition, we develop precise trace results for Newton polygon spaces.
Together, these tools allow us to identify precise algebraic obstructions that prevent certain boundary operators from yielding well-posedness on the half space.

\medskip

The celebrated results by Agmon-Douglis-Nirenberg \cite{agmon1959estimates,agmon1964estimates} show that a cornerstone of the theory of well-posedness for elliptic problems with boundaries are the \emph{Lopatinski\u{\i}-Shapiro conditions}, also known as Agmon complementing conditions.
For parabolic equations, the Lopatinski\u{\i}-Shapiro conditions continue to be sufficient for a comprehensive well-posedness theory in the $L^p$ framework as shown in \cite{DHP07} for the initial value problem and in \cite{kyed2019time} for the time-periodic case.
Let us briefly summarize the idea behind the Lopatinski\u{\i}-Shapiro conditions:
Consider a differential operator $\op[P]$ whose symbol $P: \mathbb{R}\times \mathbb{R}^n\to \mathbb{C}$ is a polynomial, and assume that for every $(\tau, \xi')\in (\mathbb{R}\setminus\set{0})\times \mathbb{R}^{n-1}$, the complex polynomial $z\to P(\tau, \xi', z)$ is of order $2N$ and has no real roots.
Then under appropriate ellipticity or parabolicity conditions one can write $P=P^+P^-$, where $P^+(\tau,\xi',\cdot)$ has all its $N$ roots in the upper half plane and $P^-(\tau,\xi',\cdot)$ has all $N$ roots in the lower half plane.
The Lopatinski\u{\i}-Shapiro condition requires that the boundary operators be independent modulo the factor $P^+$.
In particular, Dirichlet type conditions prescribing $u|_{x_n=0}, \partial_n u|_{x_n=0}, \ldots,\partial_n^{N-1}u|_{x_n=0}$ provide a basic class of boundary operators covered by this framework.
For parabolic operators, these conditions are sufficient and often necessary to guarantee the existence of a unique solution in the maximal regularity space.
For operators of mixed order however, the picture turns out to be more subtle.
Our second main theorem shows the insufficiency of the Lopatinski\u{\i}-Shapiro conditions for the operator
\begin{align*}
    \mathsf{D}:=\partial_t-\Delta(\partial_t-\Delta).
\end{align*}
This operator is closely related to the Cahn-Hilliard-Gurtin system \eqref{eqn: CHG_system_1}.
Indeed, $\mathsf{D}$ can be understood as the determinant of the Cahn-Hilliard-Gurtin system, and it will be paramount in our analysis that it \emph{does} provide a well-posedness theory on both the whole space and the half space if endowed with appropriate boundary conditions, see Example \ref{def_ellipticity_general} and Corollary \ref{cor: corollary_trace_1_3}.
However, for pure Dirichlet conditions, Theorem \ref{thm: main_2} shows that solvability is governed not by the boundary data alone, but by a compatibility condition involving the inverse of the factor $D^-$ corresponding to the roots of $D(\tau,\xi',\cdot)$ in the lower half plane.
\begin{theorem}\label{thm: main_2}
    Let $f\in L^2_\perp(\mathbb{T}\times\R^n_+)$.
    Consider the spaces
    \begin{align*}
        \mathbb{E}&:=\overline{H}^{(1,2)}_\perp(\mathbb{T}\times \R^n_+)\cap \overline{H}^{(0,4)}_\perp(\mathbb{T}\times \R^n_+), \\
        \mathbb{G}&:=H^{(\frac34,0)}_\perp(\mathbb{T}\times\R^{n-1})\cap H^{(0,\frac32)}_\perp(\mathbb{T}\times\R^{n-1}).
    \end{align*}
    The Dirichlet problem 
    \begin{equation}\label{eqn: CHG_Dirichlet}
        \left\{\begin{array}{rcll}
             \partial_t u-\Delta(\partial_t-\Delta) u&=&f & \text{in } \mathbb{T}\times \mathbb{R}^n_+,  \\
         u|_{x_n=0}&=&0 & \text{on } \mathbb{T}\times \mathbb{R}^{n-1}, \\
        \partial_n u|_{x_n=0}&=&0 & \text{on } \mathbb{T}\times \mathbb{R}^{n-1}   
        \end{array}\right.
    \end{equation}
    admits a solution $u\in \mathbb{E}$ if and only if $(\op[D^-]^{-1}_+f)|_{x_n=0}\in \mathbb{G}$.
    In particular, there exists $f\in L^2_\perp (\mathbb{T}\times\R^n_+)$ such that \eqref{eqn: CHG_Dirichlet} does not admit a solution in $\mathbb{E}$. 
\end{theorem}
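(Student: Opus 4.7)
The plan is to exploit the factorization $\mathsf{D}=\op[D^+]\op[D^-]$ of the determinant symbol into factors whose roots in $\xi_n$ lie in the upper and lower half planes respectively. By Paley--Wiener, $\op[D^-]^{-1}$ is convolution in $x_n$ with a kernel supported in $(-\infty,0]$ (its symbol $1/D^-$ is holomorphic in $\{\Im\xi_n>0\}$), whereas $\op[D^+]$ is a local, $x_n$-differential operator of order two with tangential and temporal Fourier-multiplier coefficients, and therefore preserves support in closed half spaces. We read $\op[D^-]^{-1}_+$ as zero extension of its argument, application of the whole-space operator $\op[D^-]^{-1}$, followed by restriction to $\mathbb{T}\times\R^n_+$.

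For necessity, let $u\in\mathbb{E}$ solve \eqref{eqn: CHG_Dirichlet} and denote by $u_0$ its zero extension to $\mathbb{T}\times\R^n$. The Dirichlet conditions place $u_0$ in $\overline{H}^{(1,2)}_\perp\cap\overline{H}^{(0,2)}_\perp$ globally, but not in $\overline{H}^{(0,4)}_\perp$; an integration-by-parts computation gives
\[
\mathsf{D}u_0 = f_+ + R,
\]
where $f_+$ is the zero extension of $f$ and $R=T_0\otimes\delta_{x_n=0}+T_1\otimes\delta'_{x_n=0}$ with $T_0=(\partial_n\Delta u)|_{x_n=0}$ and $T_1=(\Delta u)|_{x_n=0}$, the singular contributions coming only from the $\Delta^2$-term. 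Applying $\op[D^-]^{-1}$ yields $\op[D^+]u_0=\op[D^-]^{-1}f_++\op[D^-]^{-1}R$; the left-hand side has support in $\{x_n\geq 0\}$ by locality of $\op[D^+]$, while $\op[D^-]^{-1}R$ has support in $\{x_n\leq 0\}$ by Paley--Wiener. Restricting to $\{x_n>0\}$ produces
\[
\op[D^+]u=\op[D^-]^{-1}_+f\qquad\text{on }\mathbb{T}\times\R^n_+.
\]
Taking traces at $x_n=0$ and using $u|_{x_n=0}=\partial_n u|_{x_n=0}=0$, the left side collapses to a Fourier-multiplier applied to $(\partial_n^2u)|_{x_n=0}$, which by the trace theory for Newton polygon spaces of the paper (Corollary~\ref{cor: corollary_trace_1_3}) lies in $\mathbb{G}$. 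Hence $\op[D^-]^{-1}_+f|_{x_n=0}\in\mathbb{G}$ is necessary.

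For sufficiency, set $g:=\op[D^-]^{-1}_+f$ and use the whole-space mapping properties of $\op[D^-]^{-1}$ to place $g\in\overline{H}^{(1,0)}_\perp\cap\overline{H}^{(0,2)}_\perp(\mathbb{T}\times\R^n_+)$. Solve the second-order half-space problem $\op[D^+]u=g$ with $u|_{x_n=0}=\partial_nu|_{x_n=0}=0$ using the Newton polygon well-posedness from Example~\ref{def_ellipticity_general} and Corollary~\ref{cor: corollary_trace_1_3}: since $D^+$ is of degree two in $\xi_n$ with both roots in the upper half plane, the Dirichlet pair satisfies Lopatinski\u{\i}--Shapiro modulo $D^+$ trivially, and the hypothesis $g|_{x_n=0}\in\mathbb{G}$ furnishes precisely the compatibility needed to obtain $u\in\mathbb{E}$. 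Reversing the factorization computation verifies that $u$ solves \eqref{eqn: CHG_Dirichlet}. The final assertion then follows by producing any $f\in L^2_\perp$ whose Fourier transform concentrates on frequencies $(\tau,\xi')$ with $|\tau|\gg|\xi'|^2$: the gain of two normal orders in $\op[D^-]^{-1}$ is purely spatial, so the trace $\op[D^-]^{-1}_+f|_{x_n=0}$ lies in $H^{(0,3/2)}_\perp$ but its temporal oscillations escape $H^{(3/4,0)}_\perp$, violating the $\mathbb{G}$-condition. The main obstacle is the regularity bookkeeping in the sufficiency step, namely identifying the Newton polygon of $D^+$, the data space for $g$, and the matching trace space so that the paper's general half-space theory applies cleanly; by contrast, the support-preservation and the factorization themselves are algebraically direct.
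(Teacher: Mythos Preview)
Your strategy coincides with the paper's: factorize $D=D^-D^+$, observe that $\op[D^-]^{-1}$ preserves lower support while $\op[D^+]$ is a differential operator in $x_n$, derive $\op[D^+]_+ u=\op[D^-]^{-1}_+ f$ on $G_+$, and take the trace at $x_n=0$. Your zero-extension computation in the necessity step is a correct hands-on version of what the paper packages abstractly in Lemma~\ref{lemma_added_boundary_conditions}.

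Two gaps remain. In the sufficiency step you invoke Lopatinski\u{\i}--Shapiro for $(D^+,\Tr_0,\Tr_1)$ and Corollary~\ref{cor: corollary_trace_1_3}, but neither applies: the corollary treats $\op[D]$ with boundary operators $(\Tr_1,\Tr_3)$, and appealing to LS is circular since the very theorem you are proving shows LS is insufficient for mixed-order symbols. The correct tool is Proposition~\ref{prop: upper_support_preserving}, which requires prescribing \emph{all four} traces $\Tr^{(4)}u$. The equation $\op[D^+]_+ u=g$ together with $\Tr_0 u=\Tr_1 u=0$ forces $\Tr_2 u$ and $\Tr_3 u$ to be specific combinations of $\Tr_0 g$, $\Tr_1 g$ and the coefficients $d_j^+$; one must then verify these lie in $H^{T_2(\mu_D)}_\perp$ and $H^{T_3(\mu_D)}_\perp$. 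The hypothesis $\Tr_0 g\in\mathbb{G}$ handles the first; the second uses the identity $T_1(\mu_{D^-})=T_3(\mu_D)$ together with the mapping property of $\op[d_1^+]$ from Lemma~\ref{lemma: ellipticity_coeff}. This is precisely the ``regularity bookkeeping'' you flag but do not carry out.

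Your counterexample is only a heuristic. The paper's construction is direct: pick any $g\in H^{T_0(\mu_{D^-})}_\perp(G^{n-1})\setminus H^{T_2(\mu_D)}_\perp(G^{n-1})$ (nonempty because $T_0(\mathcal{N}_{D^-})\subsetneq T_2(\mathcal{N}_D)$, the temporal vertices being $(0,\tfrac12)$ versus $(0,\tfrac34)$), lift it via the surjectivity in Theorem~\ref{thm: trace} to $F\in\overline{H}^{\mu_{D^-}}_\perp(G_+)$ with $\Tr_0 F=g$, and set $f:=\op[D^-]_+ F\in L^2_\perp(G_+)$. Then $\op[D^-]^{-1}_+ f=F$, so $\Tr_0\op[D^-]^{-1}_+ f=g\notin\mathbb{G}$.
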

Here, $\op[m]$ stands for the Fourier multiplier operator with multiplier $m$, while the notation $\opA_+$ is related to the half space and introduced in Definition \ref{def: supp_pres_op} below.
Theorem \ref{thm: main_2} demonstrates that an algebraic \emph{complementing boundary condition} adapted to Newton polygon geometry must be strictly stronger than the Lopatinski\u{\i}-Shapiro conditions in order to guarantee well-posedness for symbols with non-triangular Newton polygons.
We will provide precisely such a complementing boundary condition in Definition \ref{def:compl_cond} by demanding an appropriately extended boundary matrix containing the information of $D^+$ and the boundary operators to be a mixed order system on the boundary $\mathbb{T}\times\R^{n-1}$.
The stronger nature of our condition compared to Lopatinski\u{\i}-Shapiro is captured by a clear algebraic formulation, see Proposition \ref{prop: LScond}.
Namely, we find that the Lopatinski\u{\i}-Shapiro conditions are equivalent to the invertibility of the extended boundary matrix, which is strictly weaker than being a mixed order system.

\medskip

To keep the presentation streamlined, we restrict ourselves to the cases required for \eqref{eqn: CHG_system_1}.
In particular, we show the trace theorem (see Theorem \ref{thm: trace}) only for Newton polygons with four vertices \linebreak $\set{(0,0),(r_1,0),(r_2,s_2),(0,s_2)}$.
Moreover, we only study the $\mathcal{N}$-ellipticity of the roots of the Cahn-Hilliard-Gurtin determinant, not of roots corresponding to general $\mathcal{N}$-elliptic symbols which are polynomial in the variable $\xi_n$.
Despite these restrictions, the underlying arguments apply more broadly, and we expect that the general case can be developed within the framework introduced here.
We postpone this to future work.

\medskip

The structure of the paper is as follows.
Section \ref{sec:pre} recalls the theory of Newton polygons and order functions, and introduces the Newton polygon potential spaces adapted to the time–periodic setting.
Section \ref{sec:whole} establishes maximal regularity for mixed-order systems on the whole space, with the Cahn-Hilliard-Gurtin system serving as the guiding example.
Section \ref{sec:half} develops the boundary analysis on the half space, including support preserving operators and trace theorems for Newton polygon spaces.
Finally, Section \ref{sec:proof} applies the abstract theory to the system \eqref{eqn: CHG_system_1} and presents the proofs of Theorems \ref{thm: main} and \ref{thm: main_2}.

\section{Preliminaries and Notations}\label{sec:pre}
Throughout this work the time period $T>0$, and the dimension $n\in\N$ are fixed.
The notion $A\lesssim B$ denotes an estimate of the form $A\le CB$, where the constant $C$ depends only on $T$ and the dimension $n$.
If the constant $C$ depends on additional quantities $\Lambda$, we will write $A\lesssim_\Lambda B$.
We will write $A\simeq B$ if both $A\lesssim B$ and $B\lesssim A$.
The imaginary unit is denoted by $\ic$.
Given two locally convex spaces $E, F$ we will write $\mathcal{L}(E, F)$ for the space of all bounded linear operators $E\to F$ and $\call_{\mathrm{iso}}(E,F)$ the spaces of bounded, invertible operators $E\to F$ with bounded inverse.
Both spaces are endowed with the usual bounded-open topology.
In particular, if  $E$ and $F$ are normed spaces, the topology on $\call(E,F)$ is given by the operator norm. If $E=F$, we write $\call(E)$ instead of $\call(E,E)$.

\medskip

We use the convention that for a set $Y$, a topological space $X$ and a map $\varphi:X\to Y$, we denote by $\varphi X$ the image space $\varphi(X)$ equipped with the final topology.
If additionally $Y$ is a vector space, $X$ a (locally convex) topological vector space, and $\varphi$ linear with a closed kernel, then $\varphi X$ is a (locally convex) topological vector space as well, see e.g.\@ \cite[Theorem 2.4.3]{Eng89} and \cite[Chapter 4 and Proposition 7.9]{Tre67}.
Similarly, if $X$ is a Fr\'echet or normed space, then so is $\varphi X$, see \cite[Chapters 10 and 11]{Tre67}.
In particular, if $X$ is a Banach space, then so is $\varphi X$, and its norm is given by the quotient norm
\begin{align*}
\|f\|_{\varphi X}=\inf\{\|F\|_{X}\mid F\in X, \ \varphi(F)=f\}.
\end{align*}
For a continuous map $\varphi:X\to Y$ between two topological vector spaces, we define its transpose ${}^t \varphi:Y'\to X'$ via $\langle {}^t \varphi(y'),x\rangle:=\langle y',\varphi(x)\rangle$ for all $y'\in Y'$ and $x\in X$.
Then ${}^t \varphi$ is continuous as well if $X'$ and $Y'$ carry both the weak or both the strong dual topology \cite[Proposition 19.5]{Tre67}.

\medskip

For $\xi\in\R^n$, we write $\langle \xi\rangle:=(1+|\xi|^2)^{\frac12}$.
Two Banach spaces $A_0$ and $A_1$ are called compatible if they embed into a common Hausdorff space.
For two such compatible couples $\set{A_0,A_1}$ and $\set{B_0,B_1}$ we write $\call(\set{A_0,A_1},\set{B_0,B_1})$ for the set of linear operators $T:A_0+A_1\to B_0+B_1$ such that $T|_{A_0}\in \call(A_0,B_0)$ and $T|_{A_1}\in \call(A_1,B_1)$.
For $\theta\in (0,1)$, the complex interpolation is denoted by $[A_0,A_1]_\theta$ and the real interpolation with fine index $2$ by $(A_0,A_1)_{\theta,2}$.
We emphasize that if $A_0$ and $A_1$ are Hilbert spaces, then
\begin{align}\label{eqn: equiv_real_complex}
 [A_0,A_1]_\theta=(A_0,A_1)_{\theta,2}
\end{align}
with equivalent norms, see e.g. \cite[Corollary C.4.2]{HNV16}.
Moreover, we recall the following result on the interplay of interpolation and retraction.
\begin{lemma}\label{lem:interp_retraction}
    Let $\set{A_0,A_1}$ and $\set{B_0,B_1}$ be couples of compatible Banach spaces.
    If $R_k\in \call(A_k,B_k)$ is a retraction with corresponding coretraction $E_k\in \call(B_k,A_k)$ such that $R_0x=R_1x$ for all $x\in A_0\cap A_1$, then for all $\theta\in (0,1)$ it holds
    \begin{align*}
        R[A_0,A_1]_\theta=[B_0,B_1]_\theta
    \end{align*}
    with equivalent norms, where $R:A_0+A_1\to B_0+B_1$ is given by $R(a_0+a_1)=R_0a_0+R_1a_1$.
\end{lemma}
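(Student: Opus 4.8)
The plan is to reduce the statement to the standard functoriality of the interpolation functors under bounded linear maps, using the retraction-coretraction pair to identify $[B_0,B_1]_\theta$ with a complemented subspace of $[A_0,A_1]_\theta$. First I would record the two elementary inclusions. Since $R\in\call(\set{A_0,A_1},\set{B_0,B_1})$ — its restriction to $A_k$ equals $R_k\in\call(A_k,B_k)$ by the compatibility hypothesis $R_0x=R_1x$ on $A_0\cap A_1$, so $R$ is well defined and the two restrictions patch — the interpolation property gives $R\in\call([A_0,A_1]_\theta,[B_0,B_1]_\theta)$ with norm bounded by $\max_k\norm{R_k}_{\call(A_k,B_k)}^{1-\theta}\norm{\cdot}^\theta$; in particular $R[A_0,A_1]_\theta\subseteq[B_0,B_1]_\theta$ with $\norm{Ra}_{[B_0,B_1]_\theta}\lesssim\norm{a}_{[A_0,A_1]_\theta}$. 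Symmetrically, the coretraction $E$ defined by $E(b_0+b_1)=E_0b_0+E_1b_1$ lies in $\call(\set{B_0,B_1},\set{A_0,A_1})$ — here one uses that $E_k$ is a coretraction for $R_k$, so on $B_0\cap B_1$ one has $E_0b=E_0R_0E_1b$; to see $E_0b=E_1b$ there one invokes $R_0E_1b=b=R_1E_1b$ together with injectivity considerations, or more cleanly one simply takes the common value to be $E_1b$ and checks $R(E_1b)=b$, which is what is actually needed below. Thus $E\in\call([B_0,B_1]_\theta,[A_0,A_1]_\theta)$ with a norm estimate of the same type.

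Next I would combine these. For $b\in[B_0,B_1]_\theta$ we have $Eb\in[A_0,A_1]_\theta$ and $R(Eb)=b$ (this holds on $B_0$ and on $B_1$ since $R_kE_k=\id_{B_k}$, hence on the sum), so $b\in R[A_0,A_1]_\theta$; combined with the first inclusion this yields the set equality $R[A_0,A_1]_\theta=[B_0,B_1]_\theta$. For the norm equivalence: by definition of the final (quotient) norm on $R[A_0,A_1]_\theta$,
\begin{equation*}
\norm{b}_{R[A_0,A_1]_\theta}=\inf\setc{\norm{a}_{[A_0,A_1]_\theta}}{a\in[A_0,A_1]_\theta,\ Ra=b}.
\end{equation*}
Taking $a=Eb$ as an admissible competitor gives $\norm{b}_{R[A_0,A_1]_\theta}\le\norm{Eb}_{[A_0,A_1]_\theta}\lesssim\norm{b}_{[B_0,B_1]_\theta}$. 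Conversely, for any admissible $a$ with $Ra=b$ we have $\norm{b}_{[B_0,B_1]_\theta}=\norm{Ra}_{[B_0,B_1]_\theta}\lesssim\norm{a}_{[A_0,A_1]_\theta}$, and taking the infimum over such $a$ gives $\norm{b}_{[B_0,B_1]_\theta}\lesssim\norm{b}_{R[A_0,A_1]_\theta}$. This is exactly the claimed two-sided estimate.

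The only genuinely delicate point — and the one I would treat with care rather than as routine — is the verification that $E$ is a well-defined element of $\call(\set{B_0,B_1},\set{A_0,A_1})$, i.e.\@ that $E_0b=E_1b$ for $b\in B_0\cap B_1$ inside the ambient Hausdorff space containing $A_0+A_1$. This is not automatic from $E_k$ being a coretraction of $R_k$ alone; one needs the compatibility of the two retractions. The clean workaround, which avoids any injectivity hypothesis, is to not insist on patching $E_0$ and $E_1$ at all: it suffices to exhibit, for each $b\in[B_0,B_1]_\theta$, \emph{some} preimage $a\in[A_0,A_1]_\theta$ under $R$ with controlled norm, and $a=E_1b$ works because $b\in B_0\cap B_1$ on the relevant interpolation-theoretic decompositions — more precisely, one applies the coretraction $E_1\in\call(B_1,A_1)$ together with the fact that $E_1$ also maps $B_0\to A_0$ boundedly (since $E_1=E_0$ on $B_0\cap B_1$ by the assumed compatibility of the coretractions, which is the standing hypothesis implicit in "corresponding coretraction" in this retraction-of-couples setting, cf.\@ \cite[Section 1.2.4]{triebel1995interpolation}). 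With that in hand the argument is purely formal. I would state the compatibility of $E_0,E_1$ explicitly at the outset so that the rest reads off immediately from the interpolation property of $R$ and $E$.
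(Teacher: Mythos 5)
Your argument is correct, and it is essentially the content of the reference the paper invokes: the paper's proof is a one-line citation of \cite[Theorem 1.2.4]{triebel1995interpolation} (via \cite[Lemma 1.51]{denk2013general}), which is precisely the retraction--coretraction theorem you re-derive from scratch: $R$ and $E$ act as morphisms of couples, hence between the interpolation spaces; $RE=\id$ on $B_0+B_1$; and the quotient-norm characterization of $R[A_0,A_1]_\theta$ closes both estimates. So the route is the same; you simply spell out what Triebel's theorem does.

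The one point you flag deserves emphasis, because it is a genuine imprecision in the lemma's statement rather than in your proof: the hypothesis only requires $R_0=R_1$ on $A_0\cap A_1$, but the argument (yours and Triebel's alike) also needs $E_0=E_1$ on $B_0\cap B_1$ in order for $E$ to be a well-defined element of $\call(\set{B_0,B_1},\set{A_0,A_1})$, and compatibility of the $R_k$ does not imply compatibility of the $E_k$ (a single retraction admits many coretractions, and two of them need not agree anywhere). Your ``workaround'' of using $E_1$ alone as a preimage operator does not escape this, as you yourself acknowledge: for $E_1$ to act on the couple $\set{B_0,B_1}$ one still needs $E_1\restriction_{B_0\cap B_1}=E_0\restriction_{B_0\cap B_1}$, and even then only modulo a density argument. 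The honest reading is that compatibility of the coretractions is an implicit standing hypothesis, exactly as in \cite[Theorem 1.2.4]{triebel1995interpolation}; in the paper's sole application (Lemma \ref{lem: interp_L2}) both $R_k$ and $E_k$ are restrictions of one globally defined operator (the Fourier transform, respectively the inclusion $\calp_\perp$-fixed subspace $\hookrightarrow$ full space), so all the required compatibilities hold automatically. Your intermediate chain $E_0b=E_0R_0E_1b$ is not valid as written (since $E_0R_0\ne\id_{A_0}$), but you correctly discard it, so this does not affect the soundness of the final argument.
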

\begin{proof}
    Follows from Theorem 1.2.4 in \cite{triebel1995interpolation}, see e.g. Lemma 1.51 in \cite{denk2013general}.
\end{proof}
\subsection{Newton Polygons and Order Functions}
As mentioned in the introduction, the fact that the symbol
\begin{equation}\label{eqn: determinant}
    D(\tau, \xi)=\ic\tau+\vert \xi\vert(\ic\tau+\vert \xi\vert), \qquad (\tau,\xi)\in \mathbb{R}\times \mathbb{R}^n,
\end{equation}
associated to the Cahn-Hilliard-Gurtin determinant is neither homogeneous nor quasi-homogeneous presents major difficulties since the usual methods related to elliptic or parabolic equations often rely on the study of homogeneities.
To tackle this problem, the notion of a Newton polygon has been introduced, see \cite{denk2013general, denk1998newton}.
This theory allows to treat symbols presenting themselves as a sum of products of terms with various homogeneities.
Here we recall only the results we need for our problem and refer to \cite{denk2013general} for an in-depth exposition of the theory.

\begin{definition}
    Let $E=\{(x_1, y_1),...,(x_M, y_M)\}\subseteq [0, \infty)^2$ be a finite set. The Newton polygon of $E$, denoted $\mathcal{N}(E)$ is the convex envelope of the elements of $E$ and of their projections on the coordinate axis
    \begin{equation*}
        \mathcal{N}(E):=\conv\left (E\cup \bigcup_{m=1}^M \{(x_m, 0), (0, y_m)\} \cup\{(0,0\}\right). 
    \end{equation*}
\end{definition}
Throughout this section we consider a polynomial 
\begin{equation}\label{eqn: polynom}
    P(\tau, \xi)= \sum_{i=1}^{I}\sum_{\substack{\alpha\in \mathbb{N}^n\\ \vert \alpha\vert\leq  N}} p_{i, \alpha} \tau^i\xi^\alpha, \qquad (\tau,\xi)\in \mathbb{R}\times \mathbb{R}^n.
\end{equation}
The Newton polygon of $P$ is a geometric representation of the degree of the monomials occuring in the multivariate polynomial $P(\tau, \xi)$.
It is constructed by considering the set of the couples $(i, \alpha)$ associated to the degree of each non-zero monomial.
\begin{definition}
    Let $P$ be as in (\ref{eqn: polynom}) and define $E(P):= \{ (i, \vert \alpha\vert) \mid p_{i, \alpha} \neq 0\}$. Then the Newton polygon associated to $P$ is $\mathcal{N}(P):=\mathcal{N}(E(P))$.
\end{definition}
Given some finite set $E\subseteq [0, \infty)$ and $\mathcal{N}=\mathcal{N}(E)$ its Newton polygon, we denote 
\begin{equation}\label{eqn: vertices}
    \mathcal{N}_V:=\{(r_0, s_0),..., (r_{J+1}, s_{J+1}) \}
\end{equation}
the set of its vertices, ordered counter-clockwise starting from $(r_0, s_0):=(0,0)$. Remark that by definition, $r_1$ is the degree of the polynomial $P(\tau, \xi)$  in the variable $\xi$ while $s_{J+1}$ is the degree of the polynomial in the variable $\tau$.
We say that $\mathcal{N}$ is \emph{regular in time} if $r_2\neq r_1$.

\begin{example}\label{ex: CHD_polygon}
Let us consider the symbol $D$ of the Cahn-Hilliard-Gurtin determinant (\ref{eqn: determinant}). The set $E(D)$ writes 
\begin{equation*}
    E(D)=\{ (4,0), (2, 1), (0,1) \}
\end{equation*}
and so 
\begin{equation}\label{eqn : CHG_det_polygon}
    \mathcal{N}_V(D)=\{(0,0), (4,0), (2, 1), (0,1) \}.
\end{equation}
   
\end{example}
 \medskip
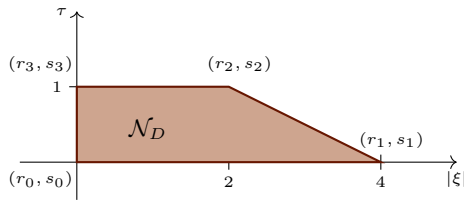
\begin{figure}[h]
\centering
\begin{tikzpicture}
 \draw[->] (-.75,0) -- (5,0) node[below , font=\tiny] {$\vert \xi\vert$};
  \draw[->] (0,-.5) -- (0,2) node[left, font=\tiny] {$\tau$}; 
  
   \draw (2,0.1) -- (2,-0.1);
  \draw (4,0.1) -- (4,-0.1);
    \node[below, yshift= -2pt, font=\tiny] at (2,0) {$2$};
    \node[below, yshift=-2pt, font=\tiny] at (4, 0) {$4$};

    \draw (-0.1, 1) -- (0.1, 1);
    \node[left, xshift=-2pt, font= \tiny] at (0,1) {$1$};
  \coordinate (A) at (0,0);
  \coordinate (B) at (4,0);
  \coordinate (C) at (2,1);
  \coordinate (D) at (0,1);

  \filldraw[color=Sepia, fill=Sepia!30,thick] (A)--(B)--(C)--(D)--cycle;

     \node[below left, xshift=2pt,  font=\tiny] at (0,0) {$(r_0, s_0)$};
    \node[above, xshift=4pt, yshift=2pt, font=\tiny] at (4,0) {$(r_1, s_1)$};
     \node[above, xshift=4pt, yshift=2pt, font=\tiny] at (2,1) {$(r_2, s_2)$};
      \node[above left, xshift=2pt, yshift=2pt, font=\tiny] at (0,1) {$(r_3, s_3)$};

      \node[font=\small] at ($(A)!0.25!(B)!0.25!(C)!0.25!(D)$) {$\mathcal{N}_D$};
\end{tikzpicture}
\caption{The Newton polygon of $D$, where $r_0=r_3=s_0=s_1=0$, $r_1=4$, $r_2=2$, and $s_2=s_3=1$.}
\end{figure}
As mentioned above, the concept of the Newton polygon aims at providing a geometric description of the degree of multivariate polynomials $P(\tau, \xi)$. As such, one would expect the Newton polygon of a product to be related to the 'sum' of the Newton polygons of each factor. However, the notion of sum of polygons is not completely intuitive. Instead, it is often much easier to introduce the notion of order functions.
\begin{definition}\label{def_order_function}
    An \emph{order function} is a function $\mu: (0, \infty)\to \mathbb{R}$ that is continuous and piecewise linear. For such a function,  there exists $J\in \mathbb{N}$, $0=:\gamma_0 \leq \gamma_1 <\gamma_2 <...< \gamma_J\leq \gamma_{J+1}:=\infty$ and $b_1,..., b_{J+1}, m_1,..., m_{J+1}\in \R$ such that 
    \begin{equation}
        \mu(\gamma)= b_j+\gamma m_j \qquad \forall \gamma\in [\gamma_{j-1}, \gamma_j), \quad j\in\set{1,..., J+1}.
    \end{equation}
\end{definition}
If $\mathcal{N}$ is a Newton polygon with vertices $\mathcal{N}_V$ ordered as in (\ref{eqn: vertices}), the associated order function is the order function $\mu_{\mathcal{N}}$ defined by 
\begin{equation}\label{eqn: order_function}
    \mu_{\mathcal{N}}(\gamma):= \max\set{r_j+\gamma s_j\mid j\in\set{1,\ldots,J+1}}.
\end{equation}
Due to the convexity of the Newton polygon $\mathcal{N}$, one can show (see \cite{denk2013general} Remark 2.20 ) that the function $\mu$ is always convex and positive. An order function presenting itself as (\ref{eqn: order_function}) with $r_j\geq 0, s_j\geq 0$ for all $j=0,..., J+1$ is called \emph{strictly positive order function}. By extension, if an order function $\nu$ is such that  $-\nu$ is strictly positive, we say that $\nu$ is a \emph{strictly negative order function}.

For $j\in\set{1,...,J}$, we define 
\begin{equation}\label{eqn: def_gamma_j}
    \gamma_j:=\frac{r_j-r_{j+1}}{s_{j+1}-s_j},
\end{equation}
where we understand $\gamma_J=\infty$ if $s_J=s_{J+1}$. Geometrically, $\gamma_j$ is the slope of the normal vector to the vertex $[(r_j, s_j)(r_{j+1}, s_{j+1})]$. We also set $\gamma_0:=0$ and $\gamma_{J+1}=\infty$. By convexity of the Newton polygon, one obtains 
$0=\gamma_0\leq \gamma_1< \gamma_2< ...<\gamma_J\leq \gamma_{J+1}=\infty$. Remark that $\mathcal{N}$ is regular in time if and only if $\gamma_1> 0$. 

We get the following description of the order function $\mu$. See \cite{denk2013general} Section 2.2 for the proof.
\begin{lemma}
    Let $\mathcal{N}$ be a Newton polygon and let $\gamma_0,..., \gamma_{J+1}$ be defined as above.
    Then for $j\in\set{1,..., J+1}$ and $\gamma\in [\gamma_{j-1}, \gamma_{j})$, 
    \begin{equation*}
        \mu_{\mathcal{N}}(\gamma)= \gamma s_j+r_j.
    \end{equation*}
\end{lemma}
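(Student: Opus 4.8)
The plan is to argue directly from the definition
$\mu_{\mathcal{N}}(\gamma)=\max\{r_k+\gamma s_k : k\in\{1,\dots,J+1\}\}$, treating each
$\ell_k(\gamma):=r_k+\gamma s_k$ as an affine function of $\gamma\in(0,\infty)$ and showing that on the interval $[\gamma_{j-1},\gamma_j)$ the piece $\ell_j$ dominates every other $\ell_k$, so that the maximum there is realised by $\ell_j$. The only structural input is what is already recorded above: the counterclockwise ordering of the vertices forces $r_1\ge r_2\ge\dots\ge r_{J+1}=0$ and $0=s_1\le s_2\le\dots\le s_{J+1}$, and convexity of $\mathcal{N}$ gives $0=\gamma_0\le\gamma_1<\gamma_2<\dots<\gamma_J\le\gamma_{J+1}=\infty$.

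The heart of the matter is a pairwise comparison of consecutive pieces. For $1\le k\le J$ with $s_{k+1}>s_k$, substituting the definition \eqref{eqn: def_gamma_j} of $\gamma_k$ gives
\[
\ell_k(\gamma)-\ell_{k+1}(\gamma)=(r_k-r_{k+1})-\gamma(s_{k+1}-s_k)=(s_{k+1}-s_k)(\gamma_k-\gamma),
\]
so $\ell_k(\gamma)\ge\ell_{k+1}(\gamma)$ exactly when $\gamma\le\gamma_k$, and $\ell_k(\gamma)\le\ell_{k+1}(\gamma)$ exactly when $\gamma\ge\gamma_k$; in the remaining case $s_{k+1}=s_k$, which forces $k=J$ and $\gamma_J=\infty$, one simply has $\ell_J(\gamma)-\ell_{J+1}(\gamma)=r_J-r_{J+1}\ge0$ for all $\gamma$, which is consistent with $\gamma_J=\infty$.

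I would then fix $j$ with $\gamma_{j-1}<\gamma_j$ — if the interval is empty there is nothing to prove — and $\gamma\in[\gamma_{j-1},\gamma_j)$, and chain these comparisons in both directions. Upward: for $j\le k\le J$ one has $\gamma<\gamma_j\le\gamma_k$, hence $\ell_k(\gamma)\ge\ell_{k+1}(\gamma)$, and chaining from $k=j$ to $k=J$ yields $\ell_j(\gamma)\ge\ell_k(\gamma)$ for all $k\ge j$. Downward: for $1\le k\le j-1$ one has $\gamma\ge\gamma_{j-1}\ge\gamma_k$, hence $\ell_k(\gamma)\le\ell_{k+1}(\gamma)$, and chaining from $k=j-1$ down to $k=1$ yields $\ell_k(\gamma)\le\ell_j(\gamma)$ for all $k\le j$. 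Combining the two chains gives $\mu_{\mathcal{N}}(\gamma)=\ell_j(\gamma)=\gamma s_j+r_j$; and since the intervals $[\gamma_{j-1},\gamma_j)$ for $j=1,\dots,J+1$ partition $[0,\infty)$, this pins down $\mu_{\mathcal{N}}$ on all of $(0,\infty)$.

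I do not expect a genuine obstacle here, since the argument is pure bookkeeping in convexity; the only points demanding a little care are (i) the left endpoint $\gamma=\gamma_{j-1}$, where $\ell_{j-1}(\gamma)=\ell_j(\gamma)$ while $\ell_k(\gamma)<\ell_j(\gamma)$ for $k<j-1$ thanks to the \emph{strict} inequalities $\gamma_1<\dots<\gamma_J$ — this is exactly why the interval is taken half-open on the left — and (ii) the two degenerate configurations, a vertical first edge ($\gamma_1=0$, i.e.\@ $\mathcal{N}$ not regular in time) and a horizontal last edge ($\gamma_J=\infty$), where the closed-form expression for $\gamma_k$ must not be applied with a vanishing denominator and the corresponding interval is empty.
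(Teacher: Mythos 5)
The paper gives no proof of this lemma, simply referring the reader to Section 2.2 of Denk and Kaip, so I cannot compare with "the paper's approach" as such; I can only check your argument on its own terms. It is correct and self-contained. The decisive observation is the identity
\begin{equation*}
\ell_k(\gamma)-\ell_{k+1}(\gamma)=(s_{k+1}-s_k)(\gamma_k-\gamma)\qquad (1\le k\le J,\ s_{k+1}>s_k),
\end{equation*}
which simply rewrites the defining formula \eqref{eqn: def_gamma_j} of $\gamma_k$ as the statement that $\gamma_k$ is the crossing point of the consecutive affine pieces $\ell_k$ and $\ell_{k+1}$, with $\ell_k\ge\ell_{k+1}$ to its left and $\ell_k\le\ell_{k+1}$ to its right. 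Combined with the monotonicity $\gamma_1<\dots<\gamma_J$ already recorded in the text, the two transitivity chains on $[\gamma_{j-1},\gamma_j)$ give $\ell_k\le\ell_j$ for all $k$, hence $\mu_\mathcal{N}=\ell_j$ there. Your separate treatment of $s_{k+1}=s_k$ (forcing $k=J$, where $\ell_J-\ell_{J+1}=r_J-r_{J+1}\ge 0$ for all $\gamma$) closes the only case where the division in \eqref{eqn: def_gamma_j} is undefined, and you correctly identify the two configurations (vertical first edge $\gamma_1=0$, horizontal last edge $\gamma_J=\infty$) as the ones where an interval degenerates to the empty set so the claim is vacuous. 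This is the standard convexity/normal-fan argument one would expect, and I see no gap; the one cosmetic slip is writing $[0,\infty)$ rather than $(0,\infty)$ for the domain of the order function, which does not affect the argument.
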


Conversely, let $\mu$ be a strictly positive order function. By definition, there exists $K\in \mathbb{N}$, $\delta_0, \delta_1,...,\delta_{K+1}$ with $0=: \delta_0\leq \delta_1<\delta_2<...<\delta_{K}\leq \delta_{K+1}:=\infty $ and $b_1,..., b_{K+1}\geq 0, m_1,...,m_{K+1}\geq 0$ such that $\mu(\gamma)=b_k+\gamma m_k$ for all $\gamma\in [\delta_{k-1}, \delta_k)$. 

To the strictly positive order function above, we associate the set 
\begin{equation*}
    E(\mu):=\{(0,0)\}\cup\{(b_k, m_k), (b_k, 0), (m_k, 0) \mid k=1,...K+1\}.
\end{equation*}
The Newton polygon associated to $\mu$ is $\mathcal{N}(\mu):=\conv(E(\mu))$. One easily sees that there is a one-to-one correspondence between Newton polygons and order functions in the sense that $\mathcal{N}(\mu_\mathcal{N})=\mathcal{N}$ for any Newton polygon $\mathcal{N}$ and $\mu_{\mathcal{N}(\mu)}=\mu$ for any strictly positive order function $\mu$. It is however important to notice that order functions are more general than Newton polygons, since our Definition \ref{def_order_function} allows us to consider also order functions that are not strictly positive, i.e. that are not associated to any Newton polygon.

For any strictly positive order function $\mu$ and its Newton polygon $\mathcal{N}(\mu)$ with a set of vertices 
$\mathcal{N}(\mu)_V=\{(r_0, s_0),..., (r_{J+1}, s_{J+1})\}$, we define the \emph{order} of $\mu$ by $\ord\mu:=r_1$.

We now introduce the concept of weight functions. For a symbol $p(\xi)$ depending only on $\xi$, a classic way to define ellipticity is to ask for a lower estimate of the form $\vert p\vert \geq C \vert \xi\vert^d$ with $d$ the degree of the polynomial. The weight functions associated to the Newton polygons generalise this idea by providing two-sided estimates for our multivariate polynomials $P(\tau, \xi)$.  

\begin{definition}
    Let $\mu$ be an order function with notations as in Definition \ref{def_order_function}.
    \begin{enumerate}
        \item If $\mu$ is a strictly positive order function with associated Newton polygon $\mathcal{N}(\mu)$ and its set of vertices $\mathcal{N}(\mu)_V:=\{(r_0, s_0), (r_1, s_1),...(r_{J+1}, s_{J+1})\}$ ordered counter-clockwise starting from $(r_0, s_0)=(0,0)$.
        The \emph{weight function} associated to $\mu$ is $W_\mu: \mathbb{R}\times \mathbb{R}^n\to \mathbb{R}$ defined by 
        \begin{equation*}
            W_\mu(\tau, \xi):=1+\sum_{j=1}^{J+1}\vert \tau\vert^{s_j}\vert \xi\vert^{r_j}, \qquad \tau\in \mathbb{R}, \xi\in \mathbb{R}^n.
        \end{equation*}
        \item   If $\mu$ is a strictly negative order function, then by definition $-\mu$ is strictly positive and we set 
        \begin{equation*}
            W_\mu(\tau, \xi):=\frac{1}{W_{-\mu}(\tau, \xi)}. 
        \end{equation*}
    \end{enumerate}
\end{definition}
The vertices of a Newton polygon can be thought of as the points of 'maximal weight', as explained in the following Lemma.
\begin{lemma}\label{lemma: point_inside}
    Let $\mathcal{N}_1$ be a Newton polygon and $\mu_1:=\mu(\mathcal{N}_1)$ the associated order function. Then $(r, s)\in \mathcal{N}_1$ if and only if there exists a constant $C>0$ such that for all $(\tau, \xi)\in \mathbb{R}\times \mathbb{R}^n$ it holds
    \begin{equation*}
        \vert \tau\vert^s\vert\xi\vert^r \leq C W_{\mu_1}(\tau, \xi). 
    \end{equation*}
    In particular, if $\mathcal{N}_2$ is another Newton polygon such that $\mathcal{N}_1\subseteq \mathcal{N}_2$ and $\mu_2:=\mu(\mathcal{N}_2)$ is its order function, then there exists $C>0$ such that $W_{\mu_1}\leq C W_{\mu_2}$.
\end{lemma}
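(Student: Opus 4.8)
The plan is to prove the stated equivalence by establishing the two implications separately, and then to deduce the monotonicity statement $W_{\mu_1}\lesssim W_{\mu_2}$ as a direct corollary. Recall that, by construction, $\mathcal{N}_1$ is a compact convex subset of $[0,\infty)^2$ whose extreme points are exactly the vertices $(r_0,s_0)=(0,0),(r_1,s_1),\dots,(r_{J+1},s_{J+1})$, and that $W_{\mu_1}(\tau,\xi)=\sum_{j=0}^{J+1}|\tau|^{s_j}|\xi|^{r_j}$, the term $j=0$ being the constant $1$.

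For the forward implication I would argue as follows. If $(r,s)\in\mathcal{N}_1$, then $(r,s)$ is a convex combination $(r,s)=\sum_{j=0}^{J+1}\lambda_j(r_j,s_j)$ with $\lambda_j\ge 0$ and $\sum_j\lambda_j=1$. For $(\tau,\xi)$ with $\tau\neq0$ and $\xi\neq0$ one factors $|\tau|^s|\xi|^r=\prod_{j=0}^{J+1}\bigl(|\tau|^{s_j}|\xi|^{r_j}\bigr)^{\lambda_j}$, and the weighted arithmetic--geometric mean inequality gives $\prod_j a_j^{\lambda_j}\le\sum_j\lambda_j a_j\le\sum_{j=0}^{J+1}a_j=W_{\mu_1}(\tau,\xi)$ with $a_j:=|\tau|^{s_j}|\xi|^{r_j}$. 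Hence the estimate holds with $C=1$ on $\{\tau\neq0\}\cap\{\xi\neq0\}$, and it extends to all of $\R\times\R^n$ by continuity of both sides (all exponents being nonnegative); the degenerate polygon $\mathcal{N}_1=\{(0,0)\}$ is trivial.

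For the converse I would argue by contraposition. Assume $(r,s)\notin\mathcal{N}_1$. Since $\mathcal{N}_1$ is compact and convex, the point $(r,s)$ can be strictly separated from it, so there exist $\delta,\beta\in\R$ with $\delta r+\beta s>\delta r_j+\beta s_j$ for every vertex $(r_j,s_j)$; taking $j=0$ this gives in particular $\delta r+\beta s>0$. I would then test the putative estimate along the anisotropic scaling $t\mapsto(\tau_t,\xi_t):=\bigl(t^{\beta},(t^{\delta},0,\dots,0)\bigr)$ for $t>0$, for which $|\tau_t|^{s_j}|\xi_t|^{r_j}=t^{\delta r_j+\beta s_j}$, so that
\[
\frac{|\tau_t|^s|\xi_t|^r}{W_{\mu_1}(\tau_t,\xi_t)}=\frac{t^{\delta r+\beta s}}{\sum_{j=0}^{J+1}t^{\delta r_j+\beta s_j}}\longrightarrow+\infty\quad\text{as }t\to\infty,
\]
because the numerator exponent strictly exceeds every exponent occurring in the denominator. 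Thus no finite $C$ can satisfy the estimate, which is the desired contrapositive.

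Finally, for the ``in particular'' clause: if $\mathcal{N}_1\subseteq\mathcal{N}_2$, then every vertex $(r_j,s_j)$ of $\mathcal{N}_1$ lies in $\mathcal{N}_2$, so applying the already-proved forward implication to $\mathcal{N}_2$ yields $|\tau|^{s_j}|\xi|^{r_j}\le W_{\mu_2}(\tau,\xi)$ for each $j$; summing over $j$ and using $1\le W_{\mu_2}$ gives $W_{\mu_1}\le(J+2)\,W_{\mu_2}$. I expect the converse implication to be the delicate step: the forward direction is essentially Young's inequality once the vertex decomposition is in place, whereas the converse hinges on the observation that the separating direction $(\delta,\beta)$ furnished by convexity is precisely the exponent pair of the scaling $(\tau,\xi)=(t^{\beta},t^{\delta}e_1)$ along which the estimate is forced to blow up; the coordinate-degenerate cases ($\tau=0$, $\xi=0$, or $\mathcal{N}_1=\{(0,0)\}$) are routine.
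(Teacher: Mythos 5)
Your proof is correct. The paper itself does not prove this lemma; it simply cites \cite[Remark 2.17]{denk2013general}. Your argument supplies a complete self-contained proof along the standard lines: for the forward direction you express $(r,s)$ as a convex combination of the vertices (including $(0,0)$) and apply the weighted arithmetic--geometric mean inequality to get $|\tau|^s|\xi|^r=\prod_j a_j^{\lambda_j}\le \sum_j a_j=W_{\mu_1}(\tau,\xi)$ with constant $C=1$, and the restriction to $\tau\neq 0$, $\xi\neq 0$ followed by a continuity argument is clean since all exponents involved are nonnegative. For the converse you use strict separation of the point from the compact convex set $\mathcal{N}_1$ to obtain a direction $(\delta,\beta)$, and then test the inequality along the one-parameter family $(\tau_t,\xi_t)=(t^\beta,t^\delta e_1)$; since the numerator exponent $\delta r+\beta s$ strictly exceeds every denominator exponent $\delta r_j+\beta s_j$ (including $0$ from the vertex $(0,0)$), the ratio diverges and no $C$ can work. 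The deduction of $W_{\mu_1}\lesssim W_{\mu_2}$ from $\mathcal{N}_1\subseteq\mathcal{N}_2$ by summing the vertex estimates is also correct. In short, you have proved the lemma rather than merely invoking it, and the route you chose is the natural one underlying the cited reference.
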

\begin{proof}
    See \cite[Remark 2.17]{denk2013general}.
\end{proof}
Since the purpose of order functions is to compute easily sums and differences of Newton polygons, we are also interested in how the weight functions behave with respect to sums and differences of order functions. We have the following property.
\begin{lemma}\label{lemma: additivity}
    If $\mu, \nu$ are both strictly positive order functions or both strictly negative order functions and $\alpha,\beta>0$, then $W_{\alpha\mu+\beta\nu}\simeq W_{\mu}^\alpha W_\nu^\beta$.
\end{lemma}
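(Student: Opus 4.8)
The plan is to reduce to the case of two strictly positive order functions and then to exploit two things: the representation $W_\mu \simeq \sum_{(r,s)\in\mathcal{N}(\mu)_V}|\tau|^s|\xi|^r$ (the ``$1$'' in the definition of $W_\mu$ being the vertex $(0,0)$), and the elementary equivalence $\bigl(\sum_{i=1}^m a_i\bigr)^\alpha \simeq_{m,\alpha} \sum_{i=1}^m a_i^\alpha$ for $a_1,\dots,a_m\ge 0$ and $\alpha>0$, which follows from $\max_i a_i\le\sum_i a_i\le m\max_i a_i$ together with the monotonicity of $t\mapsto t^\alpha$. I would first dispose of the negative case: if $\mu,\nu$ are strictly negative, then $-\mu,-\nu$ are strictly positive, $\alpha\mu+\beta\nu$ is strictly negative, and $W_{\alpha\mu+\beta\nu}=1/W_{\alpha(-\mu)+\beta(-\nu)}$ holds by definition, so the claim follows from the positive case applied to $-\mu,-\nu$.

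Assuming now $\mu,\nu$ strictly positive (so that $\alpha\mu+\beta\nu$ is strictly positive as well), the first step handles the exponents. Applying the elementary equivalence vertex by vertex to $W_\mu = \sum_{(r,s)\in\mathcal{N}(\mu)_V}|\tau|^s|\xi|^r$ yields $W_\mu^\alpha \simeq \sum_{(r,s)\in\mathcal{N}(\mu)_V}|\tau|^{\alpha s}|\xi|^{\alpha r}$. Since scaling by $\alpha>0$ maps $\mathcal{N}(\mu)$ to a Newton polygon (a bounded convex polytope in $[0,\infty)^2$ that contains the origin and is closed under passing to coordinatewise smaller points), and this polygon has order function $\alpha\mu$, the one-to-one correspondence between Newton polygons and strictly positive order functions identifies it with $\mathcal{N}(\alpha\mu)$ and shows its vertex set is $\{\alpha v : v\in\mathcal{N}(\mu)_V\}$; hence $W_\mu^\alpha\simeq W_{\alpha\mu}$, and likewise $W_\nu^\beta\simeq W_{\beta\nu}$. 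Multiplying out,
\begin{equation*}
W_\mu^\alpha W_\nu^\beta \simeq W_{\alpha\mu}W_{\beta\nu} = \sum_{v\in\mathcal{N}(\alpha\mu)_V}\ \sum_{w\in\mathcal{N}(\beta\nu)_V}|\tau|^{v_2+w_2}|\xi|^{v_1+w_1},
\end{equation*}
so the statement reduces to showing this finite sum is $\simeq W_{\alpha\mu+\beta\nu}$.

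For that, the key geometric input is the identity $\mathcal{N}(\alpha\mu+\beta\nu)=\mathcal{N}(\alpha\mu)+\mathcal{N}(\beta\nu)$, the Minkowski sum. I would derive it from support functions: for $\gamma\in(0,\infty)$ the value $\mu_{\mathcal{N}}(\gamma)=\max_{(r,s)\in\mathcal{N}}(r+\gamma s)$ is exactly the support function of $\mathcal{N}$ evaluated at $(1,\gamma)$; support functions are additive under Minkowski sums, the Minkowski sum of two Newton polygons is again a Newton polygon, and therefore its order function is $\alpha\mu+\beta\nu$, so the one-to-one correspondence gives the identity. Two standard facts about Minkowski sums of polytopes then conclude. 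First, every point $v+w$ occurring in the double sum lies in $\mathcal{N}(\alpha\mu)+\mathcal{N}(\beta\nu)=\mathcal{N}(\alpha\mu+\beta\nu)$, so Lemma~\ref{lemma: point_inside} bounds each monomial $|\tau|^{v_2+w_2}|\xi|^{v_1+w_1}$ by $W_{\alpha\mu+\beta\nu}$, and summing the finitely many terms gives ``$\lesssim$''. Conversely, every vertex of $\mathcal{N}(\alpha\mu+\beta\nu)=\mathcal{N}(\alpha\mu)+\mathcal{N}(\beta\nu)$ is a sum $v+w$ of a vertex of $\mathcal{N}(\alpha\mu)$ and a vertex of $\mathcal{N}(\beta\nu)$, so $W_{\alpha\mu+\beta\nu}=\sum_{u\in\mathcal{N}(\alpha\mu+\beta\nu)_V}|\tau|^{u_2}|\xi|^{u_1}$ is dominated term by term by the double sum, which gives ``$\gtrsim$''.

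I expect Step~2 to be the only genuinely non-routine part: Step~1 is bookkeeping around a single elementary inequality, whereas the identification of $\mathcal{N}(\alpha\mu+\beta\nu)$ with a Minkowski sum — and, just as importantly, the fact that vertices of a Minkowski sum of polytopes are sums of vertices of the summands — is where the convex geometry of Newton polygons really enters. One must also check at each stage that the classes ``strictly positive order function'' and ``Newton polygon'' are preserved under the operations used (scaling by a positive constant, addition, Minkowski sum), so that the correspondence $\mu\leftrightarrow\mathcal{N}(\mu)$ may legitimately be invoked.
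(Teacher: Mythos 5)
Your proposal is correct, and its overall structure matches the paper's: reduce to the case $\alpha=\beta=1$ via the scaling equivalence $W_{\alpha\mu}\simeq W_{\mu}^\alpha$ (which the paper also extracts, using Lemma \ref{lemma: point_inside}), and then invoke the multiplicativity $W_{\mu+\nu}\simeq W_\mu W_\nu$. The difference is that the paper delegates the latter to \cite[Lemma~2.33]{denk2013general}, whereas you prove it from scratch through the Minkowski-sum identity $\mathcal{N}(\alpha\mu+\beta\nu)=\mathcal{N}(\alpha\mu)+\mathcal{N}(\beta\nu)$, obtained by reading the order function as the support function evaluated at $(1,\gamma)$, together with the standard fact that vertices of a Minkowski sum of polytopes are sums of vertices of the summands. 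Your argument is self-contained (modulo that convex-geometry fact and the closure of the class of Newton polygons under Minkowski sum, both of which you correctly flag as the genuine content); the paper's version is shorter but leans on an external reference. Both are fine — yours buys transparency at the cost of a paragraph of convex geometry, and it correctly identifies where the work really is.
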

\begin{proof}
    See \cite[Lemma 2.33]{denk2013general} for $\alpha=\beta=1$.
    The general case follows from this, since $W_{\alpha \mu}\simeq W_\mu^{\alpha}$ for $\alpha>0$ by Lemma \ref{lemma: point_inside}.
\end{proof}
In view of the above lemma, one can also define weight functions for the difference of order functions in the following way.
Let $\mu, \nu$ be two strictly positive order functions.
We set 
\begin{equation*}
   W_{\mu-\nu}:\simeq \frac{W_\mu}{W_\nu}.
\end{equation*}
Here $W_{\mu-\nu}$ is well-defined up to the equivalence  $\simeq$. Indeed, let $\mu_1, \mu_2, \nu_1, \nu_2$  be strictly positive order functions such that $\mu_1-\nu_1=\mu_2-\nu_2$. Then $\mu_1+\nu_2=\mu_2+\nu_1$ are both strictly positive order functions and by the above lemma, $W_{\mu_1}W_{\nu_2}\simeq W_{\mu_2}W_{\nu_1}$.

\medskip

The weight function associated to a strictly positive order function can be decomposed as a product of 'elementary' weight functions using the slopes of the edges of the associated Newton polygon $\mathcal{N}(\mu)$. This will be particularly useful in the last sections for practical computations.
More precisely, for any $y\geq 0$, we define the elementary order functions $o_y(\gamma):=\max\{1, y\gamma \}, \gamma>0$.
In particular, for $(\tau, \xi)\in \mathbb{R}\times \mathbb{R}^n$ it holds 
    \begin{equation*}
        W_{o_{y}}(\tau, \xi) =1 +\vert \xi\vert+\vert \tau\vert^y 
    \end{equation*}
\begin{proposition}\label{prop: decomposition_elementary_order}
    Let $\mu$ be a strictly positive order function, $\mathcal{N}=\mathcal{N}(\mu)$ the associated Newton polygon and  $\mathcal{N}_V=\{(r_0, s_0),...,(r_{J+1}, s_{J+1})\}$ the set of its vertices. Define $\gamma_1,..., \gamma_{J+1}$ as in (\ref{eqn: def_gamma_j}) and assume that $\mathcal{N}$ is regular in time, that is, $\gamma_1>0$.  Then
    \begin{equation*}
        W_\mu \simeq \prod _{j=1}^{J}W_{o_{1/\gamma_j}}^{r_j-r_{j+1}},
    \end{equation*}
    where we understand $\frac{1}{\gamma_J}=0$ if $\gamma_J=\infty$. 
\end{proposition}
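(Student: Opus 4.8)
The plan is to prove the decomposition $W_\mu \simeq \prod_{j=1}^{J} W_{o_{1/\gamma_j}}^{r_j - r_{j+1}}$ by showing that the right-hand side is itself (up to $\simeq$) the weight function of a strictly positive order function whose associated Newton polygon coincides with $\mathcal{N}$. Since weight functions of order functions are determined up to $\simeq$ by the underlying polygon (Lemma \ref{lemma: point_inside}, which gives both directions of the comparison), it suffices to identify the polygon on the right.

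First I would compute the order function of each elementary factor. The function $o_y(\gamma) = \max\{1, y\gamma\}$ is a strictly positive order function with $\mathcal{N}(o_y)$ having vertices $\{(0,0),(1,0),(0,1/y)\}$ when $y>0$ (and $\{(0,0),(1,0)\}$ when $y=0$, corresponding to $W_{o_0}\simeq 1+|\xi|$). Hence, by Lemma \ref{lemma: additivity} applied repeatedly, $\prod_{j=1}^J W_{o_{1/\gamma_j}}^{r_j-r_{j+1}} \simeq W_\nu$ where $\nu := \sum_{j=1}^J (r_j - r_{j+1})\, o_{1/\gamma_j}$. Note each coefficient $r_j - r_{j+1}$ is positive: since the vertices are ordered counter-clockwise starting from $(0,0)$, the $r$-coordinates are strictly decreasing after the first vertex, so $r_1 > r_2 > \cdots > r_{J+1} \geq 0$, and in particular $r_j - r_{j+1} > 0$ for $j=1,\dots,J$, so Lemma \ref{lemma: additivity} applies and $\nu$ is a genuine strictly positive order function.

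Next I would verify that $\mathcal{N}(\nu) = \mathcal{N}$, equivalently $\mu_{\mathcal{N}(\nu)} = \mu_\mathcal{N}$, equivalently $\nu = \mu_\mathcal{N}$ as functions on $(0,\infty)$ (using the one-to-one correspondence between strictly positive order functions and Newton polygons recalled after Lemma 2.7). This is a direct piecewise-linear computation: on the interval $[\gamma_{k-1}, \gamma_k)$ one has $o_{1/\gamma_j}(\gamma) = 1$ for $j < k$ (since there $\gamma/\gamma_j \le \gamma_k/\gamma_j \le 1$... more carefully, since $\gamma < \gamma_k \le \gamma_j$ for $j\ge k$ forces the $\max$ to pick $1$, while for $j<k$ we have $\gamma \ge \gamma_{k-1}\ge\gamma_j$ so $o_{1/\gamma_j}(\gamma)=\gamma/\gamma_j$), so
\begin{equation*}
\nu(\gamma) = \sum_{j=1}^{k-1}(r_j-r_{j+1})\frac{\gamma}{\gamma_j} + \sum_{j=k}^{J}(r_j-r_{j+1}).
\end{equation*}
The second sum telescopes to $r_k - r_{J+1} = r_k - 0 = r_k$ (using $r_{J+1}=0$, which holds because the last vertex lies on the $\tau$-axis). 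For the first sum, substituting $1/\gamma_j = (s_{j+1}-s_j)/(r_j - r_{j+1})$ from \eqref{eqn: def_gamma_j} gives $\sum_{j=1}^{k-1}(s_{j+1}-s_j)\gamma = (s_k - s_1)\gamma = s_k \gamma$ (using $s_1 = 0$, since the vertex $(r_1,s_1)$ lies on the $|\xi|$-axis). Hence $\nu(\gamma) = \gamma s_k + r_k$ on $[\gamma_{k-1},\gamma_k)$, which is exactly $\mu_\mathcal{N}(\gamma)$ by Lemma 2.7. This proves $\nu = \mu_\mathcal{N}$, hence $W_\nu \simeq W_{\mu_\mathcal{N}} = W_\mu$, completing the argument. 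A small point to handle at the end: when $\gamma_J = \infty$ (i.e. $s_J = s_{J+1}$) the factor $o_{1/\gamma_J} = o_0$ has $W_{o_0}\simeq 1+|\xi|$ and the telescoping indices shift by one, but the same computation goes through with the convention $1/\gamma_J = 0$ stated in the proposition.

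The main obstacle I anticipate is purely bookkeeping: getting the telescoping sums and the index ranges exactly right, in particular correctly pinning down which elementary factors are "active" (equal to their linear branch $y\gamma$) versus "saturated" (equal to $1$) on each interval $[\gamma_{k-1},\gamma_k)$, and carefully using the boundary data $s_1 = 0$ and $r_{J+1}=0$ which encode that the first and last vertices sit on the coordinate axes. There is no analytic difficulty — everything reduces to the one-to-one correspondence between order functions and polygons plus the additivity Lemma \ref{lemma: additivity} — but the case $\gamma_J = \infty$ must be treated explicitly so that the product has the right number of factors.
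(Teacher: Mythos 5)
Your argument is correct and, unlike the paper, which simply defers to Chapter I, Subsection 1.3 of \cite{gindikin2012method}, it gives a self-contained proof. The strategy is the natural one: reduce via Lemma \ref{lemma: additivity} to identifying the order function $\nu := \sum_{j=1}^J (r_j - r_{j+1})\, o_{1/\gamma_j}$ with $\mu_{\caln}$, then verify this equality on each interval $[\gamma_{k-1}, \gamma_k)$ by deciding which elementary factors are saturated (equal to $1$) and which are linear (equal to $\gamma/\gamma_j$), and telescoping the resulting two sums using $s_1=0$ and $r_{J+1}=0$. The positivity of the exponents $r_j - r_{j+1}$, needed to apply Lemma \ref{lemma: additivity}, is correctly noted (it follows from convexity of $\caln$ together with the regularity-in-time hypothesis $\gamma_1>0$, which rules out a vertical first edge), and the degenerate case $\gamma_J=\infty$ is handled exactly by the convention $1/\gamma_J=0$ stated in the proposition.

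Two small slips should be corrected, neither of which affects the validity of the argument. First, for $y>0$ the vertices of $\caln(o_y)$ are $\{(0,0),(1,0),(0,y)\}$, not $\{(0,0),(1,0),(0,1/y)\}$: with the paper's conventions $\mu_\caln(\gamma)=\max_j(r_j+\gamma s_j)$ and $W_{o_y}(\tau,\xi)=1+|\xi|+|\tau|^y$, the vertex on the $\tau$-axis has $\tau$-exponent $y$. Your proof never actually uses the polygon of $o_y$, only the pointwise values $o_y(\gamma)=\max\{1,y\gamma\}$, so nothing downstream is affected. Second, the opening clause claiming $o_{1/\gamma_j}(\gamma)=1$ for $j<k$ has the inequality reversed, but you catch and fix this inside the parenthetical, and the displayed formula $\nu(\gamma)=\sum_{j=1}^{k-1}(r_j-r_{j+1})\gamma/\gamma_j+\sum_{j=k}^{J}(r_j-r_{j+1})$ that you then telescope is the correct one.
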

\begin{proof}
    See Chapter I, Subsection 1.3 in \cite{gindikin2012method}.
\end{proof}
\begin{example}\label{ex: CHD_order_fct}
    We continue the discussion for the Cahn-Hilliard-Gurtin determinant from Example \ref{ex: CHD_polygon}.
    Its order function $\mu_D:=\mu(\caln(E(D))$ is given by $\mu_D(\gamma)=\max\set{4,2+\gamma}$, which means that $\mu_D=2o_\frac12+2$.
    The corresponding weight function is $W_D(\tau,\xi):=1+|\xi|^4+|\tau||\xi|^2+|\tau|$.
\end{example}
Knowing if the symbol $P$ can be estimated from above and from below by its weight function will turn out to be crucial to establish maximal regularity estimates.
We do not restrict ourselves to polynomials in the following definition, which will be of importance in Section \ref{sec:half}.
\begin{definition}\label{def: upper_order_function_general}
     Let $P: (\mathbb{R}\setminus \{0\})\times \mathbb{R}^n\to \mathbb{C}$ be smooth and let $\mu$ be the difference of strictly positive order functions.
     We say that $\mu$ is an \emph{upper order function} for $P$ if there exists $C>0$ such that
    \begin{equation*}
        \vert P(\tau, \xi)\vert \leq CW_{\mu}(\tau, \xi)
    \end{equation*}
    for all $(\tau, \xi)\in (\mathbb{R}\setminus\set{0})\times \mathbb{R}^n$.
\end{definition}
We observe that if $\mu_1$ is an upper order function for $P_1$ and $\mu_2$ is an upper order function for $P_2$ in the above sense, then $\mu_1+\mu_2$ is an upper order function for $P_1P_2$.
\begin{definition}\label{def_ellipticity_general}
    Let $P: \mathbb{R}\setminus \{0\}\times \mathbb{R}^n\to \mathbb{C}$ be smooth and let $\mu$ be the difference of strictly positive order functions.
    We say that $P$ is $\mathcal{N}$-elliptic for the order function $\mu$ if 
    \begin{enumerate}
    \item $\mu$ is an upper order function for $P$.
        \item For all $\lambda>0$, there exists $C_\lambda>0$ such that 
        \begin{equation*}
            \vert P(\tau, \xi)\vert \geq C_\lambda W_\mu(\tau, \xi)
        \end{equation*}
        for all $(\tau, \xi)\in \mathbb{R}\times \mathbb{R}^n$ with $\vert \tau \vert \geq \lambda$. 
    \end{enumerate}
\end{definition}
As above, the notion of $\caln$-ellipticity is compatible with taking products of symbols: If $P_j$ is $\caln$-elliptic for the order function $\mu_j$ for $j\in\set{1,2}$, then $P_1P_2$ is $\caln$-elliptic for the order function $\mu_1+\mu_2$.
We immediately show that the CHG determinant is $\mathcal{N}$-elliptic.
\begin{proposition}\label{prop: det_elliptic}
    The symbol $D(\tau, \xi)= i\tau+\vert \xi\vert^2(i\tau+\vert \xi\vert^2)$ is $\mathcal{N}$-elliptic for the associated order function $\mu_D$ defined in Example \ref{ex: CHD_order_fct}. 
\end{proposition}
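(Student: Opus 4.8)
The plan is to verify the two requirements of Definition~\ref{def_ellipticity_general} directly, working from the explicit formula
\[
D(\tau,\xi)=\ic\tau+|\xi|^2(\ic\tau+|\xi|^2)=\ic\tau(1+|\xi|^2)+|\xi|^4,
\]
and recalling from Example~\ref{ex: CHD_order_fct} that the weight function of $\mu_D$ is $W_{\mu_D}(\tau,\xi)=1+|\xi|^4+|\tau||\xi|^2+|\tau|$. Condition (i), that $\mu_D$ is an upper order function for $D$, is immediate from the triangle inequality: $|D(\tau,\xi)|\le|\tau|+|\tau||\xi|^2+|\xi|^4\le W_{\mu_D}(\tau,\xi)$.

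For condition (ii) I would first split $D$ into real and imaginary parts, $\Re D=|\xi|^4$ and $\Im D=\tau(1+|\xi|^2)$, which gives $|D(\tau,\xi)|^2=|\xi|^8+\tau^2(1+|\xi|^2)^2$. Since $(1+|\xi|^2)^2\ge 1+|\xi|^4$, this yields the clean lower bound $|D(\tau,\xi)|^2\ge|\xi|^8+\tau^2+\tau^2|\xi|^4$. On the other hand, expanding $W_{\mu_D}^2$ and absorbing every mixed term by Young's inequality (for instance $|\tau||\xi|^6\le\tfrac12(|\xi|^8+\tau^2|\xi|^4)$ and $|\tau||\xi|^2\le\tfrac12(1+\tau^2|\xi|^4)$) shows $W_{\mu_D}^2\simeq 1+|\xi|^8+\tau^2|\xi|^4+\tau^2$. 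Hence the only term in $W_{\mu_D}^2$ not already dominated by $|D|^2$ is the constant $1$, and this is precisely where the hypothesis $|\tau|\ge\lambda$ enters: for such $\tau$ one has $1\le\lambda^{-2}\tau^2$, so
\[
1+|\xi|^8+\tau^2|\xi|^4+\tau^2\le(1+\lambda^{-2})\bigl(|\xi|^8+\tau^2+\tau^2|\xi|^4\bigr)\le(1+\lambda^{-2})\,|D(\tau,\xi)|^2 .
\]
Combining this with $W_{\mu_D}^2\simeq 1+|\xi|^8+\tau^2|\xi|^4+\tau^2$ produces a constant $C_\lambda>0$ with $|D(\tau,\xi)|\ge C_\lambda W_{\mu_D}(\tau,\xi)$ for all $(\tau,\xi)$ satisfying $|\tau|\ge\lambda$, which is exactly (ii).

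I do not expect a genuine obstacle; the one subtlety worth flagging is that an \emph{unconditional} lower bound $|D|\gtrsim W_{\mu_D}$ on all of $\mathbb{R}\times\mathbb{R}^n$ is false, since $D(0,0)=0$ while $W_{\mu_D}(0,0)=1$, and more generally $|D|=|\xi|^4$ on $\{\tau=0\}$ cannot control the constant term $1$ near $\xi=0$. The argument must therefore be organised so that the constant part of $W_{\mu_D}^2$ is absorbed using $|\tau|\ge\lambda$ rather than by a $\tau$-free estimate, which is precisely the reason $\mathcal{N}$-ellipticity in Definition~\ref{def_ellipticity_general} is formulated with the parameter $\lambda$. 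Since $D$ is a polynomial in $\tau$ and $|\xi|^2$, it is in particular smooth on $\mathbb{R}\times\mathbb{R}^n$, so no regularity issue arises.
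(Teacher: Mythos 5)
Your proposal is correct and follows essentially the same route as the paper: both compute $|D|^2=|\xi|^8+\tau^2(1+|\xi|^2)^2$, drop the cross term $2\tau^2|\xi|^2$ to reach $|D|^2\ge|\xi|^8+\tau^2+\tau^2|\xi|^4$, and then absorb the constant term in $W_{\mu_D}$ using $|\tau|\ge\lambda$ (the paper does the comparison after taking a square root via $\sqrt{a^2+b^2+c^2}\ge\tfrac{1}{\sqrt3}(a+b+c)$, yours at the level of squares, which is equivalent). The only incidental difference is that you verify the upper bound directly by the triangle inequality, whereas the paper cites \cite[Lemma 2.30]{denk2013general}; your version is slightly more self-contained, and your remark that the unconditional bound $|D|\gtrsim W_{\mu_D}$ must fail near $(\tau,\xi)=(0,0)$ correctly explains why the parameter $\lambda$ is needed.
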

\begin{proof}
    The upper bound follows from \cite[Lemma 2.30]{denk2013general}.
    Recall that $W_{\mu_D}(\tau, \xi)=1+\vert \xi\vert^4 +\vert \tau\vert\vert \xi\vert^2+\vert\tau\vert$ for all $(\tau, \xi)\in \mathbb{R}\times \mathbb{R}^n$.
    Therefore, we have 
    \begin{align*}
        \vert D(\tau, \xi)\vert &=\sqrt{\vert \tau\vert^2+2 \vert \xi\vert^2\vert \tau\vert^2+\vert \xi\vert^4\vert \tau\vert^2+\vert \xi\vert^8}
        \geq \sqrt{\vert \tau\vert^2+\vert \xi\vert^4\vert \tau\vert^2+\vert \xi\vert^8}
        \geq \frac{1}{\sqrt{3}}(\vert \tau\vert+\vert \xi\vert^2\vert \tau\vert +\vert \xi\vert^4).
    \end{align*}
    Let $\lambda>0$. Then we obtain for $|\tau|\ge \lambda$ the desired estimate with $C_{\lambda}:=\frac{\min\set{1,\lambda}}{2\sqrt3}\le \frac{1}{\sqrt3}$ due to $|\tau|\ge \frac12\min\set{1,\lambda}(1+|\tau|)$.
\end{proof}
\subsection{Mixed-Order Systems}
The notion of mixed-order systems has been introduced in \cite{agmon1964estimates} in order to study the regularity of solutions of systems of elliptic equations. The definition was later adapted to the Newton polygon setting in \cite{denk2013general}.
In the following, we consider a matrix-valued symbol  $L:(\mathbb{R}\setminus \{0\})\times \mathbb{R}^n\to \mathbb{C}^{m\times m}$ for some $m\in \mathbb{N}$. We denote by $\mathcal{D}: (\mathbb{R}\setminus\{0\})\times \mathbb{R}^n\to \mathbb{C}$, $\mathcal{D}(\tau, \xi):=\det L(\tau, \xi)$ its determinant.
\begin{definition}\label{def: mixed_order_systems}
    The symbol $L$ is associated to a \emph{mixed-order system} if there exist strictly positive order functions $-s_1,...,-s_m, t_1,...,t_m$ such that the following is true.
    \begin{enumerate}
        \item The order function $s_i+t_j$ is an upper order function of the symbol $L_{ij}: \mathbb{R}\times \mathbb{R}^n\to \mathbb{C}$ for all $i,j\in\set{1,..., m}$.
        \item The determinant $\mathcal{D}$ is $\mathcal{N}$-elliptic for the order function $\delta:=\sum_{k=1}^m s_k+t_k$.
    \end{enumerate}
\end{definition}
While solving systems, we will make a regular use of the adjugate symbol of $L$, denoted $\ad L$. Here 
\begin{equation*}
    (\ad L)_{ij}:= \det L^{(ji)}, \qquad i,j\in\set{1,.., m}, 
\end{equation*}
where $L^{(ji)}$ denotes the $(m-1)\times (m-1)$ matrix obtained by removing the $j$-th line and the $i$-th column of $L$.
\begin{proposition}\label{prop: ad}
    If $L:\mathbb{R}\setminus \{0\}\times \mathbb{R}^n\to \mathbb{C}$ is a mixed-order system for the order functions $s_1,..., s_m$, $t_1,...,t_m$ then $\ad L$ is a mixed-order system for the order functions $S_1,...,S_m$, and $T_1,...,T_m$, with 
    \begin{equation*}
        S_i=\sum_{\substack{k=1\\k\neq i}}^m s_k, \qquad T_j:=\sum_{\substack{k=1\\k\neq j}}^m t_k,\qquad i, j\in\set{1,...,m}.
    \end{equation*}
\end{proposition}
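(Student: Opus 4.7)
The plan is to verify the two conditions of Definition \ref{def: mixed_order_systems} for the symbol $\ad L$ with the proposed order functions $S_1,\dots,S_m,T_1,\dots,T_m$. The first condition (the bound on each entry) will follow from a Leibniz expansion of the minor $\det L^{(ji)}$ together with the upper order estimates for the entries of $L$, while the second condition ($\caln$-ellipticity of $\det(\ad L)$) will be immediate from the classical matrix identity $\det(\ad L)=(\det L)^{m-1}$ combined with the stability of $\caln$-ellipticity under products noted right after Definition~\ref{def_ellipticity_general}.

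First I would establish the entry-wise upper order. Writing out the Leibniz formula for the $(m-1)\times(m-1)$ minor $L^{(ji)}$, every summand in $(\ad L)_{ij}$ is a signed product $\prod_{k\ne j} L_{k,\sigma(k)}$, where $\sigma$ ranges over the bijections from $\{1,\dots,m\}\setminus\{j\}$ onto $\{1,\dots,m\}\setminus\{i\}$. Because $s_k+t_{\sigma(k)}$ is an upper order function of $L_{k,\sigma(k)}$ by hypothesis, and upper order functions are additive under products (as remarked immediately after Definition \ref{def: upper_order_function_general}), each such product admits the upper order function
\begin{equation*}
\sum_{k\ne j}\bigl(s_k+t_{\sigma(k)}\bigr)=\sum_{k\ne j}s_k+\sum_{\ell\ne i}t_\ell,
\end{equation*}
which is independent of the particular bijection $\sigma$ and coincides (up to naming) with $S_i+T_j$. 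Since finite sums of symbols bounded by $CW_\mu$ are again bounded by a multiple of $W_\mu$, the claimed upper order carries over to the full determinant $(\ad L)_{ij}$.

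Next I would verify $\caln$-ellipticity of the determinant of $\ad L$. The key point is the classical matrix identity $\det(\ad L)=(\det L)^{m-1}=\cald^{m-1}$. By Proposition \ref{prop: det_elliptic}-style reasoning, and more generally by the remark that $\caln$-ellipticity is preserved under products with additive order functions, $\cald^{m-1}$ is $\caln$-elliptic for the order function $(m-1)\delta$. It remains to check that this order function is exactly $\sum_{i=1}^m S_i+\sum_{j=1}^m T_j$, which is a combinatorial identity: each $s_k$ appears in $S_i$ for every $i\ne k$, hence exactly $m-1$ times, so $\sum_i S_i=(m-1)\sum_k s_k$ and analogously $\sum_j T_j=(m-1)\sum_k t_k$; summing gives $(m-1)\delta$, as required.

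The routine part of the argument is Step 2, which is essentially an application of the product stability observations already recorded in Section~\ref{sec:pre}. The only mildly delicate point is the bookkeeping in Step 1: one must keep careful track of which rows and columns are deleted in $L^{(ji)}$ and use that the image of any bijection $\sigma$ in the Leibniz expansion is forced to be the full complement of $\{i\}$, independently of $\sigma$, so that the resulting upper order function does not depend on which permutation one picks. Once this combinatorial observation is in place, the rest of the proof is straightforward.
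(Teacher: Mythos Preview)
Your argument is correct and is the standard one: the Leibniz expansion of each minor gives the entry-wise upper order, and the identity $\det(\ad L)=\cald^{m-1}$ together with the multiplicativity of $\caln$-ellipticity handles the determinant. The paper does not give an in-text proof at all but simply refers to part (I) of the proof of Theorem~2.69 in \cite{denk2013general}; your write-up is essentially that argument spelled out, so there is nothing to compare.

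One small point worth tightening: your phrase ``up to naming'' is covering a genuine index swap. From the Leibniz expansion of $(\ad L)_{ij}=\det L^{(ji)}$ (row $j$ and column $i$ removed) you correctly obtain the upper order function $\sum_{k\ne j}s_k+\sum_{\ell\ne i}t_\ell$, which in the notation of the proposition is $S_j+T_i$, not $S_i+T_j$. In other words, the row index of $\ad L$ picks out a $T$ and the column index picks out an $S$. This does not affect the determinant check (since $\sum_i T_i+\sum_j S_j=(m-1)\delta$ either way) and is consistent with how the result is actually used in the proof of Theorem~\ref{thm: whole_space_systems}, but you should state the correct pairing rather than leave it implicit.
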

\begin{proof}
    See part (I) of the proof of Theorem 2.69 in \cite{denk2013general}.
\end{proof}
\begin{example}\label{ex: CHG_mixed_order_system}
    One can give the CHG system 
    \begin{equation}\label{eqn: CHG_system}
        \left\{\begin{array}{rcl}
        \partial_tu_1 -\Delta u_2 &=&f_1,       \\
        \Delta u_1-\partial_tu_1+u_2 &=&f_2,
        \end{array}\right.
    \end{equation}
    a structure of mixed-order system. Indeed, the symbol matrix associated to the problem is 
    \begin{equation}\label{eqn: CHG_symbol}
        L(\tau, \xi)=
        \begin{pmatrix}
        \ic\tau & \vert \xi\vert^2\\
        -\vert \xi\vert^2-\ic\tau & 1
        \end{pmatrix}, \qquad (\tau, \xi)\in \mathbb{R}\times \mathbb{R}^n.
    \end{equation}
    Defining the order functions 
    \begin{align*}
        &t_1(\gamma):=\max\{3, 1+\gamma\}, \quad
        t_2(\gamma):=2, \quad
        s_1(\gamma):=0, \quad
        s_2(\gamma):=-1,
    \end{align*}
    we obtain that $s_i+t_j$ is an upper order function for the symbol $L_{ij}$ for all $i, j\in\set{1,2}$.
    Furthermore, one has 
    \begin{equation*}
        \delta(\gamma):= (s_1+s_2+t_1+t_2)(\gamma)=\max \{4, 2+\gamma\}=\mu_D(\gamma)
    \end{equation*}
    with $D(\tau, \xi)=\det L(\tau, \xi)$.
    Since we established in Proposition  \ref{prop: det_elliptic} that the determinant $D$ is $\mathcal{N}$-elliptic for the order function $\mu_D$, we indeed have a mixed-order system. 
\end{example}

\subsection{Time-Periodic Setting and Purely Oscillatory Function Spaces}
We start by recalling the time-periodic setting as introduced e.g.~in \cite{EiK17,KyS17,kyed2019time}.
Any time-periodic function $f:\mathbb{R}\to \mathbb{C}$ of period $T>0$ can be considered as a function defined on the torus $\mathbb{T}:=\mathbb{R}/ T\mathbb{Z}$. Thus our aim is to solve (\ref{eqn: CHG_system}) for some functions $u_1, u_2, f_1, f_2$ defined on $G:=\mathbb{T}\times \mathbb{R}^n$.\\
The differential structure on $G$ is the one inherited from $\mathbb{R}\times \mathbb{R}^n$ through the quotient projection $\pi: \mathbb{R}\times \mathbb{R}^n\to G$, that is, 
\begin{equation*}
    C^\infty(G)=\{f: G\to \mathbb{C} \mid f\circ \pi \in C^\infty(\mathbb{R}\times \mathbb{R}^n)\}
\end{equation*}
and for all $f\in C^\infty (G)$, $k\in \mathbb{N}, \alpha\in \mathbb{N}_0^n$, 
\begin{equation*}
    (\partial_t^k\partial_x^\alpha f)(\pi(t, x)):=\partial_t^k\partial^\alpha_x (f\circ \pi)(t, x).
\end{equation*}
For $K\subseteq \R^n$ we define $\CRi_K(G):=\set{f\in \CRi(G)\mid \supp f\subseteq \mathbb{T}\times K}$, which is turned into a Fr\'echet space in the usual way via the seminorms
\begin{align*}
 \rho_{m,K}(f):=\sup\set{\sum_{k+|\alpha|\le m} |\partial_t^k\partial_x^\alpha f(t,x)| \mid (t,x)\in \mathbb{T}\times K}.
\end{align*}
Fix an exhaustive sequence $K_1\subseteq K_2\ldots\subseteq \R^n$ of compact sets.
Then we can define the test function space $\CRci(G):=\bigcup_{j\in\N} \CRi_{K_j}(G)$ as endowed with the canonical LF topology, and introduce the set of distributions $\cald'(G)$ as the strong dual of $\CRci(G)$.
Since derivatives are continuous operators on $\CRci(G)$, the notion of distributional derivatives can be introduced by transposition in the usual way.
Integration on $G$ is done with respect to the product of the Haar measure on $\mathbb{T}$ and of the Lebesgue measure on $\mathbb{R}^n$.
In particular, it holds  $f\in L^2(G)$ if and only if $f\circ \pi_{\vert [0, T]\times \mathbb{R}^n}\in L^2([0, T]\times \mathbb{R}^n)$. 

Once the weak derivatives and integration are defined, one can consider the usual Sobolev spaces. In particular, we are interested in distinguishing the regularity  in the variables $t$ and $x$. We thus introduce for $s,r\in\N$ the Sobolev space of dominating mixed smoothness
\begin{equation}\label{eqn: def_sobolev}
    H^{(s, r)}(G):=\{f\in L^2(G)\mid \partial_t^s\partial_x^\alpha f, \partial_x^\alpha f, \partial_t^sf\in L^2(G) \text{ for all } \alpha\in \mathbb{N}_0^n \text{ with } \vert \alpha\vert=r\}.
\end{equation}

\subsubsection{Fourier Transform on $G$}
The space $G$ defined above is a locally compact abelian group and thus admits a Pontryagin dual 
\begin{equation*}
    \widehat{G}:=\frac{2\pi}{T}\mathbb{Z}\times \mathbb{R}^n, 
\end{equation*}
where one identifies $\widehat{G}$ by associating to any $(k, \xi)\in \frac{2\pi}{T}\mathbb{Z}\times \mathbb{R}^n $ the character $\chi: G\to \mathbb{C}$, $\chi(t, x):=\mathrm{e}^{\ic x\cdot \xi+\ic tk}$, see e.g. \cite[Section 1.2]{Rud62}.
The measure on $\widehat{G}$ will be the product of the counting measure on $\frac{2\pi}{T}\mathbb{Z}$ and the Lebesgue measure on $\mathbb{R}^n$.

\medskip

The differentiable structure on $\widehat{G}$ is defined as 
\begin{equation*}
    C^\infty(\widehat{G}):=\{h\in C(\widehat{G})\mid h(k, \cdot)\in C^\infty(\mathbb{R}^n) \text{ for all } k\in \mathbb{Z}\}
\end{equation*}
 where for all $f\in C^\infty(\widehat{G}), (k, \xi)\in \widehat{G}$ and $\alpha\in \mathbb{N}^n$, we set 
 \begin{equation*}
     \partial_x^\alpha f(k, \xi):=(\partial_x^\alpha f(k, \cdot))(\xi).
 \end{equation*}

In a usual fashion, we define the Schwartz-Bruhat space on $G$ by 
\begin{equation*}
    \mathcal{S}(G):=\{f\in C^\infty(G): \rho_{\alpha, \beta, \gamma}(f)<\infty \text{ for all } (\alpha, \beta, \gamma)\in \mathbb{N}_0\times \mathbb{N}_0^n\times \mathbb{N}_0^n\}, 
\end{equation*}
where the topology is induced by the seminorms
\begin{equation*}
    \rho_{\alpha, \beta, \gamma}(f):=\sup\set{\vert x^\gamma \partial_t^\alpha \partial_x^\beta f(t, x)\vert \mid (t, x)\in G}.
\end{equation*}
Similarly, we introduce
\begin{equation*}
    \mathcal{S}(\widehat{G}):=\{h\in C^\infty(\widehat{G}): \hat{\rho}_{\alpha, \beta, \gamma}(h)<\infty \text{ for all } (\alpha, \beta, \gamma)\in \mathbb{N}_0\times \mathbb{N}_0^n\times \mathbb{N}_0^n\},
\end{equation*}
with the seminorms 
\begin{equation*}
    \hat{\rho}_{\alpha, \beta, \gamma}(h):=\sup\set{\vert k^\alpha \xi^\gamma \partial_\xi^\beta h(k, \xi)\vert\mid (k, \xi)\in \widehat{G}}.
\end{equation*}
The tempered distribution spaces  $\mathcal{S}'(G)$ and $\mathcal{S}'(\widehat{G})$) are then introduced as the strong dual of $\mathcal{S}(G)$ and $\mathcal{S}(\widehat{G})$, respectively.

\medskip

Being a locally compact abelian group, $G$ admits a Fourier transform  $\mathcal{F}_G: L^1(G)\to C(\widehat{G})$ defined by 
\begin{equation*}
    \mathcal{F}_Gf (k, \xi)= \int_{\mathbb{T}}\int_{\mathbb{R}^n}f(t, x)\exp(-\ic x\cdot \xi - \ic tk)dxdt, \qquad (k, \xi)\in \widehat{G}.
\end{equation*}
and similarly, $\widehat G$ admits a Fourier transform $\mathcal{F}_{\widehat{G}}: L^1(\widehat{G})\to C(G)$
\begin{equation*}
    \mathcal{F}_{\widehat G}h(t, x):=\sum_{k\in \mathbb{Z}}\int_{\mathbb{R}^n}h(k, \xi)\exp(-\ic x\cdot \xi -\ic tk)d\xi, \qquad (t, x)\in G.
\end{equation*}
It holds $\mathcal{F}_G\in \call_{\mathrm{iso}}(\cals(G),\cals(\widehat{G}))$ and $\mathcal{F}_{\widehat{G}}\in \call_{\mathrm{iso}}(\cals(\widehat{G}),\cals(\widehat{\widehat{G}}))$, see \cite[Theorem 3.2]{Waw68}.
By transposition, the Fourier transform defines an isomorphism $\calf_G:={}^t\mathcal{F}_{\widehat{G}}\in \call_{\mathrm{iso}}(\mathcal{S}'(G), \mathcal{S}'(\widehat{G}))$.
We will often write $\mathcal{F}:=\mathcal{F}_G$.

\medskip
Armed with the Fourier transform on $G$, we can define Fourier multipliers in a usual fashion. 
We denote by $\mathcal{O}(\mathbb{R}\times \mathbb{R}^n)$ the set of symbols $m\in C^\infty (\mathbb{R}\times \mathbb{R}^n)$ such that $m$ and all its derivatives are polynomially bounded. 
We will also set 
\begin{equation*}
    \mathcal{O}(\widehat{G}):=\{ m_{\vert \widehat{G}} \mid m\in \mathcal{O}(\mathbb{R}\times \mathbb{R}^n)\}. 
\end{equation*}
Then for $m\in \mathcal{O}(\widehat{G})$, the multiplication $M_m:\cals(\widehat{G})\to\cals(\widehat{G})$, $M_m(\varphi):=m\varphi$ is linear and bounded, and we may write $mT:={}^t M_m(T)$ for $T\in \cals'(G)$.
Then for $m\in \mathcal{O}(\widehat{G})$, we define $\op[m]\in\call(\mathcal{S}'(G),\mathcal{S}'(G))$ via 
\begin{equation}\label{eqn: def_multiplier}
    \op[m]f :=\mathcal{F}^{-1}m\mathcal{F}f.
\end{equation}
The definition implies in particular that $\op[m_1]\op[m_2]=\op[m_1m_2]=\op[m_2]\op[m_1]$ for all $m_1,m_2\in \calo(\widehat{G})$.
A remark on the situation in $L^2(G)$ is in order:
Since the Fourier transform $\mathcal{F}$ restricted to $L^2(G)$ to an isometric isomorphism $\mathcal{F}\in \call_{\mathrm{iso}}(L^2(G),L^2(\widehat{G}))$ by \cite[Theorem 1.6.1]{Rud62}, the definition in \eqref{eqn: def_multiplier} stays well-defined on $L^2(G)$ if $m\in L^\infty(\widehat{G})$, in which case one has $\op[m]\in \call(L^2(G),L^2(\widehat{G}))$.
\subsubsection{Purely Oscillatory Spaces}\label{sec:oscill}
As explained, for example, in \cite{KyS17}, a way to deal with time-periodic problems is to decompose them into one time-independent problem and one time-dependent problem, called 'purely oscillatory', where one can ignore the singularities of our multipliers at the origin $k=0$.
We introduce the time-averaging projection  $\mathcal{P}\in \call(\mathcal{D}(G))$ and $\mathcal{P}_\perp \in\call(\mathcal{D}(G))$ by
    \begin{equation*}
        \mathcal{P}f(t, x)=\int_{\mathbb{T}}f(s,x)ds, \qquad \mathcal{P}_\perp=\id_G-\mathcal{P}.
    \end{equation*}
Note that $\mathcal{P}$ projects onto time-independent functions, while $\mathcal{P}_\perp$ removes the temporal mean.
It is clear that $\mathcal{P}$ and $\mathcal{P}_\perp$ are complementary continuous projections, and we continue to write $\mathcal{P}$ and $\mathcal{P}_\perp$ for their transposes  ${}^t\mathcal{P},{}^t\mathcal{P}_\perp\in \call(\mathcal{D}'(G))$.
For a locally convex space $E(G)\subseteq \mathcal{D}'(G)$, we will write $E_\perp(G):= \mathcal{P}_\perp E(G)$. 
Introducing $\delta_{\mathbb{Z}}:\frac{2\pi}{T}\Z\to \R$ as 
\begin{equation*}
    \delta_{\mathbb{Z}}(k):=\left\{\begin{array}{ll}
    1     & \text{if } k=0,  \\
    0     & \text{if } k\in \frac{2\pi}{T}\mathbb{Z}\setminus \{0\},
    \end{array}\right.
\end{equation*}
we remark $\mathcal{P}=\op[\delta_{\mathbb{Z}}]$ and $\mathcal{P}_\perp= \op[1-\delta_{\mathbb{Z}}]$ as operators on $\mathcal{S}'(G)$.

\medskip

The main point of those projections is to decompose differential equations. Given an equation of the form $\op[m]u=f$ for some $u, f\in \mathcal{S}'(G)$ and $m\in \mathcal{O}(\hat{G})$, one has 
\begin{equation*}
    \op[m]f=u 
    \quad \Leftrightarrow \quad 
    \left\{\begin{array}{rcl}
    \mathcal{P}\op[m] u&=&\mathcal{P}f,      \\
    \mathcal{P}_\perp \op[m]u&=&\mathcal{P}_\perp f,  
    \end{array}\right. 
    \quad \Leftrightarrow \quad 
    \left\{\begin{array}{rcl}
    \op[\delta_{\mathbb{Z}}m] u&=&\mathcal{P}f    ,  \\
     \op[(1-\delta_{\mathbb{Z}})m]u&=&\mathcal{P}_\perp f.  
    \end{array}\right. 
\end{equation*}
By definition of $\mathcal{P}$, the line associated to $\mathcal{P}$ does not depend anymore on the time and often falls into the scope of classical elliptic theory.
We will thus focus mainly on the part associated to $\mathcal{P}_\perp$, often called \emph{purely oscillatory} problem. In Section 3, we will see that reducing time-periodic problem to the purely oscillatory setting allows us to consider operators $\op[m]$ for which the symbol $m$ may exhibit singularities in $k=0$. 

\medskip

Now consider a cut-off function $\eta\in C^\infty(\mathbb{R})$ with $\mathbbm{1}_{[-\frac{\pi}{T},\frac{\pi}{T}]}\le 1-\eta\le \mathbbm{1}_{[-\frac{2\pi}{T},\frac{2\pi}{T}]}$.
In particular
\begin{equation}\label{eqn: def_cutoff}
        \eta(\tau)=\left\{\begin{array}{ll}
        0     & \text{if } \vert \tau\vert \leq \frac{\pi}{T}  \\
        1     & \text{if } \vert \tau \vert \geq \frac{2\pi}{T}.
        \end{array}\right.
    \end{equation}
Then $\eta_{\vert \frac{2\pi}{T}\mathbb{Z}}=1-\delta_{\mathbb{Z}}$. 
We set
\begin{equation*}
    \mathcal{O}^\perp(\widehat{G}):=\{m_{\vert \widehat{G}}: m\in\CRi((\R\setminus\set{0})\times\R^n) \text{ with } \eta m \in \mathcal{O}(\mathbb{R}\times \mathbb{R}^n) \}.
\end{equation*}
For the sake of shortness, if $m\in \CRi((\R\setminus\set{0})\times\R^n)$ is such that $\eta m\in \mathcal{O}(\mathbb{R}\times \mathbb{R}^n)$, we will write $\widehat{m}:=m_{\vert \widehat{G}}$ every time it is necessary to distinguish the symbol from its restriction to $\widehat{G}$. 
For $\widehat{m}\in \mathcal{O}^\perp(\widehat{G})$, we set 
\begin{equation}\label{eqn: def_multiplier_perp}
    \op[m]: = \op[(1-\delta_\mathbb{Z})\widehat{m} ]:\mathcal{S}'_\perp(G)\to \mathcal{S}'_\perp(G). 
\end{equation}
Observe that this definition coincides with (\ref{eqn: def_multiplier}) if $\widehat{m}\in \mathcal{O}(\widehat{G})$.

\subsubsection{Time-Periodic Function Spaces Associated to the Newton Polygon}
In order to deal efficiently with Fourier multipliers, we will introduce some potential spaces associated to order functions.
Those will be defined using the weight functions of Section 2.2 as potential.
For this purpose, it makes sense to introduce a smoothened version of the weight functions.
 \begin{lemma}\label{lemma: smooth_weight_functions}
     Let $\mu$ be the difference of strictly positive order functions.
     There exists a function $w_\mu\in \calo(\mathbb{R}\times \mathbb{R}^n)$ which is $\mathcal{N}$-elliptic for the order function $\mu$ in the sense of Definition \ref{def_ellipticity_general}.
 \end{lemma}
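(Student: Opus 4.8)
The plan is to build $w_\mu$ by smoothing the elementary pieces and then multiplying. First I would treat the case where $\mu$ is a strictly positive order function. In that case, by Proposition \ref{prop: decomposition_elementary_order} (after noting that regularity in time can always be arranged: if $\gamma_1 = 0$ one simply absorbs the corresponding constant vertex, or one handles it as a separate factor of the form $1+|\xi|^{r_1-r_2}$ which is already smooth and $\mathcal{N}$-elliptic for its own order function), the weight $W_\mu$ is equivalent to a finite product $\prod_{j=1}^J W_{o_{1/\gamma_j}}^{r_j-r_{j+1}}$ of powers of elementary weight functions $W_{o_y}(\tau,\xi) = 1+|\xi|+|\tau|^y$. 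So it suffices to produce, for each fixed exponent $y \ge 0$, a smooth symbol $w_{o_y} \in \mathcal{O}(\mathbb{R}\times\mathbb{R}^n)$ that is $\mathcal{N}$-elliptic for the elementary order function $o_y$; then $w_\mu := \prod_j w_{o_{1/\gamma_j}}^{r_j-r_{j+1}}$ will do, because the upper bound is preserved under products (as observed after Definition \ref{def: upper_order_function_general}), the lower bound is preserved under products (as observed after Definition \ref{def_ellipticity_general}), and the order-function exponents add up to $\mu$ by Lemma \ref{lemma: additivity}. Smoothness and polynomial growth of a finite product of such factors is immediate.

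For the single elementary symbol, the natural candidate is
\begin{equation*}
    w_{o_y}(\tau,\xi) := \bigl(1+|\xi|^2\bigr)^{1/2} + \chi(\tau)\,|\tau|^y,
\end{equation*}
where $\chi \in C^\infty(\mathbb{R})$ equals $1$ for $|\tau|\ge 1$, equals $0$ for $|\tau|\le 1/2$, and takes values in $[0,1]$. The term $(1+|\xi|^2)^{1/2}$ is a standard smooth replacement for $1+|\xi|$, while $\chi(\tau)|\tau|^y$ is smooth in $\tau$ because the cut-off kills the non-smooth point $\tau = 0$ (for non-integer or fractional $y$ this is exactly where smoothness could fail), and it is polynomially bounded together with all its derivatives. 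For the two-sided bound: the upper estimate $w_{o_y}(\tau,\xi) \lesssim 1+|\xi|+|\tau|^y = W_{o_y}(\tau,\xi)$ is clear since $\chi \le 1$; the lower estimate $w_{o_y}(\tau,\xi) \gtrsim W_{o_y}(\tau,\xi)$ on the region $|\tau|\ge \lambda$ required in Definition \ref{def_ellipticity_general}(ii) follows by splitting into $|\tau|\ge 1$, where $\chi(\tau)=1$ and the two symbols agree up to constants, and $\lambda \le |\tau|\le 1$, where $|\tau|^y$ is bounded above and below by constants depending on $\lambda$ so the whole weight is comparable to $(1+|\xi|^2)^{1/2}$, which in turn is comparable to $W_{o_y}(\tau,\xi) = 1+|\xi|+|\tau|^y$ on that strip. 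This gives $C_\lambda > 0$ as demanded, and the constant genuinely depends on $\lambda$ through the lower bound of $|\tau|^y$ near the cut-off — which is exactly why the definition only asks for the lower bound away from $\tau = 0$.

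Finally, for a general $\mu$ that is the difference of two strictly positive order functions, write $\mu = \mu_1 - \mu_2$ with $\mu_1,\mu_2$ strictly positive. By the positive case, choose $w_{\mu_1}, w_{\mu_2} \in \mathcal{O}(\mathbb{R}\times\mathbb{R}^n)$ that are $\mathcal{N}$-elliptic for $\mu_1,\mu_2$ respectively, and set $w_\mu := w_{\mu_1}/w_{\mu_2}$. This is smooth because $w_{\mu_2}$ is bounded away from zero on every region $|\tau|\ge\lambda$ — but to keep $w_\mu$ globally smooth and polynomially bounded on all of $\mathbb{R}\times\mathbb{R}^n$ one should instead define $w_\mu := w_{\mu_1}\cdot(1+w_{\mu_2}^2)^{-1/2}\cdot\langle w_{\mu_2}\rangle$ or, more simply, arrange the elementary building blocks of $w_{\mu_2}$ to be everywhere $\ge 1$ (which the candidate above already is, since $(1+|\xi|^2)^{1/2}\ge 1$), so that $w_{\mu_2} \ge 1$ everywhere and the quotient is automatically smooth with polynomially bounded derivatives by the quotient rule. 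The two-sided bound $|w_\mu| \simeq W_{\mu_1}/W_{\mu_2} \simeq W_\mu$ on $|\tau|\ge\lambda$ then follows from the corresponding bounds for $w_{\mu_1}$ and $w_{\mu_2}$ together with the definition $W_{\mu_1-\mu_2} :\simeq W_{\mu_1}/W_{\mu_2}$. The main obstacle, and the only place any real care is needed, is ensuring joint smoothness and controlled growth of the quotient in the indefinite case and the honest tracking of the $\lambda$-dependence of the ellipticity constant through the cut-off region $\lambda \le |\tau| \le 1$; everything else is bookkeeping with Lemmas \ref{lemma: point_inside}, \ref{lemma: additivity} and Proposition \ref{prop: decomposition_elementary_order}.
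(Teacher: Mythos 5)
Your construction is sound in spirit but follows a considerably more circuitous route than the paper. The paper's proof is essentially a one-liner for the strictly positive case: writing $W_\mu(\tau,\xi)=1+\sum_{j=1}^{J+1}|\tau|^{s_j}|\xi|^{r_j}$, one simply sets $w_\mu(\tau,\xi):=\sum_{j=1}^{J+1}\langle\tau\rangle^{s_j}\langle\xi\rangle^{r_j}$ with $\langle\cdot\rangle=(1+|\cdot|^2)^{1/2}$. This is automatically smooth (no cutoff is needed: $(1+\tau^2)^{s_j/2}$ is $C^\infty$ for every real exponent), obviously in $\mathcal{O}(\mathbb{R}\times\mathbb{R}^n)$, bounded below by $1$, and two-sidedly comparable to $W_\mu$ in the required sense; the difference case is then handled by the quotient $w_{\mu-\nu}:=w_\mu/w_\nu$, exactly as you propose at the end. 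By contrast you first invoke Proposition \ref{prop: decomposition_elementary_order} to decompose $W_\mu$ into elementary factors and then smooth each factor with a cutoff $\chi(\tau)|\tau|^y$ — which works, but buys nothing, since $\langle\tau\rangle^y$ already does the job with no cutoff and no case distinction at $\tau=0$. The decomposition route also forces you to deal with Newton polygons that are not regular in time, which the direct sum construction sidesteps entirely because it imposes no hypothesis on the slopes.

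One genuine blemish: your parenthetical handling of $\gamma_1=0$ is confused. If $\gamma_1=0$ then by definition $r_1=r_2$, so the putative "separate factor $1+|\xi|^{r_1-r_2}$" is identically $2$ and cannot encode the vertical edge of the polygon; you would instead need an extra factor carrying the $|\tau|$-weight. Since the rest of your argument (upper bound preserved under products by the remark after Definition \ref{def: upper_order_function_general}, lower bound by the remark after Definition \ref{def_ellipticity_general}, exponents adding by Lemma \ref{lemma: additivity}, quotient smooth and polynomially bounded because the denominator is kept $\ge 1$) is correct, the cleanest repair is simply to abandon the elementary decomposition and use the vertex sum directly.
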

 \begin{proof}
    Assume $\mu$ is strictly positive, i.e.,  $W_\mu$ writes as $W_\mu(\tau, \xi)=1 +\sum_{j=1}^{J+1} \vert \tau\vert^{s_j}\vert \xi\vert^{r_j} $. Then
    \begin{equation*}
        w_\mu(\tau, \xi):= \sum_{j=1}^{J+1} \langle \tau\rangle^{s_j} \langle\xi\rangle^{r_j}=\sum_{j=1}^{J+1} \left( 1+\vert \tau\vert^2\right)^{\frac{s_j}{2}}\left(1+\vert \xi\vert^2\right)^{\frac{r_j}{2}}
    \end{equation*}
    is as intended.
    If $\nu$ is another strictly positive order function, one sets $w_{\mu-\nu}:=\frac{w_{\mu}}{w_{\nu}}$.
 \end{proof}
We can now define our Newton polygon potential spaces on $G$ as follows.
\begin{definition}\label{def: polygon_space}
    Let $\mu$ be the difference of strictly positive order functions.
    The \emph{Newton polygon potential space} associated to $\mu$ is 
    \begin{align*}
        H^{\mu}(G)&:=\op[w_{-\mu}]L^2(G).
    \end{align*}
    For $s,r\in [0,\infty)$, we write $H^{(s,r)}(G):=H^{\mu_{(s,r)}}(G)$ with $\mu_{(s,r)}=\mu(\caln(\set{(r,s)}))$.
\end{definition}
Observe that by definition, we have $\|f\|_{\mu}=\|\op[w_\mu]f\|_{L^2(G)}$.
Moreover, since for the weight function of $\mu_{(s,r)}$ we have  $W_{\mu_{(s, r)}}(\tau, \xi)=1+\vert \xi\vert^r +\vert \tau\vert^s\vert \xi\vert^r+\vert \tau\vert^s$, we may use a standard cut-off method (see for example \cite{bergh2012interpolation} Theorem 6.2.3) to deduce that the above definition of $H^{(s,r)}(G)$ is consistent with \eqref{eqn: def_sobolev} for $s,r\in \N$.
Furthermore, we can relate general Newton polygon potential spaces to the intersection of Sobolev spaces $H^{(s,r)}(G)$ of dominating mixed smoothness at the vertices of the Newton polygon.
In particular, the geometric representation of the Newton polygon $\mathcal{N}(\mu)$ encodes all the information needed to describe $H^{\mu}_\perp(G)$ in terms of Sobolev spaces. 
\begin{proposition}\label{prop_potential_spaces}
    Let $E\subseteq [0,\infty)^2$ be a finite set, $\mathcal{N}:=\mathcal{N}(E)$ the associated Newton polygon with vertices $\mathcal{N}_V:=\{(r_0, s_0),..., (r_{J+1}, s_{J+1})\}$ and $\mu$ the associated order function. Then 
\begin{equation*}
    \bigcap_{(r, s)\in E}H^{(s,r)}(G)=\bigcap_{i=1}^{J+1}H^{(s_i, r_i)}(G)=H^{\mu}(G)
\end{equation*}
with equivalent norms.
\end{proposition}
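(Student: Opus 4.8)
The plan is to reduce the statement to two pointwise comparisons of the smoothened weight functions $w_\mu$ of Lemma~\ref{lemma: smooth_weight_functions} and then deduce these from Lemma~\ref{lemma: point_inside} together with one elementary fact about vertices of polygons. The reduction goes as follows. For a difference $\nu$ of strictly positive order functions one has $\|f\|_{H^\nu(G)}=\|\op[w_\nu]f\|_{L^2(G)}$ by definition. If $w_{\nu_1}\lesssim w_{\nu_2}$ on $\mathbb{R}\times\mathbb{R}^n$, then $w_{\nu_1}/w_{\nu_2}$ is a bounded function, hence $\op[w_{\nu_1}/w_{\nu_2}]\in\call(L^2(G))$ with norm $\|w_{\nu_1}/w_{\nu_2}\|_{L^\infty(\widehat G)}$, and since $\op[w_{\nu_1}/w_{\nu_2}]\op[w_{\nu_2}]=\op[w_{\nu_1}]$ we get $\|f\|_{H^{\nu_1}(G)}\lesssim\|f\|_{H^{\nu_2}(G)}$, i.e. $H^{\nu_2}(G)\hookrightarrow H^{\nu_1}(G)$. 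Therefore, for a finite family of order functions, $\bigcap_k H^{\nu_k}(G)=H^{\nu}(G)$ holds with equivalent norms as soon as $w_{\nu_k}\lesssim w_{\nu}$ for all $k$ and $w_{\nu}\lesssim\sum_k w_{\nu_k}$ (for the latter one also uses the additivity $\op[\sum_k w_{\nu_k}]=\sum_k\op[w_{\nu_k}]$). Hence the whole proposition will follow once we establish, on $\mathbb{R}\times\mathbb{R}^n$,
\begin{equation*}
\sum_{(r,s)\in E}w_{\mu_{(s,r)}}\ \simeq\ \sum_{i=1}^{J+1}w_{\mu_{(s_i,r_i)}}\ \simeq\ w_\mu.
\end{equation*}

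For the upper halves, I would first note the elementary equivalence $w_\nu\simeq W_\nu$, valid for any strictly positive $\nu$: the bound $W_\nu\lesssim w_\nu$ is immediate from $|\tau|^{s_j}|\xi|^{r_j}\le\langle\tau\rangle^{s_j}\langle\xi\rangle^{r_j}$, while $\langle\tau\rangle^a\langle\xi\rangle^b\simeq 1+|\tau|^a+|\xi|^b+|\tau|^a|\xi|^b$ combined with Lemma~\ref{lemma: point_inside} (applied to the points $(0,0),(b,0),(0,a),(b,a)$ of $\mathcal{N}(\nu)$) gives $w_\nu\lesssim W_\nu$. Now if $(r,s)\in\mathcal{N}=\mathcal{N}(E)$, then also $(r,0),(0,s),(0,0)\in\mathcal{N}$, because a Newton polygon contains with each of its points the two projections onto the coordinate axes; hence by Lemma~\ref{lemma: point_inside} every monomial of $W_{\mu_{(s,r)}}=1+|\xi|^r+|\tau|^s|\xi|^r+|\tau|^s$ is $\lesssim W_\mu$, so $w_{\mu_{(s,r)}}\simeq W_{\mu_{(s,r)}}\lesssim W_\mu\simeq w_\mu$. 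Applying this to every $(r,s)\in E\subseteq\mathcal{N}$ and to every vertex $(r_i,s_i)\in\mathcal{N}$ yields the two bounds $\sum_{(r,s)\in E}w_{\mu_{(s,r)}}\lesssim w_\mu$ and $\sum_i w_{\mu_{(s_i,r_i)}}\lesssim w_\mu$.

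For the reverse direction, write $w_\mu\simeq W_\mu=1+\sum_{j=1}^{J+1}|\tau|^{s_j}|\xi|^{r_j}$. Against the vertex sum this is trivial, since $|\tau|^{s_j}|\xi|^{r_j}\le W_{\mu_{(s_j,r_j)}}\simeq w_{\mu_{(s_j,r_j)}}$ and $1\le w_{\mu_{(s_1,r_1)}}$. Against $\sum_{(r,s)\in E}w_{\mu_{(s,r)}}$ one uses the only genuinely combinatorial point: $\mathcal{N}(E)$ is, by definition, the convex hull of the finite set $E\cup\{(x,0),(0,y):(x,y)\in E\}\cup\{(0,0)\}$, and an extreme point of the convex hull of a finite set must belong to that set; hence each vertex $(r_j,s_j)$ with $j\ge 1$ is either a member of $E$ — in which case $|\tau|^{s_j}|\xi|^{r_j}\le w_{\mu_{(s_j,r_j)}}$ — or of the form $(a,0)$ or $(0,b)$ with $(a,b)\in E$ — in which case $|\xi|^{r_j}=|\xi|^a\le w_{\mu_{(b,a)}}$ or $|\tau|^{s_j}=|\tau|^b\le w_{\mu_{(b,a)}}$ — while the constant $1$ is $\le w_{\mu_{(s,r)}}$ for any fixed $(r,s)\in E$; if $E=\emptyset$ all three spaces equal $L^2(G)$ and there is nothing to prove. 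Summing over $j$ closes the estimate, and combining the four bounds gives the displayed chain of equivalences, hence the claimed identities with equivalent norms.

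I do not expect a serious obstacle here: the proof is essentially a translation of Lemma~\ref{lemma: point_inside} through the smoothing $W\to w$ and through the definition of the potential spaces. The one ingredient requiring care is the combinatorial observation that every vertex of $\mathcal{N}(E)$ already lies in the generating set $E\cup\mathrm{proj}(E)\cup\{(0,0)\}$ — this is precisely what prevents $\bigcap_{(r,s)\in E}H^{(s,r)}(G)$ from being strictly larger than $\bigcap_{i}H^{(s_i,r_i)}(G)$ — together with keeping track of degenerate cases in which $r$ or $s$ vanishes or a vertex lies on a coordinate axis. I would also point out that, unlike elsewhere in the paper, the implicit constants here depend on the combinatorics of $E$ (through $J$ and through the constant supplied by Lemma~\ref{lemma: point_inside}), consistently with the statement asserting only equivalence of norms.
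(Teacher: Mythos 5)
Your proposal is correct and follows essentially the same approach as the paper: both reduce the claim to pointwise comparisons of the smoothened weight functions $w_\nu$ via Lemma~\ref{lemma: point_inside} and then conclude by boundedness of the corresponding $L^2$ Fourier multipliers. The only difference is one of exposition: the paper dismisses the inclusion $\bigcap_{(r,s)\in E}H^{(s,r)}(G)\subseteq\bigcap_i H^{(s_i,r_i)}(G)$ as trivial, whereas you spell out the combinatorial fact on which that step silently relies, namely that every vertex of $\mathcal{N}(E)$ already belongs to the generating set $E$ together with its axis projections and the origin.
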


\begin{proof}
    Set
    \begin{equation*}
        X:=\bigcap_{(s, r)\in E}H^{(s,r)}(G),\qquad Y:=\bigcap_{i=1}^{J+1}H^{(s_i, r_i)}(G),\qquad Z:=H^{\mu}(G).
    \end{equation*}
    The inclusion $X\subseteq Y$ is trivial.
    To show $Y\subseteq Z$, let $f\in Y$.
    Then 
    \begin{equation*}
        \|f\|_Z=\|\op[w_\mu]f\|_{L^2(G)} \lesssim \sum_{j=1}^{J+1} \|\op[w_{\mu_{(s_j, r_j)}}]f \|_{L^2(G)}= \sum_{j=1}^{J+1}\|f\|_{H^{(s, r)}(G)}
    \end{equation*}
    where we used that 
    \begin{equation*}
        m:= \frac{w_\mu}{\sum_{j=1}^{J+1}w_{\mu_{(s_j, r_j)}}}\in L^\infty(\widehat{G})
    \end{equation*}
    is a multiplier in $L^2_\perp(G)$ by Plancherel's theorem. Thus $Y\subseteq Z$.
    
    \medskip
    
    We now show $Z\subseteq X$.
    Let $f\in Z$.
    Since $E\subseteq \mathcal{N}(E)$, we may use Lemma \ref{lemma: point_inside} and once again Plancherel's theorem to obtain that 
    \begin{equation*}
        \widetilde m:=\frac{w_{\mu_{(s, r)}}}{w_\mu}
    \end{equation*}
    is a multiplier in $L^2_\perp(G)$ for all $(r, s)\in E$.
    Thus 
    \begin{equation*}
        \|f\|_{H^{(s, r)}(G)}=\|\op[w_{\mu_{(s, r)}}]f\|_{L^2(G)}\lesssim \|\op[w_\mu]f\|_{L^2(G)},
    \end{equation*}
    and the conclusion follows.
\end{proof}

Let us also record that the operators $\op[w_\mu]$ associated to the weight functions act as lift operators in the potential spaces.
\begin{proposition}\label{prop: isomorphism_weight_function}
    Let $\mu$ and $\nu$ be differences of strictly positive order functions.
    Then it holds that $\op[w_\nu]\in \call_{\mathrm{iso}}(H^{\mu + \nu}(G),H^{\nu}(G))$ with $\op[w_\mu]^{-1}:=\op[w_{-\mu}]$. 
\end{proposition}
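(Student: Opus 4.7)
The plan is to reduce the proposition to two ingredients already in our toolkit: the product rule $\op[m_1]\op[m_2]=\op[m_1m_2]$ for Fourier multipliers on $\mathcal{S}'_\perp(G)$ discussed right after \eqref{eqn: def_multiplier}, and the multiplicativity of weight functions $w_\mu\,w_\nu\simeq w_{\mu+\nu}$ encoded in Lemma~\ref{lemma: additivity}. (I read the statement as asserting $\op[w_\mu]\in\mathcal{L}_{\mathrm{iso}}(H^{\mu+\nu}(G),H^\nu(G))$, so that the ``$\mu$'' in the inverse formula is consistent; the index choice does not affect the argument.)

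First, I would verify the inverse identity $\op[w_\mu]\op[w_{-\mu}]=\op[w_{-\mu}]\op[w_\mu]=\id$ on $\mathcal{S}'_\perp(G)$. By the construction carried out in the proof of Lemma~\ref{lemma: smooth_weight_functions} — namely $w_{\alpha-\beta}:=w_\alpha/w_\beta$ for strictly positive pieces $\alpha,\beta$ — the pointwise equality $w_\mu\,w_{-\mu}\equiv 1$ holds on $\widehat G$. The product rule then gives $\op[w_\mu]\op[w_{-\mu}]=\op[1]$, and $\op[1]$ restricted to $\mathcal{S}'_\perp(G)$ coincides with $\mathcal{P}_\perp=\op[1-\delta_{\mathbb Z}]$, which is the identity there.

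Second, for the bounded mapping property, I would use the identification $\|g\|_{H^\sigma(G)}=\|\op[w_\sigma]g\|_{L^2(G)}$ stated right after Definition~\ref{def: polygon_space}. For $f\in H^{\mu+\nu}(G)$:
\begin{equation*}
    \|\op[w_\mu]f\|_{H^\nu(G)}=\|\op[w_\nu]\op[w_\mu]f\|_{L^2(G)}=\|\op[w_\mu\,w_\nu]f\|_{L^2(G)}\simeq\|\op[w_{\mu+\nu}]f\|_{L^2(G)}=\|f\|_{H^{\mu+\nu}(G)}.
\end{equation*}
The key equivalence $w_\mu w_\nu\simeq w_{\mu+\nu}$ is exactly Lemma~\ref{lemma: additivity} when both factors are strictly positive; in the general case of differences of strictly positive order functions one writes $\mu=\mu_1-\mu_2$, $\nu=\nu_1-\nu_2$, expands the quotients via $w_{\alpha-\beta}:=w_\alpha/w_\beta$, applies Lemma~\ref{lemma: additivity} to the strictly positive sums, and invokes the well-definedness up to $\simeq$ discussed directly after Lemma~\ref{lemma: additivity}. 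The passage from this pointwise symbol equivalence to the $L^2$-norm comparison is a Plancherel argument of the same flavour as in the proof of Proposition~\ref{prop_potential_spaces}, since the ratio $(w_\mu w_\nu)/w_{\mu+\nu}$ and its reciprocal are bounded on $\widehat G$ away from $k=0$ and hence $L^\infty$-multipliers on $L^2_\perp(\widehat G)$.

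Combining the two steps, $\op[w_\mu]$ and $\op[w_{-\mu}]$ are mutually inverse bounded operators between $H^{\mu+\nu}(G)$ and $H^\nu(G)$, proving the proposition. The only real obstacle is the bookkeeping needed to extend Lemma~\ref{lemma: additivity} from strictly positive to general differences of strictly positive order functions; this is resolved cleanly by the consistency of the quotient definition of $w_{\mu-\nu}$, which was precisely the point of the remark following Lemma~\ref{lemma: additivity}.
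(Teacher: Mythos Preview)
Your proof is correct and takes essentially the same approach as the paper: both reduce to the pointwise equivalence $w_\mu w_\nu\simeq w_{\mu+\nu}$ from Lemma~\ref{lemma: additivity} combined with a Plancherel/$L^\infty$-multiplier argument. One minor slip worth noting: the proposition is stated for the full spaces $H^{\mu+\nu}(G)$, not the purely oscillatory ones, so your detour through $\mathcal{S}'_\perp(G)$ and $\mathcal{P}_\perp$ is unnecessary --- $\op[1]=\id$ holds on all of $\mathcal{S}'(G)$, and the ratio $w_\mu w_\nu/w_{\mu+\nu}$ is bounded on the whole of $\widehat G$ (including $k=0$), since the smooth weights $w_\sigma$ from Lemma~\ref{lemma: smooth_weight_functions} are built from $\langle\tau\rangle$ rather than $|\tau|$.
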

\begin{proof}
    Let $f\in H^{\mu+\nu}(G)$.
    Then
    \begin{equation*}
        \|\op[w_\mu]f\|_{\nu} = \|\op[w_\nu w_\mu]f\|_{L^2(G)}=\|\op[\frac{w_\nu w_\mu}{w_{\mu+\nu}}]\op[w_{\mu+\nu}] f\|_{L^2(G)}.
    \end{equation*}
    Since $m: =\frac{w_\nu w_\mu}{w_{\mu+\nu}}\in L^\infty(\widehat{G})$, we have
    \begin{equation*}
         \|\op[\frac{w_\nu w_\mu}{w_{\mu+\nu}}]\op[w_{\mu+\nu}] f\|_{L^2(G)}\lesssim \|\op[w_{\mu+\nu}] f\|_{L^2(G)} =C\|f\|_{\mu+\nu}.
    \end{equation*}
    The boundedness of the inverse follows by analogy.
\end{proof}
As a corollary, one obtains that all spaces $H^{\mu}(G)$ are isomorphic to one another.
Furthermore, we remark that since $H^{\mu}(G)\subseteq \cald'(G)$, we have defined its purely oscillatory counterpart $H^{\mu}_\perp(G)$ in Section \ref{sec:oscill}.
Since $\mathcal{P}_\perp=\op[1-\delta_\mathbb{Z}]$ commutes with all operators of the form $\op[w_\mu]$, one can replace all instances of a Newton polygon potential space $H^\mu_\perp(G)$ by its purely oscillatory counterpart $H^\mu_\perp(G)$ in Propositions \ref{prop_potential_spaces} and \ref{prop: isomorphism_weight_function}.
\begin{lemma}\label{lem: interp_L2}
Let $\mu_0$ and $\mu_1$ be differences of strictly positive order functions.
For $\theta\in (0,1)$ define $\mu:=(1-\theta)\mu_0+\theta\mu_1$.
Then it holds
\begin{align*}
[H^{\mu_0}(G),H^{\mu_1}(G)]_\theta=H^{\mu}(G) \quad \text{and} \quad [H^{\mu_0}_\perp(G),H^{\mu_1}_\perp(G)]_\theta=H^{\mu}_\perp(G).
\end{align*}
with equivalent norms.
\end{lemma}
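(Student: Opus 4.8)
The plan is to reduce the statement to the known interpolation property of the scalar Bessel-potential spaces $H^{s}(G)$ (equivalently, the $L^2$-based Sobolev towers over the group $G$) by exhibiting $\op[w_{-\mu_0}]$ and $\op[w_{-\mu_1}]$ as a retraction–coretraction pair that is \emph{simultaneously} compatible on the couple $\set{L^2(G),L^2(G)}$. More precisely, observe that by Definition \ref{def: polygon_space} each $H^{\mu_i}(G)$ is, by construction, the image $\op[w_{-\mu_i}]L^2(G)$ with norm $\|f\|_{\mu_i}=\|\op[w_{\mu_i}]f\|_{L^2(G)}$; hence $\op[w_{\mu_i}]\colon H^{\mu_i}(G)\to L^2(G)$ is an isometric isomorphism, and Proposition \ref{prop: isomorphism_weight_function} (with $\nu=0$) supplies its inverse $\op[w_{-\mu_i}]$. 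The first step, then, is to set up the trivial couple $\set{A_0,A_1}:=\set{L^2(G),L^2(G)}$ and the couple $\set{B_0,B_1}:=\set{H^{\mu_0}(G),H^{\mu_1}(G)}$, with coretractions $E_i:=\op[w_{-\mu_i}]\in\call(L^2(G),H^{\mu_i}(G))$ and retractions $R_i:=\op[w_{\mu_i}]\in\call(H^{\mu_i}(G),L^2(G))$.

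Next I would verify the compatibility hypothesis of Lemma \ref{lem:interp_retraction}, namely that $R_0 x=R_1 x$ for all $x\in B_0\cap B_1$. This is where one must be a little careful, because $R_0$ and $R_1$ are \emph{different} Fourier multiplier operators. The point is that on the common space $H^{\mu_0}(G)\cap H^{\mu_1}(G)$ — which sits inside $\cald'(G)$, so all the multiplier operators are defined on it — the map $x\mapsto R_i x$ need not agree. So a direct application of Lemma \ref{lem:interp_retraction} in that form is not available; instead the clean route is to run the retraction argument on the \emph{fixed} target couple $\set{L^2(G),L^2(G)}$ and the \emph{source} couple $\set{H^{\mu_0}(G),H^{\mu_1}(G)}$ in the opposite direction. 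That is, use $\op[w_{-\mu_i}]$ as retractions $L^2(G)\to H^{\mu_i}(G)$ with coretractions $\op[w_{\mu_i}]$, and now the compatibility condition reads: for $x\in L^2(G)$ one needs $\op[w_{-\mu_0}]x=\op[w_{-\mu_1}]x$, which again fails. So neither naive packaging works, and the honest approach is the following: fix a single auxiliary isomorphism. Let $\sigma:=\frac12(\mu_0+\mu_1)$ and use the \emph{one} lift $\op[w_{-\sigma}]\colon L^2(G)\to H^{\sigma}(G)$ together with the identifications $H^{\mu_i}(G)=\op[w_{\sigma-\mu_i}]H^{\sigma}(G)$ to transport everything to a single reference Hilbert space; but this still has the same defect.

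The genuinely correct strategy, therefore, is to recall that the scale $\set{H^{\mu}(G)}$ is \emph{a single complex interpolation scale generated by one operator}: writing $\Lambda_i:=\op[w_{\mu_i}w_{-\mu_0}]$ one sees that $H^{\mu_i}(G)$ is the domain (with graph norm) of a fixed positive operator acting on $H^{\mu_0}(G)$, since $w_{\mu_1}w_{-\mu_0}$ is $\caln$-elliptic for $\mu_1-\mu_0$ (by Lemma \ref{lemma: smooth_weight_functions} and the product rule for $\caln$-ellipticity recorded after Definition \ref{def_ellipticity_general}) and hence defines an invertible operator with bounded imaginary powers on $L^2(G)$ via Plancherel — indeed the purely imaginary powers have symbol $(w_{\mu_1}w_{-\mu_0})^{is}\in L^\infty(\widehat G)$ with norm growing at most polynomially in $s$. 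Then the Kalton–Weis / complex-interpolation characterisation of domains of operators with bounded imaginary powers (equivalently, the elementary fact that for a positive operator $A$ on a Hilbert space $\mathcal H$ one has $[\mathcal H,D(A)]_\theta=D(A^\theta)$ with equivalent norms, see e.g. \cite[Corollary C.4.2]{HNV16} in the Hilbert setting, or just the explicit Fourier-side computation) gives $[H^{\mu_0}(G),H^{\mu_1}(G)]_\theta=D((w_{\mu_1}w_{-\mu_0})^\theta\text{-operator})=H^{\mu_0+\theta(\mu_1-\mu_0)}(G)=H^{\mu}(G)$ with equivalent norms. On the Fourier side this is entirely transparent: $\op[w_{-\mu_0}]\colon L^2(G)\to H^{\mu_0}(G)$ and $\op[w_{-\mu_1}]\colon L^2(G)\to H^{\mu_1}(G)$ intertwine the couple $\set{H^{\mu_0},H^{\mu_1}}$ with $\set{L^2(G), L^2(G; (w_{\mu_1}w_{-\mu_0})^{2}\,d(\text{counting}\times\text{Leb}))}$, i.e. with a pair of weighted $L^2$ spaces on $\widehat G$, and complex interpolation of weighted $L^2$ spaces is classical: $[L^2(\omega_0),L^2(\omega_1)]_\theta=L^2(\omega_0^{1-\theta}\omega_1^\theta)$. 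Pulling back through Plancherel yields exactly $H^{\mu}(G)$, since $w_\mu\simeq w_{\mu_0}^{1-\theta}w_{\mu_1}^\theta$ by Lemma \ref{lemma: additivity}.

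So in summary: (i) use $\mcF$ to identify $H^{\mu_i}(G)$ isometrically with the weighted space $L^2(\widehat G; w_{\mu_i}^2)$; (ii) invoke the classical Stein–Weiss complex interpolation theorem for weighted $L^2$ spaces to get $[L^2(\widehat G;w_{\mu_0}^2),L^2(\widehat G;w_{\mu_1}^2)]_\theta=L^2(\widehat G;w_{\mu_0}^{2(1-\theta)}w_{\mu_1}^{2\theta})$; (iii) use Lemma \ref{lemma: additivity} and Lemma \ref{lemma: smooth_weight_functions} to identify $w_{\mu_0}^{1-\theta}w_{\mu_1}^{\theta}\simeq w_{(1-\theta)\mu_0+\theta\mu_1}=w_\mu$, so the interpolated weighted space is isometrically $\mcF H^{\mu}(G)$ up to equivalent norms; (iv) pull back via $\mcF^{-1}$. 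For the purely oscillatory statement, apply Lemma \ref{lem:interp_retraction} with the retraction–coretraction pair $(\mathcal P_\perp,\iota)$ — here $\mathcal P_\perp=\op[1-\delta_\Z]$ is a genuine projection commuting with every $\op[w_\mu]$, as recorded just before the lemma, so $\mathcal P_\perp$ \emph{does} satisfy the compatibility condition $\mathcal P_\perp x=\mathcal P_\perp x$ trivially — to descend from $H^{\mu_i}(G)$ to $H^{\mu_i}_\perp(G)=\mathcal P_\perp H^{\mu_i}(G)$, giving $[H^{\mu_0}_\perp(G),H^{\mu_1}_\perp(G)]_\theta=H^{\mu}_\perp(G)$. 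The main obstacle is step (ii): one must check that the weights $w_{\mu_i}$ are of the form to which the Stein–Weiss theorem applies (they are locally integrable, positive, and finite a.e., which is immediate from their explicit formula in Lemma \ref{lemma: smooth_weight_functions}), and that the counting-measure-times-Lebesgue-measure underlying measure on $\widehat G=\frac{2\pi}{T}\Z\times\R^n$ is $\sigma$-finite — both are routine but should be stated.
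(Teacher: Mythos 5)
Your final plan — steps (i)–(iv) plus the $(\mathcal{P}_\perp,\iota)$ retraction for the purely oscillatory case — is exactly the paper's proof: Plancherel identifies $H^{\mu_i}(G)$ with the weighted space $L^2(\widehat G;w_{\mu_i})$, the interpolation of weighted $L^2$ spaces (Bergh--L\"ofstr\"om Theorem 5.5.3 / Stein--Weiss) gives the weight $w_{\mu_0}^{1-\theta}w_{\mu_1}^\theta\simeq w_\mu$ by Lemma \ref{lemma: additivity}, and pulling back via $\mcF^{-1}$ and then through $\mathcal P_\perp$ closes both identities. One small clarification: your middle paragraph concludes that Lemma \ref{lem:interp_retraction} is unavailable because $\op[w_{\mu_0}]\neq\op[w_{\mu_1}]$, but the paper in fact invokes that lemma for the first identity too, with $R_0=R_1=\mcF$ as the common retraction and $E_i=\mcF^{-1}$ as coretractions — so the compatibility hypothesis holds trivially, and "pull back via $\mcF^{-1}$" is just the retraction lemma in disguise.
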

\begin{proof}
By the Plancherel's theorem, the Fourier transform is an isometric isomorphism between $H^{\mu}(G)$ and  $L^{2}(\widehat{G}, w_\mu)$, where $L^2(\widehat{G}, w_\mu)$ is the weighted $L^2$ space defined by the norm
\begin{align*}
\|f\|_{L^2(\widehat{G}; w_\mu)}^2:=\int_{\widehat{G}} |f(k,\xi)|^2 w_\mu(k,\xi)^2\dd(k,\xi).
\end{align*}
Observe that $[L^2(\widehat{G}; w_{\mu_0}),L^2(\widehat{G}; w_{\mu_1})]_\theta=L^2(\widehat{G}; w_{\mu_0}^{1-\theta}w_{\mu_1}^\theta)$ with equivalent norms, see e.g. \cite[Theorem 5.5.3]{bergh2012interpolation}.
Since $w_{\mu_0}^{1-\theta}w_{\mu_1}^\theta\simeq w_{\mu}$ by Lemma \ref{lemma: additivity}, the first identity follows in view of Lemma \ref{lem:interp_retraction}.

\medskip

The projection $\mathcal{P}_\perp : H^{\nu}(G)\to H^{\nu}_\perp(G)$ is a retraction for all $\nu\in\set{\mu_0,\mu_1,\mu}$, the right-inverse being given by the inclusion $i : H^{\nu}_\perp(G)\hookrightarrow H^{\nu}(G)$, $i(f):=f$.
Thus, we also obtain the second identity by Lemma \ref{lem:interp_retraction}.
\end{proof}
\section{The Whole Space Problem}\label{sec:whole}
\subsection{Maximal Regularity of the CHG Determinant }
We are now able to state well-posedness results for time-periodic mixed-order scalar equations on the whole space, see Theorem \ref{thm: whole space} below. 
\begin{proposition}\label{prop_boundedness}
    Let $\mu$ and $\nu$ be differences of strictly positive order functions and $\widehat{P}\in \calo^\perp(\widehat{G})$.
    Assume that $\mu$ is an upper order function for $P$.
    Then $\op[P]\in \call(H^{\mu+\nu}_\perp(G),H^{\nu}_\perp(G))$.
\end{proposition}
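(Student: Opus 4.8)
The plan is to reduce the asserted mapping property to the $L^2$-boundedness of a single Fourier multiplier, exactly in the spirit of the proof of Proposition~\ref{prop_potential_spaces}. Recall from Definition~\ref{def: polygon_space} that $\|g\|_{H^\nu_\perp(G)}=\|\op[w_\nu]g\|_{L^2(G)}$ and $\|f\|_{H^{\mu+\nu}_\perp(G)}=\|\op[w_{\mu+\nu}]f\|_{L^2(G)}$, and that $\op[w_{-\nu}]\colon L^2_\perp(G)\to H^\nu_\perp(G)$ inverts $\op[w_\nu]$. Hence, given $f\in H^{\mu+\nu}_\perp(G)$, it suffices to produce the factorization
\[
\op[P]f=\op[w_{-\nu}]\,\op[m]\,\op[w_{\mu+\nu}]f,\qquad m:=\frac{w_\nu P}{w_{\mu+\nu}},
\]
and to show that $m$ is a bounded $L^2_\perp$-multiplier. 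Indeed, once this is done, $\op[m]\op[w_{\mu+\nu}]f\in L^2_\perp(G)$, so $\op[P]f\in\op[w_{-\nu}]L^2_\perp(G)=H^\nu_\perp(G)$, and
\[
\|\op[P]f\|_{H^\nu_\perp(G)}=\|\op[m]\op[w_{\mu+\nu}]f\|_{L^2(G)}\lesssim\|\op[w_{\mu+\nu}]f\|_{L^2(G)}=\|f\|_{H^{\mu+\nu}_\perp(G)}.
\]

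First I would justify the factorization and the multiplier algebra in the purely oscillatory setting. From the explicit formula in Lemma~\ref{lemma: smooth_weight_functions} one checks that $w_\nu$ and $1/w_{\mu+\nu}$ belong to $\calo(\mathbb{R}\times\mathbb{R}^n)$ (the reciprocal of the smoothed weight of a strictly positive order function, being bounded above by $1$ with polynomially bounded derivatives, lies in $\calo$), so that $w_\nu P$ and $m$ lie in $\calo^\perp(\widehat G)$. Using the composition rule $\op[m_1]\op[m_2]=\op[m_1m_2]$ together with \eqref{eqn: def_multiplier_perp}, and observing that the cut-off factor $1-\delta_{\mathbb{Z}}$ annihilates the only possible singularity of $\widehat P$ (and of $\widehat{w_\nu}$, $\widehat{w_{\mu+\nu}}$ if these are genuine differences of order functions) at $k=0$, one gets $\op[w_\nu]\op[P]=\op[w_\nu P]=\op[m]\op[w_{\mu+\nu}]$ as operators on $\mathcal{S}'_\perp(G)$, which is the claimed factorization.

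It then remains to prove that $(1-\delta_{\mathbb{Z}})\widehat m\in L^\infty(\widehat G)$, which is where the hypotheses enter. On $\{(k,\xi)\in\widehat G\mid k\neq 0\}$ we have $|k|\geq\tfrac{2\pi}{T}=:\lambda_0>0$, so I would estimate the three ingredients of $m$ separately: since $\mu$ is an upper order function for $P$, $|P(k,\xi)|\leq CW_\mu(k,\xi)$ for $k\neq 0$; since $w_\nu$ is $\mathcal{N}$-elliptic for $\nu$, in particular $|w_\nu(k,\xi)|\leq CW_\nu(k,\xi)$; and since $w_{\mu+\nu}$ is $\mathcal{N}$-elliptic for $\mu+\nu$, Definition~\ref{def_ellipticity_general}(ii) with $\lambda=\lambda_0$ gives $|w_{\mu+\nu}(k,\xi)|\geq C_{\lambda_0}W_{\mu+\nu}(k,\xi)$ whenever $|k|\geq\lambda_0$. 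Combining these three bounds with $W_\mu W_\nu\simeq W_{\mu+\nu}$ — which follows from Lemma~\ref{lemma: additivity} and the definition of the weight function of a sum of differences of strictly positive order functions — yields $|m(k,\xi)|\lesssim 1$ for all $k\neq 0$ and $\xi\in\mathbb{R}^n$. Plancherel's theorem then gives $\op[m]=\op[(1-\delta_{\mathbb{Z}})\widehat m]\in\call(L^2(G))$, hence $\op[m]\in\call(L^2_\perp(G))$, and the chain of estimates above finishes the proof.

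The argument is short, and the only genuinely delicate point is the bookkeeping around the purely oscillatory framework: one must keep track that all the multiplier manipulations stay legitimate despite the possible singularity of $P$ (and of $w_\mu$, $w_\nu$) at the temporal frequency $k=0$, which is precisely what passing to $\mathcal{S}'_\perp(G)$ and the cut-off $1-\delta_{\mathbb{Z}}$ take care of. The weight-function estimates themselves are immediate from Section~\ref{sec:pre}.
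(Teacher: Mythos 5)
Your proof is correct and follows essentially the same route as the paper's: both define the scalar multiplier $m=(1-\delta_\Z)\frac{Pw_\nu}{w_{\mu+\nu}}$, observe that the upper-order-function bound on $P$ together with $W_{\mu+\nu}\simeq W_\mu W_\nu$ and the $\mathcal{N}$-ellipticity of $w_{\mu+\nu}$ (using that $|k|\ge 2\pi/T$ on the purely oscillatory part) yield $m\in L^\infty(\widehat G)$, and then conclude by Plancherel. You merely spell out the bookkeeping — the role of Lemma~\ref{lemma: additivity}, the cutoff $1-\delta_\Z$, and the three separate weight bounds — that the paper compresses into the phrase ``since $w_{\mu+\nu}\cong w_\mu w_\nu$.''
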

\begin{proof}
     Since $w_{\mu+\nu}\cong w_\mu w_\nu$, we have
    \begin{equation*}
        m:= (1-\delta_\Z)\frac{Pw_{\nu}}{ w_{\mu+\nu}}\in L^\infty(\widehat{G}),
    \end{equation*}
    so that $m$ is a Fourier multiplier in $L^2_\perp(G)$.
    Therefore, we have for all $f\in H^{\nu}_\perp(G)$
    \begin{align*}
        \|\op[P]f\|_{\nu}&=\|\op[(1-\delta_{\mathbb{Z}})\frac{P w_{\nu}}{w_{\mu+\nu}}]\op[w_{\mu+\nu}]f\|_{L^2(G)}\leq C \|\op[w_{\mu+\nu}]f\|_{L^2(G)}= C \|f\|_{\mu+\nu}.
        \qedhere
    \end{align*}
\end{proof}
\begin{theorem}\label{thm: whole space}
    Let $\mu$ and $\nu$ be differences of strictly positive order functions and $\widehat{P}\in \calo^\perp(\widehat{G})$.
    Assume furthermore that $P$ is $\mathcal{N}$-elliptic for the order function $\mu$.
    Then $\op[P]\in \call_{\mathrm{iso}}(H^{\mu+\nu}_\perp(G),H^{\nu}_\perp(G))$ with $\op[P]^{-1}=\op[\frac{1}{P}]$.
    In particular, for all $f\in H^{\nu}_\perp(G)$, the equation $\op[P]u=f$ has a unique solution $u\in H^{\nu+\mu}_\perp(G)$, and with $c:=\|\op[\frac1{P}]\|_{\call(H^{\nu}_\perp(G), H^{\mu+\nu}_\perp(G))}$ it holds 
    \begin{equation*}
        \|u\|_{\mu+\nu}\le c\|f\|_{\nu}.
    \end{equation*}
\end{theorem}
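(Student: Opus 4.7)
The plan is to construct the two-sided inverse explicitly as $\op[1/P]$, using Proposition \ref{prop_boundedness} twice (once for $P$, once for $1/P$) together with the composition rule for Fourier multipliers on the purely oscillatory space. Boundedness of $\op[P]\in \call(H^{\mu+\nu}_\perp(G),H^{\nu}_\perp(G))$ is immediate from Proposition \ref{prop_boundedness} since $\mu$ is an upper order function for $P$ by the first clause of Definition \ref{def_ellipticity_general}. The work is then to set up $\op[1/P]$ as an element of $\call(H^{\nu}_\perp(G),H^{\mu+\nu}_\perp(G))$ and to verify the composition identity.

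The key step is to show that $1/P$ defines a valid multiplier in $\calo^\perp(\widehat{G})$ with $-\mu$ as upper order function, and here the purely oscillatory restriction is crucial: we only need to control $1/P$ on a neighborhood of $\supp \eta \subseteq \set{|\tau|\ge \pi/T}$, with $\eta$ the cutoff from \eqref{eqn: def_cutoff}. Applying Definition \ref{def_ellipticity_general} with $\lambda:=\pi/T$ yields $|P(\tau,\xi)|\ge C_\lambda W_\mu(\tau,\xi)$ on $\supp \eta$, so in particular $P$ does not vanish there and $\eta/P$ is smooth. The pointwise estimate $|1/P|\le C_\lambda^{-1} W_{-\mu}$ says precisely that $-\mu$ is an upper order function for $1/P$ in the sense of Definition \ref{def: upper_order_function_general}. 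To conclude $\widehat{1/P}\in \calo^\perp(\widehat{G})$, one still needs polynomial bounds on all derivatives of $\eta/P$; these follow by induction from the quotient rule, using polynomial boundedness of derivatives of $P$ (inherited from $\widehat{P}\in \calo^\perp(\widehat{G})$) together with the lower bound on $|P|$ established above. Proposition \ref{prop_boundedness} applied with the order function $-\mu$ and the shifted target $\nu$ then yields $\op[1/P]\in \call(H^{\nu}_\perp(G),H^{\mu+\nu}_\perp(G))$, with operator norm $c$.

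To close the argument, I would verify that $\op[P]$ and $\op[1/P]$ are mutual inverses. Since $P\cdot(1/P)=1$ on $\widehat{G}\setminus\set{k=0}$, the composition rule recorded after \eqref{eqn: def_multiplier_perp}, combined with the identity $(1-\delta_\Z)^2=(1-\delta_\Z)$, yields
\begin{equation*}
\op[P]\op[1/P]=\op[(1-\delta_\Z)\cdot 1]=\mathcal{P}_\perp=\id_{\cals'_\perp(G)},
\end{equation*}
and symmetrically for the reverse composition. Restricting these identities to $H^{\nu}_\perp(G)$ and $H^{\mu+\nu}_\perp(G)$ gives the isomorphism statement, and the norm bound $\|u\|_{\mu+\nu}\le c\|f\|_\nu$ is just the operator norm estimate for $\op[1/P]=\op[P]^{-1}$.

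The main obstacle I anticipate is the derivative bookkeeping in the second paragraph: one must make precise that the lower bound $|P|\gtrsim W_\mu$, which is pointwise, suffices to control all derivatives of $\eta/P$ polynomially. Once this is handled (it is essentially routine, given how $\mathcal{N}$-ellipticity feeds through Leibniz's rule), the rest of the argument is a clean application of Proposition \ref{prop_boundedness} and the multiplier composition law on $\cals'_\perp(G)$.
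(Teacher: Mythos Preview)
Your proposal is correct and follows essentially the same approach as the paper's proof: both invoke Proposition \ref{prop_boundedness} for $P$, establish that $\widehat{1/P}\in\calo^\perp(\widehat{G})$ with upper order function $-\mu$ via the $\mathcal{N}$-ellipticity lower bound, and apply Proposition \ref{prop_boundedness} again. Your version is more explicit about two points the paper leaves implicit---the derivative bookkeeping for $\eta/P\in\calo$ and the verification that the compositions act as the identity on $\cals'_\perp(G)$---but the architecture is identical.
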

\begin{proof}
    By Proposition \ref{prop_boundedness} we have $\op[P]\in \call(H^{\mu+\nu}_\perp(G),H^{\nu}_\perp(G))$. 
    Since $P$ is $\mathcal{N}$-elliptic, we have $P(\tau, \xi)\neq 0$ for $\tau\neq 0$. Thus $\widehat{P}\in \mathcal{O}^\perp(\widehat{G})$ implies $\frac{1}{\widehat{P}}\in \mathcal{O}^\perp(\widehat{G})$.
    Furthermore, $P$ being $\mathcal{N}$-elliptic for the order function $\mu$ implies that $-\mu$  is an upper order function for $\frac{1}{P}$ and the conclusion follows again from Proposition \ref{prop_boundedness}.
\end{proof}
\begin{remark}\label{remark: Poincare}
    A trivial consequence of the above theorem is that the operator $\partial_t=\op[\ic\tau]$ is bounded and invertible in Newton polygon potential spaces. In particular, one gets a Poincaré-type inequality in the following sense. Let $\nu$ be a difference of strictly positive order functions and consider the order function  $\id$.
    Then for all $k\in \mathbb{N}, $ $\partial_t^k f\in H^{\nu}_\perp(G)$ if and only if $f\in H^{\nu+k\id}_\perp(G)$, and it holds
    \begin{equation*}
        \|f\|_{\nu}\le \| f\|_{\nu+k \id}\cong_{\nu,k} \|\partial_t^k f\|_{\nu}.
    \end{equation*}
\end{remark}
\begin{corollary}\label{cor: CHG_eqn_whole_space}
    Let $D$ be the CHG determinant in \eqref{eqn: determinant} and $\mu_D$ the associated order function.
    Let furthermore $\nu$ be the difference of strictly positive order functions.
    Then for each $f\in H^\nu_\perp(G)$, there is a unique $u\in H^{\mu+\nu}(G)$ with $\op[D]u=f$, and it holds
        \begin{align*}
            \|u\|_{\mu_D+\nu}\lesssim_{\nu} \|f\|_{\nu}.
        \end{align*}
\end{corollary}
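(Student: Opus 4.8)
The plan is to read Corollary \ref{cor: CHG_eqn_whole_space} as the special case of Theorem \ref{thm: whole space} obtained by choosing $P:=D$ and $\mu:=\mu_D$, so that the asserted solution space is $H^{\mu_D+\nu}_\perp(G)$ and the data live in $H^\nu_\perp(G)$. Two hypotheses of Theorem \ref{thm: whole space} have to be verified. First, one needs $\widehat D\in\calo^\perp(\widehat G)$. This is immediate: since $|\xi|^2=\sum_{i=1}^n\xi_i^2$ and $|\xi|^4=(|\xi|^2)^2$, the symbol $D(\tau,\xi)=\ic\tau+|\xi|^2(\ic\tau+|\xi|^2)$ from \eqref{eqn: determinant} is a genuine polynomial in $(\tau,\xi)$; hence it and all its derivatives are polynomially bounded, so that $\widehat D\in\calo(\widehat G)\subseteq\calo^\perp(\widehat G)$ (cf.\ the observation following \eqref{eqn: def_multiplier_perp}). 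Second, one needs $D$ to be $\mathcal N$-elliptic for the order function $\mu_D$, which is exactly the content of Proposition \ref{prop: det_elliptic}.

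Granting these two inputs, Theorem \ref{thm: whole space} applied with $\mu=\mu_D$ and the given $\nu$ yields $\op[D]\in\call_{\mathrm{iso}}(H^{\mu_D+\nu}_\perp(G),H^\nu_\perp(G))$ with inverse $\op[\frac{1}{D}]$. Consequently, for every $f\in H^\nu_\perp(G)$ the equation $\op[D]u=f$ possesses the unique solution $u=\op[\frac{1}{D}]f\in H^{\mu_D+\nu}_\perp(G)$, together with the bound $\|u\|_{\mu_D+\nu}\le c\|f\|_\nu$, where $c:=\|\op[\frac{1}{D}]\|_{\call(H^\nu_\perp(G),H^{\mu_D+\nu}_\perp(G))}$. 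Since this operator norm depends on $\nu$ (and, as throughout, on $T$ and $n$), this is precisely the claimed estimate $\|u\|_{\mu_D+\nu}\lesssim_\nu\|f\|_\nu$.

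I do not foresee any genuine difficulty here: the corollary is a bookkeeping consequence of the whole-space theorem, and the only step worth a moment's attention is the verification that $\widehat D\in\calo^\perp(\widehat G)$ — that is, the fact that $D$, despite being written with absolute-value notation, is polynomial and hence even lies in $\calo(\widehat G)$. It is this point that licenses the direct appeal to Theorem \ref{thm: whole space} rather than an argument carried out by hand on the purely oscillatory multiplier class.
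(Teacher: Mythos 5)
Your proof is correct and matches the paper's approach exactly: the paper proves this corollary by citing Proposition \ref{prop: det_elliptic} together with Theorem \ref{thm: whole space}, which is precisely what you do. You additionally verify the hypothesis $\widehat D\in\calo^\perp(\widehat G)$ explicitly (the paper leaves this implicit), a small but welcome piece of bookkeeping that confirms $D$ is a genuine polynomial in $(\tau,\xi)$ despite the absolute-value notation.
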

\begin{proof}
    Follows directly from Proposition \ref{prop: det_elliptic} and Theorem \ref{thm: whole space}.
\end{proof}
\subsection{Mixed-Order Systems and Maximal Regularity for the CHG System}
In this section, we prove the counterpart of Theorem \ref{thm: whole space} for mixed-order systems. 
Here we consider a smooth  matrix-valued  symbol $L: (\mathbb{R}\setminus \{0\})\times \mathbb{R}^n\to \mathbb{C}^{m\times m}$ as well as $\mathcal{D}:=\det L: (\mathbb{R}\setminus\{0\}) \times \mathbb{R}^n\to \mathbb{C}$, and denote as usual $\widehat{L}, \widehat{D}$ the restrictions to $\widehat{G}$.
We assume that the symbols $\widehat{L}_{ij}$ belong to the class $\mathcal{O}^\perp(\widehat{G})$.
\begin{theorem}\label{thm: whole_space_systems}
    Let $\widehat{L}$ be as above and assume that $L$ is a mixed-order system in the sense of Definition \ref{def: mixed_order_systems} with strictly positive order functions $t_j$, and $-s_i$, $i,j\in\set{1,..., m}$.
    Assume additionally that $\nu$ is a strictly positive order functions.
    We define 
    \begin{equation*}
        \mathbb{E}:=\prod_{j=1}^m H^{\nu+t_j}_\perp(G) \qquad \text{and} \qquad \mathbb{F}:=\prod_{i=1}^m H^{\nu-s_i}_\perp(G).
    \end{equation*}
    Then $\op[L]\in\call_{\mathrm{iso}}(\mathbb{E},\mathbb{F})$ with $\op[L]^{-1}=\op[\ad L]\op[D^{-1}]$, where $D^{-1}$ has to be understood as the diagonal symbol matrix with all diagonal entries equal to $\frac{1}{D}$.
    In particular, for all $f=(f_1, ..., f_m)\in \mathbb{F}$, the system $\op[L]u=f$ admits a unique solution $u=(u_1,...,u_m)\in \mathbb{E}$, and it holds
    \begin{equation*}
        \sum_{j=1}^m \| u_j\|_{\nu+ t_j}\lesssim_{\nu,s_i,t_i,L,m} \sum_{i=1}^m \| f_i\|_{\nu-s_i}.
    \end{equation*}
\end{theorem}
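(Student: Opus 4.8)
The plan is to reduce the statement to the scalar result Theorem \ref{thm: whole space} applied to the determinant $\mathcal{D}=\det L$, combined with the algebraic identities $L\,\ad L=\ad L\, L=\mathcal{D}\, I_m$ and the order-function accounting provided by Propositions \ref{prop_boundedness} and \ref{prop: ad}. As a preliminary remark, $\calo^\perp(\widehat G)$ is stable under finite sums and products (for products one uses $(1-\delta_{\mathbb Z})^2=1-\delta_{\mathbb Z}$ together with $\eta\widehat{L_{ij}}\in\calo(\R\times\R^n)$); hence $\widehat{\mathcal D}$ and all entries $\widehat{(\ad L)_{ij}}$ lie in $\calo^\perp(\widehat G)$, and since $\mathcal D$ is $\mathcal N$-elliptic — in particular nonvanishing for $\tau\neq 0$ — the argument in the proof of Theorem \ref{thm: whole space} also gives $\widehat{1/\mathcal D}\in\calo^\perp(\widehat G)$. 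Thus every operator occurring below is well defined on $\mathcal S'_\perp(G)$ and the composition rule $\op[m_1]\op[m_2]=\op[m_1m_2]$ extends to $\calo^\perp(\widehat G)$ symbols.

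\emph{Boundedness.} Since $s_i+t_j$ is an upper order function for $L_{ij}$ and $\nu-s_i$ is a difference of strictly positive order functions (because $-s_i$ and $\nu$ are strictly positive), Proposition \ref{prop_boundedness} with $\mu=s_i+t_j$ and second exponent $\nu-s_i$ gives $\op[L_{ij}]\in\call(H^{\nu+t_j}_\perp(G),H^{\nu-s_i}_\perp(G))$ (as $(s_i+t_j)+(\nu-s_i)=\nu+t_j$), hence $\op[L]\in\call(\mathbb E,\mathbb F)$ after summing over $j$. For the candidate inverse, Theorem \ref{thm: whole space} applied to $P=\mathcal D$, which is $\mathcal N$-elliptic for $\delta=\sum_k s_k+t_k$, gives $\op[1/\mathcal D]\in\call_{\mathrm{iso}}(H^{\lambda}_\perp(G),H^{\lambda+\delta}_\perp(G))$ for any difference $\lambda$ of strictly positive order functions; in particular it maps $H^{\nu-s_i}_\perp(G)$ into $H^{\nu-s_i+\delta}_\perp(G)$. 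By Proposition \ref{prop: ad}, $\ad L$ is again a mixed-order system, so each entry $(\ad L)_{ji}$ admits an upper order function of the form $\sum_{k\neq\bullet}s_k+\sum_{k\neq\bullet}t_k$; feeding this into Proposition \ref{prop_boundedness} and using $\sum_k s_k+\sum_k t_k=\delta$, the orders telescope exactly and $\op[(\ad L)_{ji}]$ maps $H^{\nu-s_i+\delta}_\perp(G)$ into $H^{\nu+t_j}_\perp(G)$. Composing, $\op[\ad L]\op[D^{-1}]\in\call(\mathbb F,\mathbb E)$, and for $f=(f_1,\ldots,f_m)\in\mathbb F$ one gets the componentwise bound $\|(\op[\ad L]\op[D^{-1}]f)_j\|_{\nu+t_j}\lesssim_{\nu,s_i,t_i,L,m}\sum_i\|f_i\|_{\nu-s_i}$.

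\emph{Inverse identities and conclusion.} Using the multiplication rule from the preliminary remark and the pointwise identity $L\,\ad L=\mathcal D\, I_m$ on $(\R\setminus\{0\})\times\R^n$, we get $\op[L]\op[\ad L]\op[D^{-1}]=\op[\mathcal D]\op[1/\mathcal D]\,I_m=\op[1]\,I_m=\id_{\mathbb F}$, since $\op[1]=\mathcal P_\perp$ acts as the identity on $\mathcal S'_\perp(G)$. Symmetrically, $\ad L\, L=\mathcal D\, I_m$ together with the fact that the scalar operator $\op[1/\mathcal D]$ commutes with $\op[\ad L]$ gives $\op[\ad L]\op[D^{-1}]\op[L]=\id_{\mathbb E}$. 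Hence $\op[L]\colon\mathbb E\to\mathbb F$ is a bijection with bounded inverse $\op[L]^{-1}=\op[\ad L]\op[D^{-1}]$, i.e. $\op[L]\in\call_{\mathrm{iso}}(\mathbb E,\mathbb F)$; applying the componentwise estimate above to $u=\op[L]^{-1}f$ and summing over $j$ yields the stated bound.

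The main point to get right is the order-function bookkeeping in the second step: one needs that the adjugate of a mixed-order system is again mixed-order with the shifted orders of Proposition \ref{prop: ad}, and then that, after applying $\op[1/\mathcal D]$, these orders recombine precisely to $\nu+t_j$. This exact cancellation is what dictates the particular form of the spaces $\mathbb E$ and $\mathbb F$. The fact that $\op[1/\mathcal D]$ and all the multiplier identities are only available on the purely oscillatory spaces is handled once and for all by the preliminary observation on $\calo^\perp(\widehat G)$, and causes no further difficulty.
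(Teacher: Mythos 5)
Your proof is correct and follows essentially the same route as the paper: you use Proposition \ref{prop_boundedness} for entrywise boundedness of $\op[L]$, Theorem \ref{thm: whole space} for $\op[1/\mathcal D]$, Proposition \ref{prop: ad} for the adjugate orders, and the cancellation $\delta-S-T=s+t$ for the telescoping, then verify the mutual-inverse identities via the symbol calculus. The only differences are cosmetic: you make explicit the closure of $\calo^\perp(\widehat G)$ under products and the algebraic identities $L\,\ad L=\ad L\,L=\mathcal D\,I_m$, which the paper leaves implicit.
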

\begin{proof}
    By definition of a mixed-order system, $s_i+t_j$ is an upper order function for $L_{ij}$.
    Thus by Proposition \ref{prop_boundedness}, $\op[L_{ij}]\in \call(H^{\nu+t_j}_\perp(G),H^{\nu-s_i}_\perp(G))$ for all $i, j\in\set{1,...,m}$. 

\medskip

    In a similar fashion,  the determinant $\mathcal{D}$ is $\mathcal{N}$-elliptic for the order function $\delta:=\sum_{l=1}^m (s_l+t_l)$.
    Thus, by Theorem \ref{thm: whole space}, $\op[\mathcal{D}]^{-1}\in \call(H^{\nu-s_i}_\perp(G),H^{\nu-s_i+\delta}_\perp(G))$ for all $i\in\set{1,...,m}$.
    Furthermore, by Proposition \ref{prop: ad} and \ref{prop_boundedness}, we have 
    \begin{equation*}
        \op[(\ad L)_{ij}]\in \call(H^{\nu-s_i+\delta}_\perp(G),H^{\nu-s_i +\delta -S_i-T_j}_\perp(G))
    \end{equation*}
    for all $i,j\in\set{1,\ldots,m}$, where $S_i$ and $T_j$ are defined in Proposition \ref{prop: ad}.
    Since $\nu-s_i +\delta -S_i-T_j=\nu+t_j$, the boundedness of $\op[\ad L]\op[D^{-1}]$ follows.
    Since $\op[L]$ and $\op[\ad L]\op[D^{-1}]$ are mutually inverse, this achieves the proof. 
\end{proof}
Since we already showed that the Cahn-Hilliard-Gurtin system is a mixed-order system, we obtain time-periodic maximal regularity as a direct corollary of the above theorem. 
\begin{corollary}
    For all $f_1\in L^2_\perp(G), f_2\in H^{(0, 1)}_\perp(G)$, the Cahn-Hilliard-Gurtin system \eqref{eqn: CHG_system} admits a unique solution $(u_1, u_2)$ with $u_1\in H^{(0, 3)}_\perp(G)\cap H^{(1, 1)}_\perp(G)$, $u_2\in H^{(0, 2)}_\perp(G)$. 
\end{corollary}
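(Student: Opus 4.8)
The plan is to read this off Theorem~\ref{thm: whole_space_systems}, since the relevant mixed-order structure of the Cahn-Hilliard-Gurtin system has already been exhibited. Recall from Example~\ref{ex: CHG_mixed_order_system} that the symbol matrix \eqref{eqn: CHG_symbol} is a mixed-order system with the strictly positive order functions $t_1(\gamma)=\max\set{3,1+\gamma}$, $t_2(\gamma)\equiv 2$, $-s_1(\gamma)\equiv 0$ and $-s_2(\gamma)\equiv 1$, whose determinant $D$ is $\caln$-elliptic for $\mu_D$ by Proposition~\ref{prop: det_elliptic}. All entries of $L$ are polynomials in $(\tau,\xi)$ (writing $\vert\xi\vert^2=\xi_1^2+\dots+\xi_n^2$), so $\widehat L_{ij}\in\calo(\widehat G)\subseteq\calo^\perp(\widehat G)$ and the hypotheses of Theorem~\ref{thm: whole_space_systems} are in place.

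I would then apply that theorem with the choice $\nu\equiv 0$, a (strictly positive) order function for which $H^{\nu}_\perp(G)=L^2_\perp(G)$. This gives that $\op[L]$ is an isomorphism of $\mathbb{E}=H^{t_1}_\perp(G)\times H^{t_2}_\perp(G)$ onto $\mathbb{F}=H^{-s_1}_\perp(G)\times H^{-s_2}_\perp(G)=L^2_\perp(G)\times H^{-s_2}_\perp(G)$; in particular, for every $(f_1,f_2)\in\mathbb{F}$ the system $\op[L]u=f$, i.e. \eqref{eqn: CHG_system}, has a unique solution $(u_1,u_2)\in\mathbb{E}$, together with the a priori bound $\|u\|_{\mathbb{E}}\lesssim\|f\|_{\mathbb{F}}$ (not asked for in the statement, but obtained for free). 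It then only remains to translate the Newton polygon potential spaces $H^{-s_2}_\perp(G)$, $H^{t_1}_\perp(G)$ and $H^{t_2}_\perp(G)$ into the Sobolev spaces appearing in the statement.

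This last step is pure bookkeeping with Proposition~\ref{prop_potential_spaces}, applied in the purely oscillatory form recorded after Proposition~\ref{prop: isomorphism_weight_function}. Since $-s_2\equiv 1=\mu_{(0,1)}$ and $t_2\equiv 2=\mu_{(0,2)}$, we get $H^{-s_2}_\perp(G)=H^{(0,1)}_\perp(G)$ and $H^{t_2}_\perp(G)=H^{(0,2)}_\perp(G)$. For $t_1$, writing $t_1=\max\set{3+0\cdot\gamma,\ 1+1\cdot\gamma}$ shows that the Newton polygon of $t_1$ has vertex set $\set{(0,0),(3,0),(1,1)}$, so Proposition~\ref{prop_potential_spaces} yields $H^{t_1}_\perp(G)=H^{(0,3)}_\perp(G)\cap H^{(1,1)}_\perp(G)$ with equivalent norms. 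Inserting these identifications into the conclusion of the previous paragraph produces exactly the claimed existence, uniqueness and regularity, the implied norm equivalences being absorbed into $\lesssim$. There is no genuine obstacle here, as the whole content sits in Example~\ref{ex: CHG_mixed_order_system} and Theorem~\ref{thm: whole_space_systems}; the only points requiring a moment's care are that $\nu\equiv 0$ is an admissible choice of strictly positive order function in Theorem~\ref{thm: whole_space_systems} and the correct matching of $t_1,t_2,-s_2$ with the Sobolev vertices.
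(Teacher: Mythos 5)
Your proof is correct and follows essentially the same route as the paper: apply Theorem~\ref{thm: whole_space_systems} to the mixed-order structure exhibited in Example~\ref{ex: CHG_mixed_order_system} with $\nu\equiv 0$, and then identify the Newton polygon potential spaces with the stated Sobolev intersections via Proposition~\ref{prop_potential_spaces}. The only difference is that you spell out the bookkeeping for $t_1$ explicitly, whereas the paper states the identifications without comment.
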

\begin{proof}
    Set $s_1, s_2, t_1, t_2$ as in Example \ref{ex: CHG_mixed_order_system}. By Proposition \ref{prop_potential_spaces}, we get $H^{-s_1}_\perp(G)=L^2_\perp(G)$, $H^{-s_2}_\perp(G)=H^{(0, 1)}_\perp(G)$ as well as $H^{t_1}_\perp(G)= H^{(0, 3)}_\perp(G)\cap H^{(1, 1)}_\perp(G)$ and $H^{t_2}_\perp (G)=H^{(0, 2)}_\perp(G)$.
    The conclusion follows from Theorem \ref{thm: whole_space_systems}.
\end{proof}
\section{The Half Space Problem with General Boundary Conditions}\label{sec:half}
In this section, we develop a theory for mixed order systems with general boundary conditions which will enable us to solve the Cahn-Hilliard-Gurtin system in the half space.
We denote by $\mathbb{R}^n_+=\{x\in \mathbb{R}^n : x_n>0\}$ the upper half space, and write by analogy $\mathbb{R}^n_-=\{x\in \mathbb{R}^n : x_n<0\}$ for the lower half space.
Then we define
\begin{align*}
    G_\pm:=\mathbb{T}\times \mathbb{R}^n_\pm \quad \text{and} \quad G^{n-1}:=\mathbb{T}\times \mathbb{R}^{n-1}.
\end{align*}
Function spaces on $G_\pm$ will be defined as quotient spaces of those defined in Section \ref{sec:pre}, see Section \ref{sec:quotient}, while function spaces of $G^{n-1}$ will be defined via trace theory, see Section \ref{sec:trace}.

\subsection{Quotient Spaces}\label{sec:quotient}
Define the extensions $e_0^\pm: \CRci(G_\pm)\to \CRci(G)$ via
\begin{align}\label{def:ext_op}
e_0^+\varphi(t,x',x_n):=
\begin{cases}
\varphi(t,x',x_n) & \text{if } x_n>0,\\
0 & \text{else,}
\end{cases}
\quad \text{and} \quad
e_0^-\varphi(t,x',x_n):=
\begin{cases}
\varphi(t,x',x_n) & \text{if } x_n<0,\\
0 & \text{else.}
\end{cases}
\end{align}
Then $e_0^\pm$ is continuous, and thus we may introduce the continuous restrictions $r^\pm:={}^t e_0^\pm:\mathcal{D}'(G)\to \mathcal{D}'(G_\pm)$.
Here $\mathcal{D}'(G)$ and $\mathcal{D}'(G_\pm)$ are the usual spaces of distributions, that is the strong dual of $\CRci(G)$ and $\CRci(G_\pm)$, respectively.
Thus, $\ker r^+$ is closed and so is $\ker r^+|_{E(G)}$ for any locally convex space $E(G)$ which is continuously embedded into $\cald'(G)$.
For the sake of readability, we will write $r^+$ instead of $r^+|_{E(G)}$ in the sequel.
\begin{definition}\label{def:restr_and_supp_space}
Let $E(G)$ be a locally convex space continuously embedded into $\mathcal{D}'(G)$.
Then we introduce the \emph{restricted spaces} $\overline{E}(G_\pm)$ and the \emph{supported spaces} $\dot E(\overline{G_\pm})$ via
\begin{align*}
\overline{E}(G_\pm):=r^\pm E(G) \quad \text{and} \quad \dot E(\overline{G_\pm}):=\{f\in E(G)\mid \supp f\subseteq \overline{G_\pm}\}.
\end{align*}
\end{definition}
As we mostly work with restricted and supported spaces on the upper half space, we sometimes write $\overline{E}:=\overline{E}(G_+)$ and $\dot E:=\dot E(\overline{G_+})$ if no confusion can arise.
  In particular, given any order function $\mu$, we obtain the restricted space
\begin{align*}
    \overline{H}^{\mu}_\perp(G_+)&:=\{f\in \cald'(G_+)\mid f=r^+ F \ \text{ for some } \ F\in H^{\mu}_\perp(G)\}, \\
    \|f\|_{\overline{H}^{\mu}_\perp}&:=\|f\|_{\mu, +}:=\inf\{\| F \|_{\mu} \mid F\in H^{\mu}_\perp(G),  r^+F=f\},
\end{align*}
  and the supported space
\begin{align*}
  \dot{H}^{\mu}_\perp(\overline{G_+})&:=\{f\in H^{\mu}_\perp(G)\mid \supp f\subseteq \overline{G_+}\},\\
    \|f\|_{\dot{H}^{\mu}_\perp}&:=\|f\|_{\mu}.
\end{align*}
It will be convenient to note that the restriction operator acts as intended on the supported spaces.
\begin{lemma}\label{lem:restr_supp_space}
It holds $r^{\pm}\dot{\cald}'(\overline{G_{\mp}})=\set{0}$.
\end{lemma}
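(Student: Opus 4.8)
## Proof Plan

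The plan is to show that the restriction operator $r^{\pm}$ annihilates any distribution supported in the opposite closed half space. By symmetry it suffices to treat one case, say $r^{-}\dot{\cald}'(\overline{G_{+}})=\set{0}$; the other follows by swapping the roles of $x_n>0$ and $x_n<0$. So let $F\in \dot{\cald}'(\overline{G_+})$, i.e. $F\in \cald'(G)$ with $\supp F\subseteq \overline{G_+}=\mathbb{T}\times\overline{\mathbb{R}^n_+}$. We must show that $r^-F = {}^t e_0^-(F)$ is the zero functional on $\CRci(G_-)$.

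First I would unwind the definitions: for any test function $\varphi\in\CRci(G_-)$, by definition of the transpose we have $\langle r^-F,\varphi\rangle = \langle F, e_0^-\varphi\rangle$, where $e_0^-\varphi\in\CRci(G)$ is the extension by zero of $\varphi$ from $G_-$ to all of $G$, as in \eqref{def:ext_op}. Now the key observation is a support disjointness: since $\varphi$ is smooth and compactly supported in the \emph{open} set $G_- = \mathbb{T}\times\mathbb{R}^n_-$, its support is a compact subset of $G_-$, hence $\supp(e_0^-\varphi) = \supp\varphi \subseteq G_- = \mathbb{T}\times\mathbb{R}^n_-$, which is disjoint from $\overline{G_+} = \mathbb{T}\times\overline{\mathbb{R}^n_+}$ because $\mathbb{R}^n_- \cap \overline{\mathbb{R}^n_+} = \emptyset$. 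Therefore $\supp F$ and $\supp(e_0^-\varphi)$ are disjoint.

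The conclusion then follows from the standard fact that a distribution vanishes when paired with any test function whose support is disjoint from the support of the distribution: $\langle F, e_0^-\varphi\rangle = 0$. Concretely, one can cover $\supp(e_0^-\varphi)$ by an open set $U$ with $\overline{U}\cap\supp F=\emptyset$ (possible since $\supp(e_0^-\varphi)$ is compact and $\supp F$ is closed), so that $e_0^-\varphi$ is a test function supported in the open set $G\setminus\supp F$ on which $F$ restricts to zero by definition of the support of a distribution. Since $\varphi\in\CRci(G_-)$ was arbitrary, $r^-F=0$, and as $F$ ranged over all of $\dot{\cald}'(\overline{G_+})$ this gives $r^-\dot{\cald}'(\overline{G_+})=\set{0}$.

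This argument is entirely routine; there is no real obstacle. The only point requiring a modicum of care is the distinction between the open half space $G_\pm$ (where test functions live, so their compact supports stay strictly away from the boundary hyperplane $\set{x_n=0}$) and the closed half space $\overline{G_\pm}$ (where the supported distributions live); it is precisely this openness of $G_-$ that makes $\supp\varphi$ disjoint from $\overline{G_+}$ rather than merely having disjoint interiors, which would not suffice. Everything else is the definition of transpose and the elementary support property of distributions.
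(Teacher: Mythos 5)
Your proof is correct and follows essentially the same route as the paper's: unwind $\langle r^\mp F,\varphi\rangle=\langle F,e_0^\mp\varphi\rangle$, note that $\supp(e_0^\mp\varphi)$ is a compact subset of the \emph{open} half space $G_\mp$ and hence disjoint from $\overline{G_\pm}\supseteq\supp F$, and conclude the pairing vanishes. The paper states this in one line; you spell out the support-disjointness argument in more detail, but there is no difference in substance.
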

\begin{proof}
    Let $f\in \dot{\cald}'(\overline{G_{\mp}})$, i.e. $\supp f\subseteq \overline{G_\mp}$.
    Then for $\varphi\in \cald(G_\pm)$ we have $\supp e_0^+\varphi\subseteq G_\pm$ and thus
    \begin{align*}
        \langle r^+f,\varphi\rangle&=\langle f,e_0^+\varphi\rangle=0.
        \qedhere
    \end{align*}
\end{proof}
\begin{remark}\label{rem:quot_space}
~\begin{enumerate}
\item\label{rem:quot_spacei} We use the traditional notation of H\"ormander for the restricted and supported spaces.
We note that \cite[Definition 1.63]{denk2013general} use slightly different terminology (besides the fact that in the present work we are dealing with time periodic functions).
In particular, our space $\overline{\cals}'(G_+)$ would be called ${}_0\cals'(G_+)$ in \cite{denk2013general}.
On the other hand, in their notation the space $\cals'(G_+)$ would correspond to $r^+\dot{\cals}'(\overline{G_+})$ in our notation:
Indeed, using the notation of \cite{denk2013general}, the inclusion $\cals'(G_+)\subseteq r^+\dot{\cals}'(\overline{G_+})$ follows from
\begin{align*}
\cals'(G_+)=r_0^+\cals'(G)=r^+(1-e^-r^-)\cals'(G)
\end{align*}
and the observation that $(1-e^-r^-)\cals'(G)\subseteq \dot{\cals}'(\overline{G_+})$, while the converse inclusion follows from $e_0^+r_0^+f=f$ for $f\in \dot\cals'(\overline{G_+})$ (see Lemma 1.65 in \cite{denk2013general}) and $r^+e^+_0 f=f$ for $f\in \cals'(G_+)$ in form of $r_0^+f=r^+e_0^+r_0^+f=r^+f$ for all $f\in \dot \cals'(\overline{G_+})$.
\item\label{rem:quot_spaceii} If $E(G)\subseteq \CRi(G)$, then for each $f\in \overline{E}(G_+)$ and each $j\in\N$ there is a unique extension of $\partial_{n}^j f$ to $\overline{G_+}$.
Therefore, we may regard $\overline{E}(G_+)$ as a space of functions defined on $\overline{G_+}$ instead of $G_+$, and suggestively write $E(\overline{G_+}):=\overline{E}(G_+)$.
\end{enumerate}
\end{remark}

\subsection{Trace Theory}\label{sec:trace}
The first step to treat the half space problem is to identify the trace spaces associated to our Newton polygon spaces. 
We will see that the appropriate spaces are again Newton polygon spaces defined on $G^{n-1}$, and that the corresponding polygons are obtained by 'translating' the polygon of the original space to the left.
A good understanding of those spaces will be essential to establish the {(non-)}existence of solutions in the later sections.
We consider the usual trace operators of $i$-th order: for $i,j\in \mathbb{N}$, we set
\begin{equation*}
    \Tr_j: \CRci(\overline{G_+})\to \CRci(G^{n-1}), \qquad \Tr_j\phi(t, x'):=\partial_{n}^j\phi(t, x', 0)
\end{equation*}
and
\begin{equation*}
    \Tr^{(i)}: \CRci(\overline{G_+})\to \CRci(G^{n-1})^{i}, \qquad \Tr^{(i)}\phi:=(\Tr_0\phi, \Tr_1\phi,..., \Tr_{i-1}\phi),
\end{equation*}
where we keep in mind the notation introduced in Remark \ref{rem:quot_space}.\ref{rem:quot_spaceii}.

\medskip

Now consider a Newton polygon $\mathcal{N}$, the associated order function $\mu=\mu(\mathcal{N})$, and its set of vertices $\mathcal{N}_V=\{(r_0, s_0),..., (r_{J+1}, s_{J+1})\}$.
We are looking for a function space $\mathbb{G}$ such that  the operator $\Tr^{(r_1)}$ defined above extends to a bounded and surjective operator 
\begin{equation*}
   \Tr^{(r_1)}: \overline{H}^{\mu}_\perp(G_+)\to \mathbb{G}.
\end{equation*}
\begin{corollary}\label{cor: interpolation_intersection}
     For all $r, s, r_0, s_0\geq 0$ such that $s_0\geq s$ ,  $r_0\geq r$ and $\theta\in (0, 1)$ ,
      \begin{equation*}
          [H^{(s, r)}_\perp (G), H^{(s, r_0)}_\perp(G))\cap H^{(s_0, r)}_\perp(G)]_{\theta}
          =H^{(s, (1-\theta)r+\theta r_0)}_\perp(G)\cap H^{((1-\theta)s+\theta s_0, r)}_\perp(G).
      \end{equation*}
\end{corollary}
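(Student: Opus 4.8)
The plan is to view both sides of the asserted equality as Newton polygon potential spaces over $G$ and then to reduce the claim to the interpolation identity of Lemma \ref{lem: interp_L2}, combined with the description of intersections of Sobolev spaces of dominating mixed smoothness given in Proposition \ref{prop_potential_spaces}. Since every space occurring is purely oscillatory, we use throughout the purely oscillatory versions of these results, which are legitimate by the remark following Proposition \ref{prop: isomorphism_weight_function}.

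\textbf{Identifying the endpoints.} First I would identify the two endpoints of the interpolation couple. On the one hand, by Definition \ref{def: polygon_space} we have $H^{(s,r)}_\perp(G) = H^{\mu_0}_\perp(G)$, where $\mu_0 := \mu_{(s,r)}$ satisfies $\mu_0(\gamma) = r + \gamma s$. On the other hand, applying Proposition \ref{prop_potential_spaces} to the finite set $E_1 := \set{(r_0, s),\,(r, s_0)} \subseteq [0,\infty)^2$ gives $H^{(s,r_0)}_\perp(G) \cap H^{(s_0,r)}_\perp(G) = H^{\mu_1}_\perp(G)$ with $\mu_1 := \mu(\mathcal{N}(E_1))$; using the definition of the order function of a Newton polygon together with $r_0 \geq r$, $s_0 \geq s$ and $\gamma \geq 0$, this simplifies to $\mu_1(\gamma) = \max\set{r_0 + \gamma s,\ r + \gamma s_0}$.

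\textbf{Interpolation and re-identification.} Since $\set{H^{\mu_0}_\perp(G),\,H^{\mu_1}_\perp(G)}$ is a compatible couple and both $\mu_0$ and $\mu_1$ are (in particular, differences of) strictly positive order functions, Lemma \ref{lem: interp_L2} yields $[H^{\mu_0}_\perp(G),\,H^{\mu_1}_\perp(G)]_\theta = H^{\mu}_\perp(G)$ with $\mu := (1-\theta)\mu_0 + \theta\mu_1$. Writing $\tilde r := (1-\theta)r + \theta r_0$ and $\tilde s := (1-\theta)s + \theta s_0$ and distributing the affine function $(1-\theta)\mu_0$ over the maximum defining $\mu_1$, one obtains $\mu(\gamma) = \max\set{\tilde r + \gamma s,\ r + \gamma\tilde s}$. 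Since $\tilde r \geq r$ and $\tilde s \geq s$, this is exactly the order function $\mu(\mathcal{N}(E_2))$ associated to $E_2 := \set{(\tilde r, s),\,(r, \tilde s)}$, so a second application of Proposition \ref{prop_potential_spaces} gives $H^{\mu}_\perp(G) = H^{(s,\tilde r)}_\perp(G) \cap H^{(\tilde s, r)}_\perp(G)$, which is the right-hand side of the claimed identity, with equivalent norms. Chaining the three displays proves the corollary.

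\textbf{Main obstacle.} The argument is essentially bookkeeping with order functions, and the only point requiring a modicum of care is checking that the two monomials entering $E_1$ (resp.\ $E_2$) are the extremal ones governing the corresponding order function — which is precisely where the hypotheses $r_0 \geq r$ and $s_0 \geq s$ (hence $\tilde r \geq r$, $\tilde s \geq s$) are used — and that the formulas for $\mu_1$ and $\mu$ remain valid in the degenerate cases $r_0 = r$ or $s_0 = s$, where the two monomials partially coincide. No genuine difficulty arises.
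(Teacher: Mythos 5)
Your proof is correct and follows essentially the same route as the paper: identify the two endpoints as Newton polygon potential spaces via Proposition \ref{prop_potential_spaces}, interpolate using Lemma \ref{lem: interp_L2}, and re-identify $(1-\theta)\mu_0+\theta\mu_1$ as the order function of the right-hand side. Your explicit computation distributing the affine term over the maximum is the same observation the paper records implicitly, and your choice of coordinates in $E_1$ is in fact the one consistent with the convention $\bigcap_{(r,s)\in E}H^{(s,r)}(G)$ of Proposition \ref{prop_potential_spaces}.
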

\begin{proof}
	Consider the sets $E_0:=\set{(s,r)}$, $E_1:=\set{(s,r_0),(s_0,r)}$ and $E:=\set{(s,(1-\theta)r+\theta r_0),((1-\theta)s+\theta s_0,r)}$, as well as the associated Newton polygons $\caln(E_0)$, $\caln(E_1)$, and $\caln(E)$.
	For the associated order functions $\mu_0:=\mu(\caln(E_0))$, $\mu_1:=\mu(\caln(E_1))$ and $\mu:=\mu(\caln(E))$, we have by Proposition \ref{prop_potential_spaces} that
    \begin{align*}
        H^{\mu_0}_\perp(G)&=H^{(s, r)}_\perp (G), \\
        H^{\mu_1}(G)&=H^{(s, r_0)}_\perp(G))\cap H^{(s_0, r)}_\perp(G), \\
        H^{\mu}(G)&=H^{(s, (1-\theta)r+\theta r_0)}_\perp(G)\cap H^{((1-\theta)s+\theta s_0, r)}_\perp(G).
    \end{align*}
    Note that
    \begin{align*}
        \mu_0(\gamma)&=r+s\gamma,\\
        \mu_1(\gamma)&=\max\set{r_0+s\gamma,r+s_0\gamma},\\
        \mu(\gamma)&=\max\set{(1-\theta)r+\theta r_0+s\gamma,r+((1-\theta)s+\theta s_0)\gamma}.
    \end{align*}
	In particular, $(1-\theta)\mu_0+\theta\mu_1= \mu$, so that the assertion follows from Lemma \ref{lem: interp_L2}.
\end{proof}

 We now describe the trace space associated to Newton polygon spaces.
 As mentioned in the introduction, we will restrict ourselves to Newton polygons of a shape similar to the one of the CHG determinant symbol (\ref{eqn : CHG_det_polygon}). For integers $r_1, r_2$ with $r_1>r_2\geq 0$, and for  $s_2\geq 0 $ consider a Newton polygon $\mathcal{N}$ with vertices 
 \begin{equation*}
     \mathcal{N}_V:=\{(0,0), (r_1, 0), (r_2, s_2), (0, s_2)\}.
 \end{equation*}
 We set $\gamma^\perp_1 : =\frac{s_2}{r_2-r_1}\le 0$ the slope of the line segment joining $(r_2, s_2)$ to $(r_1, 0)$.
 The associated order function to $\mathcal{N}$ can be written as $\mu=(r_1-r_2)o_{-\gamma^\perp_1}+r_2$ with $\ord \mu=r_1$, where we recall the definition of the elementary order functions $o_y$ in the paragraph before Proposition \ref{prop: decomposition_elementary_order}.
 Order functions of such type will be called \emph{CHG-shaped}.
 For $j\in\set{0,\ldots, r_1-1}$, define the order functions
 \begin{align*}\label{eqn: def_trace_ord_fct}
      T_j(\mu) :=\left\{\begin{array}{ll}
      (r_1-r_2)o_{-\gamma^\perp_1} + r_2-j-\frac12,  & \text{for } j\in\set{0,..., r_2-1}, \\
     (r_1-j-\frac12)o_{-\gamma^\perp_1}, &\text{for } j\in\set{r_2,..., r_1-1}.
     \end{array}\right.
 \end{align*}
 Remark that the Newton polygon $T_j(\caln)$ associated to $T_j(\mu)$ is the Newton polygon $\mathcal{N}$ translated to the left by $j+\frac12$, cf. Figure \ref{fig: trace_spc}.
 In particular we have
 \begin{equation*}\label{eqn: def_trace_space}
    H^{T_j(\mu)}_\perp(G^{n-1})=\left\{\begin{array}{ll}
     H^{(s_2,r_2-j-\frac12)}_\perp(G^{n-1})
             \cap H^{(0,r_1-j-\frac12)}_\perp(G^{n-1})  & \text{for } j\in\set{0,..., r_2-1}, \\
     H^{(s_2+\gamma_1^\perp (j-r_2+\frac12),0)}_\perp (G^{n-1})   \cap H^{(0,r_1-j-\frac12)}_\perp(G^{n-1})&\text{for } j\in\set{r_2,..., r_1-1}.
     \end{array}\right.
 \end{equation*}
 We set 
 \begin{equation}\label{eqn: def_trace_space_vector}
     T^\mu_\perp(G^{n-1}):=\prod_{j=0}^{r_1-1}H^{T_j(\mu)}_\perp(G^{n-1}) ,
 \end{equation}
 and prove that it is the desired trace space.
\begin{theorem}\label{thm: trace}
    Let $\mu$ be a CHG-shaped order function and $r_1:=\ord\mu$.
    The trace operator $\Tr^{(r_1)}$ extends to $\Tr^{(r_1)}\in \call(H^{\mu}_\perp(G),T^\mu_\perp(G^{n-1}))$.
    Furthermore, it admits a right-inverse $E^{\mu}\in \call(T^\mu_\perp(G^{n-1}), H^{\mu}_\perp(G))$.
\end{theorem}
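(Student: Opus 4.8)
The plan is to transfer the statement to the Fourier side and reduce it to one-dimensional estimates for the weight functions. Writing $\calf$ for $\calf_G$ and $\calf_{G^{n-1}}$ for the Fourier transform on $G^{n-1}$, Plancherel's theorem together with $w_\mu\simeq W_\mu$ identifies $H^\mu_\perp(G)$, up to equivalent norms, with the space of $L^2(\widehat G;W_\mu)$-functions supported in $\set{k\ne 0}$, and likewise $H^{T_j(\mu)}_\perp(G^{n-1})$ with $L^2(\widehat{G^{n-1}};W_{T_j(\mu)})$. For $f\in\mathcal S_\perp(G)$ Fourier inversion gives $\calf_{G^{n-1}}(\Tr_j f)(k,\xi')=c\int_\R(\ic\xi_n)^j\calf f(k,\xi',\xi_n)\,\dd\xi_n$ with a constant $c=c(n,T)$, so by the Cauchy--Schwarz inequality in $\xi_n$ --- and using that $\mathcal S_\perp(G)$ is dense in $H^\mu_\perp(G)$ --- the boundedness $\Tr_j\in\call(H^\mu_\perp(G),H^{T_j(\mu)}_\perp(G^{n-1}))$ reduces to the uniform kernel estimate
\[
 \int_\R\frac{|\xi_n|^{2j}}{W_\mu(k,\xi',\xi_n)^2}\,\dd\xi_n\ \lesssim\ W_{T_j(\mu)}(k,\xi')^{-2},\qquad (k,\xi')\in\big(\tfrac{2\pi}{T}\Z\setminus\set 0\big)\times\R^{n-1}.
\]

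\textbf{The kernel estimate.} Here I would use the product structure of $W_\mu$. Since $\caln(\mu)$ is regular in time, Proposition \ref{prop: decomposition_elementary_order} applies; combined with $W_{o_0}(\tau,\xi)\simeq 1+|\xi|$, $|\xi|\simeq|\xi'|+|\xi_n|$, and $1/\gamma_1=-\gamma_1^\perp$ for the first slope $\gamma_1$ of $\caln(\mu)$, it yields
\[
 W_\mu(k,\xi',\xi_n)\ \simeq\ (a+|\xi_n|)^{\,r_1-r_2}(b+|\xi_n|)^{\,r_2},\qquad a:=1+|\xi'|+|k|^{-\gamma_1^\perp},\quad b:=1+|\xi'|,
\]
with $a\ge b\ge 1$ (and $a\simeq b$ if $s_2=0$). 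The integral is then comparable to $\int_0^\infty t^{2j}(a+t)^{-2(r_1-r_2)}(b+t)^{-2r_2}\,\dd t$, which I would split over $[0,b]$, $[b,a]$, $[a,\infty)$. The tail over $[a,\infty)$ is finite exactly because $2j<2r_1-1$, i.e.\ $j\le r_1-1=\ord\mu-1$, and contributes $\simeq a^{2j+1-2r_1}$. Computing the three pieces and distinguishing $0\le j\le r_2-1$ from $r_2\le j\le r_1-1$, one finds the integral is $\simeq a^{-2(r_1-r_2)}b^{2j+1-2r_2}$ in the first case and $\simeq a^{2j+1-2r_1}$ in the second. By Lemma \ref{lemma: additivity} and $W_{o_{-\gamma_1^\perp}}(k,\xi')=a$, these equal (up to $\simeq$) $W_{T_j(\mu)}(k,\xi')^{-2}$, as read off from the definition of $T_j(\mu)$; since one obtains a two-sided estimate, $T^\mu_\perp(G^{n-1})$ is in fact the sharp trace space.

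\textbf{The coretraction.} For $E^\mu$ I would give an explicit frequency-localised formula. Fix $\psi_0,\dots,\psi_{r_1-1}\in\mathcal S(\R)$ with $c\int_\R(\ic\xi_n)^l\psi_j(\xi_n)\,\dd\xi_n=\delta_{jl}$ for all $l\in\set{0,\dots,r_1-1}$ (such functions exist, e.g.\ as suitably normalised Fourier transforms of $x\mapsto\frac{x^j}{j!}\chi(x)$ with $\chi\in\CRci(\R)$ equal to $1$ near $0$). With $\lambda_j(k,\xi'):=b$ for $j<r_2$ and $\lambda_j(k,\xi'):=a$ for $j\ge r_2$, define
\[
 \calf(E^\mu g)(k,\xi',\xi_n)\ :=\ \sum_{j=0}^{r_1-1}\lambda_j(k,\xi')^{-(j+1)}\psi_j\big(\xi_n/\lambda_j(k,\xi')\big)\,\calf_{G^{n-1}}g_j(k,\xi').
\]
Substituting $\xi_n=\lambda_j\sigma$ and using the moment conditions, only the diagonal term $j=l$ survives in $\calf_{G^{n-1}}(\Tr_l E^\mu g)$, whence $\Tr^{(r_1)}E^\mu=\id$. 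For the norm bound, the same substitution together with $W_\mu(k,\xi',\lambda\sigma)\lesssim\langle\sigma\rangle^{r_1}W_\mu(k,\xi',\lambda)$ (valid because $W_\mu(k,\xi',\cdot)$ is, up to $\simeq$, a polynomial of degree $r_1$ with nonnegative coefficients) reduces the $H^\mu_\perp$-norm of the $j$-th summand to a constant multiple of $\lambda_j^{-2j-1}W_\mu(k,\xi',\lambda_j)^2$; inserting $W_\mu(k,\xi',b)\simeq a^{r_1-r_2}b^{r_2}$ for $j<r_2$ and $W_\mu(k,\xi',a)\simeq a^{r_1}$ for $j\ge r_2$ gives exactly $W_{T_j(\mu)}(k,\xi')^2$, and the cross terms are absorbed by Cauchy--Schwarz. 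This shows $E^\mu g\in H^\mu_\perp(G)$ with $\|E^\mu g\|_\mu\lesssim\|g\|_{T^\mu_\perp(G^{n-1})}$. The $j$-dependence of $\lambda_j$ is essential: using the single scale $a$ for the components with $j<r_2$ would overestimate the factor $(b+|\xi_n|)^{r_2}$ of $W_\mu$.

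\textbf{Main obstacle.} I expect the kernel estimate --- and especially its \emph{exact} matching with $W_{T_j(\mu)}$ --- to be the crux of the proof: this is where the two regimes in the definition of $T_j(\mu)$ (according to whether the left-translate of $\caln(\mu)$ by $j+\tfrac12$ still meets the edge through $(r_2,s_2)$ and $(r_1,0)$) must be separated, and where the borderline integrability condition $j\le\ord\mu-1$ enters. A secondary, routine point is to justify the reduction to $\mathcal S_\perp(G)$, i.e.\ that the Fourier representation of $\Tr_j$ and the identity $\Tr^{(r_1)}E^\mu=\id$, established a priori only on this dense subspace, persist on all of $H^\mu_\perp(G)$; this follows from the boundedness established in the first step.
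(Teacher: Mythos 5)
Your proof is correct but takes a genuinely different route from the paper's. The paper proves boundedness of $\Tr_j$ via the abstract trace method: it writes an equivalent norm on $\overline{H}^\mu_\perp(G_+)$ in terms of $\partial_n$-derivatives valued in cross-sectional spaces, invokes \cite[Section 1.8]{triebel1995interpolation} to land in a real interpolation space $(\,\cdot\,,\,\cdot\,)_{\frac{1}{2r_1},2}$, and identifies that space via Corollary~\ref{cor: interpolation_intersection}; the coretraction is then built from analytic semigroups $\mathrm{e}^{-kx_nA_j}$ generated by suitable tangential Fourier multipliers $A_j$, adapting Denk--Saal--Seiler and relying on maximal $L^2$ regularity and the $H^\infty$-calculus. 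You instead work entirely on the Fourier side: Plancherel plus the Cauchy--Schwarz inequality reduces boundedness of $\Tr_j$ to the one-dimensional kernel estimate
\[
\int_\R \frac{|\xi_n|^{2j}}{W_\mu(k,\xi',\xi_n)^2}\,\dd\xi_n \lesssim W_{T_j(\mu)}(k,\xi')^{-2},
\]
and the coretraction is a frequency-scaled mollifier ansatz whose scale $\lambda_j$ switches from $b=1+|\xi'|$ to $a=1+|\xi'|+|k|^{-\gamma_1^\perp}$ at $j=r_2$. Both steps rest on the factorization $W_\mu(k,\xi',\xi_n)\simeq(a+|\xi_n|)^{r_1-r_2}(b+|\xi_n|)^{r_2}$, which follows from Proposition~\ref{prop: decomposition_elementary_order} together with $1/\gamma_1=-\gamma_1^\perp$ and $1/\gamma_2=0$; I verified the kernel computation in both regimes $j\le r_2-1$ and $r_2\le j\le r_1-1$ against the definition of $T_j(\mu)$, as well as the moment-condition construction of the $\psi_j$ and the norm bound for $E^\mu$, and all are correct. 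Your approach is more elementary and self-contained, gives the two-sided kernel estimate (hence the sharpness of $T^\mu_\perp(G^{n-1})$) for free, and makes the role of the two slopes of $\caln(\mu)$ fully transparent; the paper's approach, while heavier, reuses the interpolation and $H^\infty$-calculus machinery that is standard in the parabolic $L^p$ theory and would extend more readily to $L^p$ with $p\ne 2$ or to polygons with more edges, where the $L^2$-specific Cauchy--Schwarz reduction is unavailable. Two small points worth adding: (i) record explicitly that $\cals_\perp(G)$ is dense in $H^\mu_\perp(G)$ --- this follows since $H^\mu_\perp(G)=\op[w_{-\mu}]L^2_\perp(G)$ with $w_{-\mu}\in\calo(\R\times\R^n)$, so $\op[w_{-\mu}]\cals_\perp(G)\subseteq\cals_\perp(G)$ is dense --- and (ii) the remark that the two-sided kernel estimate alone yields sharpness of the trace space needs a sentence of justification, since the lower bound on the kernel does not by itself exclude a smaller trace space.
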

\begin{proof}
        For the sake of readability, we will write $\mathbb{E}:=\overline{H}^{\mu}_\perp(G_+)$, $\mathbb{G}:=T^\mu_\perp(G^{n-1})$, $H(s, r):= H_\perp^{(s, r)} (G^{n-1})$, and $\overline{H}(s,r) :=\overline{H}^{(s,r)}_\perp(G_+)$.
        Let us comment on the structure of the proof: In Step 1, we show the boundedness of the trace operator, while in Step 2 we construct a bounded right-inverse.
        Both steps are divided into two substeps: For each $j\in\set{0,\ldots,r_1-1}$, the shifted Newton polygon can have three or four vertices. We treat both cases for $j=0$ in the first substep. In the second substep we lift the argument from $j=0$ to general $j\in\set{0,\ldots,r_1-1}$.

        \medskip
        
        \textsc{Step} 1.
        We want to show that $\Tr^{(r_1)}$ extends to a well-defined and bounded operator $\mathbb{E}\to \mathbb{G}$.
        Obviously, it is sufficient to prove that $\Tr_j$ extends to a bounded operator $\mathbb{E}\to \mathbb{G}_j:=H^{T_j(\mu)}_\perp(G^{n-1})$ for all $j\in\set{0,..., r_1-1}$.

        \medskip
        
        \textsc{Step} 1.1.
        We start by showing that $\Tr_0$ extends to a well-defined and bounded operator $\mathbb{E}\to \mathbb{G}_0$. Here we have two cases to distinguish depending on the value of $r_2$.

        \medskip
        
         If $r_2=0$, then the Newton polygon associated to $\mathbb{E}$ is simply $\mathcal{N}(\{(r_1, 0), (0, s_2)\})$ and $\gamma^\perp_1=-\frac{s_2}{r_1}$. Thus an equivalent norm on $\mathbb{E}$ is given by 
         \begin{equation*}
             \|f\|_{\mathbb{E}} := \|\partial_{n}^{r_1}f\|_{L^2(\mathbb{R}_+, H(0,0))}+\|f\|_{L^2(\mathbb{R}_+, H(s_2, 0)\cap H(0, r_1))}.
         \end{equation*}
         By the trace method e.g. in Section 1.8 of \cite{triebel1995interpolation}, this implies the boundedness of 
         \begin{equation*}
             \Tr_0 : \mathbb{E}\to ( H(0, r_1)\cap H(s_2, 0),H(0,0) )_{\frac{1}{2r_1}, 2}= [ H(0, r_1)\cap H(s_2, 0),H(0,0) ]_{\frac{1}{2r_1}}.
         \end{equation*}
         Using Corollary \ref{cor: interpolation_intersection}, we find 
         \begin{align*}
             [H(0,r_1)\cap H(s_2, 0),H(0,0)]_{\frac{1}{2r_1}}
             &=H(0,r_1-\frac12)\cap H(s_1-\frac{\gamma_1^\perp}{2},0)=\mathbb{G}_0.
         \end{align*}
         
         If $r_2\geq 1$, then an equivalent norm on $\overline{H}(s_2, r_2)$ is given by 
         \begin{equation*}
             \|f\|_{\overline{H}(s_2, r_2)}\simeq  \|\partial_n^{r_2}f\|_{L^2(\mathbb{R}_+, H(s_2, 0))}+\|f\|_{L^2(\mathbb{R}_+, H(s_2, r_2))}.
         \end{equation*}
         Using the trace method as above, we get the boundedness of 
         \begin{equation*}
             \Tr_0: \overline{H}(s_2, r_2)\to H(s_2,r_2-\frac12).
         \end{equation*}
         Substituting $(s_2, r_2)$ by $(0,r_1)$ in the above, we also get boundedness of 
         \begin{equation*}
             \Tr_0 : \overline{H}(0, r_1)\to H(0,r_1-\frac12),
         \end{equation*}
         and thus 
         \begin{align*}
             \Tr_0 : \mathbb{E}&=\overline{H }(s_2, r_2)\cap \overline{H}(0, r_1)
             \to H(s_2,r_2-\frac12)\cap H(0,r_1-\frac12)=\mathbb{G}_0
         \end{align*}
         is also bounded.

         \medskip
         
         \textsc{Step} 1.2.
         For all $j=0,..., r_1-1$, remark that $\Tr_j =\Tr_0 \circ \partial_n^j$ and define 
         \begin{equation*}
             \mathbb{E}_j:=\left\{\begin{array}{ll}
             \overline{H}(s_2, r_2-j)\cap \overline{H}(0, r_1-j)    & \text{if }j=0,..., r_2-1  \\
             \overline{H}(s_2+\gamma_1^\perp(j-r_2), 0) \cap    \overline{H}(0, r_1-j)  & \text{if } j= r_2,..., r_1-1
             \end{array}\right.
         \end{equation*}
         Remark that the Newton polygon associated to $\mathbb{E}_j$ is the Newton polygon of the space $\mathbb{E}$ translated to the left by $j$, and that $\partial_n^j: \mathbb{E}\to \mathbb{E}_j$ is bounded. Furthermore, replacing $\mathbb{E}, \mathbb{G}_0$ by $\mathbb{E}_j, \mathbb{G}_j$ in STEP 1.1, one gets that $\Tr_0: \mathbb{E}_j\to \mathbb{G}_j$ is bounded, and the conclusion follows.

          \medskip
          
          \textsc{Step} 2.
          We now construct a bounded right-inverse. This part of the proof is adapted from \cite{denk2008inhomogeneous}. Pick $\sigma=(\sigma_0, ..., \sigma_{r_1-1}) \in \mathbb{G}$. For $j=0,..., r_1-1$, we define the operator 
          \begin{equation*}
              A_j :=\left\{\begin{array}{ll}
             (1-\Delta')^{\frac{1}{2}}      & \text{if } j\in\set{0,..., r_2-1} \\
              (1-\Delta')^{\frac{1}{2}}+(1+\partial_t)^{-\gamma^\perp_1}      & \text{if } j\in\set{r_2,..., r_1-1},
              \end{array}\right. 
          \end{equation*}
          where the Laplace operator $\Delta'$ acts in the variables $x'\in \R^{n-1}$.
          These operators generate analytic semigroups in all considered Newton polygon spaces (in fact, they admit a bounded $H^\infty$-calculus by a Fourier multiplier argument as in \cite[Example 10.2]{KuW04}), and in particular they admit maximal $L^2$ regularity.
          Define
        \begin{equation*}
            \eta_j(x_n):=\sum_{k=0}^{r_1-1} c_{kj}\mathrm{e}^{-kx_nA_j}(A^{-j}_j\sigma_j),\qquad j\in\set{0,..., r_1-1},
        \end{equation*}
        where we can choose the coefficients $c_{kj}$ in such a way that for $j,m\in\set{0,...,r_1-1}$ it holds
        \begin{equation}\label{eqn: choice_coeff}
            \partial_{n}^{m}\eta_j(0)=\delta_{m, j}\sigma_j.
        \end{equation}
        Indeed, we have 
        \begin{align*}
             \partial_{n}^{m}\eta_j(0)
             &=\sum_{k=0}^{r_1-1}(-k)^{m}c_{kj}A^{m-j}_{j}\sigma_j.
        \end{align*}
        Therefore, (\ref{eqn: choice_coeff}) is verified if $VC_j=e_j$ for $j\in\set{1,...,r_1}$, where $C_j=(c_{1j},...,c_{r_1j})$ and $V$ is the Vandermonde matrix. Since $V$ is invertible, we can set $C_j=V^{-1}e_j$ to achieve this.
        We now set 
        \begin{equation*}
            \eta=\sum_{j=0}^{r_1-1}\eta_j.
        \end{equation*}
        By (\ref{eqn: choice_coeff}), we have $\partial_{n}^{m}\eta=\sigma_m$ for $m\in\set{0,...,r_1-1}$ and thus $\Tr^{(r_1)}\eta=\sigma$.
        It is left to show that $\eta_j\in \mathbb{E}$ for all $j\in \{0,...,r_1-1\}$.
        Once again, we will proceed in two substeps.

        \medskip
        
        \textsc{Step} 2.1.
        We show that $\eta_0 \in \mathbb{E}$  and  $\|\eta_0\|_{\mathbb{E}}\lesssim \|\sigma_0\|_{\mathbb{G}_0}$, which due to
        \begin{align}\label{eqn: trace_decomp}
            \|\eta_0\|_{\mathbb{E}}\simeq \|\eta_0\|_{L^2(\R_+;H(s_2, r_2))} + \|\partial_n^{r_2}\eta_0\|_{L^2(\R_+;H(s_2, 0))}  +\|\eta_0\|_{L^2(\mathbb{R}_+ ; H(0, r_1))} + \|\partial_n^{r_1}\eta_0 \|_{L^2(G_+)}
        \end{align}
        comes down to estimating the four contributions to the right-hand side individually.
        Once again, we have two cases to consider depending on the value of $r_2$. 

        \medskip
        
        We first show the case $r_2\geq 1$. In this case we have $A_0=(1-\Delta')^{\frac{1}{2}}$.
        For $j\in\set{1,2}$ consider $A_0$ acting in $X:=H( s_j, r_j-1)$ with domain $D(A_0):=H(s_j, r_j)$. We claim that 
            \begin{equation*}
                \mathbb{G}_0 \hookrightarrow [X, D(A_0)]_{\frac{1}{2}}.
            \end{equation*}
            Indeed, Corollary \ref{cor: interpolation_intersection} directly shows that the right-hand side is equal to $H(s_j, r_j-\frac{1}{2})$, which is a vertex of the Newton polygon associated to the space $\mathbb{G}_0$.
            Therefore, Lemma \ref{lemma: point_inside} yields the claim.
            The maximal regularity of $A_0$ now shows 
            \begin{equation*}
                \eta_0 \in \overline{H}^1(\mathbb{R}_+ ; H(s_j, r_j-1)) \cap L^2(\mathbb{R}_+; H(s_j, r_j)) 
            \end{equation*}
            and in particular $\|\eta_0\|_{L^2 (\mathbb{R}_+ ; H(s_j, r_j))} \lesssim \|\sigma_0\|_{\mathbb{G}_0}$.
            This is the desired estimate for the first and the third contribution to the right-hand side of \eqref{eqn: trace_decomp}.
            For the second and fourth contribution to the right-hand side of \eqref{eqn: trace_decomp}, remark that 
            \begin{align*}
            \partial_n^{r_j-1} \eta_0=\sum_{k=0}^{r_1-1} (-k)^{r_j-1}c_{k0}\mathrm{e}^{-kx_nA_0}(A_0^{r_j-1}\sigma_0), \qquad j\in\set{1,2}.   \end{align*}
            Set $X:=H(0,0)$ and $D(A_0)=H(0,1)$.  Here one has
        \begin{equation*}
            [X, D(A_0)]_{\frac{1}{2}}= H(0, \frac{1}{2}),
        \end{equation*}
        and since $A_0$ is $\mathcal{N}$-elliptic with weight function $w(\tau, \xi)\simeq 1+\vert \xi\vert$, Theorem \ref{thm: whole space} shows
        \begin{equation*}
            A_0^{r_j-1}\in \call(\mathbb{G}_0,
        H(0, \frac{1}{2})\cap H(-\frac{\gamma_1^\perp}{2}, 0))\hookrightarrow \call(\mathbb{G}_0, H(0, \frac{1}{2})). 
        \end{equation*}
        Now the maximal regularity of $A_0$ yields $\partial_n^{r_j-1}\eta_0 \in \overline{H}^1(\mathbb{R}_+; H(0,0))$ and $\|\partial_n^{r_j}\eta_0\|_{L^2(G_+)}\lesssim \|\sigma_0\|_{\mathbb{G}_0}$. 
        
        \medskip
        
        For the case $r_2=0$, the first two contributions to the right-hand side of \eqref{eqn: trace_decomp} collapse to $\|\eta_0\|_{\overline{H}(s_2,0)}$. Observe that now $A_0= (1-\Delta')^{\frac{1}{2}}+(1+\partial_t)^{-\gamma_1^\perp}$.
        We consider $A_0$ acting in $X:=H(s_2+\gamma_1^\perp , 0)$ with domain $D(A_0):= H(s_2, 0)\cap H(s_2+\gamma^\perp_1, 1)$. We claim that 
        \begin{equation*}
            \mathbb{G}_0 \hookrightarrow [X, D(A_0)]_{\frac{1}{2}}. 
        \end{equation*}
        Indeed, by Corollary \ref{cor: interpolation_intersection}, the right-hand side is equal to $H(s_2+\frac{\gamma_1^\perp}{2}, 0)\cap H(s_2+\gamma_1^\perp, \frac{1}{2})$, and both the points $(s_2+\frac{\gamma_1^\perp}{2}, 0)$ and $(s_2+\gamma_1^\perp, \frac{1}{2})$ belong to the Newton polygon associated to the space $\mathbb{G}_0$, so that the embedding follows by Lemma \ref{lemma: point_inside}. 
        The maximal regularity of $A_0$ thus yields 
        \begin{equation*}
            \eta_0 \in \overline{H}^1(\mathbb{R}_+; H(s_2+\gamma_1^\perp , 0))\cap L^2(\mathbb{R}^+;H(s_2, 0)\cap H(s_2+\gamma^\perp_1, 1))\hookrightarrow \overline{H}(s_2, 0)
        \end{equation*}
        with $\|\eta_0\|_{\overline{H}(s_2, 0)} \lesssim\|\sigma_0\|_{\mathbb{G}_0}$. 

        \medskip
        
        For the third contribution to the right-hand side of \eqref{eqn: trace_decomp} we run a similar argument applied to $X:=H(0, r_1-1)$ and $D(A_0):= H(-\gamma^\perp_1, r_1-1)\cap H(0, r_1)$.
        This yields $\eta_0\in L^2(\mathbb{R}_+, H(0, r_1))$ together with the estimate $\|\eta_0\|_{L^2(\mathbb{R}_+, H(0, r_1))}\lesssim \|\sigma_0\|_{\mathbb{G}_0}$.

        \medskip
        
        For the fourth contribution to the right-hand side of \eqref{eqn: trace_decomp} we use a similar argument to the one used in the case $r_2\geq 1$, here taking $X:=(0,0)$ and $D(A_0)=H(\gamma_1^\perp, 0)\cap H(0, 1)$ to get  $\partial_n ^{r_1-1}\eta_0 \in \overline{H}^1(\mathbb{R}_+; H(0,0))$ with the desired estimate $\|\partial_n^{r_1} \eta_0\|_{L^2(G_+)}\lesssim \|\sigma_0\|_{\mathbb{G}_0}$.

        \medskip
        
        \textsc{Step} 2.2
            We now show that $\eta_j\in \mathbb{E}$ for all $j\in\set{0,...,r_1-1}$ and $\|\eta_j\|_{\mathbb{E}}\lesssim \|\sigma_j\|_{\mathbb{G}_j}$. 
            We set
            \begin{equation*}
                \mathbb{X}_j :=\left\{\begin{array}{ll}
                H(s_2, r_2-\frac{1}{2})\cap H(0, r_1-\frac{1}{2})    & \text{if } j\in\set{0,..., r_2-1}  \\
                H(-\gamma^\perp_1 (r_1-\frac{1}{2}), 0) \cap H(0, r_1-\frac{1}{2})     & \text{if } j\in\set{r_2,..., r_1-1}.
                \end{array}\right. 
            \end{equation*}
            As in the previous step we obtain by Theorem \ref{thm: whole space} that $A_j^{-j}\in \call_{\mathrm{iso}}(\mathbb{G}_j,\mathbb{X}_j)$.
            Therefore, it makes sense to define the maximal regularity spaces $\mathbb{Y}_j$ of $A_j$ corresponding to 'initial data' in $\mathbb{X}_j$, i.e.
            \begin{equation*}
                \mathbb{Y}_j :=\left\{\begin{array}{ll}
                \mathbb{E}     & \text{if } j\in\set{0,..., r_2-1},  \\
                \overline{H}(-\gamma^\perp_1 r_1, 0)\cap \overline{H}(0, r_1)     & \text{if } j\in\set{r_2,..., r_1-1}. 
                \end{array}\right.
            \end{equation*}
            Replacing $\mathbb{E}, \mathbb{G}_0, \sigma_0$ in STEP 2.1 by $\mathbb{Y}_j, \mathbb{X}_j, A_j^{-j}\sigma_j$, we infer that 
            $\|\eta_j\|_{\mathbb{Y}_j} \lesssim \|A_j^{-j}\sigma_j\|_{\mathbb{X}_j} \lesssim \|\sigma_j\|_{\mathbb{G}_j}$. 
            To conclude, we remark that the Newton polygon associated to the space $\mathbb{E}$ is included in $\mathbb{Y}_j$ for all $j\in\set{0,..., r_1-1}$. Thus $\mathbb{Y}_j \hookrightarrow \mathbb{E}$ by Lemma \ref{lemma: point_inside}, which concludes the proof.
            \qedhere
\end{proof}
In conclusion, the trace spaces associated to $\overline{H}^{\mu}_\perp(G_+)$ can be described by a Newton polygon.
 We particularise these results to the Cahn-Hilliard-Gurtin determinant.
\begin{example}\label{ex: trace_CHG}
    We apply the results of Theorem \ref{thm: trace} to the space $H^{\mu_D}_\perp(G_+)$, where $\mu_D$ is the order function associated to the Cahn-Hilliard-Gurtin determinant (\ref{eqn: determinant}).
    By Example \ref{ex: CHD_order_fct} we have $\mu_D=2o_\frac12+2$.
    Therefore, the order functions associated to the trace spaces are successively given by
    \begin{align*}
        T_0(\mu_D)=2o_{\frac{1}{2}} + \frac32, 
        \quad T_1(\mu_D)= 2o_{\frac{1}{2}} + \frac{1}{2},
        \quad T_2(\mu_D)= \frac32o_{\frac{1}{2}},
        \quad T_3(\mu_D)= \frac12o_{\frac{1}{2}}. 
    \end{align*}
    In other words, the Newton polygons associated to the trace spaces are given by
    \begin{align*}
        &T_0(\mathcal{N})_V =\{(0, 0), (\frac{7}{2}, 0), (\frac{3}{2}, 1), (0, 1)\}\\
        &T_1(\mathcal{N})_V =\{(0, 0), (\frac{5}{2}, 0), (\frac{1}{2}, 1), (0, 1)\}\\
        &T_2(\mathcal{N})_V =\{(0, 0), (\frac{3}{2}, 0), (0, \frac34)\}\\
        &T_3(\mathcal{N})_V =\{(0, 0), (\frac{1}{2}, 0), (0, \frac14)\}.
    \end{align*}
    which are indeed the translates of the Newton polygon of $D$, which we recall is described by 
    \begin{equation*}
        \mathcal{N}_V(D)=\{(0,0), (4, 0), (2, 1), (0,1)\}.
    \end{equation*}
\end{example}
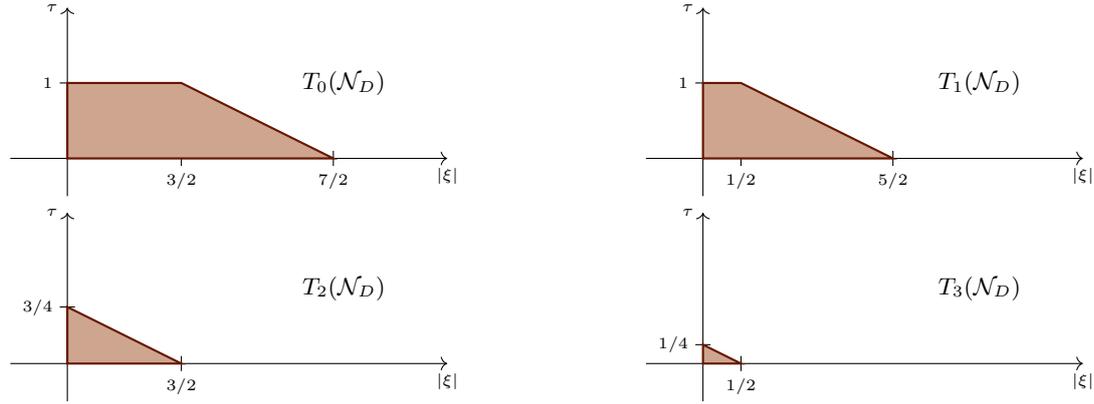
\begin{figure}[h]
   
   \begin{subfigure}[b]{.49\textwidth}
       \begin{tikzpicture}
 \draw[->] (-.75,0) -- (5,0) node[below , font=\tiny] {$\vert \xi\vert$};
  \draw[->] (0,-.5) -- (0,2) node[left, font=\tiny] {$\tau$}; 
  
   \draw (1.5,0.1) -- (1.5,-0.1);
  \draw (3.5,0.1) -- (3.5,-0.1);
    \node[below, yshift= -2pt, font=\tiny] at (1.5,0) {$3/2$};
    \node[below, yshift=-2pt, font=\tiny] at (3.5, 0) {$7/2$};

    \draw (-0.1, 1) -- (0.1, 1);
    \node[left, xshift=-2pt, font= \tiny] at (0,1) {$1$};
  \coordinate (A) at (0,0);
  \coordinate (B) at (3.5,0);
  \coordinate (C) at (1.5,1);
  \coordinate (D) at (0,1);

  \filldraw[color=Sepia, fill=Sepia!30,thick] (A)--(B)--(C)--(D)--cycle;

     
      \node[xshift=4pt, font=\small] at (7/2, 1) {$T_0(\mathcal{N}_D)$};
\end{tikzpicture}
   \end{subfigure}
   \hfill
   \begin{subfigure}[b]{.49\textwidth}
       \begin{tikzpicture}
 \draw[->] (-.75,0) -- (5,0) node[below , font=\tiny] {$\vert \xi\vert$};
  \draw[->] (0,-.5) -- (0,2) node[left, font=\tiny] {$\tau$}; 
  
   \draw (0.5,0.1) -- (0.5,-0.1);
  \draw (2.5,0.1) -- (2.5,-0.1);
    \node[below, yshift= -2pt, font=\tiny] at (0.5,0) {$1/2$};
    \node[below, yshift=-2pt, font=\tiny] at (2.5, 0) {$5/2$};

    \draw (-0.1, 1) -- (0.1, 1);
    \node[left, xshift=-2pt, font= \tiny] at (0,1) {$1$};
  \coordinate (A) at (0,0);
  \coordinate (B) at (2.5,0);
  \coordinate (C) at (0.5,1);
  \coordinate (D) at (0,1);

  \filldraw[color=Sepia, fill=Sepia!30,thick] (A)--(B)--(C)--(D)--cycle;

     
      \node[xshift=4pt, font=\small] at (7/2, 1) {$T_1(\mathcal{N}_D)$};
\end{tikzpicture}
\end{subfigure}

   \begin{subfigure}{.49\textwidth}
      
      \begin{tikzpicture}
 \draw[->] (-.75,0) -- (5,0) node[below , font=\tiny] {$\vert \xi\vert$};
  \draw[->] (0,-.5) -- (0,2) node[left, font=\tiny] {$\tau$}; 
  
   \draw (1.5,0.1) -- (1.5,-0.1);
  
    \node[below, yshift= -2pt, font=\tiny] at (1.5,0) {$3/2$};

    \draw (-0.1, .75) -- (0.1, .75);
    \node[left, xshift=-2pt, font= \tiny] at (0,0.75) {$3/4$};
  \coordinate (A) at (0,0);
  \coordinate (B) at (1.5,0);
  \coordinate (C) at (0, 0.75);

  \filldraw[color=Sepia, fill=Sepia!30,thick] (A)--(B)--(C)--cycle;

     
      \node[xshift=4pt, font=\small] at (7/2, 1) {$T_2(\mathcal{N}_D)$};
\end{tikzpicture}
   \end{subfigure}\hfill
   \begin{subfigure}{.49\textwidth}
       \begin{tikzpicture}
 \draw[->] (-.75,0) -- (5,0) node[below , font=\tiny] {$\vert \xi\vert$};
  \draw[->] (0,-.5) -- (0,2) node[left, font=\tiny] {$\tau$}; 
  
   \draw (0.5,0.1) -- (0.5,-0.1);
  
    \node[below, yshift= -2pt, font=\tiny] at (0.5,0) {$1/2$};

    \draw (-0.1, 0.25) -- (0.1, 0.25);
    \node[left, xshift=-2pt, font= \tiny] at (0,0.25) {$1/4$};
  \coordinate (A) at (0,0);
  \coordinate (B) at (0.5,0);
  \coordinate (C) at (0, 0.25);

  \filldraw[color=Sepia, fill=Sepia!30,thick] (A)--(B)--(C)--cycle;

     
      \node[xshift=4pt, font=\small] at (7/2, 1) {$T_3(\mathcal{N}_D)$};
\end{tikzpicture}
   \end{subfigure}
   \caption{The Newton polygons associated to the trace spaces of $\mu_D$.}
   \label{fig: trace_spc}
\end{figure}

\subsection{Operators Preserving Upper and Lower Support}
In order to adapt the results of Section 2 to the half space problem, the main issue we are facing is that pseudodifferential operators are \emph{a priori} nonlocal.
Thus, not every Fourier multiplier $\op[m]: E(G)\to \mathcal{S}'(G)$ induces a well-defined operator $\overline{E}(G_+)\to \overline{\cals}{}'(G_+)$. 
\begin{definition}[support-preserving operators]\label{def: supp_pres_op}
    Let $E(G),F(G)\hookrightarrow \cald'(G)$ be locally convex spaces and $\opA\in \call(E(G),F(G))$.
    \begin{enumerate}
    \item $\opA$ \emph{preserves support in $\overline{G_\pm}$} (or \emph{preserves upper/lower support}) if for every $f\in \dot E(\overline{G_\pm})$, one has $\opA f\in \dot F(\overline{G_\pm})$.
    \item If $\opA$ preserves support in $\overline{G_\pm}$, the induced operators $\dot{\opA}_{\pm}:r^\pm\dot E(\overline{G_\pm})\to r^\pm\dot F(\overline{G_\pm})$ and $\opA_\mp : \overline{E}(G_\mp)\to \overline{F}(G_\mp)$ are defined via
    \begin{align*}
     \dot{\opA}_\pm u:=r^\pm Ae_0^\pm u \qquad \text{and} \qquad \opA_\mp u:=r^\mp \opA U,
    \end{align*}
    where $U\in E(G)$ is any extension of $u$, that is, $r^\mp U=u$.
    \end{enumerate}
\end{definition}
Observe that $\opA_\mp$ is well-defined, i.e., the above definition does not depend on the choice of the extension $U$.
Indeed, let $U_1, U_2 \in E(G)$ with $r^\mp U_1=r^\mp U_2=u$.
Then $U_1-U_2\in \dot{E}(\overline{G_\pm})$.
Since $\opA$ preserves support in $\overline{G_\pm}$, it holds $\opA U_1-\opA U_2\in \dot{F}(\overline{G}_\pm)$, which in turn implies $r^\mp\opA U_1=r^\mp\opA U_2$ by Lemma \ref{lem:restr_supp_space}.
Moreover, $\dot\opA_\pm$ is the restriction of $\opA_\pm$ to $r^\pm \dot E(\overline{G_\pm})$ if both exist.
Finally, if $\opA$ is a local operator (for example a differential operator), then $\opA:\overline{E}(G_\mp)\to \overline{F}(G_\mp)$ has a canonical meaning, and it holds $\opA=\opA_\mp$. 
\begin{proposition}\label{prop: boundedness_lower_support}
    Let $E(G), F(G),H(G)\hookrightarrow \cald'(G)$ be normed spaces and let $\opA\in \call(E(G),F(G))$, $\opB\in \call(F(G),H(G))$ preserve lower support.
    \begin{enumerate}
        \item It holds $\dot\opA_-\in \call(r^-\dot{E}(\overline{G_-}),r^-\dot{F}(\overline{G_-}))$ and $\opA_+\in \call(\overline{E}(G_+),\overline{F}(G_+))$, and both operator norms do not exceed $\|\opA\|_{\call(E(G),F(G))}$.
        \item $\opB\opA\in \call(E(G),H(G))$ preserves lower support, and $(\opB\opA)_-^{\boldsymbol{\cdot}}=\dot\opB_-\dot\opA_-$ and $(\opB\opA)_+=\opB_+\opA_+$.
        \item If $\opA\in \call_{\mathrm{iso}}(E(G),F(G))$ and $\opA^{-1}$ preserves lower support, then $\dot\opA_-\in \call_{\mathrm{iso}}(r^-\dot{E}(\overline{G_-}),r^-\dot{F}(\overline{G_-}))$ and $\opA_+\in \call_{\mathrm{iso}}(\overline{E}(G_+),\overline{F}(G_+))$ with $(\dot\opA_-)^{-1}=(\opA^{-1})_-^{\boldsymbol{\cdot}}=:\dot{\opA}_-^{-1}$ and $(\opA_+)^{-1}=(\opA^{-1})_+:=\opA^{-1}_+$.
    \end{enumerate}
\end{proposition}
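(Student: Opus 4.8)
The plan is to verify the three assertions in order, exploiting systematically the defining relations $\dot\opA_- u = r^- \opA e_0^- u$ and $\opA_+ u = r^+ \opA U$ (with $r^- U = u$), together with Lemma \ref{lem:restr_supp_space} and the elementary fact that $e_0^\pm$ is a norm-nonincreasing right inverse of $r^\pm$ on the relevant normed spaces.

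\emph{Part (i).} For $\dot\opA_-$ I would observe that if $u \in r^-\dot E(\overline{G_-})$, then $e_0^- u \in \dot E(\overline{G_-})$ is an extension with $\|e_0^- u\|_{E(G)} = \|u\|_{r^-\dot E(\overline{G_-})}$ (the supported space carries the $E(G)$-norm, and $e_0^-$ is an isometry onto its image by construction), so $\|\dot\opA_- u\|_{r^-\dot F} \le \|\opA e_0^- u\|_{F(G)} \le \|\opA\|\,\|u\|_{r^-\dot E}$. For $\opA_+$ I would take an \emph{arbitrary} extension $U \in E(G)$ of $u \in \overline E(G_+)$ and estimate $\|\opA_+ u\|_{\overline F} = \|r^+ \opA U\|_{\overline F} \le \|\opA U\|_{F(G)} \le \|\opA\|\,\|U\|_{E(G)}$; taking the infimum over all such $U$ and using that $\|u\|_{\overline E(G_+)}$ is precisely that infimum (the quotient norm of the restricted space) gives the claimed bound. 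Well-definedness of $\opA_+$ was already recorded after Definition \ref{def: supp_pres_op}.

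\emph{Part (ii).} That $\opB\opA$ preserves lower support is immediate: if $\supp f \subseteq \overline{G_-}$ then $\supp \opA f \subseteq \overline{G_-}$ and hence $\supp \opB\opA f \subseteq \overline{G_-}$. For the composition formula on $\opA_+$, pick $u \in \overline E(G_+)$ and an extension $U$ of $u$; then $\opA U$ is an extension of $\opA_+ u$ (since $r^-\opA U = \opA_- (r^- U)$ is well-defined, but more directly $r^-\opA U = \opA_- u'$ only matters through: $\opA U \in F(G)$ restricts on $G_+$ to $\opA_+ u$ by definition), so $(\opB\opA)_+ u = r^+ \opB\opA U = \opB_+(r^+\opA U) = \opB_+\opA_+ u$. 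The analogous computation with $e_0^-$ in place of a general extension, using $r^- e_0^-\opA e_0^- u = \opA e_0^- u$ modulo $\dot F(\overline{G_+})$ — more carefully, using Lemma \ref{lem:restr_supp_space} to discard the part supported in the wrong half space — yields $(\opB\opA)^{\boldsymbol\cdot}_- = \dot\opB_-\dot\opA_-$.

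\emph{Part (iii).} Here I would apply (i) and (ii) to both $\opA$ and $\opA^{-1}$: by (i), $\opA_+$ and $(\opA^{-1})_+$ are bounded, and by (ii), $(\opA^{-1})_+\opA_+ = (\opA^{-1}\opA)_+ = (\id_{E(G)})_+ = \id_{\overline E(G_+)}$ and symmetrically $\opA_+(\opA^{-1})_+ = \id_{\overline F(G_+)}$, since the identity operator trivially preserves lower support and induces the identity on the restricted space. Hence $\opA_+ \in \call_{\mathrm{iso}}$ with inverse $(\opA^{-1})_+$. The same argument with $\dot{(\cdot)}_-$ gives the statement for $\dot\opA_-$. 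The one point requiring a little care — and the main obstacle, such as it is — is checking that $(\id)_+ = \id$ and $(\id)^{\boldsymbol\cdot}_- = \id$, i.e. that the bookkeeping of extensions and restrictions genuinely collapses; this is where Lemma \ref{lem:restr_supp_space} and the identity $r^+ e_0^+ = \id$ on $\overline E(G_+)$ (respectively $r^- e_0^- = \id$ on the supported space) are used, and it is routine once one tracks which half space each support sits in.
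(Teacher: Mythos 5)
Your proposal is correct and follows essentially the same route as the paper's proof: part (i) via the quotient-norm infimum over extensions, part (ii) via $r^{+}\opB\opA U=\opB_{+}r^{+}\opA U$ for an arbitrary extension $U$, and part (iii) by applying part (ii) with $\opB:=\opA^{-1}$. The only substantive difference is that you spell out the $\dot{(\cdot)}_{-}$ version of (ii) (using $e_0^-$ and Lemma \ref{lem:restr_supp_space} to discard the part on the wrong side), which the paper declares trivial and omits; and in part (i) the claim that $e_0^-$ is an \emph{isometry} onto its image is a touch stronger than needed --- $e_0^- u$ being \emph{some} extension, so that $\|\dot\opA_- u\|\le\|\opA\|\,\|e_0^- u\|_{E(G)}$, already suffices, and the isometry requires $r^-$ to be injective on $\dot E(\overline{G_-})$, which holds for the spaces of interest but is not recorded in the generality of the statement.
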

\begin{proof}
\begin{enumerate}
    \item The assertion for $\dot\opA_-$ is trivial.
    For the assertion about $\opA_+$, let $u\in \overline{E}(G_+)$.  For any $U\in E(G)$ such that $r^+U=u$, it holds $\opA_+u=r^+ \opA U$, and thus
    \begin{align*}
        \|\opA_+ u\|_{\overline{F}(G_+)}&=\inf\{ \|V\|_{F(G)} \mid V\in F(G) , r^+V=\opA_+u\}\\
        &\leq \|\opA U\|_{F(G)}\leq \|\opA\| _{\mathcal{L}(E(G), F(G))} \|U\|_{E(G)}.
    \end{align*}
    Since the estimate holds for every extension $U\in E$ of $u$, we conclude
    \begin{equation*}
        \|\opA_+u\|_{\overline{F}(G_+)}
        \leq \|\opA\|_{\mathcal{L}(E(G), F(G))}\inf\{ \|U\|_{E(G)} \mid U\in E(G), r^+U=u\}
        = \|\opA\| _{\mathcal{L}(E, F)}\|u\|_{\overline{E}(G_+)}.
    \end{equation*}
    \item By definition, $\opB\opA\in \call(E(G),H(G))$ preserves lower support.
    By the previous point, we thus have $(\opB\opA)_+\in \call(\overline{E}(G_+),\overline{H}(G_+))$, $\opA_+\in \call(\overline{E}(G_+),\overline{F}(G_+))$, and $\opB_+\in \call(\overline{F}(G_+),\overline{H}(G_+))$.
    Moreover, for $u\in \overline{E}(G_+)$ and $U\in E(G)$ with $r^+U=u$ we have
    \begin{align*}
        \opB_+\opA_+u=\opB_+r^+\opA U=r^+\opB\opA U=(\opB\opA)_+u,
    \end{align*}
    where we have used that $r^+V=v$ for $V:=\opA U\in F(G)$ and $v:=r^+\opA U\in \overline{F}(G_+)$.
    \item Follows directly from the previous point with $\opB:=\opA^{-1}$.
    \qedhere
\end{enumerate}
\end{proof}

In view of Proposition \ref{prop: boundedness_lower_support}, if $\opA\in \call_{\mathrm{iso}}(H^{\mu}_\perp(G),L^2_\perp(G))$ preserves lower support, then one can consider the equation $\opA_+u=f$ on the half space $G_+$.
If $\opA^{-1}$ preserves lower support as well, then this equation is uniquely solvable, the solution being directly given by $u:=\opA^{-1}_+ f$.
However, the situation gets more complicated when $\opA^{-1}$ does not possess this property, since there is no direct way to define $\opA^{-1}_+$ as an inverse of $\opA_+$.
As a first observation, we show that in the case where both $\opA^{-1}$ and $\opA$ preserve upper support, there is a unique solution in the Newton polygon spaces by the trace theory developed in Section 3 if additional boundary conditions are prescribed. 
We first characterize the elements of $r^+\dot{H}^{\mu}_\perp(\overline{G_+})$ by their trace. 
\begin{lemma}\label{lemma: zero_trace}
    Let $\mu$ be a CHG-shaped order function and $r_1:=\ord\mu$.
    Then 
    \begin{equation*}
        r^+\dot{H}^{\mu}_\perp(\overline{G_+})=\{u\in \overline{H}^{\mu}_\perp(G_+): \Tr^{(r_1)}u=0\}. 
    \end{equation*}
    Moreover, $r^+\dot{H}^{0}_\perp(\overline{G_+})=\overline{H}^{0}_\perp(G_+)=L^2_\perp(G_+)$.
\end{lemma}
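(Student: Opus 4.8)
The plan is to identify the restriction of the supported space with those half-space functions whose extension-by-zero stays in $H^{\mu}_\perp(G)$. Write $e_0^+$ for the extension by zero across $\set{x_n=0}$ as in \eqref{def:ext_op}; since $\mu$ is CHG-shaped it is strictly positive, so $\overline{H}^{\mu}_\perp(G_+)\subseteq L^2_\perp(G_+)$ consists of genuine functions and $e_0^+$ maps it into $L^2_\perp(G)$ (it commutes with $\mathcal{P}_\perp$ as it acts only in $x_n$). I first claim
\[
r^+\dot{H}^{\mu}_\perp(\overline{G_+})=\setc{u\in\overline{H}^{\mu}_\perp(G_+)}{e_0^+u\in H^{\mu}_\perp(G)}.
\]
For "$\subseteq$": if $u=r^+F$ with $F\in H^{\mu}_\perp(G)\subseteq L^2_\perp(G)$ and $\supp F\subseteq\overline{G_+}$, then $F$ vanishes a.e.\ on $G_-$, so $F=e_0^+(r^+F)=e_0^+u$ and hence $e_0^+u\in H^{\mu}_\perp(G)$. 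For "$\supseteq$": if $e_0^+u\in H^{\mu}_\perp(G)$, then $F:=e_0^+u$ satisfies $\supp F\subseteq\overline{G_+}$ and $r^+F=u$. Thus everything reduces to the equivalence $e_0^+u\in H^{\mu}_\perp(G)\iff\Tr^{(r_1)}u=0$ for $u\in\overline{H}^{\mu}_\perp(G_+)$.

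Next I would localise this at the vertices of $\mathcal N$. By the purely oscillatory form of Proposition \ref{prop_potential_spaces}, $H^{\mu}_\perp(G)=\bigcap_{i=1}^{J+1}H^{(s_i,r_i)}_\perp(G)$, and since $\mu$ is CHG-shaped we have $r_1=\ord\mu=\max_i r_i$ with $H^{(0,r_1)}_\perp(G)$ among these factors. Writing $u=r^+U$ with $U\in H^{\mu}_\perp(G)\subseteq H^{(s_i,r_i)}_\perp(G)$ also gives $u\in\overline{H}^{(s_i,r_i)}_\perp(G_+)$ for every $i$. Hence the equivalence follows once we prove, for each $r\in\N_0$, $s\ge0$ and each $v\in\overline{H}^{(s,r)}_\perp(G_+)$,
\[
e_0^+v\in H^{(s,r)}_\perp(G)\quad\Longleftrightarrow\quad\Tr_j v=0\ \text{ for }j=0,\dots,r-1,
\]
because $r_i\le r_1$ means the vanishing of $\Tr_j u$ for $j\le r_1-1$ (equivalently $\Tr^{(r_1)}u=0$) is sufficient at every vertex and already necessary at $(0,r_1)$.

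For this vertex statement I would first strip off the time regularity: from $w_{\mu_{(s,r)}}\simeq\langle\tau\rangle^{s}\langle\xi\rangle^{r}$ one gets $H^{(s,r)}_\perp(G)=\op[\langle\tau\rangle^{-s}]H^{(0,r)}_\perp(G)$ with equivalent norms, and $\op[\langle\tau\rangle^{s}]$ acts only in the time variable, hence preserves upper and lower support, commutes with $e_0^+$, $r^\pm$ and every $\Tr_j$, and is an isomorphism of the spaces involved by Proposition \ref{prop: boundedness_lower_support}. This reduces matters to $s=0$, $r\in\N$, which is the classical statement: for $v\in\overline{H}^{(0,r)}_\perp(G_+)$ the iterated jump formula $\partial_n^{k}(e_0^+v)=e_0^+(\partial_n^{k}v)+\sum_{\ell=0}^{k-1}(\Tr_\ell v)\otimes\partial_n^{k-1-\ell}\delta_{\set{x_n=0}}$ (valid on $\CRci(\overline{G_+})$ and extended by continuity using Theorem \ref{thm: trace}), together with the commutation of $e_0^+$ with $\partial_{x'}$, shows that $e_0^+v\in H^{(0,r)}_\perp(G)$ precisely when all the singular boundary terms up to order $r-1$ vanish, i.e.\ $\Tr_0 v=\dots=\Tr_{r-1}v=0$; the time variable merely rides along as an $L^2$-parameter compatible with the oscillatory projection.

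Finally, the last assertion is the degenerate case $\mu=0$: then $w_0\equiv1$, so $H^{0}_\perp(G)=L^2_\perp(G)$ and $r_1=\ord\mu=0$, so no trace conditions occur and $e_0^+$ maps $L^2_\perp(G_+)$ into $L^2_\perp(G)$ unconditionally; the first claim then gives $r^+\dot H^{0}_\perp(\overline{G_+})=\overline{H}^{0}_\perp(G_+)$, while $\overline{H}^{0}_\perp(G_+)=r^+L^2_\perp(G)=L^2_\perp(G_+)$ since restriction of $L^2$-functions to $G_+$ is onto (again via $e_0^+$). I expect the main obstacle to be the rigorous bookkeeping in the vertex statement: exactly which traces the zero-extension sees in the mixed, possibly fractional-in-time smoothness scale, and the compatibility of the jump formula with the trace operator of Theorem \ref{thm: trace}. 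Once that is settled, the rest is formal manipulation with the support-preserving operators of Proposition \ref{prop: boundedness_lower_support} and the Newton-polygon characterisations of the spaces.
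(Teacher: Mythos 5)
Your proof is correct and takes a genuinely different route from the paper's. You first reduce the statement to the identity $r^+\dot H^{\mu}_\perp(\overline{G_+})=\{u:e_0^+u\in H^{\mu}_\perp(G)\}$, localize at the vertices of the Newton polygon, strip off time regularity with a purely temporal Fourier multiplier, and then invoke the classical jump formula for extension by zero: the vanishing of the distributional boundary layers is equivalent to the vanishing of $\Tr_0,\dots,\Tr_{r-1}$ (both directions follow, since a distribution in $L^2$ supported on $\{x_n=0\}$ must vanish, and the $\partial_n^k\delta$ are independent). The paper instead proves sufficiency by appealing to Amann's density theorem (Theorem VIII.1.6.8 of \cite{amann2019linear}): $C_c^\infty(\R^n_+;H^{s_j}_\perp(\mathbb{T}))$ is dense in the closed subspace of $\overline{H}^{r_j}(\R^n_+;H^{s_j}_\perp(\mathbb{T}))$ with vanishing traces, after which one passes to the limit under $e_0^+$; and proves necessity via a reflection argument, using that the one-sided traces of $U\in\dot H^{\mu}_\perp(\overline{G_+})$ and of its odd reflection $V(t,x',x_n)=U(t,x',-x_n)$ must agree while $r^+V=0$. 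Your jump-formula route is more self-contained (it avoids the external density result and treats both implications by a single mechanism), at the cost of having to carry through the distributional bookkeeping carefully — which you flag yourself as the crux. The paper's approach delegates the analytic content to the cited density theorem, at the cost of a separate reflection step for the converse. Both proofs are sound; they differ in where the real work is placed.
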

\begin{proof}
    As usual, we denote  $\mathcal{N}$ the Newton polygon of $\mu$ and $\mathcal{N}_V=\{(r_0, s_0),..., (r_{3}, s_{3})\}$ the set of its vertices.
    \\
    Consider first $u\in  \overline{H}^{\mu}_\perp(G_+)$ with $\Tr^{(N)} u=0$.
    Then $u\in \overline{H}^{r_j}(\mathbb{R}^n_+; H^{s_j}_\perp(\mathbb{T}))$ with $\Tr^{(r_j)}=0$ for all $j\in\set{0,1,2,3}$.
    By \cite[Theorem VIII.1.6.8.]{amann2019linear}, there is $\seqkN{u}\subseteq\CRci(\R^n_+;H^{s_j}_\perp(\mathbb{T}))$ such that $u_n\to u$ in $\overline{H}^{r_j}(\mathbb{R}^n_+; H^{s_j}_\perp(\mathbb{T}))$.
    In particular, $e_0^+u_n$ converges in $H^{r_j}(\mathbb{R}^n; H^{s_j}_\perp(\mathbb{T}))=H^{(r_j,s_j)}_\perp(G)$, and the limit is $U:=e_0^+u$.
    Therefore, $U\in \bigcap_{j=0}^{J+1} H^{(r_j,s_j)}_\perp(G)=H^{\mu}_\perp(G)$ and $\supp U\subseteq \overline{G_+}$, that is $U\in \dot H^{\mu}_\perp(\overline{G_+})$.
    Thus $u=r^+U\in r^+\dot{H}^{\mu}_\perp(\overline{G_+})$. 

\medskip

    Conversely
    let $U\in \dot{H}^{\mu}_\perp(G)$ and $u:=r^+U$.
    Then $u\in \overline{H}^{\mu}_\perp(G_+)$ by definition.
    Moreover $\Tr^{(r_1)}u=\Tr^{(r_1)}v= 0$, where $v:=r^+ V=0$ and $V(t,x',x_n):=U(t,x',-x_n)$.
\end{proof}
We recall the trace space $T^\mu_\perp(G^{n-1})$ and right-inverse $E^{\mu}\in \call(T^\mu_\perp(G^{n-1}),\overline{H}^{\mu}(G_+))$ from Theorem \ref{thm: trace}.
\begin{proposition}\label{prop: upper_support_preserving}
    Let $\mu$ and $\nu$ be strictly positive order functions such that $\mu+\nu$ is CHG-shaped, and set $r_1:=\ord (\mu+\nu)$.
    Moreover, suppose that $\opA\in \call_{\mathrm{iso}}(H^{\mu+\nu}_\perp(G),H^{\nu}_\perp(G))$ is such that
    \begin{align*}
        \text{$\opA$ preserves upper and lower support, and $\opA^{-1}$ preserves upper support.}
    \end{align*}
    Then for all $f\in r^+\dot H^{\nu}_\perp(G_+)$ and $g\in T^{\mu+\nu}_\perp(G^{n-1})$ with $\opA_+E^{\mu+\nu}g\in r^+\dot H^{\nu}_\perp(G_+)$, the problem 
    \begin{align*}
        \left\{\begin{array}{rcl}
        \opA_+u&=&f       \\
        \Tr^{(r_1)}u&=&g    
        \end{array}\right.
    \end{align*}
    admits a unique solution $u\in \overline{H}^{\mu+\nu}_\perp(G_+)$ given by the formula $u=v+w$ with
    \begin{align*}
        v=E^{\mu+\nu}g\in \overline{H}^{\mu+\nu}_\perp(G_+), \quad  w:=\dot\opA_+^{-1}(f-\opA_+E^{\mu+\nu}g)\in r^+\dot H^{\mu+\nu}_\perp(\overline{G_+}).
    \end{align*}
    Furthermore, 
    \begin{equation*}
        \|u\|_{\mu+\nu, +}\lesssim_{\mu,\nu,\opA} \|f\|_{\nu,+} + \|g\|_{\mathbb{G}}.
    \end{equation*}
\end{proposition}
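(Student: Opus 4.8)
The plan is to construct the solution explicitly in the stated form $u=v+w$ — with $v$ carrying the boundary data and $w$ correcting the interior equation while having vanishing trace — and then to obtain uniqueness and the estimate directly from the mapping properties of the operators involved.

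First I would collect the consequences of the hypotheses. Since $\opA$ preserves lower support, Proposition \ref{prop: boundedness_lower_support}(i) gives $\opA_+\in\call(\overline{H}^{\mu+\nu}_\perp(G_+),\overline{H}^{\nu}_\perp(G_+))$. Since $\opA$ preserves upper support and $\opA^{-1}$ preserves upper support, applying Definition \ref{def: supp_pres_op} and the version of Proposition \ref{prop: boundedness_lower_support}(iii) in which the roles of $\overline{G_+}$ and $\overline{G_-}$ are interchanged — which is legitimate because both are symmetric under the reflection $x_n\mapsto -x_n$ — yields
\begin{equation*}
\dot\opA_+\in\call_{\mathrm{iso}}\bigl(r^+\dot{H}^{\mu+\nu}_\perp(\overline{G_+}),\,r^+\dot{H}^{\nu}_\perp(\overline{G_+})\bigr),\qquad \dot\opA_+^{-1}:=(\opA^{-1})_+^{\boldsymbol{\cdot}}.
\end{equation*}
Moreover, as $\opA$ preserves support in both $\overline{G_+}$ and $\overline{G_-}$, the operator $\dot\opA_+$ is the restriction of $\opA_+$ to $r^+\dot{H}^{\mu+\nu}_\perp(\overline{G_+})$ (see the remark following Definition \ref{def: supp_pres_op}); in particular $\opA_+w=\dot\opA_+w$ for every $w\in r^+\dot{H}^{\mu+\nu}_\perp(\overline{G_+})$.

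For existence, set $v:=E^{\mu+\nu}g$, regarded as an element of $\overline{H}^{\mu+\nu}_\perp(G_+)$. By Theorem \ref{thm: trace}, $\Tr^{(r_1)}v=g$ and $\|v\|_{\mu+\nu,+}\lesssim\|g\|_{\mathbb{G}}$. By assumption $\opA_+v\in r^+\dot{H}^{\nu}_\perp(\overline{G_+})$, and since $f\in r^+\dot{H}^{\nu}_\perp(\overline{G_+})$ this forces $f-\opA_+v\in r^+\dot{H}^{\nu}_\perp(\overline{G_+})$, so that $w:=\dot\opA_+^{-1}(f-\opA_+v)\in r^+\dot{H}^{\mu+\nu}_\perp(\overline{G_+})$ is well defined. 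Put $u:=v+w\in\overline{H}^{\mu+\nu}_\perp(G_+)$. Since $\mu+\nu$ is CHG-shaped and $w\in r^+\dot{H}^{\mu+\nu}_\perp(\overline{G_+})$, Lemma \ref{lemma: zero_trace} gives $\Tr^{(r_1)}w=0$, hence $\Tr^{(r_1)}u=g$; and $\opA_+u=\opA_+v+\opA_+w=\opA_+v+\dot\opA_+w=f$. For uniqueness, if $u_1,u_2\in\overline{H}^{\mu+\nu}_\perp(G_+)$ both solve, then $z:=u_1-u_2$ satisfies $\opA_+z=0$ and $\Tr^{(r_1)}z=0$, so Lemma \ref{lemma: zero_trace} gives $z\in r^+\dot{H}^{\mu+\nu}_\perp(\overline{G_+})$, whence $\dot\opA_+z=\opA_+z=0$ and injectivity of $\dot\opA_+$ yields $z=0$. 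Finally, for the estimate, combine the bounds above:
\begin{equation*}
\|u\|_{\mu+\nu,+}\le\|v\|_{\mu+\nu,+}+\|w\|_{\mu+\nu,+}\lesssim\|g\|_{\mathbb{G}}+\|\dot\opA_+^{-1}\|\,\|f-\opA_+v\|_{\nu,+}\lesssim\|g\|_{\mathbb{G}}+\|f\|_{\nu,+}+\|\opA_+\|\,\|v\|_{\mu+\nu,+},
\end{equation*}
and bounding the last term again by $\|g\|_{\mathbb{G}}$ finishes the proof.

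I expect the main obstacle to be bookkeeping rather than conceptual. One must be careful that $\opA_+$ and $\dot\opA_+$ act consistently on the overlap of the restricted and supported spaces, that the reflection argument transporting Proposition \ref{prop: boundedness_lower_support} from lower to upper support is invoked correctly, and — most delicately — that on $r^+\dot{H}^{\nu}_\perp(\overline{G_+})$ the norm is comparable to the norm $\|\cdot\|_{\nu,+}$ inherited from $\overline{H}^{\nu}_\perp(G_+)$, i.e.\ that extension by zero is norm-optimal among all extensions of a distribution whose trace of order $r_1$ vanishes; this is the point where the equivalence of the Newton polygon norm with an intersection of Sobolev norms from Proposition \ref{prop_potential_spaces} is used.
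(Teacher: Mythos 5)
Your proof is correct and takes essentially the same route as the paper's: the same decomposition $u=v+w$, the same use of Theorem \ref{thm: trace} for $v$, Proposition \ref{prop: boundedness_lower_support} (reflected from lower to upper support) for $\dot\opA_+$, and Lemma \ref{lemma: zero_trace} to transfer the zero-trace condition into membership in $r^+\dot H^{\mu+\nu}_\perp(\overline{G_+})$ for both existence and uniqueness. Your closing remark about the comparability of the $r^+\dot H^{\nu}_\perp(\overline{G_+})$-norm with $\|\cdot\|_{\nu,+}$ is a fair point of diligence that the paper passes over silently; it is harmless in the applications (there $\nu$ is the zero order function, where the two norms literally coincide), but you are right that it needs a word for general CHG-shaped $\nu$.
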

\begin{proof}
    By Theorem \ref{thm: trace} and Proposition \ref{prop: boundedness_lower_support} it holds $v\in \overline{H}^{\mu+\nu}_\perp(G_+)$ and $w\in r^+\dot H^{\mu+\nu}_\perp(\overline{G_+})$ together with the claimed estimate.
    Since $\opA_+\dot\opA_+^{-1}=\id$ on $\dot H^{\nu}(\overline{G_+})$, we also obtain $\opA_+u=f$.
    Moreover, since $\Tr^{(r_1)}$ vanishes on $r^+\dot H^{\mu+\nu}_\perp(\overline{G_+})$ by Lemma \ref{lemma: zero_trace}, we have $\Tr^{(r_1)}u=\Tr^{(r_1)}v=g$. 

    \medskip
    
    We now prove the uniqueness.
    Assume that $u\in \overline{H}^{\mu+\nu}_\perp(G_+)$ is a solution to the problem for $f=0$ and $g=0$. Then $u\in r^+\dot H^{\mu+\nu}_\perp(\overline{G_+})$ by Lemma \ref{lemma: zero_trace}, and thus $0=\opA_+u=\dot\opA_+u$. Hence $u=\dot\opA_+^{-1}\dot\opA_+u=0$.
\end{proof}
Most differential operators of interest, and in particular the CHG-determinant operator, fulfill neither the hypothesis of Proposition \ref{prop: boundedness_lower_support} nor those of Proposition \ref{prop: upper_support_preserving}.
However, they can often be decomposed as a product of operators, one verifying the hypothesis of Proposition \ref{prop: boundedness_lower_support} and the other verifying the hypothesis of Proposition \ref{prop: upper_support_preserving}. This motivates the following class of admissible operators.
\begin{definition}\label{def: admissible}
    Let $\nu$ and $\mu$ be strictly positive order functions.
    Then $\opA\in\call_{\textrm{iso}}(H^{\mu+\nu}_\perp(G),H^{\nu}_\perp(G))$ is \emph{admissible} if there exist stictly positive order functions $\mu_+$ and $\mu_-$ with $\mu=\mu_+ +\mu_-$ and $\opA^-\in\call_{\textrm{iso}}(H^{\mu_-+\nu}_\perp(G),H^{\nu}_\perp(G))$, $\opA^+\in\call_{\textrm{iso}}(H^{\mu+\nu}_\perp(G),H^{\mu_-+\nu}_\perp(G))$ such that the following conditions are verified. 
    \begin{enumerate}
        \item $\opA=\opA^-\opA^+$.
        \item Both $\opA^+$ and $\opA^-$ preserve lower support.
        \item Both $\opA^+$ and $(\opA^+)^{-1}$ preserve upper support. 
        \item $(\opA^-)^{-1}$ preserves lower support.
    \end{enumerate}
\end{definition}
By Proposition \ref{prop: boundedness_lower_support}, $\opA$ preserves lower support and $\opA_+=\opA^-_+\opA^+_+\in \call(\overline{H}^{\mu+\nu}(G_+),\overline{H}^{\nu}(G_+))$, and it holds $\opA^-_+\in\call_{\mathrm{iso}}(\overline{H}^{\mu_-+\nu}_\perp(G_+), \overline{H}^{\nu}_\perp(G_+))$ with the inverse given by $(\opA^-)^{-1}_+$.
Observe that $\opA^+_+$ is not element of $\call_{\mathrm{iso}}(\overline{H}^{\mu+\nu}(G_-), \overline{H}^{\mu_-+\nu}(G_-))$ in general, since its inverse is not defined.\footnote{We mention that $\opA^+_-\in\call_{\mathrm{iso}}(\overline{H}^{\mu+\nu}(G_-), \overline{H}^{\mu_-+\nu}(G_-))$ with the inverse given by $(\opA^+)^{-1}_-$.}

\medskip

We denote by $\mathcal{N}^{\pm}$ and $\mathcal{N}^{\nu}$ the Newton polygon associated to the order functions $\mu^\pm$ and $\nu$, respectively.
We set $N_\pm:=\ord \mu_\pm$ and $m:=\ord \nu$.
The key for a well-posedness theory for admissible operators on the half plane is the following lemma. 
\begin{lemma}\label{lemma_added_boundary_conditions}
    Let $\opA=\opA^-\opA^+$ be admissible with notations as in Definition \ref{def: admissible} such that $\mu_-+\nu$ is CHG-shaped.
    Additionally, let $\opL\in \call_{\mathrm{iso}}(H^{\mu_-+\nu}_\perp(G),L^2_\perp(G))$ be such that $\opL$ preserves upper and lower support, and $\opL^{-1}$ preserves upper support.
    Suppose that $f\in \overline{H}^{\nu}_\perp(G_+)$ and $u \in \overline{H}^{\mu+\nu}_\perp(G_+)$.
    Then
    \begin{align*}
        \opA_+u=f \quad \Longleftrightarrow \quad
        \left\{ \begin{array}{rcl}
        \opL_+\opA^+_+ u&=& \opL_+  (\opA^-)^{-1}_+f,     \\
        \Tr^{(N_-+m)}\opA^+_+u&=& \Tr^{(N_-+m)}(\opA^-)^{-1}_+f.      
        \end{array}\right.
    \end{align*}
\end{lemma}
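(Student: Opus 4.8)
The equivalence is essentially an application of Proposition \ref{prop: upper_support_preserving} to the operator $\opA^+$, with $\opL$ playing the role of the auxiliary support-preserving isomorphism needed to convert "$\opA^+_+v=g$" into the pair "($\opL_+\opA^+_+v=\opL_+g$, $\Tr v=\Tr_{\text{target}}$)". The plan is to proceed in the following steps.

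\emph{Step 1: Reduce $\opA_+u=f$ to an equation for $\opA^+_+u$.} Since $\opA=\opA^-\opA^+$ with both factors preserving lower support, Proposition \ref{prop: boundedness_lower_support}(ii) gives $\opA_+=\opA^-_+\opA^+_+$. Because $(\opA^-)^{-1}$ also preserves lower support, Proposition \ref{prop: boundedness_lower_support}(iii) shows $\opA^-_+\in\call_{\mathrm{iso}}(\overline{H}^{\mu_-+\nu}_\perp(G_+),\overline{H}^{\nu}_\perp(G_+))$ with inverse $(\opA^-)^{-1}_+$. Hence, setting $v:=\opA^+_+u\in\overline{H}^{\mu_-+\nu}_\perp(G_+)$, the equation $\opA_+u=f$ is equivalent to
\begin{equation*}
    v=(\opA^-)^{-1}_+f.
\end{equation*}
So it remains to show that, for $v,w\in\overline{H}^{\mu_-+\nu}_\perp(G_+)$ with $w:=(\opA^-)^{-1}_+f$, the identity $v=w$ is equivalent to the system on the right-hand side with $v$ substituted for $\opA^+_+u$; i.e. to $\opL_+v=\opL_+w$ together with $\Tr^{(N_-+m)}v=\Tr^{(N_-+m)}w$.

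\emph{Step 2: Characterize $v=w$ via $\opL$ and the trace.} Apply Proposition \ref{prop: upper_support_preserving} with the order functions $\mu\rightsquigarrow 0$, $\nu\rightsquigarrow\mu_-+\nu$ (noting $0+(\mu_-+\nu)=\mu_-+\nu$ is CHG-shaped by hypothesis, with $\ord(\mu_-+\nu)=N_-+m$) and with the operator $\opA\rightsquigarrow\opL$: by hypothesis $\opL$ preserves upper and lower support and $\opL^{-1}$ preserves upper support, so Proposition \ref{prop: upper_support_preserving} applies. It yields that for any right-hand side $h\in r^+\dot H^{0}_\perp(G_+)=L^2_\perp(G_+)$ (using the second assertion of Lemma \ref{lemma: zero_trace}) and any boundary datum $g\in T^{\mu_-+\nu}_\perp(G^{n-1})$ with the compatibility condition, the problem $\{\opL_+z=h,\ \Tr^{(N_-+m)}z=g\}$ has a \emph{unique} solution $z\in\overline{H}^{\mu_-+\nu}_\perp(G_+)$. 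Applying this uniqueness with $h:=\opL_+w$ and $g:=\Tr^{(N_-+m)}w$: the element $w$ solves this problem, and if $v$ also satisfies $\opL_+v=\opL_+w$ and $\Tr^{(N_-+m)}v=\Tr^{(N_-+m)}w$, then $v=w$ by uniqueness. The converse (if $v=w$ then both identities hold) is trivial. This proves the claimed equivalence after re-substituting $v=\opA^+_+u$ and $w=(\opA^-)^{-1}_+f$.

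\emph{The main obstacle.} The genuinely delicate point is checking that the hypotheses of Proposition \ref{prop: upper_support_preserving} are met in the degenerate case $\mu=0$ — in particular that $r^+\dot H^{0}_\perp(\overline{G_+})=L^2_\perp(G_+)$ so that no boundary compatibility obstruction arises on the right-hand side $h$, and that $E^{\mu_-+\nu}$, $\Tr^{(N_-+m)}$ from Theorem \ref{thm: trace} are available for the CHG-shaped order function $\mu_-+\nu$. Both are handled: the first is exactly the second assertion of Lemma \ref{lemma: zero_trace}, and the second is precisely the hypothesis that $\mu_-+\nu$ is CHG-shaped. A secondary bookkeeping point is to verify that $w=(\opA^-)^{-1}_+f$ indeed lies in $\overline{H}^{\mu_-+\nu}_\perp(G_+)$ and that $\opL_+w\in L^2_\perp(G_+)$ and $\Tr^{(N_-+m)}w$ is a well-defined element of $T^{\mu_-+\nu}_\perp(G^{n-1})$ — all of which follow from Proposition \ref{prop: boundedness_lower_support}(i),(iii) and the boundedness statements in Theorem \ref{thm: trace}. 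With these in place the argument is just the combination of the isomorphism $\opA^-_+$ from Step 1 and the uniqueness assertion of Proposition \ref{prop: upper_support_preserving} from Step 2.
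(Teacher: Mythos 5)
Your proof is correct and follows essentially the same route as the paper: both use that $\opA^-_+$ is invertible with inverse $(\opA^-)^{-1}_+$ to reduce the equivalence to $\opA^+_+u=(\opA^-)^{-1}_+f$, and then invoke the uniqueness assertion of Proposition \ref{prop: upper_support_preserving} applied to $\opL$. One small bookkeeping slip: to match $\opL\in\call_{\mathrm{iso}}(H^{\mu_-+\nu}_\perp(G),L^2_\perp(G))$ with the hypotheses of Proposition \ref{prop: upper_support_preserving} one should substitute $\mu\rightsquigarrow\mu_-+\nu$ and $\nu\rightsquigarrow 0$ (not the reverse), which is in any case what your subsequent reasoning, in particular the identification $r^+\dot H^0_\perp(\overline{G_+})=L^2_\perp(G_+)$ via Lemma \ref{lemma: zero_trace}, actually uses.
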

\begin{proof} ''$\Rightarrow$'':
    Since $(\opA^-)^{-1}_+$ is the inverse of $\opA^-_+$, we have $\opA^+_+u=(\opA^-)^{-1}_+f\in \overline{H}^{\mu_-+\nu}_\perp(G_+)$, so that the assertion on the right-hand side is fulfilled.

    \medskip
    
    ''$\Leftarrow$'':
    Set $v_1:= \opA^+_+u$, $v_2:=(\opA^-)^{-1}_+f$.
    Then $v_1, v_2\in \overline{H}^{\mu_-+\nu}_\perp(G_+)$ by the mapping properties of $\opA^+_+$ and $(\opA^-)^{-1}_+$.
    Moreover, both $v_1$ and $v_2$ are solutions of 
    \begin{equation*}
        \left\{ \begin{array}{rcl}
        \opL_+v&=& \opL_+(\opA^-)^{-1}_+f,       \\
         \Tr^{(N_-+m)}v&=&\Tr^{(N_-+m)} (\opA^-)^{-1}_+f.   
        \end{array}\right.
    \end{equation*}
    The operator $\opL$ verifies the hypothesis of Proposition \ref{prop: upper_support_preserving} with $\mu+\nu$ replaced by $\mu_-+\nu$.
    Thus, by the uniqueness assertion in Proposition \ref{prop: upper_support_preserving} we deduce $v_1=v_2$.
    This implies $\opA_+u=\opA^-_+v_1=\opA^-_+v_2=f$ and the conclusion follows.
\end{proof}

In order to identify a useful sufficient condition for Fourier multiplier $\op[m]$ to preserve upper or lower support, we recall the following Paley-Wiener type theorem.
\begin{theorem}[Paley-Wiener-Stein]\label{thm_Paley_Wiener}
    Let $f\in \cals(\R)$.
    Then $\supp f\subseteq \overline{\mathbb{R}_-}$ if and only if $\mathcal{F}_\R f$ admits an extension $\widetilde{\mathcal{F}_\R f}\in C_b(\overline{\mathbb{C}_+})$ that is holomorphic on $\mathbb{C}_+$.
    In that case, such an extension is given by 
    \begin{equation*}
        \widetilde{\mathcal{F}_\R f}(z):=\int_{-\infty}^\infty f(x)\mathrm{e}^{-\ic x z}d x,\qquad z\in \overline{\mathbb{C}_+}.
    \end{equation*}
\end{theorem}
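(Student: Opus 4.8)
The proof splits into the two implications of the equivalence, together with a uniqueness remark for the displayed formula. For the ``only if'' direction I plan to check directly that the integral in the statement defines the required extension; for the ``if'' direction I plan a contour-shift argument combined with a regularisation. I expect the ``if'' direction to be the main obstacle: the assumed extension is merely \emph{bounded} on $\overline{\C_+}$ and need not decay as $\Re z\to\pm\infty$, so a naive deformation of the contour in the Fourier inversion integral picks up uncontrolled contributions from the vertical sides.

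\emph{The ``only if'' direction.} Assume $\supp f\subseteq\overline{\R_-}$ and set $F(z):=\int_{-\infty}^{0} f(x)\mathrm{e}^{-\ic x z}\,\mathrm{d}x$ for $z\in\overline{\C_+}$. Since $|f(x)\mathrm{e}^{-\ic xz}|=|f(x)|\mathrm{e}^{x\Im z}\le|f(x)|$ for all $x\le 0$ and $\Im z\ge 0$, and $f\in\cals(\R)\subseteq L^1(\R)$, the integral converges absolutely with $\|F\|_{\infty}\le\|f\|_{L^1}$, and dominated convergence shows $F\in C(\overline{\C_+})$; hence $F\in C_b(\overline{\C_+})$. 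Holomorphy of $F$ on $\C_+$ follows from Morera's theorem: on the boundary of any closed triangle contained in $\C_+$ one may interchange the $x$- and $z$-integrations by Fubini (legitimate since $|f|\in L^1$ and the triangle has finite length), and $\oint\mathrm{e}^{-\ic xz}\,\mathrm{d}z=0$ by Cauchy's theorem for the entire function $z\mapsto\mathrm{e}^{-\ic xz}$. Finally $F|_{\R}=\mathcal{F}_\R f$, so $F$ is the desired extension, and it coincides with the stated formula.

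\emph{The ``if'' direction.} Suppose $\mathcal{F}_\R f$ extends to some $G\in C_b(\overline{\C_+})$ holomorphic on $\C_+$. As $f\in\cals(\R)$, also $g:=\mathcal{F}_\R f=G|_{\R}\in\cals(\R)$, and the Fourier inversion formula gives $f(x)=c\int_{\R}g(\xi)\mathrm{e}^{\ic x\xi}\,\mathrm{d}\xi$ for a nonzero constant $c$. Fix $x>0$; the goal is to show $f(x)=0$. Since $|\mathrm{e}^{\ic xz}|=\mathrm{e}^{-x\Im z}\le 1$ for $\Im z\ge 0$, the map $z\mapsto G(z)\mathrm{e}^{\ic xz}$ is bounded and holomorphic on $\C_+$ and continuous on $\overline{\C_+}$, which suggests closing the contour in the upper half-plane. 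To compensate for the lack of horizontal decay I introduce, for $\epsilon>0$, the regularised integrand $h_\epsilon(z):=G(z)/(1-\ic\epsilon z)$; its only pole is at $z=-\ic/\epsilon\in\C_-$, and $|1-\ic\epsilon z|\ge 1$ on $\overline{\C_+}$, so $|h_\epsilon|\le\|G\|_{\infty}$ on $\overline{\C_+}$ and $|h_\epsilon(z)|\le\|G\|_{\infty}/(\epsilon|z|-1)$ for $|z|>1/\epsilon$. Integrating $h_\epsilon(z)\mathrm{e}^{\ic xz}$ over the boundary of the upper half-disc of radius $R$, Cauchy's theorem gives $0$; the semicircular part tends to $0$ as $R\to\infty$ by Jordan's lemma (here $x>0$ is used, and $\sup_{|z|=R}|h_\epsilon|\le\|G\|_{\infty}/(\epsilon R-1)\to 0$); and since $g\in\cals(\R)$ makes $h_\epsilon(\cdot)\mathrm{e}^{\ic x\cdot}\in L^1(\R)$, we obtain $\int_{\R}h_\epsilon(\xi)\mathrm{e}^{\ic x\xi}\,\mathrm{d}\xi=0$ for every $\epsilon>0$. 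Letting $\epsilon\to 0$ and using dominated convergence (with the bound $|h_\epsilon(\xi)\mathrm{e}^{\ic x\xi}|\le|g(\xi)|\in L^1$) yields $\int_{\R}g(\xi)\mathrm{e}^{\ic x\xi}\,\mathrm{d}\xi=0$, hence $f(x)=0$. As $x>0$ was arbitrary, $\supp f\subseteq\overline{\R_-}$.

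\emph{Uniqueness of the extension.} Once $\supp f\subseteq\overline{\R_-}$ is known, the ``only if'' part supplies the extension given by the displayed formula. It is unique: the difference of two bounded holomorphic functions on $\C_+$ that are continuous on $\overline{\C_+}$ and agree on $\R$ extends, by the Schwarz reflection principle, to a bounded entire function, hence is constant by Liouville's theorem and therefore identically zero. The only points requiring care in this scheme are the interchanges of limits and integrals; each is justified by the domination $|f|\in L^1(\R)$ in the first direction, and by $g\in\cals(\R)$ together with the elementary bound $|1-\ic\epsilon z|\ge 1$ on $\overline{\C_+}$ in the second.
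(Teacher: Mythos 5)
Your proof is correct; note that the paper itself does not give a proof but only cites \cite[Theorem 3.5]{stein2010complex}, so your self-contained argument stands on its own. The ``only if'' direction via Morera and dominated convergence is routine. For the harder ``if'' direction, your regularisation by $(1-\ic\epsilon z)^{-1}$ is exactly the right device: it keeps the integrand holomorphic and uniformly bounded on $\overline{\mathbb{C}_+}$ while manufacturing the $O(1/|z|)$ decay needed for Jordan's lemma, and the subsequent passage $\epsilon\to 0^+$ is justified by dominated convergence as you observe; a naive contour shift without this regularisation would indeed fail, since the assumed extension $G$ is only bounded and need not decay off the real axis. One technical point deserves an explicit sentence: when you invoke Cauchy's theorem for $h_\epsilon(z)\mathrm{e}^{\ic xz}$ over the boundary of the upper half-disc, the integrand is holomorphic only in the open half-disc and merely continuous on the diameter $[-R,R]$, so one should either appeal to the version of Cauchy's theorem for functions continuous on the closure of a Jordan domain and holomorphic in its interior, or first integrate over the raised segment $\Im z=\delta$ together with the corresponding arc and then let $\delta\to 0^{+}$ by uniform continuity on the closed half-disc. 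Finally, the uniqueness remark is correct, but can be shortened: the Schwarz reflection of $H=F_1-F_2$ already produces an entire function vanishing identically on $\mathbb{R}$, so the identity theorem concludes directly without needing boundedness and Liouville.
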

\begin{proof}
See \cite[Theorem 3.5]{stein2010complex}.
\end{proof}
\begin{proposition}\label{prop_support_preserving}
    Let $P\in \mathcal{O}(\mathbb{R})$ and
    suppose that $P$ admits an extension $\widetilde{P}\in C(\overline{\mathbb{C}_+})$ that is holomorphic on $\mathbb{C}_+$, and such that there exists a constant $C>0$ and a polynomial $Q$ verifying 
    \begin{equation*}
        \vert \widetilde{P}(z)\vert \leq C\vert Q(z)\vert
    \end{equation*}
    for all $z\in \overline{\mathbb{C}_+}$.
    Then $\op[P]:\cals(\R)\to\mathcal{S}(\mathbb{R})$ preserves lower support. 
\end{proposition}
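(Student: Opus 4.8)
The plan is to reduce the statement to the one–dimensional Paley–Wiener–Stein theorem (Theorem~\ref{thm_Paley_Wiener}) applied in the $x_n$–variable, treating the remaining variables $(t,x')$ as parameters. First I would fix $\varphi\in\cals(\R)$ and, since we only need to test against functions supported in $\overline{\R_-}$, assume $\supp\varphi\subseteq\overline{\R_-}$; the goal is to show $\op[P]\varphi$ has support in $\overline{\R_-}$ as well. By Theorem~\ref{thm_Paley_Wiener}, $\mcF_\R\varphi$ extends to a bounded holomorphic function $\widetilde{\mcF_\R\varphi}$ on $\overline{\C_+}$. The multiplier $P$ also extends holomorphically to $\widetilde P$ on $\C_+$, with the polynomial growth bound $|\widetilde P(z)|\le C|Q(z)|$. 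Hence the product $\widetilde P\cdot\widetilde{\mcF_\R\varphi}$ is holomorphic on $\C_+$ and continuous on $\overline{\C_+}$, and it satisfies $|\widetilde P(z)\widetilde{\mcF_\R\varphi}(z)|\le C|Q(z)|\,\|\widetilde{\mcF_\R\varphi}\|_\infty$, i.e.\ it is of polynomial growth on $\overline{\C_+}$.

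The next step is the converse direction of a Paley–Wiener statement: a function on $\R$ whose Fourier transform admits a holomorphic extension to $\C_+$ of at most polynomial growth is a tempered distribution supported in $\overline{\R_-}$. The cleanest way I would carry this out is to note that $\widetilde{P}\,\widetilde{\mcF_\R\varphi}$, being holomorphic on $\C_+$, continuous on $\overline{\C_+}$ and polynomially bounded there, is in particular the Fourier(–Laplace) transform of a tempered distribution $T$ on $\R$; one then wants $\supp T\subseteq\overline{\R_-}$. To see this I would regularize: for $\eps>0$ consider $P_\eps(\xi):=\widetilde P(\xi+i\eps)$, which is a bounded $C^\infty$ symbol (its holomorphic extension $\widetilde P(z+i\eps)$ is bounded on $\overline{\C_+}$ because $Q$ is bounded on the shifted half-plane by... no — $Q$ is not bounded; instead divide by a fixed polynomial). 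More robustly: multiply by $(1+z^2)^{-k}$ for $k$ large enough that $z\mapsto (1+z^2)^{-k}Q(z)$ is bounded and holomorphic on $\overline{\C_+}$; then $(1+\xi^2)^{-k}P(\xi)\,\mcF_\R\varphi(\xi)$ is the Fourier transform of a function in $\cals(\R)$ whose holomorphic extension is in $C_b(\overline{\C_+})$, so by the ``only if'' direction of Theorem~\ref{thm_Paley_Wiener} that function is supported in $\overline{\R_-}$; applying the differential operator $(1-\partial_x^2)^k$ (which preserves support) recovers $\op[P]\varphi$ up to lower-order terms handled the same way, and one concludes $\supp\op[P]\varphi\subseteq\overline{\R_-}$.

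Finally I would restore the parameters $(t,x')$: strictly speaking $\op[P]$ here is the one-dimensional operator on $\cals(\R)$, so the $n$-dimensional bookkeeping of the preceding subsections is not needed for this particular proposition; the parameter dependence enters only when this lemma is invoked for symbols $P(\tau,\xi',\cdot)$, and there one applies the scalar result pointwise in $(\tau,\xi')$ together with a routine Fubini/continuity argument that the paper presumably carries out elsewhere. The main obstacle is the second paragraph: the converse Paley–Wiener direction is only stated in Theorem~\ref{thm_Paley_Wiener} for $f\in\cals(\R)$ (equivalently, for transforms in $C_b(\overline{\C_+})$), whereas here $\widetilde P\widetilde{\mcF_\R\varphi}$ is merely polynomially bounded, so one genuinely needs the polynomial-division trick to bring it into the bounded regime before quoting the theorem, and then argue that multiplying back by the polynomial $Q$ (a differential operator in $x_n$) does not destroy the support property—which is immediate since differential operators are local. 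I would expect the write-up to spend most of its effort making this regularization precise and checking that the chosen power $(1+z^2)^{-k}$ indeed tames $|Q(z)|$ uniformly on $\overline{\C_+}$.
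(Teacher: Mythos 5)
Your overall strategy is the correct one (reduce to the one-dimensional Paley--Wiener--Stein Theorem~\ref{thm_Paley_Wiener} and exploit that $\op[Q]$ is a local operator), but the concrete regularization you propose in the second paragraph does not work and the paper avoids it entirely.

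The flaw: you want to divide by $(1+z^2)^{-k}$ to tame the polynomial growth and land in the $C_b(\overline{\C_+})$ regime of Theorem~\ref{thm_Paley_Wiener}. But $1+z^2=(z-i)(z+i)$ vanishes at $z=i\in\C_+$, so $(1+z^2)^{-k}$ has a pole in the open upper half plane, and $(1+z^2)^{-k}\widetilde P\,\widetilde{\calf_\R f}$ is therefore \emph{not} holomorphic on $\C_+$. You would need a damping factor with zeros only in the lower half plane, e.g.\ $(1-\ic z)^{-k}$, for which $|1-\ic z|\geq\max\set{1,|z|}$ on $\overline{\C_+}$; then the back-multiplication is by the genuine differential operator $(1-\partial_x)^k$, which is local, and the argument can be salvaged. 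Moreover you gesture at ``$\op[P]\varphi$ up to lower-order terms handled the same way'', which is not really a proof step; if one chooses the factor correctly there are no lower-order terms to chase.

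The paper's proof sidesteps this entirely and is shorter. Instead of dividing and re-multiplying, it applies $\op[Q]$ to $f$ directly: since $\op[Q]$ is a differential operator, $\op[Q]f\in\dot\cals(\overline{\R_-})$, so by Theorem~\ref{thm_Paley_Wiener} its Fourier transform has a \emph{bounded} holomorphic extension $\widetilde h$ on $\overline{\C_+}$, and by uniqueness of holomorphic extensions $\widetilde h=Q\,\widetilde{\calf_\R f}$. The hypothesis $|\widetilde P|\le C|Q|$ then gives $|\widetilde P\,\widetilde{\calf_\R f}|\le C|\widetilde h|$, which is bounded, so $\widetilde P\,\widetilde{\calf_\R f}\in C_b(\overline{\C_+})$ and the converse direction of Theorem~\ref{thm_Paley_Wiener} applies directly to $\op[P]f$ (which one checks beforehand lies in $\cals(\R)$ since $P\in\calo(\R)$). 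No damping factor, no choice of $k$, no bookkeeping of lower-order terms. You should adopt this cleaner route, or at least replace $(1+z^2)^{-k}$ by a factor with no zeros in $\overline{\C_+}$.
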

\begin{proof}
    Let $f\in \dot{\mathcal{S}}(\overline{\mathbb{R}_-})$.
    Since $\calf_\R f\in \cals(\R)$ and $P\in \calo(\R)$, we have $P\calf_\R f \in \cals(\R)$, so that $\op[P]f\in \cals(\R)$.
    Therefore we only need to show that $\supp \op[P]f\subseteq \overline{\R_-}$.

    \medskip
    
    By Theorem \ref{thm_Paley_Wiener}, $\mathcal{F}_\R f$ has an extension $\widetilde{\mathcal{F}_\R f}\in C_b(\overline{\mathbb{C}_+})$ that is holomorphic on $\mathbb{C}_+$.
    Write $Q(z)=\sum_{k=1}^K c_k z^k$.
    Since $\op[Q]=\sum_{k=1}^K c_k (-\ic \partial_x)^k$ is a differential operator, it holds $\op[Q]f\in \dot{\cals}(\overline{\R_-})$.
    Thus, the function $h:=\mathcal{F}_\R\op[Q]f$ admits an extension $\widetilde{h}\in C(\overline{\mathbb{C}_+})$ that is holomorphic on $\mathbb{C}_+$, and given by 
    \begin{align*}
        \widetilde{h}(z)&=\int_{-\infty}^\infty (\op[Q]f)(x)\mathrm{e}^{-\ic x z}\dd x=\sum_{k=1}^K c_k (-\ic)^k\int_{-\infty}^\infty \partial_x^k f(x)\mathrm{e}^{-\ic x z}\dd x\\
        &=\sum_{k=1}^K c_kz^k\int_{-\infty}^\infty f(x)\mathrm{e}^{-\ic x z}\dd x
        =Q(z)\widetilde{\mathcal{F}_\R f}(z).
    \end{align*}
    Consider now $P\mathcal{F}_\R f$.
    By hypothesis, this function admits an extension $\widetilde{P}\widetilde{\mathcal{F}_\R f}$.
    This extension is continuous on $\overline{\mathbb{C}_+}$ and holomorphic on $\mathbb{C}_+$.
    Furthermore, it is bounded on $\overline{\mathbb{C}_+}$ since $\widetilde{h}$ is bounded on $ \overline{\mathbb{C}_+}$ and there it holds
\begin{equation*}
    \vert \widetilde{P}\widetilde{\mathcal{F}_\R f}\vert \leq C\vert Q\widetilde{\mathcal{F}_\R f}\vert =\vert \widetilde{h}\vert.
\end{equation*}
Thus, using once more Theorem \ref{thm_Paley_Wiener}, we deduce $\supp \op[P]f \subseteq \overline{\mathbb{R}_-}$.
\end{proof}
\begin{remark}
    Let $\widehat{m}\in \calo^\perp(\widehat{G})$ be such that for all $k\neq 0$ and $\xi'\in \mathbb{R}^{n-1}$, the operator $\op[m(k, \xi', \cdot)]: \mathcal{S}(\mathbb{R})\to \mathcal{S}(\mathbb{R})$ preserves lower support. Then $\op[m] : \mathcal{S}_\perp(G)\to \mathcal{S}_\perp(G)$ also preserves lower support. Indeed, one has 
    \begin{equation*}
        \op[m]f=\mathcal{F}^{-1}_{G^{n-1}}\op[m(k, \xi', \cdot)]\mathcal{F}_{G^{n-1}},
    \end{equation*}
    and both $\mathcal{F}^{-1}_{G^{n-1}}:\cals(\widehat{G}^{n-1})\to\cals(G^{n-1})$ and $\mathcal{F}_{G^{n-1}}: \mathcal{S}(G^{n-1})\to \mathcal{S}(\widehat{G}^{n-1})$ preserve lower support.
\end{remark}
We next show that we can adapt the  weight functions to define equivalent norms on the Newton polygon spaces on the half space.
\begin{proposition}\label{prop:weight_fct_supp}
    Let $\mu$ be a strictly positive order function with associated Newton polygon $\mathcal{N}$ and its set of vertices $\mathcal{N}_V=\{(r_0, s_0),...,(r_{J+1}, s_{J+1})\}$.
    Assume that $r_j\in \mathbb{N}$ for all $j\in\set{0,..., J+1}$, and that $\mathcal{N}$ is regular in time.
    Then there exist  smooth functions $\omega_\mu^\pm : \mathbb{R}\times \mathbb{R}^n\to \mathbb{C}$ such that $\omega_\mu^\pm $ is $\mathcal{N}$-elliptic for the order function $\mu$, $\op[\omega_\mu^\pm]$ preserves both upper and lower support, and such that $\op[\frac{1}{\omega_\mu^+}]$ preserves upper support and $\op[\frac{1}{\omega_\mu^-}]$ preserves lower support. 
     In particular, for all $u\in \overline{H}^{\mu}_\perp(G_+)$ one has 
     \begin{equation*}
         \|u\|_{\mu, + } \simeq_\mu \|\op[\omega_\mu^-]^+u \|_{L^2(G_+)}.
     \end{equation*}
\end{proposition}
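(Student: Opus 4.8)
The plan is to construct $\omega_\mu^\pm$ explicitly as polynomials in $\xi_n$, modelled on the factorization of $W_\mu$ into elementary weight functions, with all $\xi_n$-zeros placed in the open upper (resp.\ lower) half-plane. Write $\mathcal{N}_V=\{(r_0,s_0),\dots,(r_{J+1},s_{J+1})\}$ and let $\gamma_1,\dots,\gamma_J$ be the slopes from \eqref{eqn: def_gamma_j}. Since $\mathcal{N}$ is regular in time, $0<\gamma_1<\gamma_2<\dots<\gamma_J\le\infty$, so $r_1>r_2>\dots>r_{J+1}=0$ and each $r_j-r_{j+1}$ is a positive integer (using $r_j\in\N$). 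Put $a_j(\tau,\xi'):=\langle\xi'\rangle+\langle\tau\rangle^{1/\gamma_j}$, with the convention $\langle\tau\rangle^{0}:=1$ if $\gamma_J=\infty$, and define
\begin{equation*}
\omega_\mu^{\pm}(\tau,\xi):=\prod_{j=1}^{J}\bigl(a_j(\tau,\xi')\pm\ic\,\xi_n\bigr)^{r_j-r_{j+1}},\qquad (\tau,\xi)\in\R\times\R^n.
\end{equation*}
Then $\omega_\mu^\pm$ is smooth on $\R\times\R^n$, and for fixed $(\tau,\xi')$ it is a polynomial in $\xi_n$ of degree $\sum_{j=1}^J(r_j-r_{j+1})=r_1=\ord\mu$, with coefficients in $\mathcal{O}(\R\times\R^n)$; in particular $\widehat{\omega_\mu^\pm}\in\mathcal{O}^\perp(\widehat{G})$.

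Since $a_j>0$ we have $|a_j\pm\ic\xi_n|=\sqrt{a_j^2+\xi_n^2}\simeq a_j+|\xi_n|\simeq 1+|\xi|+|\tau|^{1/\gamma_j}=W_{o_{1/\gamma_j}}(\tau,\xi)$, using $\langle\xi'\rangle+|\xi_n|\simeq\langle\xi\rangle$ and $\langle\tau\rangle^{1/\gamma_j}\simeq 1+|\tau|^{1/\gamma_j}$. Multiplying over $j$ and applying Proposition \ref{prop: decomposition_elementary_order} gives $|\omega_\mu^\pm(\tau,\xi)|\simeq W_\mu(\tau,\xi)$ for all $(\tau,\xi)\in\R\times\R^n$. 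Hence $\mu$ is an upper order function for $\omega_\mu^\pm$ and the lower bound in Definition \ref{def_ellipticity_general} holds (uniformly in $\lambda$), so $\omega_\mu^\pm$ is $\mathcal{N}$-elliptic for $\mu$; moreover $\omega_\mu^\pm$ has no real zeros (the real part of each factor is $\ge1$), hence $1/\omega_\mu^\pm$ is smooth with $|1/\omega_\mu^\pm|\simeq W_{-\mu}$, and a routine estimate of its derivatives (quotient rule, using $|\omega_\mu^\pm|\gtrsim1$ and polynomial bounds on the derivatives of $\omega_\mu^\pm$) gives $1/\omega_\mu^\pm\in\mathcal{O}(\R\times\R^n)$ as well.

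It remains to verify the support properties. Expanding in powers of $\ic\xi_n$, $\omega_\mu^\pm(\tau,\xi)=\sum_{\kappa=0}^{r_1}b_\kappa(\tau,\xi')(\ic\xi_n)^\kappa$ with $b_\kappa\in\mathcal{O}$ independent of $\xi_n$, so $\op[\omega_\mu^\pm]=\sum_{\kappa=0}^{r_1}\op[b_\kappa]\,\partial_{x_n}^\kappa$ on $\mathcal{S}'_\perp(G)$. The operators $\partial_{x_n}^\kappa$ are local, and each tangential multiplier $\op[b_\kappa]$ preserves support in $\overline{G_\pm}$: a test function $\varphi$ with $\supp\varphi\subseteq G_\mp$ has $\supp\varphi\subseteq\{\pm x_n\le-\eps\}$ for some $\eps>0$, and since $\op[b_\kappa]$ — like its transpose multiplier — acts only in $(t,x')$ it sends vanishing $x_n$-slices to vanishing $x_n$-slices, so the $x_n$-support is unchanged; pairing against $f$ with $\supp f\subseteq\overline{G_\pm}$ (after truncating $\op[b_\kappa]^t\varphi$ in $x'$) then gives $0$. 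Hence $\op[\omega_\mu^\pm]$ preserves both upper and lower support (being a differential operator in $x_n$). For the reciprocals, the $\xi_n$-zeros of $\omega_\mu^+$ are the points $\xi_n=\ic\,a_j(\tau,\xi')\in\mathbb{C}_+$, so for each $k\ne0$ and $\xi'\in\R^{n-1}$ the polynomial $\omega_\mu^+(k,\xi',\cdot)$ has no zero in $\overline{\mathbb{C}_-}$ and satisfies $|\omega_\mu^+(k,\xi',z)|\ge1$ there; thus $1/\omega_\mu^+(k,\xi',\cdot)$ extends holomorphically and boundedly to $\overline{\mathbb{C}_-}$. By Proposition \ref{prop_support_preserving} (in its form reflected through $x_n\mapsto-x_n$) together with the Remark following it, $\op[1/\omega_\mu^+]$ preserves upper support on $\mathcal{S}_\perp(G)$, hence — by a standard density argument, as in the proof of Lemma \ref{lemma: zero_trace} (for the application below, only density in $L^2_\perp(G)$ is needed) — on the relevant potential spaces; symmetrically $\op[1/\omega_\mu^-]$ preserves lower support.

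Finally, since $\widehat{\omega_\mu^-}\in\mathcal{O}^\perp(\widehat{G})$ is $\mathcal{N}$-elliptic for $\mu$, Theorem \ref{thm: whole space} with the trivial order function $\nu\equiv0$ (so $H^0_\perp(G)=L^2_\perp(G)$) gives $\op[\omega_\mu^-]\in\call_{\mathrm{iso}}(H^\mu_\perp(G),L^2_\perp(G))$ with inverse $\op[1/\omega_\mu^-]$. Both $\op[\omega_\mu^-]$ and $\op[1/\omega_\mu^-]$ preserve lower support, so the last part of Proposition \ref{prop: boundedness_lower_support} yields $\op[\omega_\mu^-]_+\in\call_{\mathrm{iso}}(\overline{H}^\mu_\perp(G_+),\overline{H}^0_\perp(G_+))$ with inverse $\op[1/\omega_\mu^-]_+$. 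As $\overline{H}^0_\perp(G_+)=L^2_\perp(G_+)$ with the quotient norm equal to the $L^2(G_+)$-norm (the norm-minimizing extension of $u$ being $e_0^+u$, which again has vanishing temporal mean), the isomorphism together with $\|u\|_{\mu,+}=\|u\|_{\overline{H}^\mu_\perp(G_+)}$ gives $\|u\|_{\mu,+}\simeq_\mu\|\op[\omega_\mu^-]_+u\|_{L^2(G_+)}$ for all $u\in\overline{H}^\mu_\perp(G_+)$. The crux of the argument is the support analysis of the second-to-last paragraph: demanding that $\op[\omega_\mu^\pm]$ preserve \emph{both} supports forces $\omega_\mu^\pm$ to be polynomial in $\xi_n$ (Paley--Wiener), while preservation of one-sided support by the reciprocals pins all $\xi_n$-roots to a single half-plane, and reconciling these constraints with the two-sided bound $|\omega_\mu^\pm|\simeq W_\mu$ is exactly what the elementary factorization of Proposition \ref{prop: decomposition_elementary_order} makes possible — that factorization, into integer powers of symbols affine in $\xi_n$, being available precisely because $\mathcal{N}$ is regular in time and has integer $\xi$-coordinates.
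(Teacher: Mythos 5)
Your proposal is correct and follows essentially the same approach as the paper: both reduce the construction to elementary factors $\langle\tau\rangle^{1/\gamma_j}+\langle\xi'\rangle\pm\ic\xi_n$ (via Proposition \ref{prop: decomposition_elementary_order}), place the $\xi_n$-root in a fixed half-plane, and invoke the Paley--Wiener argument of Proposition \ref{prop_support_preserving} for the reciprocal. You spell out the full product and the concluding norm equivalence, which the paper's proof handles by appealing to the elementary case and leaves implicit, but the underlying idea is identical.
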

\begin{proof}
    By Proposition \ref{prop: decomposition_elementary_order}, it is enough to show that for any elementary order function $o_y$ with $y\in [0,\infty)$, one can find a suitable weight function $\omega_{o_y}$.
    We choose
    \begin{equation*}
        \omega _{o_y}^\pm(\tau, \xi', \xi_n):=\langle\tau\rangle^y +\langle\xi'\rangle \pm \ic\xi_n.
    \end{equation*}
    Then $\omega_{o_y}$ is smooth.
    For every fixed $(\tau, \xi')\in \mathbb{R}\times  \mathbb{R}^{n-1} $, the operator $\op[\omega_{o_y}^\pm (\tau, \xi', \cdot)]$ is a differential operator and thus preserves both upper and lower support.
    Furthermore, the polynomial $z\mapsto \omega_{o_y}^\pm(\tau, \xi', z):=\langle\tau\rangle^y+\langle\xi'\rangle \pm iz$ has roots only in $\mathbb{C}_\pm$, so that the symbol $\frac{1}{\omega_{o_y}^\pm(\tau, \xi', \cdot)}$ admits a continuous and bounded extension $z\mapsto \frac{1}{\omega_{o_y}(\tau, \xi', z)}$ to $\overline{\mathbb{C}_\mp}$ which is holomorphic in $\C_\pm$, and thus preserves support in $G_\pm$ by Proposition \ref{prop_support_preserving}.

    We now show the $\mathcal{N}$-ellipticity.
    The upper estimate follows by the triangle inequality.
    Moreover,
    \begin{equation*}
        \vert \omega_{o_y}(\tau, \xi', \xi_n)\vert^2= \xi_n^2 +W_{o_y}(\tau, \xi', 0)^2 \gtrsim \xi_n^2 +W_{2o_y}(\tau, \xi',0)^2=W_{2o_y}(\tau, \xi', \xi_n),
    \end{equation*}
    so that the lower estimate follows.
\end{proof}
    We remark that a particular outcome of Proposition \ref{prop:weight_fct_supp} is that we can always use $\op[\omega_{\mu_-+\nu}^+]$ as the operator $\opL$ in Lemma \ref{lemma_added_boundary_conditions}.
\subsection{System at the Boundary for the CHG Determinant}
In this section, we determine a class of boundary operators $\opB:\overline{H}^{\mu_D+\nu}(G_+)\to \prod_{j=1}^M H^{\chi_j}(G^{n-1})$ such that for all $f\in \overline{H}^{\nu}(G_+)$ and $g\in \prod_{j=1}^M H^{\chi_j}(G^{n-1})$ the problem
\begin{align}\label{eqn: main_theorem}
\left\{
\begin{array}{rcl}
     \partial_t u-\Delta(\partial_t-\Delta) u&=&f  \\
     \opB u&=&g\\ 
\end{array}
\right.
\end{align}
is uniquely solvable in $\overline{H}^{\mu_D+\nu}(G_+)$.
In particular, we need to identify $M\in \N$ and strictly positive order functions $\nu$, $\chi_j$ for $j\in\set{1,\ldots,M}$ which ensure this.
We recall that $\partial_t-\Delta(\partial_t-\Delta)=\op[D]$ is the CHG determinant with the symbol $D$ defined in \eqref{eqn: determinant}.
We first show that the CHG determinant is admissible in the sense of Definition \ref{def: admissible}.
The key is the study of its roots along with the Paley-Wiener Theorem \ref{thm_Paley_Wiener}.
\begin{lemma}\label{lemma: roots}
    There exist functions $\rho_j^+: (\mathbb{R}\setminus\{0\})\times  \mathbb{R}^{n-1} \to \mathbb{C}$ and $\rho_j^+: (\mathbb{R}\setminus\{0\})\times  \mathbb{R}^{n-1} \to \mathbb{C}$ $j=1,2$ with the following properties. 
    \begin{enumerate}
        \item\label{lemma: rootsi} For all $(\tau, \xi')\in (\mathbb{R}\setminus\set{0})\times \mathbb{R}^{n-1}$ it holds $D(\tau, \xi', \rho_j^\pm(\tau, \xi'))=0$, i.e., the $\rho_j^\pm(\tau, \xi')$ are the roots of the polynomial $D(\tau, \xi', \cdot)$.
        \item\label{lemma: rootsii} For all $(\tau, \xi')\in (\mathbb{R}\setminus\set{0})\times \mathbb{R}^{n-1}$, the $\rho_j^ \pm(\tau, \xi')$ are pairwise disjoint.
        \item\label{lemma: rootsiii} The functions $\rho_j(\tau, \xi')$ are smooth on $(\mathbb{R}\setminus\{0\})\times  \mathbb{R}^{n-1}$.
        \item\label{lemma: rootsiv} For all $(\tau, \xi')\in (\mathbb{R}\setminus\set{0})\times \mathbb{R}^{n-1}$ it holds 
        \begin{equation*}
            \Im(\rho_j^+(\tau, \xi'))>0, \qquad \Im (\rho_j^-(\tau, \xi')<0, \qquad j=1,2.
        \end{equation*}
        \item\label{lemma: rootsv} The symbol $\rho_1^\pm$ is $\mathcal{N}$-elliptic for the order function $ o_{\frac{1}{2}}(\gamma)=\max\{1, \frac{\gamma}{2}\}$ while $\rho_2^\pm$ is  $\mathcal{N}$-elliptic for the order function $ o_{0}(\gamma)=1$.
        Moreover, $|\arg\rho_2^\pm(\tau,\xi')|\in(\frac\pi 4,\frac{3\pi}{4})$.
    \end{enumerate}
\end{lemma}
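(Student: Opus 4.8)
The plan is to exploit the \emph{biquadratic} structure of $D$ in the normal variable. Writing $\zeta:=|\xi|^2=|\xi'|^2+\xi_n^2$, we have $D(\tau,\xi',\xi_n)=\zeta^2+\ic\tau\zeta+\ic\tau$, a monic polynomial in $\zeta$; since it depends on $\xi_n$ only through $\xi_n^2$, its four roots in $\xi_n$ will automatically come in pairs $\pm\rho$. The discriminant of the quadratic in $\zeta$ is $(\ic\tau)^2-4\ic\tau=-\tau^2-4\ic\tau$, and the key structural observation is that $\Re(-\tau^2-4\ic\tau)=-\tau^2<0$ for $\tau\neq0$: the discriminant lies in the open left half-plane and, having nonzero imaginary part, is never a real number. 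Its two square roots therefore have nonzero real parts of opposite sign, so I let $s(\tau)$ be the square root of $-\tau^2-4\ic\tau$ with $\Re s(\tau)<0$; around any $\tau_0\neq0$ this agrees with a holomorphic branch of the square root composed with the polynomial map $\tau\mapsto-\tau^2-4\ic\tau$, whence $s\in\CRi(\R\setminus\set{0})$. Then $\zeta_\pm(\tau):=\tfrac12\bigl(-\ic\tau\pm s(\tau)\bigr)$ are the roots of the $\zeta$-quadratic, with $\zeta_++\zeta_-=-\ic\tau$ and $\zeta_+\zeta_-=\ic\tau$.

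Before defining the $\rho_j^\pm$ I would record the scalar facts about $\zeta_\pm$ that carry the proof. Comparing imaginary parts in $s^2=-\tau^2-4\ic\tau$ gives $\Re s\,\Im s=-2\tau$, hence $\Im s=2\tau/|\Re s|$ and $\tau\,\Im s\geq0$; writing $s=a+\ic b$ with $a^2-b^2=-\tau^2$ and $2ab=-4\tau$ one checks in two lines that $\Im s=\pm\tau$ forces $\tau=0$, so $\Im\zeta_\pm\neq0$ for all $\tau\neq0$. Also $\Re\zeta_+=\tfrac12\Re s<0$. From $\tau\,\Im s\geq0$ one gets $|\zeta_+|\leq|\zeta_-|$, and together with $|\zeta_+|+|\zeta_-|\geq|\tau|$ and $|\zeta_+|\,|\zeta_-|=|\tau|$ this yields $|\zeta_+|\leq2$ and $|\tau|/2\leq|\zeta_-|\leq|\tau|+2$. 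Since $|\xi'|^2\geq0$ and $\Im\zeta_\pm\neq0$, the numbers $\zeta_\pm(\tau)-|\xi'|^2$ lie in $\C\setminus[0,\infty)$.

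Fixing the holomorphic branch $\sqrt{\,\cdot\,}$ of the square root on $\C\setminus[0,\infty)$ normalized by $\Im\sqrt{w}>0$, I set
\[
\rho_2^\pm(\tau,\xi'):=\pm\sqrt{\zeta_+(\tau)-|\xi'|^2},\qquad\rho_1^\pm(\tau,\xi'):=\pm\sqrt{\zeta_-(\tau)-|\xi'|^2}.
\]
Properties (i)--(iv) are then immediate: since $|\xi'|^2+(\rho_2^\pm)^2=\zeta_+$ and $|\xi'|^2+(\rho_1^\pm)^2=\zeta_-$ are roots of $\zeta^2+\ic\tau\zeta+\ic\tau$, we get $D(\tau,\xi',\rho_j^\pm)=0$; smoothness is composition of smooth maps with the holomorphic branch; $\Im\rho_j^+>0$ and $\Im\rho_j^-=-\Im\rho_j^+<0$ by construction; and the four values are pairwise distinct (hence exhaust the four roots of the monic degree-$4$ polynomial $D(\tau,\xi',\cdot)$) because $(\rho_2^\pm)^2=\zeta_+-|\xi'|^2\neq\zeta_--|\xi'|^2=(\rho_1^\pm)^2$, the discriminant being nonzero for $\tau\neq0$, while $(\rho_j^+)^2=(\rho_j^-)^2\neq0$. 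For (v), the upper estimates $|\rho_2^\pm|^2\leq2+|\xi'|^2\lesssim W_{o_0}^2$ and $|\rho_1^\pm|^2\leq|\tau|+2+|\xi'|^2\lesssim W_{o_{1/2}}^2$ follow from the bounds on $|\zeta_\pm|$, so $o_0$ and $o_{1/2}$ are upper order functions for $\rho_2^\pm$ and $\rho_1^\pm$. For the lower estimates on $\set{|\tau|\geq\lambda}$, with $\zeta_\bullet$ denoting $\zeta_+$ or $\zeta_-$, I would combine $\bigl|\zeta_\bullet-|\xi'|^2\bigr|\geq|\Im\zeta_\bullet|$ with $\bigl|\zeta_\bullet-|\xi'|^2\bigr|\geq|\xi'|^2-|\zeta_\bullet|$: the latter dominates $|\xi'|^2$ once $|\xi'|^2\geq2|\zeta_\bullet|$, while for $\zeta_-$ one has $|\Im\zeta_-|\geq|\tau|-2$ for $|\tau|$ large and, on the compact set $\lambda\leq|\tau|\leq4$, the continuous nonvanishing function $\Im\zeta_\bullet$ is bounded below; a short case split then gives $|\rho_2^\pm|\gtrsim_\lambda W_{o_0}$ and $|\rho_1^\pm|\gtrsim_\lambda W_{o_{1/2}}$. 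Finally, $\Re\bigl((\rho_2^+)^2\bigr)=\Re\zeta_+-|\xi'|^2=\tfrac12\Re s-|\xi'|^2<0$ forces $\arg\bigl((\rho_2^+)^2\bigr)\in(\tfrac{\pi}{2},\tfrac{3\pi}{2})$, hence $\arg\rho_2^+\in(\tfrac{\pi}{4},\tfrac{3\pi}{4})$ since $\Im\rho_2^+>0$, and $\rho_2^-=-\rho_2^+$ gives the same for $\rho_2^-$.

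The main obstacle will be the quantitative lower bound in (v): one has to show that $\zeta_\pm(\tau)-|\xi'|^2$ stays bounded away from $0$, and additionally grows like $1+|\tau|$ in the case of $\zeta_-$, \emph{uniformly} over $\set{|\tau|\geq\lambda}\times\R^{n-1}$. This blends the elementary scalar bounds on $\zeta_\pm$ with a compactness argument on the bounded $\tau$-interval $[\lambda,4]$ and the $|\tau|\to\infty$ asymptotics of $\zeta_\pm$. A secondary, more bookkeeping-type difficulty is the consistent choice of branches, both for $s(\tau)$ (ensuring at once $\Re s<0$ and smoothness across the disconnected set $\R\setminus\set{0}$) and for the outer square root on $\C\setminus[0,\infty)$.
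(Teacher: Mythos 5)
Your approach is essentially parallel to the paper's for items (i)--(iv): both exploit that $D(\tau,\xi',\xi_n)$ is biquadratic in $\xi_n$ and solve the resulting quadratic in $\zeta=|\xi'|^2+\xi_n^2$. The paper writes all four roots in a single closed formula $\rho_\epsilon$, $\epsilon\in\set{-1,1}^2$, in terms of the principal square root, whereas you first construct $\zeta_\pm$ through a branch normalized by $\Re s<0$ and then apply a second square root. The smoothness, sign, and distinctness arguments are equivalent, and your deduction of the argument condition for $\rho_2^\pm$ from $\Re\zeta_+<0$ matches the paper's observation $\Re Z(\tau)<0$.

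Where you genuinely diverge is the lower bound in (v), and there your sketch has a concrete gap. You propose to bound $|\zeta_\bullet-|\xi'|^2|$ from below using only $|\Im\zeta_\bullet|$ (on $\lambda\le|\tau|\le 4$ by compactness, and for $|\tau|>4$ via $|\Im\zeta_-|\gtrsim|\tau|$) together with $|\xi'|^2-|\zeta_\bullet|$ for large $|\xi'|$. This covers $\rho_1^\pm$, but leaves the regime $|\tau|>4$ with $|\xi'|$ small uncovered for $\rho_2^\pm$: in fact $\Im\zeta_+(\tau)=O(|\tau|^{-1})$ as $|\tau|\to\infty$, so $|\Im\zeta_+|$ gives no uniform lower bound there, and your compactness step stops at $|\tau|=4$. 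The repair is a fact you already have but do not exploit at this point: since $\Re\zeta_+=\tfrac12\Re s<0$ and $|\Re s(\tau)|$ increases to $2$ (a limit the paper records explicitly), one has $|\zeta_+-|\xi'|^2|\ge |\Re\zeta_+|+|\xi'|^2\gtrsim_\lambda 1+|\xi'|^2$ on $\set{|\tau|\ge\lambda}$, which closes the argument. The paper instead takes a shorter, indirect route to the lower bound that you might prefer: having the upper bounds $|\rho_1^\pm|\lesssim W_{o_{1/2}}$ and $|\rho_2^\pm|\lesssim W_{o_0}$ together with the factorization $\rho_1^+\rho_1^-\rho_2^+\rho_2^-=D(\tau,\xi',0)$, it invokes the $\mathcal{N}$-ellipticity of $D$ from Proposition~\ref{prop: det_elliptic} to bound $|D(\tau,\xi',0)|$ from below by $W_{\mu_D}(\tau,\xi',0)$, and then divides out the upper bounds on three of the four roots to obtain the lower bound on the remaining one. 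This avoids the $|\tau|\to\infty$ asymptotics of $\zeta_\pm$ entirely and is the step you would save by following the paper.
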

\begin{proof}
    For fixed $(\tau, \xi')\in (\mathbb{R}\setminus\set{0})\times \mathbb{R}^{n-1}$, an elementary computation shows that the four roots of $D(\tau, \xi', z)$ are given by 
    \begin{align}\label{def:roots}
        \rho_\epsilon(\tau,\xi'):=\frac{\epsilon_1}{2} \sqrt{ -2\vert \xi'\vert^2-\ic\tau+ \epsilon_2\sqrt{-\tau^2-4\ic\tau}}, \quad \epsilon\in \set{-1,1}^2.
    \end{align}
    where for any complex number $a\in \mathbb{C}\setminus (-\infty, 0)$, $\sqrt{a}$ denotes the principal square root of $a$.
    In particular $\sgn\Re\rho_\epsilon(\tau,\xi')=\epsilon_1$.
    Moreover, one has $\sgn\Im\rho_{\eps}(\tau,\xi')=-\epsilon_1\epsilon_2\sgn(\tau)$.
    Together, this shows that every root is contained in a different quadrant.
    We now set
    \begin{align*}
        \rho_1^-(\tau,\xi')&:=\rho_{(\sgn(\tau),1)}(\tau,\xi'), \quad
        \rho_1^+(\tau,\xi'):=\rho_{(-\sgn(\tau),1)}(\tau,\xi')=-\rho_1^+(\tau,\xi'), \\
        \rho_2^-(\tau,\xi')&:=\rho_{(-\sgn(\tau),-1)}(\tau,\xi'), \quad
        \rho_2^+(\tau,\xi'):=\rho_{(\sgn(\tau),-1)}(\tau,\xi')=-\rho_2^+(\tau,\xi').
    \end{align*}
    Then $\rho^\pm_j$, $\in\set{1,2}$, is smooth on $(\mathbb{R}\setminus\set{0})\times \mathbb{R}^{n-1}$ and conditions \ref{lemma: rootsi}, \ref{lemma: rootsii}, \ref{lemma: rootsiii}, and \ref{lemma: rootsiv} are fulfilled. 
    We now show the point \ref{lemma: rootsv}.
    Due to $\rho_j^+(\tau,\xi')=-\rho_j^-(\tau,\xi')$ we may restrict ourselves to $\rho_j^-$.
    We will repeatedly use that $\rho_2^- (\tau, \xi')^2=\frac{1}{4}(Y(\xi')+Z(\tau))$ with 
    \begin{equation*}
        Y(\xi'): =-2\vert \xi'\vert, \qquad Z(\tau):=-\ic\tau-\sqrt{-\tau^2-4\ic\tau}.
    \end{equation*}
    We recall the formula $\sqrt{z}=\sqrt{\frac{|z|+x}{2}}+i\sgn(y)\sqrt{\frac{|z|-x}{2}}$ for the principle square root of a complex number $z=x+\ic y$.
    Applied to $x=-\tau^2$ and $y=-4\tau$, this yields due to $\sgn(y)=-\sgn(\tau)$
    \begin{align*}
        \Re Z(\tau)&= -\sqrt{\frac{\sqrt{\tau^4+16\tau^2}-\tau^2}{2}}<0,
        \quad
        \Im Z(\tau)= -\tau+\sgn\tau\sqrt{\frac{\sqrt{\tau^4 +16\tau^2}+\tau^2}{2}}.
    \end{align*}
    We are now in the position to prove point \ref{lemma: rootsv} in three short steps.
    ~\begin{itemize}[leftmargin=*]
        \item We first observe that due to $\Re(\rho_2^-(\tau,\xi')^2)<0$ we obtain $|\arg\rho_2^-(\tau,\xi')|\in (\frac\pi 4,\frac{3\pi}{4})$.
        \item We next show that $\vert \rho^-_1\vert\lesssim W_{o_\frac{1}{2}}$ and  $\vert \rho^-_2\vert\lesssim W_{o_0}$
        For $\rho^-_1$, this entails $|\rho^-_1(\tau,\xi')|\lesssim 1+|\xi'|+|\tau|^\frac12$, which follows readily from \eqref{def:roots}.
        For $\rho^-_2$, we need to show $|\rho^-_2(\tau,\xi')|\lesssim 1+|\xi'|$.
        Observe that $|\Re Z(\tau)|$ is increasing in $|\tau|$ and
        \begin{equation*}
            \lim\limits_{|\tau| \to \infty} |\Re Z(\tau)| =\lim\limits_{\tau \to \infty}\sqrt{\frac{\sqrt{\tau^4+16\tau^2}-\tau^2}{2}}=\lim_{\tau\to\infty}\sqrt{\frac{8}{\sqrt{1+\frac{16}{\tau^2}}+1}}=2.
        \end{equation*}
        Moreover $\Im\rho_2^-(\tau,\xi')^2=\Im Z(\tau)$ is continuous and  
        \begin{equation*}
            \lim_{|\tau|\to\infty} |\Im Z(\tau)|=\lim_{|\tau|\to\infty}-|\tau|+\sqrt{\frac{\sqrt{\tau^4 +16\tau^2}+\tau^2}{2}}=\lim_{|\tau|\to\infty}\frac{\sqrt{2}}{|\tau|}\frac{|\Re Z(\tau)|}{\sqrt{2}+\sqrt{\sqrt{1 +\frac{16}{\tau^2}}+1}}=0.
        \end{equation*}
        Thus we obtain $\vert \rho_2^-(\tau,\xi')\vert^2=\vert \rho_2^-(\tau,\xi')^2\vert \leq |Y(\xi')| + |\Re(Z(\tau)| + |\Im(Z(\tau)| \lesssim 1+|\xi'|^2$ as claimed.
        \item We now fix $\lambda>0$ and show that $\vert \rho_1^-(\tau, \xi')\vert \gtrsim_\lambda W_{o_{\frac{1}{2}}}(\tau, \xi')$ when $\vert \tau\vert >\lambda$.
        To this effect, we recall that the CHG determinant $D$ is $\mathcal{N}$-elliptic (Proposition \ref{prop: det_elliptic}), that is,
        \begin{align*}
            \vert D(\tau, \xi', \xi_n)\vert \gtrsim_\lambda W_{\mu_D}(k, \xi', \xi_n)   
        \end{align*}
        for all $\vert \tau\vert >\lambda$.
        Furthermore, one has $\rho_1^+(k, \xi')\rho_1^-(k, \xi')\rho_2^+(\tau, \xi')\rho_2^-(\tau, \xi')=D(\tau, \xi', 0)$ for all $\tau\neq 0$ and $\mu_D= 2o_\frac12+2o_0$. Using the previous point and Lemma \ref{lemma: additivity}, one then gets 
        \begin{equation*}
            \vert \rho_1^+(\tau, \xi')\vert
            =\frac{\vert D(\tau, \xi', 0)\vert}{\vert\rho_1^-(\tau, \xi')\rho_2^+(\tau, \xi')\rho_2^-(\tau, \xi')\vert }
            \gtrsim_\lambda \frac{W_{\mu_D}(\tau, \xi', 0)}{CW_{o_\frac{1}{2}}(\tau, \xi')W_{o_0}(\tau, \xi')W_{o_0}(\tau, \xi')} 
            \gtrsim_\lambda W_{o_\frac{1}{2}}(\tau, \xi')
        \end{equation*}
        for all $\vert \tau\vert>\lambda$.
        By analogy, the estimates from below for the other roots follow. 
    \end{itemize}
\end{proof}
\begin{proposition}\label{prop: DHG_admissible}
     There exists symbols $D^+, D^-: (\R\setminus\set{0})\times \mathbb{R}^n\to \mathbb{C}$ such that $\op[D]=\op[D^+]\op[D^-]$ is admissible in the sense of Definition \ref{def: admissible} for the CHG order function $\mu_D$ and any strictly positive order function $\nu$.
     Moreover, $D^\pm$ is $\mathcal{N}$-elliptic with the order function $\mu_\pm:=\frac12\mu_D$. 
\end{proposition}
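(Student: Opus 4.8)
The plan is to build the factors $D^{\pm}$ directly from the roots of the polynomial $D(\tau,\xi',\cdot)$ isolated in Lemma~\ref{lemma: roots}, and then verify the four clauses of Definition~\ref{def: admissible} by feeding the root structure into the Paley--Wiener criterion of Proposition~\ref{prop_support_preserving} and into the whole-space isomorphism Theorem~\ref{thm: whole space}.

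\emph{Step 1 (the factorization).} With the roots $\rho_1^{\pm},\rho_2^{\pm}$ of Lemma~\ref{lemma: roots}, set
\[
D^{\pm}(\tau,\xi',\xi_n):=\bigl(\xi_n-\rho_1^{\pm}(\tau,\xi')\bigr)\bigl(\xi_n-\rho_2^{\pm}(\tau,\xi')\bigr).
\]
Since $D(\tau,\xi',\cdot)$ is monic of degree $4$ in $\xi_n$ with exactly the four pairwise distinct roots $\rho_1^{\pm},\rho_2^{\pm}$ (Lemma~\ref{lemma: roots}), we have $D=D^{+}D^{-}$, and each $D^{\pm}$ is a monic quadratic \emph{polynomial} in $\xi_n$ whose coefficients $\rho_1^{\pm}+\rho_2^{\pm}$ and $\rho_1^{\pm}\rho_2^{\pm}$ are smooth on $(\R\setminus\set{0})\times\R^{n-1}$ by Lemma~\ref{lemma: roots}\ref{lemma: rootsiii}. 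I take $\mu_{\pm}:=\tfrac12\mu_D$, which by Example~\ref{ex: CHD_order_fct} equals $o_{1/2}+o_0$; in particular $\mu_{\pm}$ is strictly positive and $\mu_{+}+\mu_{-}=\mu_D$.

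\emph{Step 2 ($\mathcal N$-ellipticity and the isomorphisms).} From Lemma~\ref{lemma: roots}\ref{lemma: rootsv} one has $|\rho_1^{\pm}|\lesssim W_{o_{1/2}}$ and $|\rho_2^{\pm}|\lesssim W_{o_0}$, so using $|\xi_n|+|\xi'|\simeq|\xi|$ and Lemma~\ref{lemma: additivity},
\[
|D^{\pm}(\tau,\xi)|\le\bigl(|\xi_n|+|\rho_1^{\pm}|\bigr)\bigl(|\xi_n|+|\rho_2^{\pm}|\bigr)\lesssim W_{o_{1/2}}(\tau,\xi)\,W_{o_0}(\tau,\xi)\simeq W_{\mu_{\pm}}(\tau,\xi),
\]
i.e.\ $\mu_{\pm}$ is an upper order function for $D^{\pm}$. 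For the lower estimate fix $\lambda>0$; Lemma~\ref{lemma: additivity} gives $W_{\mu_D}\simeq W_{\mu_{+}}W_{\mu_{-}}$, and combining the $\mathcal N$-ellipticity of $D$ (Proposition~\ref{prop: det_elliptic}) with the upper bound above yields, for $|\tau|\ge\lambda$,
\[
W_{\mu_{+}}W_{\mu_{-}}\simeq W_{\mu_D}\lesssim_{\lambda}|D|=|D^{+}|\,|D^{-}|\lesssim|D^{\pm}|\,W_{\mu_{\mp}},
\]
hence $|D^{\pm}|\gtrsim_{\lambda}W_{\mu_{\pm}}$ there; thus $D^{\pm}$ is $\mathcal N$-elliptic for $\mu_{\pm}$ in the sense of Definition~\ref{def_ellipticity_general}. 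Inspecting the closed form \eqref{def:roots} also shows $\widehat{D^{\pm}}\in\calo^{\perp}(\widehat G)$ (see Step~4). Theorem~\ref{thm: whole space} (with the free order function there taken to be $\mu_{-}+\nu$ for $D^{+}$ and $\nu$ for $D^{-}$) now shows that $\opA^{\pm}:=\op[D^{\pm}]$ satisfy $\opA^{+}\in\call_{\mathrm{iso}}(H^{\mu_D+\nu}_\perp(G),H^{\mu_{-}+\nu}_\perp(G))$ and $\opA^{-}\in\call_{\mathrm{iso}}(H^{\mu_{-}+\nu}_\perp(G),H^{\nu}_\perp(G))$, with $(\opA^{\pm})^{-1}=\op[1/D^{\pm}]$, while $\opA^{-}\opA^{+}=\op[D^{-}D^{+}]=\op[D]$ (the two factors commute as Fourier multipliers), which is clause~(i).

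\emph{Step 3 (support preservation).} For each fixed $(k,\xi')\in(\tfrac{2\pi}{T}\mathbb{Z}\setminus\set{0})\times\R^{n-1}$ the fiber operator $\op[D^{\pm}(k,\xi',\cdot)]$ is a constant-coefficient differential operator in $x_n$, hence preserves both upper and lower support; by the Remark following Proposition~\ref{prop_support_preserving} and its mirror for upper support, $\op[D^{\pm}]$ on $\cals'_\perp(G)$ preserves both upper and lower support, which gives the support statements in clauses~(ii) and~(iii). For the inverses, $1/D^{+}(k,\xi',\cdot)$ is a rational function whose only poles are $\rho_1^{+}(k,\xi'),\rho_2^{+}(k,\xi')\in\mathbb{C}_{+}$ (Lemma~\ref{lemma: roots}\ref{lemma: rootsiv}) and which decays like $|\xi_n|^{-2}$; it therefore admits a bounded holomorphic extension to $\mathbb{C}_{-}$, continuous on $\overline{\mathbb{C}_{-}}$, so $\op[1/D^{+}(k,\xi',\cdot)]$ preserves upper support by the mirror of Proposition~\ref{prop_support_preserving}, whence so does $\op[1/D^{+}]$; this completes clause~(iii). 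Symmetrically, the poles of $1/D^{-}(k,\xi',\cdot)$ lie in $\mathbb{C}_{-}$, so $\op[1/D^{-}]$ preserves lower support, which is clause~(iv). Collecting Steps~1--3 proves all of Definition~\ref{def: admissible} for $\mu_D$ and any strictly positive $\nu$, with $D^{\pm}$ being $\mathcal N$-elliptic for $\mu_{\pm}=\tfrac12\mu_D$.

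\emph{Step 4 (the main obstacle).} The only genuinely technical ingredient is $\widehat{D^{\pm}}\in\calo^{\perp}(\widehat G)$: one must show that every $(\tau,\xi)$-derivative of the elementary-symmetric functions of \eqref{def:roots} is polynomially bounded once $|\tau|$ is bounded away from $0$. This is where the explicit formula \eqref{def:roots} is indispensable — for $\tau\neq0$ the nested square-root arguments stay off the cut $(-\infty,0]$ and, on $\set{|\tau|\ge\pi/T}$, are bounded below in modulus (as already used in the proof of Lemma~\ref{lemma: roots}), so repeated use of the chain rule produces the required bounds. Everything else is a bookkeeping exercise in the Paley--Wiener criterion, the multiplicativity of weight functions, and Theorem~\ref{thm: whole space}.
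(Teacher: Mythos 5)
Your proof is correct and takes essentially the same approach as the paper: both define $D^{\pm}$ by the grouping of the roots from Lemma~\ref{lemma: roots} into upper and lower half-plane, deduce $\mathcal N$-ellipticity of the factors from that of $D$ together with the upper bounds on the roots, invoke Theorem~\ref{thm: whole space} for the whole-space isomorphisms, and use the Paley--Wiener criterion of Proposition~\ref{prop_support_preserving} (and its obvious mirror) to obtain the support-preservation clauses of Definition~\ref{def: admissible}. The only difference is cosmetic: you spell out the upper/lower two-sided estimate on $|D^{\pm}|$ and the membership $\widehat{D^{\pm}}\in\calo^{\perp}(\widehat G)$ a bit more explicitly than the paper, which leaves these to the reader.
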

\begin{proof}
    Denote by $\rho_j^\pm(\tau, \xi')$, $j\in\set{1,2}$, the roots of $D(\tau, \xi', \cdot)$ from Lemma \ref{lemma: roots} and define 
    \begin{equation*}
        D^-(\tau, \xi', \xi_n):=(\xi_n-\rho_1^-(\tau, \xi'))(\xi_n-\rho_2^-(\tau, \xi')), \qquad D^+(\tau, \xi', \xi_n):=(\xi_n-\rho_1^+(\tau, \xi'))(\xi_n-\rho_2^+(\tau, \xi')).
    \end{equation*}
    By Lemma \ref{lemma: roots}, the roots $\rho_1^\pm$ (resp. $\rho_2^\pm)$ have an upper order function $o_{\frac{1}{2}}$ (resp. $o_0$). It follows easily that every of the factors $\xi_n-\rho_1^\pm$ (resp. $\xi_n-  \rho_2^\pm$ ) also has upper order function $o_{\frac{1}{2}}$ (resp. $o_0$). Since furthermore 
    \begin{equation*}
        D(\tau, \xi', \xi_n)=(\xi_n-\rho_1^-(\tau, \xi'))(\xi_n-\rho_2^-(\tau, \xi'))(\xi_n-\rho_1^+(\tau, \xi'))(\xi_n-\rho_2^+(\tau, \xi'))
    \end{equation*}
    and we know that $D$ is $\mathcal{N}$-elliptic for the order function $ \mu_D=2o_{\frac{1}{2}}+2o_0$, an argument similar to the one in the proof of Lemma \ref{lemma: roots} shows that both $D^-, D^+$ are $\mathcal{N}$-elliptic with order function $\mu_{\pm}=\frac12\mu_D$. 

    \medskip
    
    From Theorem \ref{thm: whole space} we learn $\op[D^-]\in \call_{\mathrm{iso}}(H^{\mu_-+\nu}_\bot(G),H^{\nu}_\bot(G))$ and $\op[D^+]\in \call_{\mathrm{iso}}(H^{\mu_D+\nu}_\bot(G),H^{\mu_-+\nu}_\bot(G))$ with respective inverses $\op[D^\pm]^{-1}=\op[\frac1{D^\pm}]$, and it holds $\op[D]=\op[D^-]\op[D^+]$.
    Moreover, $\frac{1}{D^-}(\tau,\xi',\cdot)$ has an extension $\frac{1}{\widetilde{D^-}}(\tau,\xi',\cdot)\in C_b(\overline{\C_+})$ that is holomorphic on $\mathbb{C}_+$, thus verifying the hypothesis of Proposition \ref{prop_support_preserving}.
    Indeed, since $\Im(\rho_1^-)<0$ and $\Im(\rho_2^-)<0$ one can take 
    \begin{equation*}
        \frac{1}{\widetilde{D^-}(\tau, \xi', z)}:=\frac{1}{(z-\rho_1^-(\tau, \xi'))(z-\rho_2^-(\tau, \xi'))}, \quad z\in \overline{\mathbb{C}_+}. 
    \end{equation*}
    Thus $\op[D^-]^{-1}=\op[\frac1{D^-}]$ preserves lower support.
    By analogy, $\op[D^+]^{-1}$ preserves upper support.
\end{proof}
We denote by $d^+_j$, $j\in\set{0,1,2}$, the coefficients of $D^+$, i.e.
\begin{equation*}
    D^+(\tau, \xi', \xi_n)=:d^+_2(\tau, \xi')(\ic\xi_n)^2+ d^+_1(\tau, \xi')(\ic\xi_n) + d_0^+(\tau, \xi'). 
\end{equation*}
Remark that $d^+_2 = -1$, $d^+_1=\ic(\rho_1^++\rho_2^+)$, and $d^+_0=\rho_1^+\rho_2^+$.
In view of Proposition \ref{prop: upper_support_preserving}, it seems reasonable to complement the part $\op[D^+]$ of the CHG determinant by two boundary operators.
We will allow those to be of the form
\begin{equation}\label{eqn: bound_op_LS}
    \opB_i=\sum_{j=0}^3 \op[b^{(i)}_j]\Tr_{j}, 
\end{equation}
where the $b^{(i)}_j$ are smooth symbols on $(\mathbb{R}\setminus\set{0})\times \mathbb{R}^{n-1}$.
Observe that additionally
\begin{equation*}
    \Tr_i \op[D^+]_+=\sum_{j=0}^{2} \op[d_j^+] \Tr_{i+j}, \qquad i\in \mathbb{N}.
\end{equation*}
This motivates the definition of the following boundary matrix, which will turn out to be the cornerstone of our theory.

\begin{definition}\label{def:compl_cond}
    Let $D=D^+D^-$ be the symbol of the CHG determinant with order function $\mu_D=\mu_++\mu_-$ and let $\opB:=(\opB_1,\opB_2)$ be as in \eqref{eqn: bound_op_LS}.
    Moreover, suppose that $m\in \mathbb{N}_0$.
    \begin{enumerate}
        \item\label{def:compl_condi} The \emph{extended boundary matrix} associated to $(D,\opB)$ is the matrix symbol $\mathcal{B}:=\mathcal{B}^{(m)}: (\R\setminus\set{0})\times \R^{n-1}\to \mathbb{C}^{(4+m)\times(4+m)}$ defined by 
        \begin{align*}
            \mathcal{B}^{(m)}_{ij}
            :=\begin{cases}
            b^{(i)}_{j-1}     & \text{if } i\in\set{1, 2},\\
            d^+_{j+2-i} & \text{if } i\in{3,..., 4+m},
            \end{cases}
        \end{align*}
        where $j\in\set{1,..., 4+m}$ and we understand $d^+_{k}=0$ if $k<0$ or $k>2$.
        \item\label{def:compl_condii} We say that $\opB$ fulfills the complementing boundary conditions for $D$ with respect to the order functions $\chi_1$, $\chi_2$, and $\nu$, if the extended matrix $\mathcal{B}^{(\ord\nu)}$ is a mixed-order system in the sense of Definition \ref{def: mixed_order_systems} with
        \begin{align*}
            t_i&:=T_{i-1}(\mu+\nu), \qquad i\in\set{1,\ldots,4+m},\\
            s_i&:=
            \begin{cases}
                -\chi_i, & \text{if } i\in\set{1,2},\\
                -T_{i-3}(\mu_-+\nu), & \text{if } i\in\set{3,\ldots,4+m}
            \end{cases}
        \end{align*}
    \end{enumerate}
\end{definition}
Observe that the complementing boundary conditions imply that $\det\calb(\tau,\xi')\ne 0$ for all $(\tau,\xi')\in(\R\setminus\set{0})\times\R^{n-1}$, but that they go beyond this criterion.
Indeed, the idea of Definition \ref{def:compl_cond} is to study the invertibility of the matrix symbol $\op[\mathcal{B}]$ in the trace spaces, treating it as a mixed-order system.
Accordingly, the order functions $t_i$ associated to the columns are those of the trace spaces of $H^{\mu}_\perp(G)$.
For the order functions associated to the rows, the first two are those of the target spaces of the boundary operators $\opB_i$, while the remaining order functions are those of the target spaces of $\Tr_i\op [D^+]_+$.
\begin{theorem}\label{thm: half_spcae_general}
    Let $\opB:=(\opB_1,\opB_2)$ fulfill the complementing boundary conditions for $D$ with respect to the strictly positive order functions $\chi_1$, $\chi_2$, and $\nu$, where $\mu_D+\nu$ is CHG-shaped.
    Then for all $f\in \overline{H}^{\nu}_\perp(G_+)$ and $g=(g_1, g_2)$ with $g_i\in H^{\chi_i}_\perp(G^{n-1}) $, $i\in\set{1,2}$, the problem \eqref{eqn: main_theorem} admits a unique solution $u\in \overline{H}^{\mu_D+\nu}_\perp(G_+)$, and it holds
    \begin{equation*}
        \|u\|_{\mu_D+\nu, +}\lesssim_{\nu,\chi_1,\chi_2,\opB} \|f\|_{\nu, +}+\|g_1\|_{\chi_1}+\|g_2\|_{\chi_2}.
    \end{equation*}
\end{theorem}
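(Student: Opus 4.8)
The plan is to combine the admissibility of the CHG determinant (Proposition \ref{prop: DHG_admissible}) with the reduction to the boundary provided by Lemma \ref{lemma_added_boundary_conditions}, and then recognize the resulting boundary problem as an invertible mixed-order system on $G^{n-1}$ via Theorem \ref{thm: whole_space_systems}. Concretely, write $\op[D]=\op[D^-]\op[D^+]$ as in Proposition \ref{prop: DHG_admissible}, with $\mu_\pm=\tfrac12\mu_D$, so that $D^\pm$ is $\caln$-elliptic for $\mu_\pm$ and $\op[D]$ is admissible for $\mu_D$ and for the given $\nu$. Since $\mu_D+\nu$ is CHG-shaped, set $N_-:=\ord\mu_-$, $m:=\ord\nu$; one should first check that $\mu_-+\nu$ is CHG-shaped as well (its Newton polygon is the ``half'' of that of $\mu_D$ shifted by $\nu$, and regularity in time is inherited), so that the trace theory of Theorem \ref{thm: trace} applies both to $\mu_D+\nu$ and to $\mu_-+\nu$. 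Pick $\opL:=\op[\omega^+_{\mu_-+\nu}]$ from Proposition \ref{prop:weight_fct_supp}, which preserves upper and lower support and whose inverse preserves upper support, so the hypotheses of Lemma \ref{lemma_added_boundary_conditions} are met.

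Next I would perform the reduction to the boundary. By Lemma \ref{lemma_added_boundary_conditions}, for $u\in\overline H^{\mu_D+\nu}_\perp(G_+)$ and $f\in\overline H^\nu_\perp(G_+)$ the equation $\op[D]_+u=f$ is equivalent to the pair
\begin{equation*}
\opL_+\op[D^+]_+u=\opL_+\op[D^-]^{-1}_+f,\qquad \Tr^{(N_-+m)}\op[D^+]_+u=\Tr^{(N_-+m)}\op[D^-]^{-1}_+f.
\end{equation*}
The first of these, together with the trace identity $\Tr_i\op[D^+]_+=\sum_{j=0}^2\op[d^+_j]\Tr_{i+j}$, shows that once $\Tr^{(r_1)}u$ is known (where $r_1=\ord(\mu_D+\nu)=4+m$), the function $v:=\op[D^+]_+u\in\overline H^{\mu_-+\nu}_\perp(G_+)$ is uniquely determined by the $\opL$-equation and the $N_-+m$ traces, via the uniqueness part of Proposition \ref{prop: upper_support_preserving}; and conversely $u$ is recovered from $v$ as $u=\dot\op[D^+]_+^{-1}(v-\op[D^+]_+\,E^{\mu_-+\nu}(\dots))$ plus the lifted trace, again using Propositions \ref{prop: boundedness_lower_support} and \ref{prop: upper_support_preserving}. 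The upshot is: solving \eqref{eqn: main_theorem} is equivalent to finding $\Tr^{(r_1)}u=:(\sigma_0,\dots,\sigma_{3+m})\in T^{\mu_D+\nu}_\perp(G^{n-1})$ such that the $4+m$ scalar boundary equations
\begin{equation*}
\opB_i u=g_i\ (i=1,2),\qquad \Tr_{i-3}\op[D^+]_+u=\Tr_{i-3}\op[D^-]^{-1}_+f\ (i=3,\dots,4+m)
\end{equation*}
hold; writing these in terms of the $\sigma_j$ using \eqref{eqn: bound_op_LS} and the formula for $\Tr_i\op[D^+]_+$, the left-hand side is exactly $\op[\mathcal B^{(m)}](\sigma_0,\dots,\sigma_{3+m})$.

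It then remains to invert $\op[\mathcal B^{(m)}]$ on the trace spaces. By hypothesis $\mathcal B^{(m)}$ is a mixed-order system for the order functions $t_i=T_{i-1}(\mu_D+\nu)$ and $s_i$ as in Definition \ref{def:compl_cond}\ref{def:compl_condii}. Hence Theorem \ref{thm: whole_space_systems} (applied on $G^{n-1}$ in place of $G$, with $\nu$ there the zero order function) yields $\op[\mathcal B^{(m)}]\in\call_{\mathrm{iso}}(\prod_i H^{t_i}_\perp(G^{n-1}),\prod_i H^{-s_i}_\perp(G^{n-1}))$, i.e. an isomorphism from $T^{\mu_D+\nu}_\perp(G^{n-1})$ onto $H^{\chi_1}_\perp\times H^{\chi_2}_\perp\times\prod_{i=3}^{4+m}H^{T_{i-3}(\mu_-+\nu)}_\perp(G^{n-1})$. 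One checks that the right-hand data $(g_1,g_2,\Tr_0\op[D^-]^{-1}_+f,\dots,\Tr_{N_-+m-1}\op[D^-]^{-1}_+f)$ lies in this target space: $g_i\in H^{\chi_i}_\perp$ by assumption, while $\op[D^-]^{-1}_+f\in\overline H^{\mu_-+\nu}_\perp(G_+)$ by Proposition \ref{prop: DHG_admissible} and Proposition \ref{prop: boundedness_lower_support}, so its traces lie in $T^{\mu_-+\nu}_\perp(G^{n-1})$ by Theorem \ref{thm: trace}; the component $i$ of that trace vector is precisely $H^{T_{i-3}(\mu_-+\nu)}_\perp(G^{n-1})$. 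Thus $(\sigma_0,\dots,\sigma_{3+m})=\op[\mathcal B^{(m)}]^{-1}(\text{data})$ exists and is unique, with a norm bound $\lesssim\|g_1\|_{\chi_1}+\|g_2\|_{\chi_2}+\|f\|_{\nu,+}$; feeding it back through the reconstruction of $v$ and $u$, together with the continuity of $E^{\mu_-+\nu}$, $\dot\op[D^+]_+^{-1}$, $\opL_+$, and $\op[D^-]^{-1}_+$, gives $u\in\overline H^{\mu_D+\nu}_\perp(G_+)$ with the stated estimate, and uniqueness follows from the uniqueness at each stage. The main obstacle I expect is bookkeeping: verifying that all the order-function indices match up exactly — in particular that the target spaces of $\Tr_i\op[D^+]_+$ on $\overline H^{\mu_-+\nu}_\perp(G_+)$ are the $H^{T_i(\mu_-+\nu)}_\perp(G^{n-1})$ appearing in the definition of the $s_i$ — and that $\mu_-+\nu$ is genuinely CHG-shaped so that Theorem \ref{thm: trace} is applicable to it; the functional-analytic content is otherwise a direct assembly of the earlier results.
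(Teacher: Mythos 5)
Your proposal follows the paper's proof essentially step for step: decompose $\op[D]=\op[D^-]\op[D^+]$ via Proposition \ref{prop: DHG_admissible}, reduce to the boundary with Lemma \ref{lemma_added_boundary_conditions} taking $\opL=\op[\omega^+_{\mu_-+\nu}]$, identify the resulting boundary system as $\op[\calb^{(m)}]$ acting on the trace vector, invert it with Theorem \ref{thm: whole_space_systems}, and reconstruct $u$ via Proposition \ref{prop: upper_support_preserving} together with the support-preserving calculus. The only cosmetic discrepancies are that the paper applies Proposition \ref{prop: upper_support_preserving} directly to the combined operator $\op[\omega^+_{\mu_-+\nu}D^+]$ with the full trace vector $l$ (so the extension operator used is $E^{\mu_D+\nu}$, not $E^{\mu_-+\nu}$ as you wrote), and the paper asserts without comment, as you correctly flagged one should check, that $\mu_-+\nu$ is CHG-shaped.
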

\begin{proof}
    With $\mu_D+\nu$ also $\mu_-+\nu$ is CHG-shaped. Thus, by Lemma \ref{lemma_added_boundary_conditions}, problem (\ref{eqn: main_theorem}) is equivalent to 
    \begin{align*}
        \left\{ \begin{array}{rcll}
        \op[\omega_{\mu_-+\nu}^+D^+]_+u&=&\op[\omega_{\mu_-+\nu}^+]_+\op[D^-]^{-1}_+f    &   \\
        \Tr_i\op[D_+]_+u&=&\Tr_i\op[D^-]^{-1}_+f, & \quad i\in\set{0,\ldots, 1+m},\\
        \opB_i u&=&g_i , & \quad i\in\set{1,2}.
        \end{array}\right.
    \end{align*}
    Since $\mathcal{B}$ is a mixed-order system, setting $h=(h_1,..., h_{4+m})$ with $h_1:=g_1, h_2:=g_2, h_{3+i}:=\Tr_{i}(\op[D^-]^{-1}_+f)$ with $i\in\set{0,..., 1+m}$, we have $h_i\in H^{-s_i}_\perp(G^{n-1})$ for $i\in\set{1,..., 4+m}$.
    Thus, by Theorem \ref{thm: whole_space_systems}, there exists a unique $l=(l_1,..., l_{4+m})$ with $l_i\in H^{t_i}_\perp(G^{n-1})$ such that $\op[\mathcal{B}] l=h$, and
    \begin{equation*}
        \sum_{i=1}^{4+m} \|l_i\|_{t_i}\lesssim \sum_{i=1}^{4+m} \|h_i\|_{-s_i}. 
    \end{equation*}
    By Proposition \ref{prop: upper_support_preserving}, the problem 
    \begin{align*}
        \left\{\begin{array}{rcl}
         \op[\omega_{\mu_-+\nu}^+D^+]_+u&=&\op[\omega_{\mu_-+\nu}^+]_+\op[D^-]^{-1}_+f,       \\
         \Tr^{(4+m)}u&=&l,   
        \end{array}\right. 
    \end{align*}
    admits a unique solution $u\in \overline{H}^{\mu+\nu}_\perp(G_+)$.
    Observe that $u$ is a solution to \eqref{eqn: main_theorem}.
    Furthermore, again by Proposition \ref{prop: upper_support_preserving}, we have
    \begin{equation*}
        \|u\|_{\mu, +}\lesssim \|\op[\omega_{\mu_-+\nu}^+]_+\op[D^-]^{-1}_+f\|_{L^2(G_+)} + \sum_{i=1}^{4+m} \|l_i\|_{t_i}. 
    \end{equation*}
    Since both $\op[\omega_{\mu_-}^+]$ and $\op[D^-]^{-1}$ preserve lower support, we obtain by Proposition \ref{prop: boundedness_lower_support}
    \begin{equation*}
        \|\op[\omega_{\mu_-+\nu}^+]_+\op[D^-]^{-1}_+f\|_{L^2(G_+)}\lesssim \|\op[D^-]^{-1}_+f\|_{\mu_-+\nu, +}\lesssim \|f\|_{\nu, +}.
    \end{equation*}
    On the other hand, the boundedness of the trace operator shown in Theorem \ref{thm: trace} combined with the above gives 
    \begin{align*}
        \sum_{i=1}^{4+m} \|l_i\|_{t_i} &\lesssim  \sum_{i=1}^{4+m} \|h_i\|_{-s_i}
        =\|g_1\|_{\chi_1}+\|g_2\|_{\chi_2}+\sum_{j=1}^{2+m} \|\Tr_{j-1}(\op[D^-]^{-1}f)\|_{T_j(\mu_-+\nu)}\\
        &\lesssim  \|g_1\|_{\chi_1}+\|g_2\|_{\chi_2}+\|\op[D^-]^{-1}_+f\|_{\mu_-+\nu, +}
        \lesssim \|g_1\|_{\chi_1}+\|g_2\|_{\chi_2}+\|f\|_{\nu, +}.
        \qedhere
    \end{align*}
\end{proof}
\begin{corollary}\label{cor: corollary_trace_1_3}
    Let $D$ be the CHG determinant in \eqref{eqn: determinant},  $\mu_D$ the associated order function and consider the problem 
    \begin{equation}\label{eqn: corollary_trace_1_3}
        \left\{\begin{array}{rcl}
         \partial_t u-\Delta(\partial_t-\Delta) u&=&f,      \\
        \Tr_1 u&=&g_1,\\
        \Tr_3 u&=&g_2.
        \end{array}\right.
    \end{equation}
    \begin{enumerate}
        \item\label{cor: corollary_trace_1_3i} For all $f\in L^2_\perp(G_+)$, $g_1\in H^{T_1(\mu_D)}_\perp(G^{n-1})$, and $g_2\in H^{T_3(\mu_D)}_\perp(G^{n-1})$, problem \eqref{eqn: corollary_trace_1_3} admits a unique solution $u\in \overline{H}^{\mu_D}_\perp(G_+)$, and it holds
        \begin{equation*}
            \|u\|_{\mu_D, +}\lesssim \|f\|_{L^2(G_+)}+\|g_1\|_{T_1(\mu_D)}+ \|g_2\|_{T_3(\mu_D)}.
        \end{equation*}
        \item\label{cor: corollary_trace_1_3ii} Consider the order function $\nu\equiv 1$.
        For all $f\in \overline{H}^{\nu}_\perp(G_+)$, $g_1\in H^{T_1(\mu_D+\nu)}_\perp(G^{n-1})$, and $g_2\in H^{T_3(\mu_D+\nu)}_\perp(G^{n-1})$, problem \eqref{eqn: corollary_trace_1_3} admits a unique solution $u\in \overline{H}^{\mu_D+\nu}_\perp(G_+)$, and it holds
        \begin{equation*}
            \|u\|_{\mu_D+\nu, +}\lesssim \|f\|_{\nu, +}+\|g_1\|_{T_1(\mu_D+\nu)}+ \|g_2\|_{T_3(\mu_D+\nu)}.
        \end{equation*}
    \end{enumerate}
\end{corollary}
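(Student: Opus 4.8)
The plan is to obtain both parts of Corollary~\ref{cor: corollary_trace_1_3} from Theorem~\ref{thm: half_spcae_general} applied to the boundary operators $\opB_1:=\Tr_1$ and $\opB_2:=\Tr_3$, which in the notation of \eqref{eqn: bound_op_LS} means $b^{(1)}_j=\delta_{j,1}$ and $b^{(2)}_j=\delta_{j,3}$, with $\nu\equiv 0$ for part~\ref{cor: corollary_trace_1_3i}, $\nu\equiv 1$ for part~\ref{cor: corollary_trace_1_3ii}, and $\chi_1:=T_1(\mu_D+\nu)$, $\chi_2:=T_3(\mu_D+\nu)$. Both choices of $\nu$ are (degenerate resp.\ genuine) strictly positive order functions for which $\mu_D+\nu$ and $\mu_-+\nu$ are CHG-shaped, with $\ord(\mu_D+\nu)=4+\ord\nu$ and $\ord(\mu_-+\nu)=2+\ord\nu$, so all structural hypotheses of Theorem~\ref{thm: half_spcae_general} hold. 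Since $T_1(\mu_D+\nu)$, resp.\ $T_3(\mu_D+\nu)$, is exactly the trace space of $\overline H^{\mu_D+\nu}_\perp(G_+)$ under $\Tr_1$, resp.\ $\Tr_3$ (Theorem~\ref{thm: trace}), the asserted existence, uniqueness and estimate are then precisely the conclusion of the theorem. Hence everything reduces to verifying that the extended boundary matrix $\mathcal B:=\mathcal B^{(\ord\nu)}$ associated to $(D,(\Tr_1,\Tr_3))$ is a mixed-order system for the order functions $t_i,s_i$ of Definition~\ref{def:compl_cond}\ref{def:compl_condii}.

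The first computational step is $\det\mathcal B$. Its first two rows are unit row vectors with a single $1$ in columns $2$ and $4$, while rows $3,\dots,4+\ord\nu$ are the successive shifts of the coefficient vector $(d^+_0,d^+_1,d^+_2)$ of $D^+$, where $d^+_2=-1$, $d^+_1=\ic(\rho_1^++\rho_2^+)$, $d^+_0=\rho_1^+\rho_2^+$ (Proposition~\ref{prop: DHG_admissible} and the roots $\rho^\pm_j$ of Lemma~\ref{lemma: roots}). Laplace expansion along the first two rows deletes columns $2$ and $4$; the remaining $(2+\ord\nu)\times(2+\ord\nu)$ determinant is then evaluated by peeling off one row at a time (the rows become successively supported on a single surviving column), yielding $\det\mathcal B=\pm(d^+_2)^{\ord\nu}\,d^+_0d^+_1$, hence $|\det\mathcal B|=|d^+_0||d^+_1|$ on $(\R\setminus\{0\})\times\R^{n-1}$. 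In particular the Lopatinski\u{\i}-Shapiro condition ($\det\mathcal B\neq0$) holds because $\Im\rho_j^+>0$ and so $\rho_1^+,\rho_2^+,\rho_1^++\rho_2^+$ are all nonzero; this is exactly where $(\Tr_1,\Tr_3)$ differs from the pair $(\Tr_0,\Tr_1)$ of Theorem~\ref{thm: main_2}, whose extended boundary matrix has the constant determinant $\pm(d^+_2)^{\ord\nu}$, which cannot be $\mathcal N$-elliptic for the order function the theory requires.

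For the two clauses of Definition~\ref{def: mixed_order_systems}: the upper-order-function clause for the two rows coming from $\opB$ is automatic, since each has a single nonzero entry placed in the column whose order function $t_j$ equals $\chi_1$ resp.\ $\chi_2$, so $s_i+t_j=0$ there, an upper order function for the constant. For the rows coming from $\Tr_i\op[D^+]_+$ one uses that $0$ is an upper order function for $d^+_2=-1$, that $o_{1/2}$ is one for $d^+_1$, and that $\mu_+=o_{1/2}+o_0$ is one for $d^+_0$ (Lemma~\ref{lemma: roots}\ref{lemma: rootsv}); the required inequalities at the relevant positions ($s_i+t_j\ge0$, $\ge o_{1/2}$, $\ge\mu_+$) follow from the explicit trace order functions $T_j(\mu_D+\nu)$, $T_j(\mu_-+\nu)$ computed as in Example~\ref{ex: trace_CHG}, the identity $(\mu_D+\nu)-(\mu_-+\nu)=\mu_+$, and the monotonicity of weight functions under inclusion of Newton polygons (Lemma~\ref{lemma: point_inside}). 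A short calculation of $\delta:=\sum_i(s_i+t_i)$ gives $\delta=2o_{1/2}+o_0$, independently of $\nu$ (consistently with $\det\mathcal B$ being $\nu$-independent), so the second clause amounts to the $\mathcal N$-ellipticity of $\det\mathcal B$ for $2o_{1/2}+o_0$, i.e.\ to $|d^+_0d^+_1|\gtrsim_\lambda W_{2o_{1/2}+o_0}$ for $|\tau|>\lambda$.

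The heart of the argument is this lower bound. Since $d^+_0=\rho_1^+\rho_2^+$ is $\mathcal N$-elliptic for $\mu_+=o_{1/2}+o_0$ by Lemma~\ref{lemma: roots}\ref{lemma: rootsv} and multiplicativity of $\mathcal N$-ellipticity, it reduces to showing $d^+_1=\ic(\rho_1^++\rho_2^+)$ is $\mathcal N$-elliptic for $o_{1/2}$; the upper bound is immediate from $|\rho_1^+|\lesssim W_{o_{1/2}}$, $|\rho_2^+|\lesssim W_{o_0}\lesssim W_{o_{1/2}}$. The lower bound is the delicate point, and it is tempting but wrong to rationalise $\rho_1^++\rho_2^+$ into a multiple of $\sqrt{-\tau^2-4\ic\tau}$ divided by a sum of two square roots: no cancellation actually occurs in that denominator, because the principal square roots of the two relevant arguments lie on opposite sides of the negative real axis. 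Instead I would use that both roots lie in the open upper half-plane (Lemma~\ref{lemma: roots}\ref{lemma: rootsiv}), so $|\rho_1^++\rho_2^+|\ge\Im\rho_1^++\Im\rho_2^+$, together with $\Im\rho_2^+\gtrsim|\rho_2^+|\gtrsim W_{o_0}$ (from $|\arg\rho_2^+|\in(\tfrac\pi4,\tfrac{3\pi}4)$, Lemma~\ref{lemma: roots}\ref{lemma: rootsv}) and a matching sectorial bound $\Im\rho_1^+\gtrsim_\lambda|\rho_1^+|\gtrsim_\lambda W_{o_{1/2}}$ for $|\tau|>\lambda$. The latter I expect to extract from the explicit formula \eqref{def:roots}: since $(\rho_1^+)^2=(\rho_1^-)^2=\tfrac14\bigl(-2|\xi'|^2-\ic\tau+\sqrt{-\tau^2-4\ic\tau}\,\bigr)$ has real part bounded above by an absolute constant and imaginary part of modulus $\gtrsim|\tau|$, the argument of $\rho_1^+$ stays in a compact subinterval of $(0,\pi)$ uniformly for $|\tau|>\lambda$, which is exactly the type of branch-tracking carried out for $\rho_2^\pm$ in the proof of Lemma~\ref{lemma: roots}. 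Granting this, $|d^+_1|\gtrsim_\lambda W_{o_{1/2}}$, so $\mathcal B$ is a mixed-order system, the complementing conditions hold, and Theorem~\ref{thm: half_spcae_general} delivers both parts at once.
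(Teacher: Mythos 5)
Your proposal is correct and follows the same overall strategy as the paper: apply Theorem~\ref{thm: half_spcae_general} with $\opB=(\Tr_1,\Tr_3)$, form the extended boundary matrix $\mathcal B^{(\ord\nu)}$, compute $\det\mathcal B=\pm d^+_0 d^+_1$ and $\delta=2o_{1/2}+1$, and reduce everything to the $\mathcal N$-ellipticity of $d^+_0 d^+_1$ for that order function; the paper isolates the latter as Lemma~\ref{lemma: ellipticity_coeff}. The only genuine divergence is in how you prove the lower bound $|d^+_1|\gtrsim_\lambda W_{o_{1/2}}$: the paper uses the sectorial-sum inequality $|z+w|\ge\cos(\omega)(|z|+|w|)$ for $|\arg z-\arg w|<2\omega<\pi$ (from \cite{bravin2025well}), which needs only that $\rho_1^+$ lies in the open upper half-plane (known from Lemma~\ref{lemma: roots}\ref{lemma: rootsiv}) together with the sectoriality $\arg\rho_2^+\in(\pi/4,3\pi/4)$, since then $|\arg\rho_1^+-\arg\rho_2^+|<3\pi/4$. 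You instead estimate $|\rho_1^++\rho_2^+|\ge\Im\rho_1^++\Im\rho_2^+$ and therefore additionally need $\Im\rho_1^+\gtrsim_\lambda|\rho_1^+|$, i.e.\ that $\rho_1^+$ lies uniformly in a closed subsector of the upper half-plane for $|\tau|\ge\lambda$; your branch-tracking sketch from \eqref{def:roots} (bounded-above real part and $|\Im(\rho_1^+)^2|\gtrsim|\tau|$) does yield this, but it is an extra fact not recorded in Lemma~\ref{lemma: roots}. The paper's route is slightly lighter since it sidesteps this sectoriality of $\rho_1^+$; your route is also correct once that step is made precise. Incidentally you (rightly) note that $d^+_0$ can be handled either via $d^+_0=D^+(\tau,\xi',0)$ (the paper's choice) or via $d^+_0=\rho_1^+\rho_2^+$ and multiplicativity (your choice); both are immediate.
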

\begin{proof}
\begin{enumerate}
    \item  With the notations of Theorem \ref{thm: half_spcae_general}, the associated extended boundary matrix is 
    \begin{equation*}
        \mathcal{B}=\begin{pmatrix}
            0 &1&0&0\\
            0&0&0&1\\
            d^+_0& d^+_1 & -1 & 0\\
            0&d^+_0 & d^+_1 & -1
        \end{pmatrix}. 
    \end{equation*}
    We take $\mu:=\mu_D$ and $\chi_1:=T_1(\mu_D), \chi_2:=T_3(\mu_D)$.
    In view of the computations made in Example \ref{ex: trace_CHG}, the order functions associated to the columns are $t_1=2o_\frac{1}{2}+\frac32$, $t_2=2o_{\frac{1}{2}} +\frac12$, $t_3=\frac32 o_{\frac{1}{2}}$, $t_4= \frac12 o_{\frac{1}{2}}$. Similarly, the order functions associated to the rows are $s_1=-\chi_1=-2o_\frac12-\frac12$, $s_2=-\chi_2=-\frac12 o_{\frac{1}{2}}$,  $s_3=-T_0(\mu_+)=-o_{\frac{1}{2}}-\frac12$, $s_4=-T_1(\mu_+)=-\frac12 o_{\frac{1}{2}}$.
    Together, we get 
    \begin{equation*}
        \delta= \sum_{k=1}^4 s_k+t_k = 2o_{\frac{1}{2}}+1
    \end{equation*}
    and $\det \mathcal{B}=-d^+_0d^+_1$.
    By Lemma \ref{lemma: ellipticity_coeff} below, $d^+_0d^+_1$ is $\mathcal{N}$-elliptic with order function $2o_{\frac{1}{2}}+1$.
    Thus Theorem \ref{thm: half_spcae_general} gives the claim. 
   \item Since $\ord(\nu)=1$, the associated extended boundary matrix is
   \begin{equation*}
       \mathcal{B}^{(1)}=\begin{pmatrix}
           0 &1& 0& 0&0\\
           0&0&0&1&0\\
           d_0^+& d_1^+& -1 &0&0\\
           0&d_0^+ & d_1^+&-1 &0\\
           0&0&d_0^+&d_1^+&-1 
       \end{pmatrix}.
   \end{equation*}
   Using Theorem \ref{thm: trace}, the order functions associated to the columns are $t_1=2o_{\frac{1}{2}}+\frac52$, $t_2=2o_{\frac{1}{2}}+\frac32$, $t_3= 2o_{\frac{1}{2}}+\frac12$, $t_4=\frac32 o_{\frac{1}{2}}$ and $t_5=\frac12 o_{\frac{1}{2}}$.
   The order functions associated to the rows are $s_1= -2o_{\frac{1}{2}}-\frac32$, $s_2=-\frac32 o_{\frac{1}{2}}$, $s_3=-o_{\frac{1}{2}}-\frac32$, $s_4=-o_{\frac{1}{2}}-\frac12$ and $s_5=-\frac12o_{\frac{1}{2}}$.
   Once again, we have 
   \begin{equation*}
       \delta= \sum_{k=1}^5 s_k+t_k=2o_{\frac{1}{2}}+1
   \end{equation*}
   and $\det \mathcal{B}^{(1)}=d^+_0d^+_1$.
   The conclusion follows as above with Lemma \ref{lemma: ellipticity_coeff} below.
\end{enumerate}
   
\end{proof}
\begin{lemma}\label{lemma: ellipticity_coeff}
    The symbol $d^+_1$ is $\mathcal{N}$-elliptic for the order function $o_{\frac{1}{2}}$, and the symbol $d^+_0$ is $\mathcal{N}$-elliptic for the order function $\mu_+=o_{\frac12}+1$.
    In particular, $d^+_0d^+_1$ is $\mathcal{N}$-elliptic for the order function $2o_\frac12+1$.
\end{lemma}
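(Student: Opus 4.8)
The plan is to reduce the lemma to the properties of the roots $\rho_1^\pm,\rho_2^\pm$ of $D(\tau,\xi',\cdot)$ collected in Lemma \ref{lemma: roots}, together with the multiplicativity of $\mathcal{N}$-ellipticity recorded after Definition \ref{def_ellipticity_general}. Recall from the paragraph following Proposition \ref{prop: DHG_admissible} that $d_0^+=\rho_1^+\rho_2^+$ and $d_1^+=\ic(\rho_1^++\rho_2^+)$, and that $\mu_+=\tfrac12\mu_D=o_{\frac12}+1=o_{\frac12}+o_0$ (using $o_0\equiv1$ and $\mu_D=2o_{\frac12}+2$ from Example \ref{ex: CHD_order_fct}). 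For $d_0^+$ there is nothing to do beyond bookkeeping: $\rho_1^+$ is $\mathcal{N}$-elliptic for $o_{\frac12}$ and $\rho_2^+$ is $\mathcal{N}$-elliptic for $o_0$ by Lemma \ref{lemma: roots}\ref{lemma: rootsv}, so $d_0^+=\rho_1^+\rho_2^+$ is $\mathcal{N}$-elliptic for $o_{\frac12}+o_0=\mu_+$. Likewise, once $d_1^+$ is shown to be $\mathcal{N}$-elliptic for $o_{\frac12}$, the product $d_0^+d_1^+$ is $\mathcal{N}$-elliptic for $\mu_++o_{\frac12}=2o_{\frac12}+1$, which is the final ``in particular'' assertion. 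Hence everything comes down to the $\mathcal{N}$-ellipticity of $d_1^+$ for $o_{\frac12}$.

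For $d_1^+$ the upper estimate is routine: $|d_1^+|\le|\rho_1^+|+|\rho_2^+|\lesssim W_{o_{\frac12}}+W_{o_0}\lesssim W_{o_{\frac12}}$, the last step because $W_{o_0}\lesssim W_{o_{\frac12}}$. The lower estimate is the crux, and the naive bound $|\rho_1^++\rho_2^+|\ge|\rho_1^+|-|\rho_2^+|$ is useless: whenever $|\tau|^{1/2}\lesssim|\xi'|$ both roots have size $\simeq|\xi'|$, so nothing is gained. The point I would exploit is that $\rho_1^+$ and $\rho_2^+$ both lie in the open upper half-plane by Lemma \ref{lemma: roots}\ref{lemma: rootsiv}, so their imaginary parts add rather than cancel:
\begin{equation*}
|d_1^+(\tau,\xi')|=|\rho_1^+(\tau,\xi')+\rho_2^+(\tau,\xi')|\ \ge\ \Im\rho_1^+(\tau,\xi')+\Im\rho_2^+(\tau,\xi')\ \ge\ \Im\rho_1^+(\tau,\xi').
\end{equation*}
Thus it suffices to prove that, for each $\lambda>0$, one has $\Im\rho_1^+(\tau,\xi')\gtrsim_\lambda|\rho_1^+(\tau,\xi')|$ whenever $|\tau|\ge\lambda$; combined with the $\mathcal{N}$-ellipticity of $\rho_1^+$ this gives $|d_1^+|\gtrsim_\lambda W_{o_{\frac12}}$ on $\set{|\tau|\ge\lambda}$, which is precisely the lower bound required in Definition \ref{def_ellipticity_general}.

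To establish $\Im\rho_1^+\gtrsim_\lambda|\rho_1^+|$, I would return to the explicit formula \eqref{def:roots}. Since $|\rho_1^+|$ and $\Im\rho_1^+$ are both even in $\tau$ (replacing $\tau$ by $-\tau$ conjugates the radicand and flips a fixed overall sign), it is enough to take $\tau>0$, where $\rho_1^+$ equals, up to a nonzero real factor, the principal square root of $a+b$ with $a:=-2|\xi'|^2-\ic\tau$ and $b:=\sqrt{-\tau^2-4\ic\tau}$. A short computation with the standard formula for the real and imaginary parts of a principal square root gives $\Re b\le 2$ and $\Im b\le-\tau$, hence $\Re(a+b)\le 2$ and $\Im(a+b)\le-2\tau\le-2\lambda$; thus $a+b$ is confined to the fixed region $\set{z:\Re z\le2,\ \Im z\le-2\lambda}$, on which $|\arg z|$ is bounded below by $\arctan\lambda>0$. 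Taking the principal square root halves the argument, so $|\arg\sqrt{a+b}|\ge\tfrac12\arctan\lambda$, and therefore
\begin{equation*}
\frac{\Im\rho_1^+(\tau,\xi')}{|\rho_1^+(\tau,\xi')|}=\bigl|\sin\arg\sqrt{a+b}\,\bigr|\ \ge\ \sin\bigl(\tfrac12\arctan\lambda\bigr)>0,
\end{equation*}
where $\Im\rho_1^+>0$ was used to drop the modulus on the left. This is the required estimate, and the proof is then complete modulo the routine items mentioned above.

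I expect the lower bound for $|d_1^+|$ to be the only genuine obstacle. The temptation is to bound the sum of the two roots by the difference of their moduli, which fails in the regime $|\tau|^{1/2}\lesssim|\xi'|$; the correct substitute is the combination of two facts — that $\rho_1^+$ and $\rho_2^+$ sit in the same half-plane (so their imaginary parts do not interfere) and that $a+b$ is trapped in a region avoiding a neighbourhood of the positive real axis, which is forced by the uniform estimates $\Re b\le2$, $\Im b\le-\tau$. Everything else — the $\mathcal{N}$-ellipticity of the individual roots from Lemma \ref{lemma: roots}, the elementary bound $W_{o_0}\lesssim W_{o_{\frac12}}$, and the multiplicativity of $\mathcal{N}$-ellipticity — is already available or entirely routine.
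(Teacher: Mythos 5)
Your proof is correct, but in the crux step it takes a genuinely different route from the paper. The $d_0^+$ claim you obtain by multiplying the $\mathcal{N}$-ellipticity of $\rho_1^+$ and $\rho_2^+$ from Lemma \ref{lemma: roots}\ref{lemma: rootsv}, whereas the paper observes $d_0^+(\tau,\xi')=D^+(\tau,\xi',0)$ and inherits ellipticity directly from $D^+$ via Proposition \ref{prop: DHG_admissible}; both are fine. The real divergence is the lower bound on $|d_1^+|$. The paper invokes the cone estimate $|z+w|\ge\cos(\omega)(|z|+|w|)$ for $|\arg z-\arg w|<2\omega<\pi$ from \cite[Lemma C.2]{bravin2025well}, fed by the angular confinement $|\arg\rho_2^+|\in(\tfrac\pi4,\tfrac{3\pi}4)$ recorded in Lemma \ref{lemma: roots}\ref{lemma: rootsv}; this yields $|\rho_1^++\rho_2^+|\gtrsim|\rho_1^+|+|\rho_2^+|\gtrsim_\lambda W_{o_{1/2}}$, keeping both roots in play. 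You instead drop $\rho_2^+$ entirely via the elementary observation that both roots lie in the open upper half-plane, so $|\rho_1^++\rho_2^+|\ge\Im\rho_1^++\Im\rho_2^+\ge\Im\rho_1^+$, and then pin down $\arg\rho_1^+$ directly from \eqref{def:roots}: the radicand $a+b$ satisfies $\Re(a+b)\le2$ and $\Im(a+b)\le-2\tau$, which traps it in a sector bounded away from the positive real axis and gives the uniform lower bound $\Im\rho_1^+\gtrsim_\lambda|\rho_1^+|$. Both arguments are sound. Your version is self-contained and avoids the external cone lemma as well as the $\arg\rho_2^+$ information from Lemma \ref{lemma: roots}\ref{lemma: rootsv}, at the cost of an explicit computation with the principal square root; the paper's version is shorter on the page because the angular estimate for $\rho_2^+$ was already established and the quantitative cone inequality is delegated to a reference.
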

\begin{proof}
    Since $d^+_0(\tau,\xi')=D^+(\tau,\xi',0)$, the $\mathcal{N}$-ellipticity of $d^+_0$ follows directly from the one of $D^+$ in Proposition \ref{prop: DHG_admissible}.
    It hence suffices to show that $d^+_0$ is $\mathcal{N}$-elliptic for the order function $\mu_+=o_{\frac12}+1$.
    
    \medskip
    
    Recall that $d^+_1=\rho_1^++  \rho_2^+$, where $\rho_1^+, \rho_2^+$ are the roots chosen in Lemma \ref{lemma: roots}.
    The upper estimates for $d^+_1$ follow via the triangle inequality from the $\mathcal{N}$-ellipticity of said roots.
    For the lower estimates, we observe that for any two complex numbers $z, w$ with $\vert \arg z-\arg w\vert< 2\omega <\pi$, it holds $\vert z+w\vert \geq \cos(\omega)(\vert z\vert+\vert w\vert)$, see e.g. \cite[Lemma C.2]{bravin2025well}.
    Since $|\arg\rho_2^+|\in (\frac\pi 4,\frac{3\pi}{4})$ by Lemma \ref{lemma: roots}\ref{lemma: rootsv}, we have $|\arg \rho_1^+-\arg \rho_2^+|<2\omega$ with $\omega:=\frac{3\pi}{8}$.
    Thus, for any $\lambda>0$, we have
    \begin{equation*}
        \vert\rho_1^++\rho_2^+\vert
        \gtrsim \vert\rho_1^+\vert+ \vert \rho_2^+\vert
        \gtrsim_{\lambda} W_{o_{\frac{1}{2}}}+W_{o_0}
        \gtrsim W_{o_{\frac{1}{2}}},
    \end{equation*}
    where the second inequality follows from the $\mathcal{N}$-ellipticity of $\rho_1^+$ and $\rho_2^+$ in Lemma \ref{lemma: roots}.
\end{proof}

\section{Proofs of the Main Theorems}\label{sec:proof}
    The careful study of the determinant $D$ of the Cahn-Hilliard-Gurtin system \eqref{eqn: CHG_system_1} on the half space which we carried out in Section \ref{sec:half} allows us to solve the system efficiently. 

\begin{proof}[Proof of Theorem \ref{thm: main}]
    We denote by $L(\tau, \xi)$ the symbol matrix in \eqref{eqn: CHG_symbol} associated to the system \eqref{eqn: CHG_system_1}.
    The existence of a bounded right-inverse for the trace operator proved in Theorem \ref{thm: trace} reduces the problem to the case $g_1=g_2=0$. 
    We set $\mu_D:=2+2o_{\frac12}$ and
    \begin{equation*}
        \mathbb{V}:= \overline{H}^{\mu_D}_\perp(G_+) \times \overline{H}^{\mu_D+1}_\perp(G_+)
    \end{equation*}
    Using both parts of Corollary \ref{cor: corollary_trace_1_3} and $\op[D]_+=\partial_t-\Delta(\partial_t-\Delta)$, one can find $v=(v_1,v_2)\in\mathbb{V} $ such that 
    \begin{equation*}
        \left\{\begin{array}{rcl}
             \op[D]_+v_i&=&f_i,\\
             \Tr_1 v_i&=&0,\\
             \Tr_3 v_i&=&0,
        \end{array}\right. \qquad i\in\set{1,2}.
    \end{equation*}
    Consider now the symbol matrix 
    \begin{equation*}
        \ad L(\tau, \xi)=\begin{pmatrix}
            1& -\vert\xi\vert^2\\\ic\tau+\vert \xi\vert^2& \ic\tau.
        \end{pmatrix}
    \end{equation*}
     Combining Propositions \ref{prop: ad}, \ref{prop: boundedness_lower_support} and Example \ref{ex: CHG_mixed_order_system}, we get that $\op[\ad L]_+: \mathbb{V}\to \mathbb{E}$ is well-defined and bounded. Thus we can set  $u:=\op[\ad L]_+v\in \mathbb{E}$ and obtain that $(\op[L]_+u)_i=\op[D]_+v_i=f_i$ for $i\in\set{1,2}$. 
    Furthermore, 
    \begin{equation*}
        \Tr_1 u_1=\Tr_1(v_1 -\op[\vert \xi\vert^2]_+v_2)=\Tr_1 v_1-\op[\vert \xi'\vert^2]\Tr_1 v_2-\Tr_3 v_2=0
    \end{equation*}
    as well as 
    \begin{equation*}
        \Tr_1 u_2=\Tr_1(\op[i\tau]_+v_1 +\op[i\tau+\vert \xi\vert^2]_+v_2)=\op[i\tau]\Tr_1 v_1+\op[i\tau+\vert \xi' \vert^2]\Tr_1 v_2+\Tr_3 v_2=0. 
    \end{equation*}
    Thus $u$ is a solution to \eqref{eqn: CHG_system_1}. The estimates follow from those of Corollary  \ref{cor: corollary_trace_1_3} and from the boundedness of $\op[\ad L]_+:\mathbb{V}\to\mathbb{E}$.
    
    \medskip
    
    We now show uniqueness. Assume that $u=(u_1, u_2)$ solves 
    \begin{equation*}
        \left \{ \begin{array}{rcl}
         \op[L]_+u&=&0,      \\
         \Tr_1 u_1&=&0,    \\
         \Tr_1 u_2&=&0.
        \end{array}\right. 
    \end{equation*}
    Then multiplying both sides of the equation $\op[L]_+u=0$ by $\op[A]_+$ gives in particular $\op[D]_+u_1=0$ and so $\op[D^+]_+u_1=0$, since $\op[D^-]_+\in \call_{\mathrm{iso}}(\overline{H}^{\mu_-}_\perp(G_+),L^2_\perp(G_+))$ for $\mu_-=1+o_\frac12$ by Theorem \ref{thm: whole space}, Proposition \ref{prop: boundedness_lower_support}, and Proposition \ref{prop: DHG_admissible}. 

    Remark that $\xi_n^2= -d_1^+(\tau, \xi')(i\xi_n)-d_0^+(\tau, \xi') + D^+(\tau, \xi', \xi_n)$, and in particular
    \begin{align}\label{eqn: trace_unique}
        -\op[\xi_n^2]_+u_1=\op[d_1^+(\tau, \xi')(\ic\xi_n)]_+u_1+\op[d_0^+(\tau, \xi')]_+ u_1    
    \end{align}
    due to $\op[D^+]_+u_1=0$.
    We will use \eqref{eqn: trace_unique} in combination with $\Tr_j\op[b(\tau,\xi')(\ic\xi_n)]_+=\op[b(\tau,\xi')]\Tr_{j+1}$ twice:
    We first take $\Tr_0$ on both sides and learn in light of $\Tr_1 u_1=0$ that $\Tr_2 u_1=\op[d_0^+(\tau, \xi')]\Tr_0 u_1$.
    Secondly we apply $\Tr_1$ and obtain
    \begin{align*}
    \Tr_3 u_1=\op[d_1^+(\tau,\xi')]\Tr_2u_1=\op[d_1^+(\tau, \xi')d_0^+(\tau, \xi')]\Tr_0u_1.
    \end{align*}
    But by assumption it holds $\op[-|\xi'|^2-\xi_n^2-\ic\tau]_+ u_1 + u_2=0$, which after an application of $\Tr_1$ gives $ \Tr_3 u_1=0$ due to $\Tr_1 u_1=\Tr_1 u_2=0$.
    This shows $\op[d_1^+(\tau, \xi')d_0^+(\tau, \xi')]\Tr_0u_1=0$, which is updated to $\Tr_0 u_1=0$ in view of the $\mathcal{N}$-ellipticity of $d_1^+$ and $d_0^+$ in Lemma \ref{lemma: ellipticity_coeff}.
    In particular, $u_1$ is a solution in $ \overline{H}^{\mu_D}_\perp(G_+)\subseteq \overline{H}^{\mu_+}_\perp(G_+)$ of 
    \begin{equation*}
        \left\{\begin{array}{rcl}
        \op[D^+]_+u_1&=&0,      \\
        \Tr_0 u_1&=&0,\\
        \Tr_1 u_1&=&0, 
        \end{array}\right.
    \end{equation*}
    where we recall that $\ord\mu_+=2$ for the order function associated to $D^+$.
    By Proposition \ref{prop: upper_support_preserving}, this implies $u_1=0$.
    Plugging $u_1=0$ into \eqref{eqn: CHG_system_1}, we also deduce $u_2=0$, which achieves the proof.
\end{proof}
\begin{proof}[Proof of Theorem \ref{thm: main_2}]
    By Proposition \ref{prop_potential_spaces} and Theorem \ref{thm: trace} it holds $\mathbb{E}=\overline{H}^{\mu_D}_{\perp}(G_+)$ and $\mathbb{G}=H^{T_2(\mu_D)}_\perp(G^{n-1})$.
    Assume first that $u\in \overline{H}^{\mu_D}_{\perp}(G_+)$ is a solution to \eqref{eqn: CHG_Dirichlet}.
    By Lemma \ref{lemma_added_boundary_conditions} and $\Tr_0 u=\Tr_1 u=0$, we conclude
    \begin{align*}
        \Tr_0 \op[D^-]^{-1}_+ f&=\Tr_0 \op[D^+]_+u
        =\op[d^+_0(k,\xi')]\Tr_0 u+\op[d^+_1(k, \xi')]\Tr_1 u+ \Tr_2 u
        =\Tr_2 u\in  H^{T_2(\mu_D)}_\perp(G^{n-1}).
    \end{align*}
    Conversely, assume $\Tr_0(\op[D^-]^{-1}f)_+\in H^{T_2(\mu_D)}_\perp(G^{n-1})$.
    By Lemma \ref{lemma_added_boundary_conditions}, solving problem \eqref{eqn: CHG_Dirichlet} is equivalent to solving 
    \begin{equation}\label{eqn: intermediate_problem_Dirichlet}
         \left\{\begin{array}{rcl}
             \op[\omega_{\mu_-}^+ D^+]_+u&=&\op[\omega_{\mu_-}^+]_+\op[D^-]^{-1}_+f,  \\
             \Tr_0 \op[D^+]_+u&=& \Tr_0 \op[D^-]^{-1}_+ f, \\
             \Tr_1 \op[D^+]_+u&=& \Tr_1 \op[D^-]^{-1}_+f, \\
             \Tr_0 u&=&0,\\\Tr_1 u&=&0.     
        \end{array}\right.
    \end{equation}
    We define $g=(g_1, g_2 ,g_3, g_4) $ via $g_1=g_2=0$ and
    \begin{align*}
        &g_3=\Tr_0 \op[D^-]^{-1}_+f,\\
        &g_4= -\Tr_1 \op[D^-]^{-1}_+f +\op[d_1^+]\Tr_0 \op[D^-]^{-1}_+f.
    \end{align*}
    The hypothesis $\Tr_0 \op[D^-]^{-1}_+f\in H^{T_2(\mu_D)}_\perp(G^{n-1})$ guarantees that $g_i\in H^{T_{i-1}(\mu_D)}_\perp(G^{n-1})$ for all $i\in\set{1,.., 4}$.
    Indeed, for $i\in\set{1,2}$ this is trivial, for $i=3$ this is the exact statement of the hypothesis, and for $i=4$ this follows since $T_3(\mu_D)=T_1(\mu_{D^-})$, see Figure \ref{fig: trace_spc_2}, and since $\op[d_1^+]\in \call(H^{T_2(\mu_{D})}(G^{n-1}),H^{T_3(\mu_{D})}(G^{n-1}))$ as a result of Theorem \ref{thm: whole space}, Lemma \ref{lemma: ellipticity_coeff}, and $T_2(\mu_{D})-T_3(\mu_{D})=o_{\frac12}$.
    By Proposition \ref{prop: upper_support_preserving}, one can find $u\in \overline{H}^{\mu_D}_\perp(G_+)$ such that 
    \begin{equation*}
        \left\{\begin{array}{rcl}
            \op[\omega_{\mu_-}^+D^+]_+u&=&\op[\omega_{\mu_-}^+]_+\op[D^-]_+^{-1}f  \\
             \Tr^{(4)} u&=&g. 
        \end{array}\right.
    \end{equation*}
    Then $u$ is also solution of (\ref{eqn: intermediate_problem_Dirichlet}) and thus of \eqref{eqn: CHG_Dirichlet}.

    \medskip

    Finally, we show that there exists $f\in L^2_\perp(G_+)$ such that $\Tr_0\op[D^-]_+^{-1}f\notin H^{T_2(\mu_D)}_\perp(G^{n-1})$.
    Namely, choose $g\in H^{T_0(\mu_{D^-})}_\perp(G^{n-1})\setminus H^{T_2(\mu_D)}_\perp(G^{n-1})$.
    The latter space is nonempty, see Figure \ref{fig: trace_spc_2}.
    By Theorem \ref{thm: trace} there exists $F\in \overline{H}^{\mu_{D^-}}_\perp(G_+)$ with $\Tr_0 F=g$.
    Since $\op[D^-]^+\in \call_{\mathrm{iso}}(\overline{H}^{\mu_{D^-}}_\perp(G_+),L^2_\perp(G_+))$ by Proposition \ref{prop: boundedness_lower_support} and Proposition \ref{prop: DHG_admissible}, the choice $f:=\op[D^-]^+F\in L^2_\perp(G_+)$ gives $\Tr_0\op[D^-]_+^{-1}f=\Tr_0 F=g\notin H^{T_2(\mu_D)}_\perp(G^{n-1})$.
\begin{figure}[h]
   \begin{subfigure}[b]{.49\textwidth}
       \begin{tikzpicture}
 \draw[->] (-.75,0) -- (5,0) node[below , font=\tiny] {$\vert \xi\vert$};
  \draw[->] (0,-.5) -- (0,2) node[left, font=\tiny] {$\tau$}; 
  
   \draw (1,0.1) -- (1,-0.1);
  \draw (2,0.1) -- (2,-0.1);
    \node[below, yshift= -2pt, font=\tiny] at (1,0) {$1$};
    \node[below, yshift=-2pt, font=\tiny] at (2, 0) {$2$};

    \draw (-0.1, .5) -- (0.1, .5);
    \node[left, xshift=-2pt, font= \tiny] at (0,.5) {$1/2$};
  \coordinate (A) at (0,0);
  \coordinate (B) at (2,0);
  \coordinate (C) at (1,.5);
  \coordinate (D) at (0,.5);

  \filldraw[color=Sepia, fill=Sepia!30,thick] (A)--(B)--(C)--(D)--cycle;

     
      \node[xshift=4pt, font=\small] at (7/2, 1) {$\mathcal{N}_{D^-}$};
\end{tikzpicture}
\end{subfigure}
\hfill
   \begin{subfigure}[b]{.49\textwidth}
      
      \begin{tikzpicture}
 \draw[->] (-.75,0) -- (5,0) node[below , font=\tiny] {$\vert \xi\vert$};
  \draw[->] (0,-.5) -- (0,2) node[left, font=\tiny] {$\tau$}; 
  
   \draw (1.5,0.1) -- (1.5,-0.1);
  
    \node[below, yshift= -2pt, font=\tiny] at (1.5,0) {$3/2$};

    \draw (-0.1, .75) -- (0.1, .75);
    \node[left, xshift=-2pt, font= \tiny] at (0,0.75) {$3/4$};
  \coordinate (A) at (0,0);
  \coordinate (B) at (1.5,0);
  \coordinate (C) at (0, 0.75);

  \filldraw[color=Sepia, fill=Sepia!30,thick] (A)--(B)--(C)--cycle;

     
      \node[xshift=4pt, font=\small] at (7/2, 1) {$T_2(\mathcal{N}_D)$};
\end{tikzpicture}
   \end{subfigure}\hfill
   \begin{subfigure}{.49\textwidth}
       \begin{tikzpicture}
 \draw[->] (-.75,0) -- (5,0) node[below , font=\tiny] {$\vert \xi\vert$};
  \draw[->] (0,-.5) -- (0,2) node[left, font=\tiny] {$\tau$}; 
  
   \draw (1.5,0.1) -- (1.5,-0.1);
  
    \node[below, yshift= -2pt, font=\tiny] at (1.5,0) {$3/2$};

    \draw (-0.1, 0.5) -- (0.1, 0.5);
    \node[left, xshift=-2pt, font= \tiny] at (0,0.5) {$1/2$};
  \coordinate (A) at (0,0);
  \coordinate (B) at (1.5,0);
  \coordinate (C) at (0.5, 0.5);
  \coordinate (d) at (0, 0.5);

  \filldraw[color=Sepia, fill=Sepia!30,thick] (A)--(B)--(C)--(D)--cycle;

     
      \node[xshift=4pt, font=\small] at (7/2, 1) {$T_0(\mathcal{N}_{D^-})$};
\end{tikzpicture}
   \end{subfigure}
\hfill      
   \begin{subfigure}{.49\textwidth}
       \begin{tikzpicture}
 \draw[->] (-.75,0) -- (5,0) node[below , font=\tiny] {$\vert \xi\vert$};
  \draw[->] (0,-.5) -- (0,2) node[left, font=\tiny] {$\tau$}; 
  
   \draw (.5,0.1) -- (.5,-0.1);
    \node[below, yshift= -2pt, font=\tiny] at (.5,0) {$1/2$};

    \draw (-0.1, .25) -- (0.1, .25);
    \node[left, xshift=-2pt, font= \tiny] at (0,.25) {$1/4$};
  \coordinate (A) at (0,0);
  \coordinate (B) at (.5,0);
  \coordinate (C) at (0,.25);

  \filldraw[color=Sepia, fill=Sepia!30,thick] (A)--(B)--(C)--cycle;

     
      \node[xshift=4pt, font=\small] at (3.5, 1) {$T_3(\mathcal{N}_D)=T_1(\mathcal{N}_{D^-})$};
\end{tikzpicture}
   \end{subfigure}
   \hfill
   \caption{The Newton polygons associated to certain spaces considered in the proof of Theorem \ref{thm: main_2}.
   Observe that $T_0(\mathcal{N}_{D^-})$ is a proper subset of $T_2(\mathcal{N}_D)$. In particular, we can choose $g\in H^{T_0(\mu_{D^-})}_\perp(G^{n-1})$ such that $g\notin H^{T_2(\mu_D)}_\perp(G^{n-1})$.}
   \label{fig: trace_spc_2}
\end{figure}
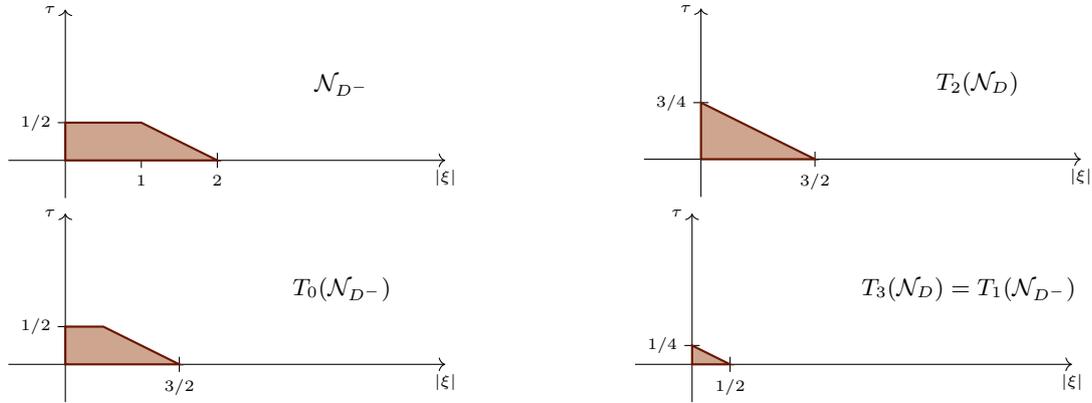
\end{proof}
We finish by providing an algebraic link between the complementing boundary condition in Definition \ref{def:compl_cond} and the classical Lopatinski\u{\i}-Shapiro condition.
Assume that we are given two boundary operators $\opB_1$ and $\opB_2$ of the form \eqref{eqn: bound_op_LS}.
Then $(\opB_1,\opB_2)$ verifies the \emph{Lopatinski\u{\i}-Shapiro conditions} for $D$, if for every fixed $(\tau, \xi')\in (\mathbb{R}\setminus\set{0})\times \mathbb{R}^{n-1}$, the polynomials $z\mapsto B_i(\tau,\xi',z):=\sum_{j=0}^{3} b_j^{(i)}(\tau,\xi')(\ic z)^j$ are linearly independent modulo $z\mapsto D^+(\tau,\xi', z)$.
\begin{proposition}\label{prop: LScond}
    For $\opB_1$ and $\opB_2$ as in \eqref{eqn: bound_op_LS}, consider the associated extended boundary matrix $\mathcal{B}$ in Definition \ref{def:compl_cond}.
    Then $\opB_1$ and $\opB_2$ verify the Lopatinski\u{\i}-Shapiro conditions for $D$ if and only if $\det \mathcal{B}(\tau, \xi')\neq 0$ for all $(\tau, \xi')\in (\mathbb{R}\setminus\set{0})\times \mathbb{R}^{n-1}$. 
\end{proposition}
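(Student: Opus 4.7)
The plan is to interpret the extended boundary matrix $\mathcal{B}=\mathcal{B}^{(m)}(\tau,\xi')$ as a Sylvester-type coefficient matrix and then reduce the non-vanishing of its determinant to a statement in the quotient ring $\C[iz]/(D^+(\tau,\xi',\cdot))$. We fix $(\tau,\xi')\in (\R\setminus\set{0})\times \R^{n-1}$ throughout and suppress this argument.

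First I would identify every polynomial $p(z)=\sum_{k=0}^{3+m} c_k (iz)^k$ of $z$-degree $\le 3+m$ with its coefficient vector $(c_0,\ldots,c_{3+m})\in \C^{4+m}$ in the ordered basis $\{(iz)^0,\ldots,(iz)^{3+m}\}$. Reading off Definition \ref{def:compl_cond}\ref{def:compl_condi}, the rows of $\mathcal{B}$ correspond in this identification to the polynomials
\begin{equation*}
B_1,\ B_2,\ D^+,\ (iz)D^+,\ \ldots,\ (iz)^{1+m}D^+,
\end{equation*}
keeping in mind that $B_i$ has $z$-degree at most $3$ so the coefficient vectors of $B_1,B_2$ have trailing zeros in the last $m$ positions. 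Hence $\det\mathcal{B}\ne 0$ is equivalent to these $4+m$ polynomials forming a basis of the ambient $(4+m)$-dimensional space $P_{3+m}$ of polynomials of $z$-degree at most $3+m$.

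Next I would analyze the span $V:=\vecspan\set{(iz)^kD^+ \mid k=0,\ldots,1+m}\subseteq P_{3+m}$. Because $D^+$ has $z$-degree exactly $2$, with leading coefficient $d_2^+=-1\neq 0$ (see Proposition \ref{prop: DHG_admissible} and the paragraph after it), the $2+m$ shifts of $D^+$ are linearly independent and $V$ consists precisely of the multiples $Q\cdot D^+$ with $\deg Q\le 1+m$, i.e. $V=(D^+)\cap P_{3+m}$. In particular, $\dim V=2+m$, so the quotient $P_{3+m}/V$ is two-dimensional. Given the linear independence of the last $2+m$ rows of $\mathcal{B}$, it follows that $\det\mathcal{B}\ne 0$ if and only if the images $\overline{B_1},\overline{B_2}\in P_{3+m}/V$ are linearly independent.

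Finally I would identify $P_{3+m}/V$ with $\C[iz]/(D^+)$. By Euclidean division, every $p\in P_{3+m}$ decomposes uniquely as $p=QD^+ + R$ with $\deg Q\le 1+m$ and $\deg R\le 1$, and the map $p\mapsto R\bmod (D^+)$ descends to a linear isomorphism $P_{3+m}/V\xrightarrow{\simeq}\C[iz]/(D^+)$. Since $\deg B_i\le 3\le 3+m$, this sends $\overline{B_i}$ to the class of $B_i$ modulo $D^+$. Therefore $\det\mathcal{B}\ne 0$ is equivalent to $B_1$ and $B_2$ being linearly independent modulo $D^+$, which is exactly the Lopatinski\u{\i}-Shapiro condition. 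No genuine obstacle is expected here; the argument is a clean exercise in linear algebra once the matrix is recognized as a Sylvester-type block, and the only care needed is to keep track of the degree bound $\deg B_i\le 3$ when asserting that the reduction $\overline{B_i}$ agrees with the one living in the full ring $\C[iz]/(D^+)$ rather than in a truncated quotient.
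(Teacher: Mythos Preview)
Your argument is correct and follows essentially the same idea as the paper's proof: both identify the rows of $\mathcal{B}$ with the polynomials $B_1,B_2,D^+,(iz)D^+,\ldots$ and then observe that a linear dependence among the rows is precisely a relation $c_1B_1+c_2B_2=Q\,D^+$, i.e.\ linear dependence of $B_1,B_2$ modulo $D^+$. Your phrasing via the quotient $P_{3+m}/V\cong \C[iz]/(D^+)$ is a slightly more structured packaging of the same computation, and you carry it out for arbitrary $m\in\N_0$, whereas the paper writes out the relation only in the case $m=0$ (four constants $c_1,\ldots,c_4$); the extension is immediate either way.
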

\begin{proof}
    Let $(\tau, \xi')\in (\mathbb{R}\setminus\set{0})\times \mathbb{R}^{n-1}$.
    We have $\det\mathcal{B}(\tau, \xi')=0$ if and only if the rows of $\mathcal{B}(\tau, \xi')$ are linearly dependent.
    This is the case if and only if there exist $c_1,\ldots,c_4\in \mathbb{C}$ such that
    \begin{align*}
        c_1B_1(\tau,\xi',z)+c_2B_2(\tau,\xi',z)=(c_3+ c_4\ic z) D^+(\tau, \xi', z) \quad \text{for all } z\in \C,   
    \end{align*}
    that is, if and only if $B_1(\tau, \xi', \cdot)$ and $B_2(\tau,\xi', \cdot)$ are linearly dependent modulo $D^+(\tau, \xi', \cdot)$.
\end{proof}

\section*{Data Availability}
Data sharing is not applicable to this article as no datasets were generated or analysed.
\section*{Conflict of Interests}
The authors declare that there is no conflict of interests.

\end{document}